\documentclass[a4paper]{amsart}
\usepackage[ansinew]{inputenc}
\usepackage[T1]{fontenc}
\usepackage{longtable}
\usepackage{xcolor}
\usepackage{mathrsfs}
\usepackage{amssymb,amsmath,amsthm}
\usepackage[matrix, arrow, curve]{xy}
\usepackage{tikz-cd}
\usepackage{stmaryrd}
\usepackage{mathtools}
\usepackage{extarrows}
\usetikzlibrary{quotes,babel, angles}

\newcommand{\op}{\textnormal{op}}
\newcommand{\Hom}{\textnormal{Hom}}
\newcommand{\Ext}{\textnormal{Ext}}
\renewcommand{\mod}{\textnormal{mod}\,}
\renewcommand{\pmod}{\underline{\textnormal{mod}}\,}
\newcommand{\imod}{\overline{\textnormal{mod}}\,}
\newcommand{\Mod}{\textnormal{Mod}\,}
\newcommand{\im}{\textnormal{Im}\,}
\newcommand{\coker}{\textnormal{coker}\,}

\newcommand{\Ab}{\textnormal{Ab}}
\newcommand{\fp}{\textnormal{fp}\,}
\newcommand{\Fp}{\textnormal{fp}}
\newcommand{\Ind}{\textnormal{Zsp}\,}
\newcommand{\Inj}{\textnormal{inj}\,}
\newcommand{\radA}{\textnormal{rad}_A}

\numberwithin{equation}{section} \theoremstyle{plain}
\newtheorem*{thm*}{Theorem}
\newtheorem{thm}{Theorem}
\numberwithin{thm}{section}

\newtheorem{coro}[thm]{Corollary}
\newtheorem*{coro*}{Corollary}
\newtheorem{lem}[thm]{Lemma}
\newtheorem*{lem*}{Lemma}
\newtheorem{prop}[thm]{Proposition}
\newtheorem*{prop*}{Proposition}
\newtheorem{rem}[thm]{Remark}
\newtheorem*{rem*}{Remark}
\newtheorem{exa}[thm]{Example}
\newtheorem*{exa*}{Example}

\newtheorem*{df*}{Definition}

\newtheorem*{ques*}{Question}

\newtheorem*{construction*}{Construction}
\newtheorem*{ack*}{ACKNOWLEDGEMENTS}

\begin{document}

\title{Exact Structures and Purity}
\author{Kevin Schlegel}
\date{}

\address{Kevin Schlegel, University of Stuttgart, Institute of Algebra and Number Theory, Pfaffenwaldring 57, 70569 Stuttgart, Germany}
\email{kevin.schlegel@iaz.uni-stuttgart.de}

\subjclass[2020]{16G10, 16D70, 18E20, 18G25, 18G50}
\keywords{Exact category, exact structure, purity, definable subcategory, Ziegler spectrum, Artin algebra, generic module, fp-idempotent ideal}

\begin{abstract} We relate the theory of purity of a locally finitely presented category with products to the study of exact structures on the full subcategory of finitely presented objects. Properties in the context of purity are translated to properties about exact structures. We specialize to the case of a module category over an Artin algebra and show that generic modules are in one to one correspondence with particular maximal exact structures. 

\end{abstract}
\maketitle \section*{Introduction}
Exact structures, in the sense of Quillen \cite{Quillen}, are the right framework to make use of homological methods not only for abelian but also for additive categories. They are prominent in representation theory and appear in, for example, functional analysis \cite{Frerick}, algebraic K-theory \cite{Quillen} and algebraic geometry \cite[Example 13.9]{Buehler}. In general, an exact structure consists of a collection of kernel-cokernel pairs subject to some closure properties. The collection of all split exact sequences forms the smallest exact structure on an additive category. Rump showed that there also always exists a largest exact structure \cite{Rump}.

In this work we build a bridge between the topic of exact structures and the theory of purity, which goes back to the work of Pr\" ufer for abelian groups \cite{Prufer}. For modules over an associative ring it was developed by Cohn to study coproducts of rings \cite{Cohn}. Purity gives a nice framework to understand the structure of certain possibly large modules. It also appears in the context of logic and model theory of modules \cite{Prest}. A systematic treatment of purity for a locally finitely presented category $\mathcal{A}$ with products is due to Crawley-Boevey \cite{Crawley-Boevey2}. We relate this theory of purity to the study of exact structures on the full subcategory $\fp \mathcal{A}$ of finitely presented objects in $\mathcal{A}$. As it turns out, the amount of information that is possible to extract from this connection is bigger, the bigger the largest exact structure on $\fp \mathcal{A}$. In particular, if $\mathcal{A}$ is abelian, then the theory of purity and the study of exact structures are strongly related.

In the context of purity, the Ziegler spectrum of $\mathcal{A}$, denoted by $\Ind \mathcal{A}$, as well as the notion of a definable subcategory of $\mathcal{A}$ is important. Definable subcategories were introduced by Crawley-Boevey for module categories \cite{Crawley-Boevey} and later by Krause for locally finitely presented categories \cite{Krause3}. The Ziegler spectrum was originally introduced by Ziegler \cite{Ziegler}. It is a topological space whose underlying set consists of the isomorphism classes of indecomposable pure-injective objects in $\mathcal{A}$, which are the injective objects in $\mathcal{A}$ relative to the pure-exact structure. Closed sets in $\Ind A$ are in one to one correspondence with definable subcategories of $\mathcal{A}$ by work of Herzog \cite{Herzog} and Krause \cite{Krause2}. Let us explain how to make sense of these notions from the viewpoint of exact structures.

For an exact structure $\mathcal{E}$ on $\fp \mathcal{A}$ we construct an embedding of $\mathcal{A}$ into a Grothen\-dieck category $\mathbf{P}_\mathcal{E}(\mathcal{A})$. The general idea of studying purity via an embedding into a Grothendieck category is due to Gruson and Jensen in the context of module categories \cite{GruJen}; see also Simson \cite{Simson}.
\\ 
\\
\textbf{Theorem A.} (Theorem \ref{big}) \textit{Let $\mathcal{A}$ be a locally finitely presented category with products and $\mathcal{E}$ an exact structure on $\fp \mathcal{A}$. There exists a fully faithful functor $\textnormal{ev}_\mathcal{E} \colon \mathcal{A} \rightarrow \mathbf{P}_\mathcal{E}(\mathcal{A})$ commuting with filtered colimits and cokernels. Moreover, the essential image of $\textnormal{ev}_\mathcal{E}$ is closed under extensions.}
\\

In particular, the above theorem says that we can realize $\mathcal{A}$, up to equivalence, as an extension-closed subcategory of $\mathbf{P}_\mathcal{E}(\mathcal{A})$. This induces an exact structure $\bar{\mathcal{E}}$ on $\mathcal{A}$, which restricts to the exact structure $\mathcal{E}$ on $\fp \mathcal{A}$. For example, if $\mathcal{E}$ is the split exact structure, then $\bar{\mathcal{E}}$ is the pure-exact structure. Using the embedding in Theorem A we show the existence of enough injectives in the exact category $(\mathcal{A}, \bar{\mathcal{E}})$, see Proposition \ref{enough}. We simply call them $\bar{\mathcal{E}}$-injectives. Further, the fp-$\bar{\mathcal{E}}$-injective objects in $\mathcal{A}$ are of interest. They are those $X\in \mathcal{A}$ that admit no non-trivial conflations $X\rightarrow Y \rightarrow C$ in $\bar{\mathcal{E}}$ with $C\in \fp \mathcal{A}$. The following shows how to obtain closed sets in $\Ind \mathcal{A}$ and definable subcategories of $\mathcal{A}$ from exact structures on $\fp \mathcal{A}$.
\\
\\
\textbf{Theorem B.} (Theorem \ref{sum}) \textit{Let $\mathcal{A}$ be a locally finitely presented category with products and $\mathcal{E}_\top$ be the largest exact structure on $\fp \mathcal{A}$. There exist one to one correspondences between the following.
\begin{itemize}
    \item[(1)] Exact structures $\mathcal{E}$ on $\fp \mathcal{A}$.
    \item[(2)] Definable subcategories $\mathcal{X}$ of $\mathcal{A}$ containing all fp-$\bar{\mathcal{E}}_\top$-injectives.
    \item[(3)] Closed sets $\mathcal{U}$ in $\Ind \mathcal{A}$ containing all indecomposable $\bar{\mathcal{E}}_\top$-injectives. 
\end{itemize}
The assignments are given by $\mathcal{E} \mapsto \mathcal{X}_\mathcal{E}$ and $\mathcal{E}\mapsto \mathcal{U}_\mathcal{E}$, where $\mathcal{X}_\mathcal{E}$ denotes the collection of all fp-$\bar{\mathcal{E}}$-injectives and $\mathcal{U}_\mathcal{E}$ the collection of all indecomposable $\bar{\mathcal{E}}$-injectives in $\mathcal{A}$.}
\\

If additionally $\mathcal{A}$ is abelian, then $\mathcal{A}$ is a Grothendieck category and has enough injectives. In this case we precisely obtain all definable subcategories of $\mathcal{A}$ containing all injectives and all closed sets in $\Ind \mathcal{A}$ containing all indecomposable injectives in the above theorem. In Section 3 we proceed by translating properties in the context of purity to properties about exact structures. For example, an exact structure $\mathcal{E}$ on $\fp \mathcal{A}$ is finitely generated, that is $\mathcal{E}$ is the smallest exact structure containing a specific conflation, if and only if $\Ind \mathcal{A} \setminus \mathcal{U}_\mathcal{E}$ is quasi-compact in $\Ind \mathcal{A}$, see Proposition \ref{quasi}.

We specialize to the case that $\mathcal{A}$ is a module category over an Artin algebra $A$, so $\mathcal{A} = \textnormal{Mod}\,A$ and $\fp \mathcal{A} = \mod A$ is the full subcategory of finite length modules. This case is particularly nice, since there are only finitely many isomorphism classes of indecomposable injective $A$-modules $Q$ and $\{Q\}$ is closed and open in $\Ind \mathcal{A}$. Thus, exact structures on $\mod A$ are in correspondence to closed sets in $\Ind \mathcal{A}$ up to a finite choice of indecomposable injective modules by Theorem B. We show a connection between exact structures on $\mod A$ and generic modules. Recall that an indecomposable module $X\in \Mod A$ is generic if it is of infinite length and $X$ is endofinite, that is $X$ is of finite length over its endomorphism ring. The existence of generic modules is related to the second Brauer-Thrall conjecture \cite{Crawley-Boevey3}.
\\
\\
\textbf{Theorem C.} (Theorem \ref{genericex}) \textit{Let $A$ be an Artin algebra. The assignment
\begin{align*}
M \mapsto \mathcal{E}_{M} = \{(f, \coker f) \mid \ker f = 0\text{ and }\coker \Hom_{A}(f,M) = 0\}
\end{align*}
induces a bijection between
\begin{itemize}
    \item[\rm (1)] isomorphism classes of indecomposable endofinite modules $M\in \Mod A$ that are not injective, and
    \item[\rm (2)] maximal exact structures $\mathcal{E}$ on $\mod A$ such that there exists an almost \mbox{$\mathcal{E}$-exact} sequence. 
\end{itemize}
Moreover, $M$ is generic if and only if $\mathcal{E}_M$ contains every almost split sequence.}
\\

An exact structure $\mathcal{E}$ on $\mod A$ is maximal if there is no bigger exact structure than the abelian structure of $\mod A$ and almost $\mathcal{E}$-exact sequences generalize almost split sequences \cite{Auslander00}, see Section 4. 

We continue by studying exact structures $\mathcal{E}$ on $\mod A$ via its associated ideals of morphisms, which were already considered in  \cite[Section 9.2]{Gabriel}. The $\mathcal{E}$-injectivity ideal $\mathcal{I}_\mathcal{E}$ consists of all $ X \rightarrow M$ such that for every conflation $X\rightarrow Y \rightarrow Z$ in $\mathcal{E}$ there exists $Y\rightarrow M$ making the diagram
\begin{equation*}
    \begin{tikzcd}
        M \\
        X \arrow[r]\arrow[u] & Y \arrow[ul]
    \end{tikzcd}
\end{equation*}
commute. This ideal completely determines the exact structure $\mathcal{E}$ and gives rise to a relative Auslander-Reiten formula \cite[Corollary 9.4]{Gabriel}. We show that $\mathcal{I}_\mathcal{E}$ equals the ideal of all morphisms in $\mod A$ that factor through an $\bar{\mathcal{E}}$-injective module in $\Mod A$, see Lemma \ref{ideal}. Moreover, we prove that the assignment $\mathcal{E} \mapsto \mathcal{I}_\mathcal{E}$ gives a one to one correspondence between exact structures on $\mod A$ and fp-idempotent ideals of $\mod A$ that contain all morphisms factoring through an injective $A$-module, see Corollary \ref{exideal}. Fp-idempotent ideals were introduced by Krause in the context of purity for module categories over Artin algebras \cite{Krause}. This makes the importance of fp-idempotent ideals apparent for the study of exact structures on $\mod A$. In Section 5 we give a surprising characterization of fp-idempotent ideals.
\\
\\
\textbf{Theorem D.} (Theorem \ref{new}) \textit{An ideal $\mathcal{I}$ of $\mod A$ is fp-idempotent if and only if there exists a directed system of ideals $\{\mathcal{J}_i\}_{i\in I}$ such that $\mathcal{I} = \bigcap_{i\in I} \mathcal{J}_i^\omega$ with $\mathcal{J}_i^\omega = \bigcap_{n=1}^\infty \mathcal{J}_i^n$.}

\section{Preliminaries}

For a locally finitely presented category $\mathcal{A}$ with products there is a nice theory of purity. We will relate this theory with the study of exact structures on the full subcategory of finitely presented objects of $\mathcal{A}$.

\subsection{Exact structures.} Let $\mathcal{C}$ be an additive category. A \textit{kernel-cokernel pair}
\begin{align*}
    X \xlongrightarrow{f} Y \xlongrightarrow{g} Z \tag{$\ast$}
\end{align*}
in $\mathcal{C}$ is a pair of morphisms $(f,g)$ such that $f$ is the kernel of $g$ and $g$ is the cokernel of $f$. For a collection $\mathcal{E}$ of kernel-cokernel pairs, a morphism $f$ is an \textit{$\mathcal{E}$-monomorphism} if there exists a morphism $g$ such that $(f,g)$ is in $\mathcal{E}$. Similarly, a morphism $g$ is an \textit{$\mathcal{E}$-epimorphism} if there exists a morphism $f$ such that $(f,g)$ is in $\mathcal{E}$. The sequence $(\ast)$ is called \emph{$\mathcal{E}$-exact} if it belongs to $\mathcal{E}$.

An \textit{exact structure} $\mathcal{E}$ on $\mathcal{C}$ is a collection of kernel-cokernel pairs
fulfilling the following properties \cite{Buehler}.
\begin{itemize}
    \item[\mbox{[E0]}] For all $X$ in $\mathcal{C}$ the identity $1_X$ is an $\mathcal{E}$-monomorphism.
    \item[\mbox{[E0$^\text{op}$]}] For all $X$ in $\mathcal{C}$ the identity $1_X$ is an $\mathcal{E}$-epimorphism.
    \item[\mbox{[E1]}] The collection of all $\mathcal{E}$-monomorphisms is closed under composition.
    \item[\mbox{[E1$^\text{op}$]}] The collection of all $\mathcal{E}$-epimorphisms is closed under composition.
    \item[\mbox{[E2]}] The push-out of an $\mathcal{E}$-monomorphism along an arbitrary morphism exists and is again an $\mathcal{E}$-monomorphism.
    \item[\mbox{[E2$^\text{op}$]}] The pull-back of an $\mathcal{E}$-epimorphism along an arbitrary morphism exists and is again an $\mathcal{E}$-epimorphism.
\end{itemize}
As discussed in \cite{Buehler}, this definition of exact structures is equivalent to the one of Quillen \cite{Quillen}.

For example, the collection of all split exact sequences always forms an exact structure $\mathcal{E}_{\bot}$ on $\mathcal{C}$. This is the smallest exact structure. There also always exists a largest exact structure $\mathcal{E}_{\top}$ on an additive category \cite{Rump}.

\subsection{The extension groups.} Let $\mathcal{E}$ be an exact structure on $\mathcal{C}$. For $X,Z\in \mathcal{C}$ the collection of all equivalence classes of $\mathcal{E}$-exact sequences
\begin{align*}
    X \longrightarrow Y \longrightarrow Z
\end{align*}
defines an abelian group $\Ext^1_\mathcal{E}(Z,X)$ (up to set-theoretic issues) with the usual equivalence relation and the Baer sum. If $\mathcal{C}$ is essentially small, then $\Ext^1_\mathcal{E}(Z,X)$ is a set. Moreover, $\Ext^1_\mathcal{E}(-,-)\colon \mathcal{C}^\op \times \mathcal{C} \longrightarrow \Ab$ is a biadditive functor.

An object $Q\in \mathcal{C}$ is \textit{$\mathcal{E}$-injective} if for all $\mathcal{E}$-monomorphisms $f$ the map $\Hom_\mathcal{C}(f, Q)$ is surjective or equivalently $\Ext_\mathcal{E}^1(X,Q) = 0$ for all $X\in \mathcal{C}$. We say that $\mathcal{C}$ has \textit{enough $\mathcal{E}$-injectives} if for all $X \in \mathcal{C}$ there exists an $\mathcal{E}$-injective object $Q$ and an \mbox{$\mathcal{E}$-monomorphism} $X \rightarrow Q$. An $\mathcal{E}$-monomorphism $f\colon X\rightarrow Q$ is an \emph{$\mathcal{E}$-injective envelope} of $X$ if $Q$ is $\mathcal{E}$-injective and $f$ is \emph{left minimal}, which means that for all $\alpha$ if $\alpha f = f$, then $\alpha$ is an isomorphism. The definition of \emph{$\mathcal{E}$-projective objects} and \emph{enough $\mathcal{E}$-projectives} is dual. Similar as for abelian categories, $\mathcal{E}$-injective and \mbox{$\mathcal{E}$-projective} objects can be used to compute the extension groups between objects. For more details, see for example \cite[Chapter 6]{Frerick}.



\subsection{Purity} Let $\mathcal{A}$ be an additive category with filtered colimits, denoted by $\varinjlim$. An object $X\in \mathcal{A}$ is \emph{finitely presented} if $\Hom_\mathcal{A}(X,-)$ commutes with filtered colimits. The category $\mathcal{A}$ is \emph{locally finitely presented} if the full subcategory of finitely presented objects, denoted by $\fp \mathcal{A}$, is essentially small and $\mathcal{A}$ equals the closure of $\fp \mathcal{A}$ under filtered colimits in $\mathcal{A}$, that is $\mathcal{A} = \varinjlim \fp \mathcal{A}$. If additionally $\fp \mathcal{A}$ is abelian, then $\mathcal{A}$ is \emph{locally coherent}. In this case $\mathcal{A}$ is a Grothendieck category.

We fix a locally finitely presented category $\mathcal{A}$ with products and set $\mathcal{C} = \fp \mathcal{A}$. The category $\mathcal{C}$ is idempotent complete and has weak cokernels. For more details, see \cite[Section 1-2]{Crawley-Boevey2}. A sequence 
\begin{align*}
    X\xlongrightarrow{} Y \xlongrightarrow{} Z
\end{align*}
in $\mathcal{A}$ is \emph{pure-exact} if for all $C\in \mathcal{C}$ applying $\Hom_\mathcal{A}(C,-)$ yields an exact sequence
\begin{align*}
    0 \longrightarrow \Hom_\mathcal{A}(C,X) \longrightarrow \Hom_\mathcal{A}(C,Y) \longrightarrow \Hom_\mathcal{A}(C,Z) \longrightarrow 0.
\end{align*}
The pure-exact sequences form an exact structure on $\mathcal{A}$, denoted by $\mathcal{E}_\textnormal{pure}$. The $\mathcal{E}_\textnormal{pure}$-projective objects are precisely the direct summands of coproducts of finitely presented objects. For an exact structure $\mathcal{E}$ on $\mathcal{A}$ an object $X\in \mathcal{A}$ is \emph{fp-$\mathcal{E}$-injective} if $\Ext^1_{\mathcal{E}}(C,X) = 0$ for all $C\in  \mathcal{C}$. As it turns out, every object in $\mathcal{A}$ is fp-$\mathcal{E}_{\textnormal{pure}}$-injective. Most important are the  $\mathcal{E}_\textnormal{pure}$-injective objects in $\mathcal{A}$; the isomorphism classes of indecomposable $\mathcal{E}_\textnormal{pure}$-injectives form a topological space, the \emph{Ziegler spectrum} of $\mathcal{A}$, denoted by $\Ind \mathcal{A}$. We will later describe the topology on $\Ind \mathcal{A}$. For more details, see \cite[Section 3]{Crawley-Boevey2}.

\subsection{The embedding.} The pure-exact structure on $\mathcal{A}$ can also be described by an embedding of $\mathcal{A}$ into an abelian category, which is constructed as follows. A functor is always meant to be additive and covariant. Let $(\mathcal{C}, \Ab)$ be the abelian category of functors $\mathcal{C} \rightarrow \Ab$. Every $F\in (\mathcal{C}, \Ab)$ can be uniquely extended to a functor $\bar{F}\colon \mathcal{A} \rightarrow \Ab$ that commutes with filtered colimits. A functor $F\in (\mathcal{C}, \Ab)$ is \emph{finitely presented} if $F\cong \coker \Hom_{\mathcal{C}}(f,-)$ for a morphism $f$ in $\mathcal{C}$ and in this case $\bar{F} \cong \coker \Hom_{\mathcal{A}}(f,-)$. The full subcategory $\textnormal{fp}(\mathcal{C}, \Ab)$ of finitely presented functors is abelian, since $\mathcal{C}$ has weak cokernels. 

The \emph{purity category} of $\mathcal{A}$, denoted by $\mathbf{P}(\mathcal{A})$, is the category of left exact functors $\textnormal{fp}(\mathcal{C}, \Ab) \rightarrow \Ab$. The functor
\begin{align*}
\textnormal{fp}(\mathcal{C}, \Ab)^\op \longrightarrow \mathbf{P}(\mathcal{A}), \quad F\mapsto \Hom_{\textnormal{fp}(\mathcal{C}, \Ab)}(F,-)    
\end{align*}
induces an equivalence between $\textnormal{fp}(\mathcal{C}, \Ab)^\op$ and $\fp \mathbf{P}(\mathcal{A})$. Moreover, $\mathbf{P}(\mathcal{A})$ is locally finitely presented and since the finitely presented objects form an abelian category, it follows that $\mathbf{P}(\mathcal{A})$ is {locally coherent}. In particular, $\mathbf{P}(\mathcal{A})$ is a Grothendieck category.

\begin{thm*}\cite[(3.3) Lemma 2-3]{Crawley-Boevey2} \textit{Let $\mathcal{A}$ be a locally finitely presented category with products. There exists a fully faithful additive functor
\begin{align*}
    \textnormal{ev} \colon \mathcal{A} \longrightarrow \mathbf{P}(\mathcal{A}),\quad X\mapsto \bar{X} \quad \textit{with} \quad \bar{X}(F) = \bar{F}(X),
\end{align*}
which commutes with filtered colimits and cokernels. Moreover, \textnormal{ev} induces an equivalence between $\mathcal{A}$ and the fp-injective objects in $\mathbf{P}(\mathcal{A})$.}
\end{thm*}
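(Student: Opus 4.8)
The plan is to verify the listed properties one by one, reducing every assertion about a general object $X = \varinjlim_i C_i$ of $\mathcal{A}$ (with $C_i \in \mathcal{C} = \fp\mathcal{A}$, the colimit filtered) to a statement about $\mathcal{C}$, and to exploit the structure of $\mathbf{P}(\mathcal{A})$ recalled above: writing $\mathbf{D} = \fp(\mathcal{C},\Ab)$, the category $\mathbf{P}(\mathcal{A})$ of left exact functors $\mathbf{D}\to\Ab$ is locally coherent Grothendieck, and $F\mapsto\Hom_{\mathbf{D}}(F,-)$ induces an equivalence $\mathbf{D}^\op \simeq \fp\mathbf{P}(\mathcal{A})$. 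First one must check that $\bar{X}$ is left exact, so that it indeed lies in $\mathbf{P}(\mathcal{A})$: for $C\in\mathcal{C}$ the evaluation functor $\mathbf{D}\to\Ab$, $F\mapsto F(C)$, is isomorphic to $\Hom_{\mathbf{D}}(\Hom_{\mathcal{C}}(C,-),-)$, hence exact since $\Hom_{\mathcal{C}}(C,-)$ is projective in $\mathbf{D}$ by Yoneda; as $\bar{X}(F) = \bar{F}(X) = \varinjlim_i F(C_i)$ and a filtered colimit of exact functors into $\Ab$ is left exact, $\bar{X}$ is left exact. Functoriality and additivity of $\textnormal{ev}$ are immediate. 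In $\mathbf{P}(\mathcal{A})$ filtered colimits and products of left exact functors are computed pointwise (pointwise filtered colimits and products of exact sequences of abelian groups are exact), so $\textnormal{ev}$ commutes with filtered colimits because each $\bar{F}$ does, and $\textnormal{ev}$ commutes with products because, choosing a presentation $\Hom_{\mathcal{C}}(B,-)\to\Hom_{\mathcal{C}}(A,-)\to F\to 0$ in $\mathbf{D}$, one has $\bar{F} = \coker(\Hom_{\mathcal{A}}(B,-)\to\Hom_{\mathcal{A}}(A,-))$, a cokernel of product-preserving functors.

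Next I would pin down $\textnormal{ev}|_{\mathcal{C}}$: for $C\in\mathcal{C}$ one has $\bar{C}(F) = \bar{F}(C) = F(C)$, so $\bar{C}\cong\Hom_{\mathbf{D}}(\Hom_{\mathcal{C}}(C,-),-)$, which under $\mathbf{D}^\op\simeq\fp\mathbf{P}(\mathcal{A})$ corresponds to the projective object $\Hom_{\mathcal{C}}(C,-)$ of $\mathbf{D}$. Thus $\textnormal{ev}|_{\mathcal{C}}$ is, up to this equivalence, the contravariant Yoneda functor $\mathcal{C}\to\mathbf{D}^\op$, $C\mapsto\Hom_{\mathcal{C}}(C,-)$, which is fully faithful; since $\mathcal{C}$ is idempotent complete and $\mathbf{D}$ has enough projectives, its essential image is precisely the class of injective objects of $\fp\mathbf{P}(\mathcal{A})$, which are exactly the finitely presented fp-injective objects of $\mathbf{P}(\mathcal{A})$ (a finitely presented object is fp-injective if and only if it is injective in $\fp\mathbf{P}(\mathcal{A})$, as the latter is closed under extensions). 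Full faithfulness of $\textnormal{ev}$ on all of $\mathcal{A}$ then follows from the usual filtered-colimit computation: for $X = \varinjlim_i C_i$ and $Y = \varinjlim_j C_j'$ both $\Hom_{\mathcal{A}}(X,Y)$ and $\Hom_{\mathbf{P}(\mathcal{A})}(\bar{X},\bar{Y})$ equal $\varprojlim_i\varinjlim_j\Hom(C_i,C_j')$ — on the right-hand side using that each $\bar{C_i}$ is finitely presented in $\mathbf{P}(\mathcal{A})$ — and the comparison map is the one induced by the isomorphisms above.

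For the essential image: if $X = \varinjlim_i C_i$ then $\bar{X} = \varinjlim_i\bar{C_i}$ is a filtered colimit of fp-injectives, hence fp-injective, because fp-injective objects of a locally coherent Grothendieck category are closed under filtered colimits. Conversely, let $G\in\mathbf{P}(\mathcal{A})$ be fp-injective. The category of finitely presented objects over $G$ is filtered with colimit $G$, and its full subcategory of those $(P\to G)$ with $P$ injective in $\fp\mathbf{P}(\mathcal{A})$ is cofinal: given $P\to G$ with $P$ finitely presented, factor it as $P\twoheadrightarrow P'\hookrightarrow G$ with $P'$ its image (finitely presented, by coherence), choose an injective envelope $P'\hookrightarrow Q$ in $\fp\mathbf{P}(\mathcal{A})$, and extend the monomorphism $P'\hookrightarrow G$ along $P'\hookrightarrow Q$ to a morphism $Q\to G$, which exists since $\Ext^1_{\mathbf{P}(\mathcal{A})}(Q/P',G) = 0$; composing yields a morphism $(P\to G)\to(Q\to G)$ over $G$. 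Since the injective objects of $\fp\mathbf{P}(\mathcal{A})$ are exactly the $\bar{C}$ with $C\in\mathcal{C}$ and $\textnormal{ev}|_{\mathcal{C}}$ is fully faithful, this cofinal diagram lifts (uniquely up to canonical isomorphism) to a filtered diagram in $\mathcal{C}$, whose colimit $X$ in $\mathcal{A}$ satisfies $\bar{X}\cong G$.

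It remains to show $\textnormal{ev}$ commutes with cokernels, which I expect to be the only genuinely delicate point. Every morphism of $\mathcal{A}$ is a filtered colimit of morphisms of $\mathcal{C}$, and cokernels commute with filtered colimits, so it suffices to treat $g\colon C\to C'$ in $\mathcal{C}$. Here $Z := \coker_{\mathcal{A}}(g)$ is again finitely presented — one checks that $\Hom_{\mathcal{A}}(Z,-)$ preserves filtered colimits, using that $C,C'$ are finitely presented and that filtered colimits in $\Ab$ commute with kernels — so $Z = \coker_{\mathcal{C}}(g)$ and $\Hom_{\mathcal{C}}(Z,-) = \ker(\Hom_{\mathcal{C}}(C',-)\to\Hom_{\mathcal{C}}(C,-))$ in $(\mathcal{C},\Ab)$, and, being finitely presented, this is also the kernel computed in $\mathbf{D}$. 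On the other side, under $\mathbf{D}^\op\simeq\fp\mathbf{P}(\mathcal{A})$ the morphism $\bar{g}$ corresponds to $\Hom_{\mathcal{C}}(g,-)$, and, $\mathbf{P}(\mathcal{A})$ being locally coherent, $\coker_{\mathbf{P}(\mathcal{A})}(\bar{g})$ is finitely presented and corresponds to $\ker_{\mathbf{D}}(\Hom_{\mathcal{C}}(C',-)\to\Hom_{\mathcal{C}}(C,-))$. Comparing the two descriptions gives $\coker_{\mathbf{P}(\mathcal{A})}(\bar{g})\cong\bar{Z}$. The reason this step is delicate is that, unlike filtered colimits and products, cokernels in $\mathbf{P}(\mathcal{A})$ are not computed pointwise, so one cannot argue objectwise and must instead use both the coherence of $\mathbf{P}(\mathcal{A})$ and the fact — unnecessary everywhere else — that $\mathcal{C}$ possesses honest cokernels.
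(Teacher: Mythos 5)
The paper offers no proof of this statement at all: it is quoted verbatim from Crawley-Boevey \cite[(3.3) Lemma 2-3]{Crawley-Boevey2}, so there is no internal argument to compare against. Your reconstruction follows the standard Gruson--Jensen/Crawley-Boevey route (identify $\fp \mathbf{P}(\mathcal{A})$ with $\fp(\mathcal{C},\Ab)^\op$, recognize $\textnormal{ev}|_{\mathcal{C}}$ as the Yoneda functor whose image is the injectives of $\fp\mathbf{P}(\mathcal{A})$, and propagate everything to $\mathcal{A}$ by filtered colimits), and it is in essence sound: the left exactness of $\bar{X}$, the pointwise computation of filtered colimits and products in $\mathbf{P}(\mathcal{A})$, the $\varprojlim\varinjlim$ argument for full faithfulness (which is exactly the computation the paper itself uses later for $\textnormal{ev}_\mathcal{E}$ in Theorem \ref{big}), the reduction of cokernels to $\mathcal{C}$ together with the observation that $\coker_\mathcal{A}(g)$ is again finitely presented, and the cofinality argument identifying the essential image with the fp-injectives are all correct in outline.

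Two points in the surjectivity step need repair, though both are local. First, the parenthetical claim that the image $P'$ of a morphism $P\rightarrow G$ with $P$ finitely presented is ``finitely presented, by coherence'' is not justified: coherence says that finitely generated subobjects of \emph{finitely presented} objects are finitely presented, whereas $P'$ is a finitely generated subobject of the arbitrary (fp-injective) object $G$, and such subobjects need not be finitely presented in a locally coherent category (already over a coherent non-noetherian ring a cyclic submodule of an injective module can fail to be finitely presented). Second, $\fp\mathbf{P}(\mathcal{A})$ need not possess injective \emph{envelopes}: that would amount to projective covers in $\fp(\mathcal{C},\Ab)$, which are not available in this generality. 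Fortunately the detour through the image is unnecessary and both problems vanish simultaneously: since $\fp(\mathcal{C},\Ab)$ has enough projectives, every finitely presented $P$ admits a monomorphism $P\hookrightarrow Q$ with $Q$ injective in $\fp\mathbf{P}(\mathcal{A})$ and $Q/P$ finitely presented, and then $\Ext^1_{\mathbf{P}(\mathcal{A})}(Q/P,G)=0$ lets you extend the given morphism $P\rightarrow G$ directly to $Q\rightarrow G$. With this modification your cofinality argument, and hence the whole proof, goes through.
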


A sequence $X\rightarrow Y \rightarrow Z$ is pure-exact in $\mathcal{A}$ if and only if $\bar{X}\rightarrow \bar{Y} \rightarrow \bar{Z}$ is exact in $\mathbf{P}(\mathcal{A})$. Further, $\textnormal{ev}$ identifies the $\mathcal{E}_\textnormal{pure}$-injectives in $\mathcal{A}$ with the injectives in $\mathbf{P}(\mathcal{A})$. From this one can deduce the existence of $\mathcal{E}_\textnormal{pure}$-injective envelopes in $\mathcal{A}$ from the existence of injective envelopes in the Grothendieck category $\mathbf{P}(\mathcal{A})$. 

The functor $\textnormal{ev}$ restricted to $\mathcal{C} = \fp \mathcal{A}$ coincides with 
\begin{align*}
    \mathcal{C} \longrightarrow \textnormal{fp}(\mathcal{C}, \Ab)^\op, \quad C\mapsto \Hom_\mathcal{C}(C,-)
\end{align*}
under the identification of $\fp \mathbf{P}(\mathcal{A})$ with $\textnormal{fp}(\mathcal{C}, \Ab)^\op$. This embedding of $\mathcal{C}$ into $\textnormal{fp}(\mathcal{C}, \Ab)^\op$ induces the split exact structure on $\mathcal{C}$, since $X\rightarrow Y \rightarrow Z$ is split exact in $\mathcal{C}$ if and only if 
\begin{align*}
   0 \longrightarrow \Hom_\mathcal{C}(C,X) \longrightarrow \Hom_\mathcal{C}(C,Y) \longrightarrow \Hom_\mathcal{C}(C,Z) \longrightarrow 0
\end{align*}
is exact for all $C \in \mathcal{C}$. The equality $\mathcal{A} = \varinjlim \fp \mathcal{A}$ now implies $\mathcal{E}_\textnormal{pure} = \varinjlim \mathcal{E}_\bot$, that is every pure-exact sequence in $\mathcal{A}$ is a filtered colimit of split exact sequences in $\mathcal{C}$. For more details, see \cite[Section 3]{Crawley-Boevey2}. Later, our goal will be to generalize the embedding of $\mathcal{A}$ into an abelian category such that an arbitrary exact structure $\mathcal{E}$ on $\mathcal{C}$ can appear instead of $\mathcal{E}_\bot$.

\subsection{The topology on the Ziegler spectrum.} The topology on $\Ind \mathcal{A}$ actually comes from a topological space associated to the purity category $\mathbf{P}(\mathcal{A})$. Indeed, for every locally coherent category $\mathcal{B}$ one can associate a topological space, the \emph{spectrum} of $\mathcal{B}$, denoted by $\textnormal{Sp}\, \mathcal{B}$. An element in $\textnormal{Sp}\, \mathcal{B}$ is an isomorphism class of an indecomposable injective object in $\mathcal{B}$. A subset $\mathcal{U}$ of $\textnormal{Sp}\,\mathcal{B}$ is closed if there exists a collection $\mathcal{S}$ of objects in $\fp \mathcal{B}$ such that
\begin{align*}
    \mathcal{U} = \{Q\in \textnormal{Sp}\, \mathcal{B} \mid \Hom_\mathcal{B}(X,Q) = 0 \textnormal{ for all }X\in \mathcal{S}\}.
\end{align*}
In fact, the collection $\mathcal{S}$ can always be chosen to be a \emph{Serre subcategory} of $\fp \mathcal{B}$, which means that $\mathcal{S}$ is closed under extensions, subobjects and quotients. For more details, see \cite{Herzog} or \cite{Krause2}. 

The functor $\textnormal{ev}\colon \mathcal{A} \rightarrow \mathbf{P}(\mathcal{A})$ identifies $\Ind \mathcal{A}$ with $\textnormal{Sp}\, \mathbf{P}(\mathcal{A})$ as sets, which induces a topology on $\Ind \mathcal{A}$. It can be described as follows. A full subcategory $\mathcal{X}$ of $\mathcal{A}$ is \emph{definable} if there exists a collection $\mathcal{S}$ of finitely presented functors $\mathcal{C}\rightarrow \Ab$ such that
\begin{align*}
    \mathcal{X} = \{X \in \mathcal{A} \mid \bar{F}(X) = 0\textnormal{ for all }F\in \mathcal{S}\}.
\end{align*}
Now $\mathcal{U} \subseteq \Ind \mathcal{A}$ is closed if there exists a definable subcategory $\mathcal{X}$ of $\mathcal{A}$ with $\mathcal{U} = \Ind \mathcal{A} \cap \mathcal{X}$. For more details, see \cite[Chapter 6]{Krause3}.

\subsection{The correspondences.} For the theory of purity of $\mathcal{A}$, the one to one correspondences between the following collections are fundamental (for more details, see \cite{Crawley-Boevey2}, \cite{Herzog}, \cite{Krause2}, \cite{Krause3} and for a summary \cite[Section 12.1]{Krause4}).   
\begin{itemize}
    \item[(i)] Serre subcategories $\mathcal{S}$ of $\textnormal{fp}(\mathcal{C}, \Ab)$.
    \item[(ii)] Hereditary torsion classes $\mathcal{T}$ of finite type in $\mathbf{P}(\mathcal{A})$. 
    \item[(iii)] Definable subcategories $\mathcal{X}$ of $\mathcal{A}$.
    \item[(iv)] Closed sets $\mathcal{U}$ in $\Ind \mathcal{A}$.
\end{itemize}
Recall that a torsion class is \emph{hereditary} if it is closed under subobjects and of \emph{finite type} if the corresponding torsion-free class is closed under filtered colimits. There are several ways to pass from one collection to the other. We will describe some of them that will be useful for later purposes.

(i)$\leftrightarrow$(ii): Since $\fp \mathbf{P}(\mathcal{A})$ is equivalent to $\textnormal{fp}(\mathcal{C}, \Ab)^\op$, we can identify Serre subcategories of $\textnormal{fp}(\mathcal{C}, \Ab)$ with Serre subcategories $\mathcal{S}$ of $\fp \mathbf{P}(\mathcal{A})$. The assignments are then given by $\mathcal{S} \mapsto \varinjlim \mathcal{S}$ and $\mathcal{T} \mapsto \fp \mathbf{P}(\mathcal{A}) \cap \mathcal{T}$.

(iii)$\leftrightarrow$(iv): The correspondence is given by $\mathcal{X} \mapsto \Ind \mathcal{A} \cap \mathcal{X}$ and $\mathcal{U} \mapsto \mathcal{X}_\mathcal{U}$, where $\mathcal{X}_\mathcal{U}$ is the smallest definable subcategory containing $\mathcal{U}$.

(i)$\leftrightarrow$(iii): The assignments are given by \begin{align*}
    \mathcal{S} &\mapsto \{X \in \mathcal{A} \mid \bar{F}(X) = 0\textnormal{ for all }F\in \mathcal{S}\},\\
    \mathcal{X} &\mapsto \{F\in \textnormal{fp}(\mathcal{C}, \Ab)\mid \bar{F}(X) = 0\textnormal{ for all }X\in  \mathcal{X}\}.
\end{align*}

(ii)$\rightarrow$(iii),(iv): Given $\mathcal{T}$, we obtain a definable subcategory $\mathcal{X}$ of $\mathcal{A}$ and a closed set $\mathcal{U}$ of $\Ind \mathcal{A}$ by the already stated correspondences. The following gives a nice second description of $\mathcal{X}$ and $\mathcal{U}$. Consider the localisation of $\mathbf{P}(\mathcal{A})$ with respect to the hereditary torsion class $\mathcal{T}$ of finite type. Then the composition of the quotient functor $\mathbf{P}(\mathcal{A}) \rightarrow \mathbf{P}(\mathcal{A})/\mathcal{T}$ with $\textnormal{ev}\colon \mathcal{A} \rightarrow \mathbf{P}(\mathcal{A})$ yields a functor $\mathcal{A} \rightarrow \mathbf{P}(\mathcal{A})/\mathcal{T}$. It identifies $\mathcal{X}$ with the fp-injective objects in $\mathbf{P}(\mathcal{A})/\mathcal{T}$ and the $\mathcal{E}_\textnormal{pure}$-injectives in $\mathcal{X}$ with the injectives in $\mathbf{P}(\mathcal{A})/\mathcal{T}$. Moreover, it induces a homeomorphism between $\mathcal{U}$ with its subspace topology and $\textnormal{Sp}\,\mathbf{P}(\mathcal{A})/\mathcal{T}$.  

\section{From exact structures to purity}

We will describe how to enter the theory of purity from exact structures. Let $\mathcal{C}$ be an essentially small additive category. A functor $F\colon \mathcal{C} \rightarrow \Ab$ is \emph{finitely generated} if it is a quotient of a finitely presented functor; $F$ is \emph{effaceable} if $F\cong \coker \Hom_{\mathcal{C}}(f,-)$ for a kernel-cokernel pair
\begin{align*}
    X \xlongrightarrow{f} Y \xlongrightarrow{g} Z
\end{align*}
in $\mathcal{C}$. We denote by $\textnormal{fg}(\mathcal{C}, \Ab)$ and $\textnormal{eff}(\mathcal{C}, \Ab)$ the category of all finitely generated and effaceable functors respectively. Clearly $\textnormal{eff}(\mathcal{C}, \Ab) \subseteq \textnormal{fp}(\mathcal{C}, \Ab) \subseteq \textnormal{fg}(\mathcal{C}, \Ab)$. Note that $\textnormal{eff}(\mathcal{C}^\textnormal{op}, \Ab)$ consists of all functors isomorphic to $\coker  \Hom_\mathcal{C}(-,g)$ for some kernel-cokernel pair $(f,g)$ in $\mathcal{C}$.

\begin{lem}\label{dual} There exists an equivalence
\begin{align*}
E\colon \textnormal{eff}(\mathcal{C}^\textnormal{op}, \Ab) \longrightarrow \textnormal{eff}(\mathcal{C}, \Ab)^\op
\end{align*}
such that
\begin{align*}
E\, \coker \Hom_\mathcal{C}(-,g) \cong \coker \Hom_\mathcal{C}(f,-)    
\end{align*}
for every kernel-cokernel pair $(f,g)$ in $\mathcal{C}$.
\end{lem}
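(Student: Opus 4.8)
The plan is to define $E$ on objects by sending $\coker \Hom_{\mathcal C}(-,g)$ to $\coker \Hom_{\mathcal C}(f,-)$ for a chosen kernel-cokernel pair $(f,g)$ representing the given effaceable functor, and then to show this is well defined, functorial, and a duality. First I would record the basic mechanism: for a kernel-cokernel pair $X \xrightarrow{f} Y \xrightarrow{g} Z$ in $\mathcal C$, the four-term exact sequences of representable functors
\begin{align*}
\Hom_{\mathcal C}(-,X) \to \Hom_{\mathcal C}(-,Y) \to \Hom_{\mathcal C}(-,Z) \to \coker\Hom_{\mathcal C}(-,g) \to 0
\end{align*}
on $\mathcal C^{\op}$ and, dually,
\begin{align*}
\Hom_{\mathcal C}(Z,-) \to \Hom_{\mathcal C}(Y,-) \to \Hom_{\mathcal C}(X,-) \to \coker\Hom_{\mathcal C}(f,-) \to 0
\end{align*}
on $\mathcal C$ are minimal-type presentations. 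A morphism $F \to F'$ in $\textnormal{eff}(\mathcal C^{\op},\Ab)$ between two such cokernels can, by projectivity of representables and the Yoneda lemma, be lifted to a morphism of the underlying kernel-cokernel pairs, i.e.\ a commutative ladder with vertical maps $X \to X'$, $Y \to Y'$, $Z \to Z'$. Applying $\Hom_{\mathcal C}(-,-)$ in the other variable turns this ladder into a morphism $\coker\Hom_{\mathcal C}(f',-) \to \coker\Hom_{\mathcal C}(f,-)$, and this is how I would define $E$ on morphisms (contravariantly).

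The key steps, in order: (1) show every effaceable functor has, up to isomorphism, a presentation coming from a genuine kernel-cokernel pair, and fix such a choice; (2) show a lift of a given morphism of effaceable functors to a ladder of kernel-cokernel pairs exists — existence follows from projectivity of the representables $\Hom_{\mathcal C}(-,Z)$ together with the fact that $f = \ker g$, $g = \operatorname{coker} f$ let one propagate the lift through all three spots; (3) the crucial well-definedness statement: if two ladders lift the same morphism $F \to F'$, the induced maps $\coker\Hom_{\mathcal C}(f',-) \to \coker\Hom_{\mathcal C}(f,-)$ agree. For this I would argue that the difference of the two ladders is a ladder inducing the zero map on the cokernels over $\mathcal C^{\op}$, hence (again using $f=\ker g$, $g=\operatorname{coker} f$ and the exactness of the representable presentation) the difference factors through homotopy-type data that dualizes to zero on the $\mathcal C$-side as well; concretely, a nullhomotopy of the representable complexes on one side transposes to a nullhomotopy on the other side. (4) Functoriality ($E$ respects composition and identities) and additivity are then formal from the construction. (5) That $E$ is a duality: construct the analogous functor $E' \colon \textnormal{eff}(\mathcal C,\Ab) \to \textnormal{eff}(\mathcal C^{\op},\Ab)$ by the symmetric recipe, and observe $E' E \cong \mathrm{id}$ and $E E' \cong \mathrm{id}$ because starting from $(f,g)$, passing to $(g$-side$)$ and back returns the pair $(f,g)$ up to the canonical isomorphism; naturality of these isomorphisms is checked on the level of ladders.

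The main obstacle I expect is step (3), the well-definedness of $E$ on morphisms — equivalently, verifying that $E$ does not depend on the chosen lift or on the chosen kernel-cokernel presentation of an effaceable functor. The subtlety is that a morphism of effaceable functors determines the ladder only up to a ``homotopy'' (maps $Z \to Y'$ and $Y \to X'$ whose induced contributions cancel), and one must check that such homotopies are carried by the transpose operation $\Hom_{\mathcal C}(-,-) \mapsto \Hom_{\mathcal C}(-,-)$ to homotopies on the other side, so that they die in the respective cokernels. This is a diagram-chase using only that $f$ is a kernel of $g$ and $g$ a cokernel of $f$ in $\mathcal C$ (so the ambient category need not be abelian), plus Yoneda to move freely between maps of objects and natural transformations of representables. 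Once this transpose-of-a-homotopy-is-a-homotopy fact is in place, everything else is bookkeeping; I would phrase it cleanly by noting that in $\textnormal{fp}(\mathcal C,\Ab)$ the objects $\coker\Hom_{\mathcal C}(f,-)$ have projective resolutions by representables of length two, so $\Hom$ and $\Ext^1$ in the functor category are computable from the $\mathcal C$-level ladders, and the duality $E$ is essentially the statement that these computations are symmetric in $\mathcal C$ and $\mathcal C^{\op}$.
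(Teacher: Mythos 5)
Your plan is correct, but it is worth noting that the paper does not argue this lemma at all: its proof is a two-line citation, taking $E$ to be the duality constructed in Enomoto [Lemma 2.5] and reading off the explicit formula $E\,\coker \Hom_\mathcal{C}(-,g) \cong \coker \Hom_\mathcal{C}(f,-)$ from the proof of Iyama [6.2(1)]. What you propose is essentially a self-contained reconstruction of that cited construction, and the details check out: from a kernel-cokernel pair $X \xrightarrow{f} Y \xrightarrow{g} Z$ one gets the exact sequences $0 \to \Hom_\mathcal{C}(-,X) \to \Hom_\mathcal{C}(-,Y) \to \Hom_\mathcal{C}(-,Z) \to \coker\Hom_\mathcal{C}(-,g) \to 0$ and $0 \to \Hom_\mathcal{C}(Z,-) \to \Hom_\mathcal{C}(Y,-) \to \Hom_\mathcal{C}(X,-) \to \coker\Hom_\mathcal{C}(f,-) \to 0$ (exactness on the left using $f=\ker g$, respectively $g=\coker f$), a morphism of effaceables lifts to a ladder $(a,b,c)$ by Yoneda plus projectivity of representables (lift $c$ first, then $b$ because the image of $\Hom_\mathcal{C}(-,g')$ is computed objectwise, then $a$ because $f'=\ker g'$), and your crucial step (3) works exactly as you say: if $(a,b,c)$ lifts the zero morphism, one peels off $c = g's$, $b - sg = f't$, hence $a = tf$, so $\Hom_\mathcal{C}(a,-)$ factors through $\Hom_\mathcal{C}(f,-)$ and dies in $\coker\Hom_\mathcal{C}(f,-)$. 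This is precisely the point that distinguishes the situation from the usual Auslander--Bridger transpose, which is only well defined up to projective summands: here the full three-term exact sequences (using both the kernel and the cokernel property) force transposed homotopies to vanish on the cokernels, so $E$ is an honest duality on the effaceable subcategories rather than a stable one. The trade-off is the expected one: the paper's citation is shorter and delegates the bookkeeping (choice of presentations, independence of lifts, naturality of $E'E\cong\mathrm{id}$) to Enomoto and Iyama, while your route makes the mechanism visible and keeps the argument entirely inside $\mathcal{C}$, using only Yoneda and the kernel-cokernel axioms; to turn it into a complete proof you would only need to write out the routine verifications you list in steps (4)--(5).
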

\begin{proof} The functor $E$ is precisely the one in \cite[Lemma 2.5]{Enomoto}. The isomorphism is clear from the proof of \cite[6.2 (1)]{Iyama}.
\end{proof}

A full subcategory $\mathcal{S}$ of $\textnormal{fg}(\mathcal{C}, \Ab)$ is a \emph{Serre subcategory} if for all short exact sequences $0 \rightarrow F \rightarrow G \rightarrow H \rightarrow 0$ of finitely generated functors, the functor $G$ is contained in $\mathcal{S}$ if and only if $F$ and $H$ are contained in $\mathcal{S}$.

For an exact structure $\mathcal{E}$ on $\mathcal{C}$, we denote by $\mathcal{S}_\mathcal{E}$ the collection of all functors isomorphic to $\coker \Hom_\mathcal{C}(f,-)$, where $f$ is an $\mathcal{E}$-monomorphism. For a full subcategory $\mathcal{S}$ of $\textnormal{eff}(\mathcal{C},Ab)$ we denote by $\mathcal{E}_\mathcal{S}$ the collection of all kernel-cokernel pairs $(f,g)$ with $\coker \Hom_\mathcal{C}(f,-) \in \mathcal{S}$. The following theorem is precisely the dual version of \cite[Theorem 2.7]{Enomoto}.

\begin{thm}\label{eno}{\rm \cite[Theorem 2.7]{Enomoto}} Let $\mathcal{C}$ be an essentially small idempotent complete additive category. There exists a one to one correspondence between 
\begin{itemize}
    \item[\rm (1)] exact structures $\mathcal{E}$ on $\mathcal{C}$, and
    \item[\rm (2)] subcategories $\mathcal{S}$ of $\textnormal{eff}(\mathcal{C}, \Ab)$ such that $\mathcal{S}, E\mathcal{S}$ are Serre subcategories of $\textnormal{fg}(\mathcal{C}, \Ab), \textnormal{fg}(\mathcal{C}^\op, \Ab)$ respectively.
\end{itemize} The assignment is given by $\mathcal{E} \mapsto \mathcal{S}_\mathcal{E}$ and $\mathcal{S} \mapsto \mathcal{E}_\mathcal{S}$.
\end{thm}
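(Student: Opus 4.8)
The plan is to deduce Theorem \ref{eno} from \cite[Theorem 2.7]{Enomoto} by applying that result to the opposite category $\mathcal{C}^\op$ and transporting all of the data along the arrow-reversal duality $\mathcal{C}\leftrightarrow \mathcal{C}^\op$. Since idempotent completeness is a self-dual property, $\mathcal{C}^\op$ is again idempotent complete, so \cite[Theorem 2.7]{Enomoto} is available for it and produces a one to one correspondence between exact structures $\mathcal{D}$ on $\mathcal{C}^\op$ and the appropriate subcategories of $\textnormal{eff}((\mathcal{C}^\op)^\op, \Ab) = \textnormal{eff}(\mathcal{C}, \Ab)$, with $\mathcal{D}$ sent to the collection of functors $\coker \Hom_{\mathcal{C}^\op}(-, g)$ for $g$ a $\mathcal{D}$-epimorphism.

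Next I would observe that reversing arrows gives a bijection between exact structures on $\mathcal{C}$ and exact structures on $\mathcal{C}^\op$: a kernel-cokernel pair $(f,g)$ in $\mathcal{C}$, read backwards, is a kernel-cokernel pair $(g^\op, f^\op)$ in $\mathcal{C}^\op$, and assigning to $\mathcal{E}$ the collection $\mathcal{E}^\op$ of all such reversed pairs is a bijection because the axiom list [E0]--[E2$^\op$] is stable under interchanging each axiom with its opposite. Under this bijection the $\mathcal{E}$-monomorphisms in $\mathcal{C}$ correspond exactly to the $\mathcal{E}^\op$-epimorphisms in $\mathcal{C}^\op$. A short computation with representables — using that $\Hom_{\mathcal{C}^\op}(-, f^\op)$, regarded as a functor on $(\mathcal{C}^\op)^\op = \mathcal{C}$, is $\Hom_\mathcal{C}(f,-)$ — then identifies the subcategory that \cite[Theorem 2.7]{Enomoto} associates to $\mathcal{E}^\op$ with the subcategory $\mathcal{S}_\mathcal{E}$ defined before the statement, and dually identifies the inverse assignment with $\mathcal{S}\mapsto \mathcal{E}_\mathcal{S}$.

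Finally I would match the two Serre-type conditions. Applying Lemma \ref{dual} to $\mathcal{C}^\op$ yields a duality $\textnormal{eff}(\mathcal{C}, \Ab) \to \textnormal{eff}(\mathcal{C}^\op, \Ab)$ carrying $\coker \Hom_\mathcal{C}(f,-)$ to $\coker \Hom_\mathcal{C}(-,g)$; comparing this with the defining property of $E$ in Lemma \ref{dual} shows the two dualities are mutually quasi-inverse, so they carry Serre subcategories to Serre subcategories in opposite directions, and hence the condition ``$E\mathcal{S}$ is a Serre subcategory of $\textnormal{fg}(\mathcal{C}^\op, \Ab)$'' is exactly the condition produced by \cite[Theorem 2.7]{Enomoto} for $\mathcal{C}^\op$. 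Substituting everything in gives the statement. No new idea is involved beyond \cite[Theorem 2.7]{Enomoto}, Lemma \ref{dual}, and the self-duality of the exact-structure axioms; the only point requiring care — and where a slip is easiest — is tracking opposite categories and opposite representable functors closely enough to confirm that the paper's assignment $\mathcal{E}\mapsto \mathcal{S}_\mathcal{E}$, phrased via covariant representables and $\mathcal{E}$-monomorphisms, is literally the transport of Enomoto's assignment, which is phrased via contravariant representables and $\mathcal{E}$-epimorphisms.
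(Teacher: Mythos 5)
Your proposal is correct and is essentially what the paper does: the paper offers no independent argument, simply citing the result as ``precisely the dual version of'' Enomoto's Theorem 2.7, which is exactly the dualization you spell out (apply Enomoto to $\mathcal{C}^\op$, transport exact structures by arrow reversal, and use the duality $E$ of Lemma \ref{dual} to match the two Serre conditions). Your write-up just makes explicit the bookkeeping the paper leaves implicit.
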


\begin{coro}\label{enosim} Let $\mathcal{A}$ be a locally finitely presented category with products, set $\mathcal{C} = \fp \mathcal{A}$ and let $\mathcal{E}_{\top}$ be the largest exact structure on $\mathcal{C}$. There exists a one to one correspondence between
\begin{itemize}
    \item[\rm (1)] exact structures $\mathcal{E}$ on $\mathcal{C}$, and 
    \item[\rm (2)] Serre subcategories $\mathcal{S}$ of $\textnormal{fp}(\mathcal{C}, \Ab)$ with $\mathcal{S} \subseteq \mathcal{S}_{\mathcal{E}_{\top}}$.
\end{itemize}
\end{coro}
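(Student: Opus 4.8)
The plan is to obtain this as the restriction of the bijection of Theorem \ref{eno}, applied to the idempotent complete additive category $\mathcal{C}=\fp\mathcal{A}$. The whole task is to show that the collection appearing in Theorem \ref{eno}(2) coincides, as a collection of subcategories, with the one in part (2) here; once this is done, the assignments $\mathcal{E}\mapsto\mathcal{S}_\mathcal{E}$ and $\mathcal{S}\mapsto\mathcal{E}_\mathcal{S}$ carry over verbatim. The two features of the present situation that are unavailable for a general idempotent complete $\mathcal{C}$, and that make the passage work, are that $\textnormal{fp}(\mathcal{C},\Ab)$ is abelian — so that $(\mathcal{C},\Ab)$ is locally coherent — and the duality $E$ of Lemma \ref{dual}.

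First I would record two observations. (a) Since $(\mathcal{C},\Ab)$ is locally coherent, a finitely generated subfunctor of a finitely presented functor is again finitely presented; consequently, if a short exact sequence in $(\mathcal{C},\Ab)$ has two of its terms in a full subcategory $\mathcal{S}\subseteq\textnormal{fp}(\mathcal{C},\Ab)$, then all three terms lie in $\textnormal{fp}(\mathcal{C},\Ab)$. Hence, for such an $\mathcal{S}$, being a Serre subcategory of $\textnormal{fg}(\mathcal{C},\Ab)$ and being a Serre subcategory of $\textnormal{fp}(\mathcal{C},\Ab)$ impose conditions on exactly the same short exact sequences and are therefore equivalent. (b) The assignment $\mathcal{E}\mapsto\mathcal{S}_\mathcal{E}$ is inclusion-preserving: if $\mathcal{E}\subseteq\mathcal{E}'$ then every $\mathcal{E}$-monomorphism is an $\mathcal{E}'$-monomorphism, so $\mathcal{S}_\mathcal{E}\subseteq\mathcal{S}_{\mathcal{E}'}$. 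Since by Theorem \ref{eno} this assignment is a bijection onto the collection in Theorem \ref{eno}(2), and $\mathcal{E}_{\textnormal{max}}$ is the largest exact structure on $\mathcal{C}$, it follows that $\mathcal{S}_{\mathcal{E}_{\textnormal{max}}}$ is the largest member of that collection; in particular $\mathcal{S}_{\mathcal{E}_{\textnormal{max}}}$ is a Serre subcategory of $\textnormal{fp}(\mathcal{C},\Ab)$ contained in $\textnormal{eff}(\mathcal{C},\Ab)$.

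Now I would check that the two collections agree. If $\mathcal{S}$ is as in Theorem \ref{eno}(2), then $\mathcal{S}\subseteq\textnormal{eff}(\mathcal{C},\Ab)\subseteq\textnormal{fp}(\mathcal{C},\Ab)$, so $\mathcal{S}$ is a Serre subcategory of $\textnormal{fp}(\mathcal{C},\Ab)$ by (a) and $\mathcal{S}\subseteq\mathcal{S}_{\mathcal{E}_{\textnormal{max}}}$ by (b); thus $\mathcal{S}$ is as in part (2) here. Conversely, let $\mathcal{S}$ be a Serre subcategory of $\textnormal{fp}(\mathcal{C},\Ab)$ with $\mathcal{S}\subseteq\mathcal{S}_{\mathcal{E}_{\textnormal{max}}}$. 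Then $\mathcal{S}\subseteq\textnormal{eff}(\mathcal{C},\Ab)$, and $\mathcal{S}$ is a Serre subcategory of $\textnormal{fg}(\mathcal{C},\Ab)$ by (a), so the only remaining point is that $E\mathcal{S}$ must be a Serre subcategory of $\textnormal{fg}(\mathcal{C}^\op,\Ab)$. For this, note that $\mathcal{S}$ and $\mathcal{S}_{\mathcal{E}_{\textnormal{max}}}$ are Serre subcategories of the abelian category $\textnormal{fp}(\mathcal{C},\Ab)$ with $\mathcal{S}\subseteq\mathcal{S}_{\mathcal{E}_{\textnormal{max}}}$, so $\mathcal{S}$ is a Serre subcategory of $\mathcal{S}_{\mathcal{E}_{\textnormal{max}}}$. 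The duality $E$ of Lemma \ref{dual} restricts to a duality between $\mathcal{S}_{\mathcal{E}_{\textnormal{max}}}$ and $E\mathcal{S}_{\mathcal{E}_{\textnormal{max}}}$, and hence carries $\mathcal{S}$ to a Serre subcategory $E\mathcal{S}$ of $E\mathcal{S}_{\mathcal{E}_{\textnormal{max}}}$. Since $\mathcal{S}_{\mathcal{E}_{\textnormal{max}}}$ lies in Theorem \ref{eno}(2), the subcategory $E\mathcal{S}_{\mathcal{E}_{\textnormal{max}}}$ is a Serre subcategory of $\textnormal{fg}(\mathcal{C}^\op,\Ab)$, and a Serre subcategory of a Serre subcategory is again one; therefore $E\mathcal{S}$ is a Serre subcategory of $\textnormal{fg}(\mathcal{C}^\op,\Ab)$, so $\mathcal{S}$ satisfies Theorem \ref{eno}(2). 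With the two collections identified, the bijection of Theorem \ref{eno} restricts to the one claimed.

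I expect the main obstacle to be the last verification in the converse direction: propagating the Serre condition through the duality $E$ and then along the inclusion $E\mathcal{S}_{\mathcal{E}_{\textnormal{max}}}\subseteq\textnormal{fg}(\mathcal{C}^\op,\Ab)$, i.e. making precise that being Serre in $\mathcal{S}_{\mathcal{E}_{\textnormal{max}}}$ together with $\mathcal{S}_{\mathcal{E}_{\textnormal{max}}}$ being Serre in $\textnormal{fg}$ yields $E\mathcal{S}$ being Serre in $\textnormal{fg}(\mathcal{C}^\op,\Ab)$. A secondary technical point is observation (a), the interchange of Serre subcategories of $\textnormal{fg}(\mathcal{C},\Ab)$ and of $\textnormal{fp}(\mathcal{C},\Ab)$ for subcategories contained in $\textnormal{fp}(\mathcal{C},\Ab)$, which is precisely where local coherence of $(\mathcal{C},\Ab)$ enters.
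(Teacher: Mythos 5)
Your proposal is correct and takes essentially the same route as the paper's proof: apply Theorem \ref{eno} to the idempotent complete category $\mathcal{C}$, use that $\mathcal{C}$ has weak cokernels (so $\textnormal{fp}(\mathcal{C},\Ab)$ is Serre in $(\mathcal{C},\Ab)$, making ``Serre in $\textnormal{fg}$'' and ``Serre in $\textnormal{fp}$'' agree for subcategories of $\textnormal{fp}(\mathcal{C},\Ab)$), and deduce that $E\mathcal{S}$ is Serre in $\textnormal{fg}(\mathcal{C}^\op,\Ab)$ from the fact that $E\mathcal{S}_{\mathcal{E}_{\textnormal{max}}}$ is and $E$ is a duality. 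The extra points you spell out (monotonicity of $\mathcal{E}\mapsto\mathcal{S}_\mathcal{E}$ and the Serre-in-Serre transitivity across the duality) are exactly the steps the paper leaves implicit in its one-line justification.
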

\begin{proof} Recall that $\mathcal{C}$ is essentially small and idempotent complete, so we can apply Theorem \ref{eno}. Moreover, $\textnormal{fp}(\mathcal{C}, \Ab)$ is abelian since $\mathcal{C}$ has weak cokernels. Every exact structure $\mathcal{E}$ on $\mathcal{C}$ yields a Serre subcategory $\mathcal{S}_\mathcal{E} \subseteq \textnormal{fg}(\mathcal{C}, \Ab)$. Clearly, $\mathcal{S}_\mathcal{E}$ is a Serre subcategory of $\textnormal{fp}(\mathcal{C}, \Ab)$ with $\mathcal{S}_\mathcal{E} \subseteq \mathcal{S}_{\mathcal{E}_\top}$. To apply the other assignment in Theorem \ref{eno}, we show that for a Serre subcategory $\mathcal{S}$ of $\textnormal{fp}(\mathcal{C}, \Ab)$ with $\mathcal{S} \subseteq \mathcal{S}_{\mathcal{E}_\top}$ automatically $E \mathcal{S}$ is a Serre subcategory of $\textnormal{fg}(\mathcal{C}^\op, \Ab)$.  

Let $0 \to F \to G \to H \to 0$ be a short exact sequence of finitely generated functors $\mathcal{C}^\op \to \Ab$. Since $E\mathcal{S}_{\mathcal{E}_\top} \subseteq \textnormal{fg}(\mathcal{C}^\op, \Ab)$ is a Serre subcategory, it follows that $G \in E\mathcal{S}_{\mathcal{E}_\top} \subseteq \textnormal{eff}(\mathcal{C}^\op, \Ab)$ if and only if $F,H \in E\mathcal{S}_{\mathcal{E}_\top}$. Thus, via the duality $E$, we deduce that $G\in E\mathcal{S} \subseteq E \mathcal{S}_{\mathcal{E}^\top}$ if and only if $F,H \in E\mathcal{S}$ because $\mathcal{S} \subseteq \textnormal{fp}(\mathcal{C}, \Ab)$ is a Serre subcategory.
\end{proof}

From now on, we fix a locally finitely presented category $\mathcal{A}$ and set $\mathcal{C} = \fp \mathcal{A}$. The above corollary gives a bridge between the theory of purity of $\mathcal{A}$ and the study of exact structures on $\mathcal{C}$. For an exact structure $\mathcal{E}$ on $\mathcal{C}$, corresponding to the Serre subcategory $\mathcal{S}_\mathcal{E}$ we obtain a hereditary torsion class $\mathcal{T}_\mathcal{E}$ of finite type in $\mathbf{P}(\mathcal{A})$, a definable subcategory $\mathcal{X}_\mathcal{E}$ of $\mathcal{A}$ and a closed set $\mathcal{U}_\mathcal{E}$ in $\Ind \mathcal{A}$ by the assignments in Section 1.6. 

We define the \emph{purity category of $\mathcal{A}$ relative to $\mathcal{E}$} by the localisation of $\mathbf{P}(\mathcal{A})$ with respect to $\mathcal{T}_\mathcal{E}$ and denote it by  $\mathbf{P}_\mathcal{E}(\mathcal{A})$. Now $\mathbf{P}_\mathcal{E}(\mathcal{A})$ is again locally coherent and the quotient functor $q\colon \mathbf{P}(\mathcal{A}) \rightarrow \mathbf{P}_{\mathcal{E}}(\mathcal{A})$ is exact and commutes with filtered colimits. Further, there exists a commutative diagram of functors
\begin{equation*}
    \begin{tikzcd}
        \mathcal{C} \arrow[r] \arrow[d]& \textnormal{fp} (\mathcal{C}, \Ab)^\op \arrow[r] \arrow[d] & \textnormal{fp} (\mathcal{C}, \Ab)^\op/\mathcal{S}_\mathcal{E} \arrow[d]\\
        \mathcal{A} \arrow[r, "\textnormal{ev}"] & \mathbf{P}(\mathcal{A}) \arrow[r, "q"] & \mathbf{P}_\mathcal{E}(\mathcal{A}),
    \end{tikzcd}
\end{equation*}
where the vertical arrows identify the categories in the first row with the finitely presented objects of the categories in the second row. For the first commutative square, see Section 1.4. For the second commutative square and the described properties, see for example \cite[2.6 - 2.8]{Krause2}. The following lemma describes the properties of the composition of the functors in the first row.

\begin{lem}\label{small} There exists a fully faithful functor
\begin{align*}
    \mathcal{C} \longrightarrow \textnormal{fp}(\mathcal{C}, \Ab)^\textnormal{op} / \mathcal{S}_\mathcal{E}, \quad C \mapsto \Hom_\mathcal{C}(C,-),
\end{align*}
whose essential image is closed under extensions.
\end{lem}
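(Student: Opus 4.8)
The plan is to analyze the composite functor $\iota\colon \mathcal{C}\to \textnormal{fp}(\mathcal{C},\Ab)^\op/\mathcal{S}_\mathcal{E}$, which by the commutative diagram preceding the lemma is just the restriction of the Yoneda-type embedding $C\mapsto \Hom_\mathcal{C}(C,-)$ followed by the Serre quotient functor $\pi\colon \textnormal{fp}(\mathcal{C},\Ab)^\op\to \textnormal{fp}(\mathcal{C},\Ab)^\op/\mathcal{S}_\mathcal{E}$. Equivalently, under the duality $E$ of Lemma~\ref{dual} one may work in $\textnormal{fp}(\mathcal{C},\Ab)$ directly. I would first recall from Section~1.4 that $C\mapsto \Hom_\mathcal{C}(C,-)$ is already fully faithful and that its essential image in $\textnormal{fp}(\mathcal{C},\Ab)^\op$ consists of projective objects; the quotient $\pi$ is exact and essentially surjective onto its target, so the real content is (a) fully faithfulness is preserved after quotienting, and (b) extension-closedness of the image.

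For fully faithfulness I would use the standard description of morphisms in a Serre quotient: $\Hom_{\mathcal{A}/\mathcal{S}}(\pi A,\pi B)=\varinjlim \Hom_\mathcal{A}(A',B/B')$ over subobjects $A'\subseteq A$ with $A/A'\in\mathcal{S}$ and $B'\subseteq B$ with $B'\in\mathcal{S}$. Since $\Hom_\mathcal{C}(C,-)$ corresponds (via $E$, or directly) to an object with no nonzero subobjects lying in $\mathcal{S}_\mathcal{E}$ on the relevant side — because $\mathcal{S}_\mathcal{E}\subseteq \mathcal{S}_{\mathcal{E}_\textnormal{max}}$ consists of effaceable (co-effaceable) functors while representables are projective/injective in the appropriate sense — the colimit collapses to $\Hom_\mathcal{C}(C,C')$. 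Concretely: a representable functor, viewed on the correct side, is injective in $\textnormal{fp}(\mathcal{C},\Ab)$ (dually projective in $\textnormal{fp}(\mathcal{C},\Ab)^\op$), hence has no nonzero subobject in any Serre subcategory of effaceable functors, and $\mathrm{Hom}$ into it is unaffected by passing to a quotient $B/B'$ with $B'$ an effaceable subfunctor — one checks $\Ext^1(\mathcal{S}_\mathcal{E}, \text{representable})=0$ so that $\Hom$ is exact against the sequence $0\to B'\to B\to B/B'\to 0$. Thus $\iota$ is full and faithful.

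For extension-closedness of the essential image, take a short exact sequence $0\to \iota X\to G\to \iota Z\to 0$ in $\textnormal{fp}(\mathcal{C},\Ab)^\op/\mathcal{S}_\mathcal{E}$. Pulling back along $\pi$ (the section functor / saturation) and using that $\pi$ is exact, lift this to a short exact sequence $0\to \Hom_\mathcal{C}(X,-)\to \widetilde G\to \Hom_\mathcal{C}(Z,-)\to 0$ in $\textnormal{fp}(\mathcal{C},\Ab)^\op$ up to $\mathcal{S}_\mathcal{E}$; more precisely, since $\Hom_\mathcal{C}(Z,-)$ is projective in $\textnormal{fp}(\mathcal{C},\Ab)^\op$, any extension of $\pi\Hom_\mathcal{C}(Z,-)$ by $\pi\Hom_\mathcal{C}(X,-)$ comes from an actual short exact sequence $0\to \Hom_\mathcal{C}(X,-)\to H\to \Hom_\mathcal{C}(Z,-)\to 0$ in $\textnormal{fp}(\mathcal{C},\Ab)^\op$ (the extension group is computed there and $\pi$ induces an isomorphism on the relevant $\Ext$ because $\mathcal{S}_\mathcal{E}$ is a Serre subcategory of effaceable functors, so $\Ext^1$ against a projective is untouched). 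But a short exact sequence in $\textnormal{fp}(\mathcal{C},\Ab)^\op$ ending in a representable, with middle term — here I invoke that representables are exactly the projectives and that the category of finitely presented functors is obtained from $\mathcal{C}$ with its split structure, so such a sequence corresponds under $E$ to $H\cong \coker\Hom_\mathcal{C}(f,-)$ for $X\xrightarrow{f} Y\to Z$, and $H$ is representable iff the kernel-cokernel pair is split; in general $H=\Hom_\mathcal{C}(Y,-)$ precisely when the sequence in $\mathcal{C}$ comes from $\mathcal{E}_\textnormal{max}$. The point is that $\pi$ kills exactly $\mathcal{S}_\mathcal{E}$, so $\pi H$ is representable in the quotient whenever the defining kernel-cokernel pair lies in $\mathcal{E}$, and Theorem~\ref{eno} / Corollary~\ref{enosim} guarantee that the obstruction (the class of $\coker\Hom_\mathcal{C}(f,-)$) vanishes in the quotient exactly for $\mathcal{E}$-conflations; hence $G\cong \pi H\cong \iota Y$ and $G$ lies in the essential image.

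The main obstacle I anticipate is step (b): carefully matching the abstract extension $0\to \iota X\to G\to \iota Z\to 0$ with a genuine kernel-cokernel pair $X\to Y\to Z$ in $\mathcal{C}$ that belongs to $\mathcal{E}$, rather than merely to $\mathcal{E}_\textnormal{max}$. This requires using that $\mathcal{S}_\mathcal{E}$ is a Serre subcategory (not just closed under subobjects/quotients) so that $G$ being an honest object of the quotient — together with projectivity of $\iota Z$ — forces the connecting obstruction $[\coker\Hom_\mathcal{C}(f,-)]$ to lie in $\mathcal{S}_\mathcal{E}$, which by the correspondence in Theorem~\ref{eno} is exactly the condition $(f,g)\in\mathcal{E}$. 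Fully faithfulness (step a) is comparatively routine once one notes representables are injective/projective on the correct side and $\mathcal{S}_\mathcal{E}$ consists of effaceable functors.
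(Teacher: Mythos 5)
Your step (a) is essentially the paper's argument: one shows each $\Hom_\mathcal{C}(C,-)$ is $\mathcal{S}_\mathcal{E}$-closed, i.e.\ $\Hom(F,\Hom_\mathcal{C}(C,-))=0=\Ext^1(F,\Hom_\mathcal{C}(C,-))$ for $F\in\mathcal{S}_\mathcal{E}$, and then quotes the standard localization fact. Be aware, though, that your justification is off: $\Hom_\mathcal{C}(C,-)$ is \emph{projective} in $\textnormal{fp}(\mathcal{C},\Ab)$ (injective only in the opposite category), and neither property is what does the work. The actual reason is that $F\cong\coker\Hom_\mathcal{C}(f,-)$ for a kernel--cokernel pair $X\xrightarrow{f}Y\xrightarrow{g}Z$, so applying $\Hom(-,\Hom_\mathcal{C}(C,-))$ to the projective resolution $0\to\Hom_\mathcal{C}(Z,-)\to\Hom_\mathcal{C}(Y,-)\to\Hom_\mathcal{C}(X,-)\to F\to0$ yields, by Yoneda, the sequence $\Hom_\mathcal{C}(C,X)\to\Hom_\mathcal{C}(C,Y)\to\Hom_\mathcal{C}(C,Z)$, whose left exactness (since $f=\ker g$) gives both vanishings. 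That computation is only gestured at in your sketch, but it is routine.

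The genuine gap is in step (b). Your lifting claim, that the quotient functor induces an isomorphism on $\Ext^1$ between (images of) representables so that any extension in $\textnormal{fp}(\mathcal{C},\Ab)^\op/\mathcal{S}_\mathcal{E}$ between $\iota X$ and $\iota Z$ comes from an extension in $\textnormal{fp}(\mathcal{C},\Ab)^\op$, is false, and if it were true it would prove far too much: in $\textnormal{fp}(\mathcal{C},\Ab)$ representables are projective, so $\Ext^1$ between representables vanishes upstairs, and your claim would force every extension between objects of the essential image to split. But every $\mathcal{E}$-conflation $X\to Y\to Z$ has $\coker\Hom_\mathcal{C}(f,-)\in\mathcal{S}_\mathcal{E}$ and hence becomes a short exact sequence $0\to\Hom_\mathcal{C}(Z,-)\to\Hom_\mathcal{C}(Y,-)\to\Hom_\mathcal{C}(X,-)\to0$ in the quotient, which by fully faithfulness (your part (a)) splits only if the conflation splits; so the localization \emph{creates} new extensions whenever $\mathcal{E}\neq\mathcal{E}_\textnormal{split}$, and no $\Ext^1$-comparison with the ambient category can see them (this is exactly the point of Remark \ref{induce}). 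What is needed is a construction rather than a lifting, and this is what the paper does: given $0\to\Hom_\mathcal{C}(C,-)\to F\to\Hom_\mathcal{C}(X,-)\to0$ in the quotient, choose an epimorphism $\Hom_\mathcal{C}(Y,-)\to F$ (every object of the quotient is a quotient of the image of a representable); the composite to $\Hom_\mathcal{C}(X,-)$ corresponds by fully faithfulness to some $f\colon X\to Y$ with $\coker\Hom_\mathcal{C}(f,-)\in\mathcal{S}_\mathcal{E}$, hence an $\mathcal{E}$-monomorphism by Theorem \ref{eno}; its kernel is $\Hom_\mathcal{C}(Z,-)$ with $Z=\coker f$, the induced map to $\Hom_\mathcal{C}(C,-)$ corresponds to $h\colon C\to Z$, and the pullback of $\coker f$ along $h$ (axiom [E2$^\op$]) produces $P\in\mathcal{C}$ with $F\cong\Hom_\mathcal{C}(P,-)$. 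Your sketch never produces the morphism $f$, the map $h$, or the object $P$; the ``obstruction class'' you invoke has no construction behind it, so the extension-closedness is not established.
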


\begin{proof} We show that for all $C\in \mathcal{C}$ the functor $\Hom_\mathcal{C}(C,-)$ is $\mathcal{S}_\mathcal{E}$-closed in $\textnormal{fp}(\mathcal{C}, \Ab)$ or equivalently 
\begin{align*}
\Hom_{\textnormal{fp}(\mathcal{C}, \Ab)}(F, \Hom_\mathcal{C}(C,-)) = 0 = \Ext^1_{\textnormal{fp}(\mathcal{C}, \Ab)}(F, \Hom_\mathcal{C}(C,-))    \tag{$\ast$}
\end{align*}
for all $F\in \mathcal{S}_\mathcal{E}$. We have $F\cong \coker \Hom_{\mathcal{C}}(f,-)$ for an $\mathcal{E}$-exact sequence 
\begin{align*}
    X\xlongrightarrow{f} Y \xlongrightarrow{g} Z
\end{align*}
in $\mathcal{C}$. This gives a projective resolution
\begin{align*}
0 \longrightarrow \Hom_{\mathcal{C}}(Z,-)\longrightarrow\Hom_{\mathcal{C}}(Y,-)\longrightarrow    \Hom_{\mathcal{C}}(X,-) \longrightarrow F \longrightarrow 0
\end{align*}
of $F$ in $\textnormal{fp}(\mathcal{C}, \Ab)$. Applying $\Hom_{\textnormal{fp}(\mathcal{C},\Ab)}(-,\Hom_{\mathcal{C}}(C,-))$ yields
\begin{align*}
    0 &\longrightarrow \Hom_{\textnormal{fp}(\mathcal{C},\Ab)}(F,\Hom_{\mathcal{C}}(C,-))\\
    & \longrightarrow \Hom_{\mathcal{C}}(C,X) \longrightarrow \Hom_{\mathcal{C}}(C,Y) \longrightarrow \Hom_{\mathcal{C}}(C,Z). 
\end{align*}
By exactness of 
\begin{align*}
0 \longrightarrow \Hom_{\mathcal{C}}(C,X) \longrightarrow \Hom_{\mathcal{C}}(C,Y) \longrightarrow \Hom_{\mathcal{C}}(C,Z),
\end{align*}
the equality $(\ast)$ follows. Thus, $\Hom_\mathcal{C}(C,-)$ is $\mathcal{S}_\mathcal{E}$-closed in $\textnormal{fp}(\mathcal{C}, \Ab)$ and the functor is fully faithful by \cite[Lemma 2.2.5]{Krause4}.

Consider a short exact sequence
\begin{align*}
    0 \longrightarrow \Hom_{\mathcal{C}}(C,-) \longrightarrow F \longrightarrow \Hom_{\mathcal{C}}(X,-) \longrightarrow 0
\end{align*}
in $\textnormal{fp}(\mathcal{C}, \Ab)/\mathcal{S}_\mathcal{E}$ with $C,X \in \mathcal{C}$. There exists an epimorphism $\Hom_{\mathcal{C}}(Y,-) \rightarrow F$, which gives a commutative diagram
\begin{equation*}
\begin{tikzcd}
0 \arrow[r]  & K \arrow[r] \arrow[d] & \Hom_{\mathcal{C}}(Y,-) \arrow[r, "\eta"] \arrow[d]& \Hom_{\mathcal{C}}(X,-) \arrow[d, equals] \arrow[r] & 0 \\
        0 \arrow[r] &\Hom_{\mathcal{C}}(C,-) \arrow[r] & F \arrow[r]& \Hom_{\mathcal{C}}(X,-) \arrow[r] & 0
\end{tikzcd} \tag{$\ast \ast$}
\end{equation*}
in $\textnormal{fp}(\mathcal{C}, \Ab)/\mathcal{S}_\mathcal{E}$ with exact rows, where $K$ is the kernel of $\eta$. In particular, the left square is a pushout diagram. By fully faithfulness, $\eta$ corresponds to some $f \colon X \rightarrow Y$ with $\coker \Hom_{\mathcal{C}}(f,-) \in \mathcal{S}_\mathcal{E}$. By Theorem \ref{eno} it follows that $f$ is an $\mathcal{E}$-monomorphism. Let $g\colon Y \rightarrow Z$ be the cokernel of $f$. Then $K \cong \Hom_{\mathcal{C}}(Z,-)$ and $K\rightarrow \Hom_{\mathcal{C}}(C,-)$ corresponds to some $h \colon C \rightarrow Z$. Taking the pullback $P$ of $g$ along $h$ yields a commutative diagram
\begin{equation*}
\begin{tikzcd}
    X \arrow[r, "f"] \arrow[d, equals] &Y \arrow[r, "g"] &  Z \\
    X \arrow[r] &P \arrow[u] \arrow[r] &C \arrow[u, "h", swap]
\end{tikzcd}
\end{equation*}
with $\mathcal{E}$-exact rows (see Section 1.1). Comparing the image of this diagram under the functor $\mathcal{C} \rightarrow \textnormal{fp}(\mathcal{C}, \Ab)^\op /\mathcal{S}_\mathcal{E}$ with $(\ast \ast)$, it follows that $F \cong \Hom_{\mathcal{C}}(P,-)$. Hence, the essential image is closed under extensions.
\end{proof}

\begin{rem}\label{induce}\rm It is well-known that a full additive extension-closed subcategory of an abelian category inherits an exact structure from the abelian structure by considering all sequences in the subcategory, which are short exact in the abelian category. See for example \cite[Lemma 10.20]{Buehler}. Thus, by Lemma \ref{small} the category $\mathcal{C}$ inherits an exact structure $\mathcal{E}'$ from the abelian structure of $\textnormal{fp}(\mathcal{C}, \Ab)^\op /\mathcal{S}_\mathcal{E}$. More precisely, a sequence\begin{align*}
    X\xlongrightarrow{f} Y \xlongrightarrow{g} Z
\end{align*}
is $\mathcal{E}'$-exact in $\mathcal{C}$ if and only if 
\begin{align*}
    0 \longrightarrow \Hom_{\mathcal{C}}(Z,-) \longrightarrow \Hom_{\mathcal{C}}(Y,-) \longrightarrow \Hom_{\mathcal{C}}(X,-) \longrightarrow 0
\end{align*}
is exact in $\textnormal{fp}(\mathcal{C}, \Ab) /\mathcal{S}_\mathcal{E}$. This is the case if and only if $\coker \Hom_{\mathcal{C}}(f,-) \in \mathcal{S}_\mathcal{E}$, which is the case if and only if $f$ is an $\mathcal{E}$-monomorphism by Theorem \ref{eno}. It follows that $\mathcal{E}' = \mathcal{E}$.
\end{rem}

Next, we will describe the embedding of $\mathcal{A}$ into its relative purity category $\mathbf{P}_\mathcal{E}(\mathcal{A})$. This gives us a big version of Lemma \ref{small} and generalizes Theorem 1.4.

\begin{thm}\label{big} Let $\mathcal{A}$ be a locally finitely presented category with products and let $\mathcal{E}$ be an exact structure on $\mathcal{C} = \fp \mathcal{A}$. There exists a fully faithful functor
\begin{align*}
    \textnormal{ev}_\mathcal{E} \colon \mathcal{A} \longrightarrow \mathbf{P}_\mathcal{E}(\mathcal{A}),\quad X \mapsto \bar{X},
\end{align*}
which commutes with filtered colimits and cokernels. Moreover, the essential image of $\textnormal{ev}_\mathcal{E}$ is closed under extensions.
\end{thm}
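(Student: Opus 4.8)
The plan is to define $\textnormal{ev}_\mathcal{E}$ as the composite $q\circ\textnormal{ev}$, where $\textnormal{ev}\colon\mathcal{A}\to\mathbf{P}(\mathcal{A})$ is the functor recalled in Section~1.4 and $q\colon\mathbf{P}(\mathcal{A})\to\mathbf{P}_\mathcal{E}(\mathcal{A})$ is the localisation functor. By construction this restricts on $\mathcal{C}=\fp\mathcal{A}$ to the functor of Lemma~\ref{small}, under the identification $\fp\mathbf{P}_\mathcal{E}(\mathcal{A})\simeq\textnormal{fp}(\mathcal{C},\Ab)^\textnormal{op}/\mathcal{S}_\mathcal{E}$. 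That $\textnormal{ev}_\mathcal{E}$ commutes with filtered colimits, products and cokernels is immediate: $\textnormal{ev}$ has all three properties, and $q$ is exact and commutes with filtered colimits and products. So the content lies in full faithfulness and in the extension-closedness of the essential image, and both will be reduced to Lemma~\ref{small} by writing objects of $\mathcal{A}$ as filtered colimits of objects of $\mathcal{C}$.

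For full faithfulness, fix $X,Y\in\mathcal{A}$ and write $X=\varinjlim X_i$ with $X_i\in\mathcal{C}$. Since $\textnormal{ev}_\mathcal{E}$ commutes with filtered colimits and $\Hom_\mathcal{A}(X,Y)=\varprojlim_i\Hom_\mathcal{A}(X_i,Y)$, it suffices to treat $X=C\in\mathcal{C}$. Let $s$ be the section functor right adjoint to $q$; then $\Hom_{\mathbf{P}_\mathcal{E}(\mathcal{A})}(q\bar C,q\bar Y)\cong\Hom_{\mathbf{P}(\mathcal{A})}(\bar C,sq\bar Y)$, and the map induced by $\textnormal{ev}_\mathcal{E}$ is, after the isomorphism $\Hom_{\mathbf{P}(\mathcal{A})}(\bar C,\bar Y)\cong\Hom_\mathcal{A}(C,Y)$, postcomposition with the unit $\bar Y\to sq\bar Y$. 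Recall that the kernel of this unit and the cokernel of the monomorphism $\bar Y/t\bar Y\hookrightarrow sq\bar Y$ both lie in the torsion class $\mathcal{T}_\mathcal{E}$; hence it is enough to prove
\[
\Hom_{\mathbf{P}(\mathcal{A})}(\bar C,T)=0=\Ext^1_{\mathbf{P}(\mathcal{A})}(\bar C,T)\qquad\text{for all }T\in\mathcal{T}_\mathcal{E}.
\]
Transporting the orthogonality established in the proof of Lemma~\ref{small} through the equivalence $\textnormal{fp}(\mathcal{C},\Ab)^\textnormal{op}\simeq\fp\mathbf{P}(\mathcal{A})$, and using that $\fp\mathbf{P}(\mathcal{A})$ is closed under extensions in $\mathbf{P}(\mathcal{A})$, gives this vanishing for every $T$ in the Serre subcategory $\fp\mathbf{P}(\mathcal{A})\cap\mathcal{T}_\mathcal{E}$; since $\bar C$ is finitely presented in the locally coherent category $\mathbf{P}(\mathcal{A})$, the functors $\Hom_{\mathbf{P}(\mathcal{A})}(\bar C,-)$ and $\Ext^1_{\mathbf{P}(\mathcal{A})}(\bar C,-)$ commute with filtered colimits, and the vanishing propagates to $\mathcal{T}_\mathcal{E}=\varinjlim\bigl(\fp\mathbf{P}(\mathcal{A})\cap\mathcal{T}_\mathcal{E}\bigr)$.

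For the essential image, let $0\to q\bar X\to N\to q\bar Z\to 0$ be exact in $\mathbf{P}_\mathcal{E}(\mathcal{A})$ with $X,Z\in\mathcal{A}$. Writing $Z=\varinjlim Z_j$ with $Z_j\in\mathcal{C}$ and pulling the sequence back along $q\bar Z_j\to q\bar Z$ yields exact sequences $0\to q\bar X\to N_j\to q\bar Z_j\to 0$ with $N=\varinjlim N_j$, because filtered colimits are exact and hence commute with pullbacks; by the full faithfulness just proved, if every $N_j$ lies in the essential image then so does $N$. This reduces us to $Z\in\mathcal{C}$. Now $q\bar Z$ is finitely presented in the locally coherent category $\mathbf{P}_\mathcal{E}(\mathcal{A})$, so $\Ext^1_{\mathbf{P}_\mathcal{E}(\mathcal{A})}(q\bar Z,-)$ commutes with filtered colimits; writing $X=\varinjlim X_i$ with $X_i\in\mathcal{C}$, the extension class descends to a class in $\Ext^1_{\mathbf{P}_\mathcal{E}(\mathcal{A})}(q\bar Z,q\bar X_{i_0})$ for some index $i_0$. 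By Lemma~\ref{small} together with Remark~\ref{induce}, the corresponding sequence is $q$ applied to an $\mathcal{E}$-exact sequence $X_{i_0}\xrightarrow{u}P\to Z$ in $\mathcal{C}$. Pushing the $\mathcal{E}$-monomorphism $u$ out along the transition maps $X_{i_0}\to X_i$ inside $\mathcal{C}$, which is legitimate by the pushout axiom [E2] and keeps the sequences $\mathcal{E}$-exact, produces a filtered system of $\mathcal{E}$-exact sequences $X_i\to P_i\to Z$; then $W:=\varinjlim_{i\ge i_0}P_i\in\mathcal{A}$ satisfies $\textnormal{ev}_\mathcal{E}(W)\cong N$, since $\textnormal{ev}_\mathcal{E}$ commutes with filtered colimits and with the relevant pushouts.

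The main obstacle is the full faithfulness, and within it the displayed vanishing: one must keep careful track of the opposite categories when moving the orthogonality of Lemma~\ref{small} across the duality $E$ of Lemma~\ref{dual} and the equivalence $\fp\mathbf{P}(\mathcal{A})\simeq\textnormal{fp}(\mathcal{C},\Ab)^\textnormal{op}$, and one needs that $\Ext^1$ out of a finitely presented object is finitary in a locally coherent Grothendieck category; this is standard, but it is the one genuinely new input beyond Lemma~\ref{small} and the formal properties of Serre localisation. The remaining reductions are routine bookkeeping with filtered colimits, the exactness of $q$, and the adjunction $q\dashv s$.
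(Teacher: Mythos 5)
Your proof is correct, but it takes a genuinely different route from the paper's in both substantive steps. For full faithfulness the paper simply writes $X=\varinjlim X_i$, $Y=\varinjlim Y_j$ and computes $\Hom_{\mathbf{P}_\mathcal{E}(\mathcal{A})}(\bar X,\bar Y)\cong\varprojlim\varinjlim\Hom_{\mathcal C}(X_i,Y_j)\cong\Hom_{\mathcal A}(X,Y)$, using only that each $\bar X_i$ is finitely presented in $\mathbf{P}_\mathcal{E}(\mathcal{A})$ and that the restriction to $\mathcal{C}$ is fully faithful (Lemma \ref{small}); you instead invoke the localisation adjunction $q\dashv s$ and lift the orthogonality $(\ast)$ from the proof of Lemma \ref{small} to the whole torsion class $\mathcal{T}_\mathcal{E}=\varinjlim\bigl(\fp\mathbf{P}(\mathcal{A})\cap\mathcal{T}_\mathcal{E}\bigr)$, which needs the extra input that $\Ext^1(\bar C,-)$ commutes with filtered colimits for $\bar C$ finitely presented in a locally coherent Grothendieck category -- a true and standard fact (its proof is essentially the same pullback/pushout manoeuvre as the paper's Lemma \ref{ext}), and the only place where the duality $E$ of Lemma \ref{dual} is actually irrelevant: the contravariant equivalence $\textnormal{fp}(\mathcal{C},\Ab)^{\textnormal{op}}\simeq\fp\mathbf{P}(\mathcal{A})$ suffices. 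For extension-closedness the paper proves the purely formal Lemma \ref{ext} ($\varinjlim\mathcal{D}$ is extension-closed whenever $\mathcal{D}\subseteq\fp\mathcal{B}$ is), which makes no use of the exact structure, whereas you descend the extension class to some $q\bar X_{i_0}$ via the same finitariness of $\Ext^1$, identify it with an $\mathcal{E}$-exact sequence through Lemma \ref{small} and Remark \ref{induce}, and rebuild $N$ as a filtered colimit of pushouts along [E2]; here you should say explicitly why $\textnormal{ev}_\mathcal{E}|_{\mathcal{C}}$ preserves these pushouts, namely because a pushout of an $\mathcal{E}$-monomorphism is encoded by the conflation $X_{i_0}\rightarrow X_i\oplus P\rightarrow P_i$, which the exact embedding of Remark \ref{induce} sends to a short exact sequence. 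The paper's route is more economical (no localisation machinery, no finitary-Ext input); yours makes visible exactly where the torsion theory and the exact structure enter, at the cost of one more standard but unproved ingredient.
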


\begin{proof} The functor $\textnormal{ev}_\mathcal{E}$ equals the composition of the functors in the second row of the commutative diagram
\begin{equation*}
    \begin{tikzcd}
        \mathcal{C} \arrow[r] \arrow[d]& \textnormal{fp} (\mathcal{C}, \Ab)^\op \arrow[r] \arrow[d] & \textnormal{fp} (\mathcal{C}, \Ab)^\op/\mathcal{S}_\mathcal{E} \arrow[d, "\varphi"]\\
        \mathcal{A} \arrow[r, "\textnormal{ev}"] & \mathbf{P}(\mathcal{A}) \arrow[r, "q"] & \mathbf{P}_\mathcal{E}(\mathcal{A}).
    \end{tikzcd}
\end{equation*}
By Theorem 1.4 and the properties of the quotient functor $q$, the functor $\textnormal{ev}_\mathcal{E}$ commutes with filtered colimits and cokernels. 

Recall that $\varphi$ induces an equivalence between $\textnormal{fp}(\mathcal{C}, \Ab)^\textnormal{op}/\mathcal{S}_\mathcal{E}$ and $\fp \mathbf{P}_\mathcal{E}(\mathcal{A})$. Further, the composition of the two functors in the first row equals the functor in Lemma \ref{small}. Since it is fully faithful and $\textnormal{ev}_\mathcal{E}$ commutes with filtered colimits, it follows that
\begin{align*}
\Hom_{\mathbf{P}_\mathcal{E}(\mathcal{A})}(\bar{X},\bar{Y})    &= \Hom_{\mathbf{P}_\mathcal{E}(\mathcal{A})}(\varinjlim \bar{X_i}, \varinjlim \bar{Y_j})\\ &\cong \varprojlim \varinjlim \Hom_{ \fp \mathbf{P}_\mathcal{E}(\mathcal{A})}(\bar{X_i}, \bar{Y_j}) \\
    &\cong \varprojlim \varinjlim \Hom_{\mathcal{C}}(X_i, Y_j)\\
    &\cong \Hom_{\mathcal{A}}(\varinjlim X_i, \varinjlim Y_j)\\
    &= \Hom_{\mathcal{A}}(X,Y)
\end{align*}
for $X,Y \in \mathcal{A}$ with $X = \varinjlim X_i, Y = \varinjlim Y_j$ and $X_i, Y_j \in \mathcal{C}$. Thus, $\textnormal{ev}_\mathcal{E}$ is also fully faithful. 

By Lemma \ref{small} the essential image of $\textnormal{ev}_\mathcal{E}$ restricted to $\mathcal{C}$ is closed under extensions in $\mathbf{P}_\mathcal{E}(\mathcal{A})$. Because $\textnormal{ev}_\mathcal{E}$ commutes with filtered colimits, also the essential image of $\textnormal{ev}_\mathcal{E}$ is closed under extensions in $\mathcal{A}$ by Lemma \ref{ext}.
\end{proof}

\begin{lem}\label{ext} Let $\mathcal{B}$ be a locally coherent category and $\mathcal{D}$ a full additive subcategory of $\fp \mathcal{B}$ closed under extensions. Then $\varinjlim \mathcal{D}$ is extension-closed in $\mathcal{B}$.
\end{lem}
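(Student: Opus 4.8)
The plan is to prove this by a two step reduction, using that $\mathcal{B}$ is a Grothendieck category, so filtered colimits are exact, and that every finitely presented object of a locally coherent category is coherent. Let $0 \to X \to Y \to Z \to 0$ be a short exact sequence in $\mathcal{B}$ with $X, Z \in \varinjlim \mathcal{D}$; the goal is $Y \in \varinjlim \mathcal{D}$. I would use throughout that $\varinjlim \mathcal{D}$ is itself closed under filtered colimits, being the closure of $\mathcal{D}$ under filtered colimits in $\mathcal{B}$.

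First I would reduce to the case $Z \in \mathcal{D}$. Writing $Z = \varinjlim_j Z_j$ with $Z_j \in \mathcal{D}$ and pulling the epimorphism $Y \to Z$ back along the structure maps $Z_j \to Z$ gives short exact sequences $0 \to X \to Y_j \to Z_j \to 0$ with $Y_j = Y \times_Z Z_j$. Since filtered colimits are exact in $\mathcal{B}$, they commute with these pullbacks, so $\varinjlim_j Y_j = Y \times_Z (\varinjlim_j Z_j) = Y$; hence it is enough to show each $Y_j \in \varinjlim \mathcal{D}$, and we may assume $Z \in \mathcal{D} \subseteq \fp \mathcal{B}$.

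In this case, write $X = \varinjlim_i X_i$ with $X_i \in \mathcal{D}$, with colimit maps $\alpha_i \colon X_i \to X$ and transition maps $\alpha_{i i'} \colon X_i \to X_{i'}$. The key input is that, since $\mathcal{B}$ is locally coherent, $Z$ is a coherent object, whence $\Ext^1_{\mathcal{B}}(Z, -)$ commutes with filtered colimits; therefore the class $\xi \in \Ext^1_{\mathcal{B}}(Z, X)$ of the given extension equals $(\alpha_{i_0})_\ast \xi_{i_0}$ for some index $i_0$ and some $\xi_{i_0} \in \Ext^1_{\mathcal{B}}(Z, X_{i_0})$. For $i \ge i_0$ let $0 \to X_i \to Y_i \to Z \to 0$ realize $(\alpha_{i_0 i})_\ast \xi_{i_0}$; then $Y_i$ is the pushout of $X_i \leftarrow X_{i_0} \to Y_{i_0}$, each $Y_i$ lies in $\mathcal{D}$ because $\mathcal{D}$ is extension closed, and the $Y_i$ form a filtered system indexed by the cofinal set $\{i \ge i_0\}$. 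Since pushouts commute with filtered colimits, $\varinjlim_{i \ge i_0} Y_i$ is the pushout of $X_{i_0} \to \varinjlim_{i \ge i_0} X_i = X$ along $X_{i_0} \to Y_{i_0}$, which is the extension classified by $(\alpha_{i_0})_\ast \xi_{i_0} = \xi$, namely $Y$. So $Y = \varinjlim_{i \ge i_0} Y_i \in \varinjlim \mathcal{D}$.

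The bookkeeping with filtered index categories and the compatibility of pullbacks and pushouts with filtered colimits are routine. The one external fact the argument rests on, and the point I would pin down with a precise reference, is that $\Ext^1_{\mathcal{B}}(Z, -)$ commutes with filtered colimits when $Z$ is a finitely presented (equivalently coherent) object of a locally coherent Grothendieck category; if one prefers to avoid invoking it, the same effect can be obtained more by hand by writing $Y = \varinjlim Y_k$ with $Y_k$ finitely presented, noting $Y_k \to Z$ is eventually epic and $\ker(Y_k\to Z)$ is coherent, and then merging this presentation of $X$ with the one by the $X_i$, but this essentially re-proves the $\Ext$ statement.
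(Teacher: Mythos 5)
Your argument is correct, but it takes a different route from the paper. The paper never exhibits $Y$ as an explicit filtered colimit of objects of $\mathcal{D}$; instead it uses the factorization criterion of \cite[Lemma 11.1.6]{Krause4} ($Y\in \varinjlim\mathcal{D}$ iff every morphism $C\to Y$ with $C\in \fp\mathcal{B}$ factors through $\mathcal{D}$): given $C\to Y$, it factors $C\to Z$ through some $D\in\mathcal{D}$, pulls back, chooses a finitely presented $E\oplus C$ mapping onto the pullback so that the composite to $D$ is epic, uses local coherence to get a finitely presented kernel $K$, factors $K\to X$ through some $D'\in\mathcal{D}$, and pushes out to land in an extension $P'$ of $D$ by $D'$, which lies in $\mathcal{D}$. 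Your proof instead reduces to $Z\in\mathcal{D}$ by pulling back along the presentation of $Z$ (legitimate, by AB5 exactness of filtered colimits) and then descends the extension class, writing $Y$ as $\varinjlim_{i\geq i_0} Y_i$ with $Y_i\in\mathcal{D}$. The external input you flag is exactly the right one to worry about, and note you only need the surjectivity half: the natural map $\varinjlim \Ext^1_\mathcal{B}(Z,X_i)\to \Ext^1_\mathcal{B}(Z,\varinjlim X_i)$ is onto for $Z\in\fp\mathcal{B}$ when $\mathcal{B}$ is locally coherent, and your sketched by-hand argument (find a finitely presented object mapping onto $Z$ inside $Y$, observe its kernel $K$ is finitely presented by coherence, factor $K\to X$ through some $X_{i_0}$, push out) is precisely where local coherence enters -- in fact this is the same mechanism as in the paper's proof, just packaged as an $\Ext$-statement. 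Two small points you should pin down: the closure of $\varinjlim\mathcal{D}$ under filtered colimits, which you use twice, also follows from \cite[Lemma 11.1.6]{Krause4} (it is not immediate from the definition of $\varinjlim\mathcal{D}$ as a class of colimits); and ``closed under extensions'' must be read with respect to short exact sequences of $\mathcal{B}$, which is harmless here since $\fp\mathcal{B}$ is extension-closed in $\mathcal{B}$. What your approach buys is a more structural statement (an explicit presentation of $Y$ and the $\Ext$-commutation, reusable elsewhere); what the paper's approach buys is that it stays elementary, needing only the factorization criterion and diagram chases.
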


\begin{proof} We have $X\in \varinjlim \mathcal{D}$ if and only if every morphism $C \rightarrow X$ with $C\in \fp \mathcal{B}$ factors through some $D\in \mathcal{D}$ by \cite[Lemma 11.1.6]{Krause4}. Consider a short exact sequence
\begin{align*}
    0 \longrightarrow X \longrightarrow Y \longrightarrow Z \longrightarrow 0
\end{align*}
in $\mathcal{B}$ with $X,Z \in \varinjlim \mathcal{D}$. Let $C \rightarrow Y$ be arbitrary with $C\in \fp \mathcal{B}$. The composition $C\rightarrow Y \rightarrow Z$ factors through some $D\in \mathcal{D}$. Taking a pullback, we obtain a commutative diagram
\begin{equation*}
    \begin{tikzcd}
        0 \arrow[r] & X \arrow[d, equals] \arrow[r] & Y  \arrow[r] & Z \arrow[r] & 0 \\
        0 \arrow[r] & X \arrow[r] & P \arrow[u] \arrow[r] & D \arrow[r] \arrow[u] & 0 \\
        & & C \arrow[u] & & 
    \end{tikzcd}
\end{equation*}
with exact rows. Consider a  morphism $E \rightarrow P$ with $E\in \fp \mathcal{B}$ such that the composition $E \rightarrow P \rightarrow D$ is an epimorphism and let $C\oplus E \rightarrow P$ be the induced morphism. This yields a commutative diagram
\begin{equation*}
    \begin{tikzcd}
0 \arrow[r] & X \arrow[r] & P \arrow[r] & D \arrow[r] & 0\\
0 \arrow[r] & K \arrow[u] \arrow[r] & E\oplus C \arrow[r] \arrow[u] & D \arrow[u, equals] \arrow[r] & 0
    \end{tikzcd}
\end{equation*}
with exact rows and $K \in \fp \mathcal{B}$. Now $K\rightarrow X$ factors through some $D'\in \mathcal{D}$. Taking a pushout, we obtain a commutative diagram 
\begin{equation*}
    \begin{tikzcd}
    &&P&&\\
0 \arrow[r] & D' \arrow[r] & P' \arrow[r] \arrow[u] & D \arrow[d, equals] \arrow[r] & 0\\
0 \arrow[r] & K \arrow[u] \arrow[r] & E\oplus C \arrow[r] \arrow[u] & D \arrow[u, equals] \arrow[r] & 0
    \end{tikzcd}
\end{equation*}
with exact rows. Since $\mathcal{D}$ is extension-closed, we have $P'\in \mathcal{D}$. The original morphism $C\rightarrow Y$ equals the composition
\begin{align*}
C \longrightarrow E\oplus C \longrightarrow P' \longrightarrow P \longrightarrow Y,
\end{align*}
where $C\rightarrow E\oplus C$ is the canonical inclusion. Thus, it factors through an object in $\mathcal{D}$ and it follows that $Y\in \varinjlim \mathcal{D}$.
\end{proof}

By Theorem \ref{big} the category $\mathcal{A}$ inherits an exact structure $\bar{\mathcal{E}}$ from the abelian structure of $\mathbf{P}_\mathcal{E}(\mathcal{A})$. More precisely, a sequence
\begin{align*}
    X \longrightarrow Y \longrightarrow Z
\end{align*}
is $\bar{\mathcal{E}}$-exact in $\mathcal{A}$ if and only if 
\begin{align*}
    0 \longrightarrow \bar{X} \longrightarrow \bar{Y} \longrightarrow \bar{Z} \longrightarrow 0 
\end{align*}
is a short exact sequence in $\mathbf{P}_\mathcal{E}(\mathcal{A})$. For example $\bar{\mathcal{E}}_\bot = \mathcal{E}_\textnormal{pure}$ (see Section 1.3). The following generalizes the fact that every pure-exact sequence is a filtered colimit of split exact sequences and offers a second description of $\bar{\mathcal{E}}$. 

\begin{coro}\label{second} The exact structure $\bar{\mathcal{E}}$ equals $\varinjlim \mathcal{E}$, that is every $\bar{\mathcal{E}}$-exact sequence in $\mathcal{A}$ is a filtered colimit of $\mathcal{E}$-exact sequences in $\mathcal{C}$.
\end{coro}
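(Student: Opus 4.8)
The plan is to prove the two inclusions $\varinjlim \mathcal{E} \subseteq \bar{\mathcal{E}}$ and $\bar{\mathcal{E}} \subseteq \varinjlim \mathcal{E}$ separately; the first is routine and the second carries all the content. For the first, recall from the proof of Theorem \ref{big} that $\textnormal{ev}_\mathcal{E}$ restricted to $\mathcal{C}$ is the composite $\mathcal{C} \to \textnormal{fp}(\mathcal{C},\Ab)^\op \to \textnormal{fp}(\mathcal{C},\Ab)^\op/\mathcal{S}_\mathcal{E} \xrightarrow{\varphi} \fp \mathbf{P}_\mathcal{E}(\mathcal{A}) \hookrightarrow \mathbf{P}_\mathcal{E}(\mathcal{A})$, so by Remark \ref{induce} it sends an $\mathcal{E}$-exact sequence $C_1 \to C_2 \to C_3$ in $\mathcal{C}$ to a short exact sequence $0 \to \bar{C_1} \to \bar{C_2} \to \bar{C_3} \to 0$ in $\mathbf{P}_\mathcal{E}(\mathcal{A})$. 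If $X \to Y \to Z$ in $\mathcal{A}$ is a filtered colimit of $\mathcal{E}$-exact sequences $C_1^i \to C_2^i \to C_3^i$, then $0 \to \bar{X} \to \bar{Y} \to \bar{Z} \to 0$ is the filtered colimit of the short exact sequences $0 \to \bar{C_1^i} \to \bar{C_2^i} \to \bar{C_3^i} \to 0$, since $\textnormal{ev}_\mathcal{E}$ commutes with filtered colimits; as $\mathbf{P}_\mathcal{E}(\mathcal{A})$ is a Grothendieck category, filtered colimits are exact, so this colimit sequence is exact and $X \to Y \to Z$ is $\bar{\mathcal{E}}$-exact.

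For the converse, let $X \xrightarrow{f} Y \xrightarrow{g} Z$ be $\bar{\mathcal{E}}$-exact, i.e.\ $0 \to \bar{X} \to \bar{Y} \to \bar{Z} \to 0$ is short exact in $\mathcal{B} := \mathbf{P}_\mathcal{E}(\mathcal{A})$. Write $\mathcal{C}'$ for the essential image of $\mathcal{C}$ in $\fp \mathcal{B}$; by Lemma \ref{small} it is extension-closed in $\fp \mathcal{B}$, the essential image of $\textnormal{ev}_\mathcal{E}$ is $\varinjlim \mathcal{C}'$, and by full faithfulness together with Remark \ref{induce} a short exact sequence in $\mathcal{B}$ with all terms in $\mathcal{C}'$ comes from an $\mathcal{E}$-exact sequence in $\mathcal{C}$. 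Since $\textnormal{ev}_\mathcal{E}$ commutes with filtered colimits it therefore suffices to show: every short exact sequence in $\mathcal{B}$ with all three terms in $\varinjlim \mathcal{C}'$ is a filtered colimit of short exact sequences with all three terms in $\mathcal{C}'$. First one reduces to the case that the cokernel term is finitely presented: write $\bar{Z} = \varinjlim \bar{Z_\lambda}$ with $Z_\lambda \in \mathcal{C}$ and pull back $0 \to \bar{X} \to \bar{Y} \to \bar{Z} \to 0$ along each $\bar{Z_\lambda} \to \bar{Z}$; the resulting sequences still have all terms in $\varinjlim \mathcal{C}'$ (it is extension-closed by Theorem \ref{big} and Lemma \ref{ext}), and since filtered colimits commute with the pullbacks involved, the given sequence is their filtered colimit.

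Assume now $\bar{Z} \in \fp \mathcal{B}$ and form the category $\mathcal{I}$ whose objects are short exact sequences $0 \to \bar{C_1} \to \bar{C_2} \xrightarrow{q} \bar{Z} \to 0$ with $C_1, C_2 \in \mathcal{C}$, equipped with a morphism to $0 \to \bar{X} \to \bar{Y} \to \bar{Z} \to 0$ that is the identity on $\bar{Z}$, and whose morphisms are morphisms over this fixed sequence restricting to the identity on $\bar{Z}$. The heart of the proof is to show that $\mathcal{I}$ is filtered and that the canonical map from the colimit of the $\mathcal{I}$-diagram to $0 \to \bar{X} \to \bar{Y} \to \bar{Z} \to 0$ is an isomorphism; transporting the $\mathcal{I}$-diagram back along $\textnormal{ev}_\mathcal{E}^{-1}$ then exhibits $X \to Y \to Z$ as a filtered colimit of $\mathcal{E}$-exact sequences in $\mathcal{C}$. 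Every construction rests on one device: by (the proof of) Lemma \ref{ext} and \cite[Lemma 11.1.6]{Krause4}, any morphism from a finitely presented object of $\mathcal{B}$ into an object of $\varinjlim \mathcal{C}'$ factors through an object of $\mathcal{C}'$, which lets one repair any kernel falling outside $\mathcal{C}'$ by a pushout along a suitable map into a $\mathcal{C}'$-object. Thus $\mathcal{I}$ is nonempty because $\bar{Z}$ is finitely generated, so some $\bar{E} \to \bar{Y}$ with $E \in \mathcal{C}$ is epi onto $\bar{Z}$, and repairing its kernel yields an object of $\mathcal{I}$; common bounds come from mapping the direct sum of the two middle terms to $\bar{Z}$ by the sum of the two epimorphisms and repairing the kernel; and a coequalizer of a parallel pair $\phi,\psi$ is obtained from a single pushout, using that $\phi$ and $\psi$ automatically agree on $\bar{Z}$, so their difference is governed by a single morphism $\bar{C_2} \to \bar{C_1'}$ into the kernel term of the target whose image is killed by $\bar{C_1'} \to \bar{X}$, hence is annihilated after one repairing pushout. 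For the colimit: the comparison map is the identity on $\bar{Z}$; surjectivity of its middle component holds since every finitely presented object mapping to $\bar{Y}$ factors through $\mathcal{C}'$ and then, after a direct-sum-and-pushout, through some object of $\mathcal{I}$; injectivity holds because a finitely presented subobject of some $\bar{C_2}$ that dies in $\bar{Y}$ necessarily lies in $\bar{C_1}$ and dies under $\bar{C_1} \to \bar{X}$ (using $q = g u_2$ and that $\bar{X} \to \bar{Y}$ is monic), so is killed by a pushout. The five lemma then forces the first component of the comparison to be an isomorphism as well.

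The main obstacle is establishing that $\mathcal{I}$ is filtered, and inside that the construction of coequalizers: each repairing pushout must be arranged to be compatible with the fixed maps to $0 \to \bar{X} \to \bar{Y} \to \bar{Z} \to 0$ and to genuinely annihilate the relevant chain map. It is precisely the reduction to a finitely presented cokernel term in the second paragraph that keeps this bookkeeping tractable, since it forces all morphisms of $\mathcal{I}$ to agree in the third degree, so that only the first two degrees must be repaired.
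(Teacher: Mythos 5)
Your proof is correct and takes essentially the paper's route: embed via Theorem \ref{big}, realize the resulting short exact sequence in $\mathbf{P}_\mathcal{E}(\mathcal{A})$ as a filtered colimit of short exact sequences with terms in the image of $\mathcal{C}$, then conclude with Remark \ref{induce} and full faithfulness — your detailed comma-category construction (repair-by-pushout, filteredness, colimit identification) is precisely the content the paper compresses into its citation of Lemma \ref{ext}, and your first paragraph supplies the easy converse inclusion the paper leaves implicit. The one step you assert rather than prove — that the outer colimit over the $\bar{Z_\lambda}$ and the inner colimits over your category $\mathcal{I}$ recombine into a single filtered colimit of sequences with terms in $\mathcal{C}'$ — is genuinely routine (for instance by working in the category of three-term diagrams over $\mathbf{P}_\mathcal{E}(\mathcal{A})$, where the exact $\mathcal{C}'$-sequences form an additive class of finitely presented diagrams and $\varinjlim$ of such a class is closed under filtered colimits), and is no less justified than the paper's own one-line appeal to Lemma \ref{ext} at the same point.
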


\begin{proof} An $\bar{\mathcal{E}}$-exact sequence $X \rightarrow Y \rightarrow Z$ in $\mathcal{A}$ yields a short exact sequence 
\begin{align*}
    0 \longrightarrow \bar{X} \longrightarrow \bar{Y} \longrightarrow \bar{Z} \longrightarrow 0 
\end{align*}
in $\mathbf{P}_\mathcal{E}(\mathcal{A})$. The functor $\textnormal{ev}_\mathcal{E}$ commutes with filtered colimits and is fully faithful by Theorem \ref{big}. Since $\mathcal{A} = \varinjlim \mathcal{C}$, it follows from Lemma \ref{ext} that the above short exact sequence is a filtered colimit of short exact sequences
\begin{align*}
    0 \longrightarrow \bar{X'} \longrightarrow \bar{Y'} \longrightarrow \bar{Z'} \longrightarrow 0 
\end{align*}
in $\fp \mathbf{P}_\mathcal{E}(\mathcal{A})$ with $X',Y', Z' \in \mathcal{C}$. Because $\fp \mathbf{P}_\mathcal{E}(\mathcal{A})$ identifies with $\textnormal{fp}(\mathcal{C}, \Ab)^\op/\mathcal{S}_\mathcal{E}$, the sequence $X' \rightarrow Y' \rightarrow Z'$ is $\mathcal{E}$-exact by Remark \ref{induce}. By fully faithfulness, the sequence $X\rightarrow Y\rightarrow Z$ is a filtered colimit of the sequences $X' \rightarrow Y' \rightarrow Z'$.
\end{proof}

Corollary \ref{second} shows that one can lift an exact structure $\mathcal{E}$ on the small category $\mathcal{C}$ to an exact structure on the big category $\mathcal{A}$ by taking filtered colimits. This is a special case of \cite[Theorem 2.7]{Positselski} and in this way one obtains a \emph{locally coherent} exact structure as in \cite{Positselski}. Positselski proves the existence of enough {$\mathcal{E}'$-injectives} for every locally coherent exact structure $\mathcal{E}'$ \cite[Corollary 5.4]{Positselski} by showing that one is of \emph{Grothendieck type} as in \cite[Definition 3.11]{stovi}. In our case, we will also show the existence of enough $\bar{\mathcal{E}}$-injectives using a different approach, essentially via the embedding of $\mathcal{A}$ into $\mathbf{P}_\mathcal{E}(\mathcal{A})$ in Theorem \ref{big}.

\begin{lem}\label{trans} The functor $\textnormal{ev}_{\mathcal{E}}\colon \mathcal{A} \longrightarrow \mathbf{P}_\mathcal{E}(\mathcal{A})$ induces an equivalence between 
\begin{itemize}
    \item[\rm (1)] the  fp-$\bar{\mathcal{E}}$-injectives in $\mathcal{A}$ and the fp-injectives in $\mathbf{P}_\mathcal{E}(\mathcal{A})$, as well as
    \item[\rm (2)] the $\bar{\mathcal{E}}$-injectives in $\mathcal{A}$ and the injectives in $\mathbf{P}_\mathcal{E}(\mathcal{A})$.
\end{itemize}
\end{lem}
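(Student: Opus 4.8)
The plan is to transport the abstract characterisation of (fp-)injectivity through the fully faithful, extension-closed embedding $\textnormal{ev}_\mathcal{E}$ of Theorem \ref{big}, exactly as one does in the classical case (Theorem 1.4) with $\textnormal{ev}\colon \mathcal{A}\to\mathbf{P}(\mathcal{A})$. For part (1), recall that, by the construction of $\bar{\mathcal{E}}$, a sequence $X\to Y\to Z$ is $\bar{\mathcal{E}}$-exact in $\mathcal{A}$ iff $0\to\bar X\to\bar Y\to\bar Z\to 0$ is short exact in $\mathbf{P}_\mathcal{E}(\mathcal{A})$, and by Remark \ref{induce} together with the identification $\fp\mathbf{P}_\mathcal{E}(\mathcal{A})\simeq\textnormal{fp}(\mathcal{C},\Ab)^\op/\mathcal{S}_\mathcal{E}$, the finitely presented objects of $\mathbf{P}_\mathcal{E}(\mathcal{A})$ are (up to equivalence) exactly the images $\bar C$ of $C\in\mathcal{C}$. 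So, an object $X\in\mathcal{A}$ is fp-$\bar{\mathcal{E}}$-injective iff $\Ext^1_{\bar{\mathcal{E}}}(C,X)=0$ for all $C\in\mathcal{C}$, and one wants to match this with $\Ext^1_{\mathbf{P}_\mathcal{E}(\mathcal{A})}(G,\bar X)=0$ for all $G\in\fp\mathbf{P}_\mathcal{E}(\mathcal{A})$. First I would show that $\textnormal{ev}_\mathcal{E}$ induces an isomorphism $\Ext^1_{\bar{\mathcal{E}}}(C,X)\cong\Ext^1_{\mathbf{P}_\mathcal{E}(\mathcal{A})}(\bar C,\bar X)$: an $\bar{\mathcal{E}}$-extension of $C$ by $X$ maps to a genuine extension in $\mathbf{P}_\mathcal{E}(\mathcal{A})$ by definition of $\bar{\mathcal{E}}$, this is injective on equivalence classes by full faithfulness, and it is surjective because the essential image is extension-closed, so any extension $0\to\bar X\to E\to\bar C\to 0$ in $\mathbf{P}_\mathcal{E}(\mathcal{A})$ has $E\cong\bar Y$ for some $Y\in\mathcal{A}$, yielding an $\bar{\mathcal{E}}$-exact sequence $X\to Y\to C$. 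Hence $X$ is fp-$\bar{\mathcal{E}}$-injective iff $\bar X$ is fp-injective in $\mathbf{P}_\mathcal{E}(\mathcal{A})$; the essential-image assertion — that every fp-injective of $\mathbf{P}_\mathcal{E}(\mathcal{A})$ is of the form $\bar X$ — follows since, by Theorem \ref{big}, $\textnormal{ev}_\mathcal{E}$ commutes with filtered colimits and $\mathbf{P}_\mathcal{E}(\mathcal{A})$ is locally coherent, so every fp-injective is a filtered colimit of finitely presented objects, i.e.\ of $\bar C$'s, which lands in $\textnormal{ev}_\mathcal{E}(\mathcal{A})=\varinjlim\textnormal{ev}_\mathcal{E}(\mathcal{C})$, and the latter is extension-closed hence closed under the relevant colimits by Lemma \ref{ext}.

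For part (2), the argument is the same but with $\fp$-objects replaced by arbitrary objects of $\mathbf{P}_\mathcal{E}(\mathcal{A})$, using that one also has $\Ext^1_{\bar{\mathcal{E}}}(X',X)\cong\Ext^1_{\mathbf{P}_\mathcal{E}(\mathcal{A})}(\bar{X'},\bar X)$ for \emph{all} $X'\in\mathcal{A}$ by the identical extension-closedness argument, and that every object of $\mathbf{P}_\mathcal{E}(\mathcal{A})$ is a filtered colimit of finitely presented ones. Concretely: if $\bar X$ is injective in $\mathbf{P}_\mathcal{E}(\mathcal{A})$ then $\Ext^1_{\bar{\mathcal{E}}}(X',X)=0$ for all $X'$, so $X$ is $\bar{\mathcal{E}}$-injective; conversely, if $X$ is $\bar{\mathcal{E}}$-injective then $\Ext^1_{\mathbf{P}_\mathcal{E}(\mathcal{A})}(\bar{X'},\bar X)=0$ for all $X'\in\mathcal{A}$, and since every object of $\mathbf{P}_\mathcal{E}(\mathcal{A})$ is a filtered colimit of objects $\bar{X'}$ (as $\textnormal{ev}_\mathcal{E}(\mathcal{C})$ generates $\fp\mathbf{P}_\mathcal{E}(\mathcal{A})$ up to the duality/quotient and $\textnormal{ev}_\mathcal{E}$ commutes with $\varinjlim$) and $\Ext^1$ into an fp-injective-type object behaves well — more precisely, in a locally coherent category vanishing of $\Ext^1(\bar C,\bar X)$ for all $\fp$ objects $\bar C$ already means $\bar X$ is fp-injective, and an fp-injective that additionally kills $\Ext^1$ from every object is injective — one concludes $\bar X$ is injective. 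It remains to note essential surjectivity onto the (fp-)injectives: every injective (resp.\ fp-injective) of $\mathbf{P}_\mathcal{E}(\mathcal{A})$ lies in $\textnormal{ev}_\mathcal{E}(\mathcal{A})$ by the colimit argument above, and $\textnormal{ev}_\mathcal{E}$ is fully faithful, so the induced functors on these subcategories are equivalences.

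The main obstacle I anticipate is the bookkeeping around which objects of $\mathbf{P}_\mathcal{E}(\mathcal{A})$ actually lie in the essential image of $\textnormal{ev}_\mathcal{E}$ — i.e.\ making precise that the essential image is closed not merely under extensions but under the filtered colimits needed to capture all (fp-)injectives — and the clean translation of the $\Ext^1$-vanishing condition across the embedding, since $\bar{\mathcal{E}}$-exactness is \emph{defined} by short exactness of the $\bar{(-)}$-image but one must still check there are no extra $\mathbf{P}_\mathcal{E}(\mathcal{A})$-extensions missed. Both points are handled by Lemma \ref{ext} (extension-closedness of $\varinjlim\mathcal{D}$) applied to $\mathcal{D}=\textnormal{ev}_\mathcal{E}(\mathcal{C})$, combined with local coherence of $\mathbf{P}_\mathcal{E}(\mathcal{A})$; I would structure the write-up so that the $\Ext^1$-isomorphism is proved once, for arbitrary source objects in $\mathcal{A}$, and then specialised to $\mathcal{C}$ for part (1). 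Everything else is a formal consequence of Theorem \ref{big}, Lemma \ref{small}, Remark \ref{induce}, and the standard facts about fp-injectives in locally coherent categories recorded in Section 1.
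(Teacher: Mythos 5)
Your proposal has a genuine gap, and it sits exactly at the point you flagged as "bookkeeping". You assert that, by Remark \ref{induce} and the identification $\fp\mathbf{P}_\mathcal{E}(\mathcal{A})\simeq\textnormal{fp}(\mathcal{C},\Ab)^\op/\mathcal{S}_\mathcal{E}$, the finitely presented objects of $\mathbf{P}_\mathcal{E}(\mathcal{A})$ are exactly the objects $\bar{C}$ with $C\in\mathcal{C}$. This is false: the image of $\mathcal{C}$ consists only of the representable functors $\Hom_\mathcal{C}(C,-)$, which form a proper extension-closed subcategory of $\textnormal{fp}(\mathcal{C},\Ab)^\op/\mathcal{S}_\mathcal{E}$ in general (this is precisely why Lemma \ref{small} is a nontrivial statement; already in the pure case $\mathcal{S}_\mathcal{E}=0$ the representables are far from all of $\textnormal{fp}(\mathcal{C},\Ab)^\op$). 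Two consequences. First, your reduction of fp-injectivity of $\bar{X}$ to the vanishing of $\Ext^1_{\mathbf{P}_\mathcal{E}(\mathcal{A})}(\bar{C},\bar{X})$ for $C\in\mathcal{C}$ is not justified: fp-injectivity requires vanishing against \emph{all} finitely presented objects, and you give no argument that testing against the representables suffices. Second, and more seriously, your essential-surjectivity argument collapses: "every fp-injective is a filtered colimit of finitely presented objects, i.e.\ of $\bar{C}$'s, hence lies in $\textnormal{ev}_\mathcal{E}(\mathcal{A})$" would, if valid, apply verbatim to \emph{every} object of the locally finitely presented category $\mathbf{P}_\mathcal{E}(\mathcal{A})$ and show that $\textnormal{ev}_\mathcal{E}$ is essentially surjective, which is false. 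The same defect undermines part (2), where you additionally need (and do not prove) that every object of $\mathbf{P}_\mathcal{E}(\mathcal{A})$ is a filtered colimit of objects $\bar{X'}$ and that $\Ext^1$-vanishing passes to such colimits in the first variable.

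The statement is not a formal consequence of Theorem \ref{big}, Lemma \ref{small} and Lemma \ref{ext} alone; some genuine purity-theoretic input is needed to identify which objects of $\mathbf{P}_\mathcal{E}(\mathcal{A})$ lie in the essential image. The paper's proof supplies this by the localisation correspondence of Section 1.6 (ii)$\rightarrow$(iii),(iv): the composite $\mathcal{A}\to\mathbf{P}(\mathcal{A})\to\mathbf{P}(\mathcal{A})/\mathcal{T}_\mathcal{E}$ identifies the definable subcategory $\mathcal{X}_\mathcal{E}$ with the fp-injectives of $\mathbf{P}_\mathcal{E}(\mathcal{A})$ and the $\mathcal{E}_\textnormal{pure}$-injectives in $\mathcal{X}_\mathcal{E}$ with the injectives. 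What then remains — and is the actual content of the paper's argument — are two elementary verifications inside $\mathcal{A}$: an object is fp-$\bar{\mathcal{E}}$-injective if and only if it lies in $\mathcal{X}_\mathcal{E}$ (pushout of an $\mathcal{E}$-exact sequence along $X\to A$, splitting from $\Ext^1_{\bar{\mathcal{E}}}(Z,A)=0$ for one direction; fp-injectivity of $\bar{A}$ plus full faithfulness for the other), and an object is $\bar{\mathcal{E}}$-injective if and only if it is an $\mathcal{E}_\textnormal{pure}$-injective object of $\mathcal{X}_\mathcal{E}$. If you want to salvage your approach, you should replace the false identification by this appeal to the known correspondence, or else prove directly (which amounts to the same external input) that the fp-injectives of the localisation are exactly the $\mathcal{S}_\mathcal{E}$-closed images $\bar{X}$, $X\in\mathcal{X}_\mathcal{E}$.
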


\begin{proof} Recall that we have a definable subcategory $\mathcal{X}_\mathcal{E}$ of $\mathcal{A}$ and a closed set $\mathcal{U}_\mathcal{E}$ in $\Ind \mathcal{A}$ associated to the exact structure $\mathcal{E}$ on $\mathcal{C}$ by Corollary \ref{enosim} and the correspondences (i)$\leftrightarrow$(iii)$\leftrightarrow$(iv) in Section 1.6. Following the assignments, we have
\begin{align*}
    \mathcal{X}_\mathcal{E} &= \{A \in \mathcal{A} \mid \Hom_{\mathcal{A}}(f,A)\textnormal{ is surjective for all } \mathcal{E}\textnormal{-monomorphisms }f\},\\
    \mathcal{U}_\mathcal{E} &= \Ind \mathcal{A} \cap \mathcal{X}_\mathcal{E}.
\end{align*}
By the correspondence (ii)$\rightarrow$(iii),(iv) it follows that, under $\textnormal{ev}_\mathcal{E}$, the definable subcategory $\mathcal{X}_\mathcal{E}$ identifies with the fp-injectives in $\mathbf{P}_\mathcal{E}(\mathcal{A})$ and the $\mathcal{E}_\textnormal{pure}$-injectives in $\mathcal{X}_\mathcal{E}$ identify with the injectives in $\mathbf{P}_\mathcal{E}(\mathcal{A})$. Thus, for (1) it suffices to show that the fp-$\bar{\mathcal{E}}$-injectives in $\mathcal{A}$ coincide with $\mathcal{X}_\mathcal{E}$ and for (2) that the $\bar{\mathcal{E}}$-injectives in $ \mathcal{A}$ coincide with the $\mathcal{E}_\textnormal{pure}$-injectives in $\mathcal{X}_\mathcal{E}$.

(1) Let $A\in \mathcal{A}$ be fp-$\bar{\mathcal{E}}$-injective and consider an $\mathcal{E}$-exact sequence
\begin{align*}
    0 \longrightarrow X \xlongrightarrow{f} Y \xlongrightarrow{g} Z \longrightarrow 0
\end{align*}
in $\mathcal{C}$. This is also an $\bar{\mathcal{E}}$-exact sequence in $\mathcal{A}$. Hence, the pushout of $f$ along an arbitrary morphism $X \rightarrow A$ yields an $\bar{\mathcal{E}}$-exact sequence $0 \rightarrow A \rightarrow P \rightarrow Z \rightarrow 0$. Now $\Ext_{\bar{\mathcal{E}}}^1(Z,A) = 0$ implies that this sequence splits and the morphism $X \rightarrow A$ factors through $f$. It follows that $\Hom_{\mathcal{A}}(f,A)$ is surjective and $A\in \mathcal{X}_\mathcal{E}$.

Let $A\in \mathcal{X}_\mathcal{E}$. Then $\bar{A}$ is fp-injective in $\mathbf{P}_\mathcal{E} (\mathcal{A})$. Now for $C\in \mathcal{C}$ an $\bar{\mathcal{E}}$-exact sequence $A\rightarrow B \rightarrow C$ yields a short exact sequence
\begin{align*}
    0 \longrightarrow \bar{A} \longrightarrow \bar{B} \longrightarrow \bar{C} \longrightarrow 0
\end{align*}
in $\mathbf{P}_\mathcal{E}(\mathcal{A})$ with $\bar{C}\in \fp \mathbf{P}_\mathcal{E}(\mathcal{A})$. Because $\bar{A}$ is fp-injective, this sequence must split. By Theorem \ref{big} the functor $\textnormal{ev}_\mathcal{E}$ is fully faithful, so also $A \rightarrow B \rightarrow C$ splits. Hence, $A$ is fp-$\bar{\mathcal{E}}$-injective.

(2) Let $A\in \mathcal{A}$ be $\bar{\mathcal{E}}$-injective. Then $A$ is also fp-$\bar{\mathcal{E}}$-injective, so $A\in \mathcal{X}_\mathcal{E}$. Further, $\mathcal{E}_\bot \subseteq \mathcal{E}$ implies $\mathcal{E}_\textnormal{pure} \subseteq \bar{\mathcal{E}}$. Hence, $A$ is also $\mathcal{E}_\textnormal{pure}$-injective.

Let $A\in \mathcal{X}_\mathcal{E}$ be $\mathcal{E}_\textnormal{pure}$-injective. Then $\bar{A}$ is injective in $\mathbf{P}_\mathcal{E}(\mathcal{A})$. Now an $\bar{\mathcal{E}}$-exact sequence $A\rightarrow B \rightarrow C$ yields a short exact sequence
\begin{align*}
    0 \longrightarrow \bar{A} \longrightarrow \bar{B} \longrightarrow \bar{C} \longrightarrow 0
\end{align*}
in $\mathbf{P}_\mathcal{E}(\mathcal{A})$. Because $\bar{A}$ is injective, this sequence must split. By Theorem \ref{big} the functor $\textnormal{ev}_\mathcal{E}$ is fully faithful, so also $A\rightarrow B \rightarrow C$ splits. Hence, $A$ is \mbox{$\bar{\mathcal{E}}$-injective}.
\end{proof}

The proof of Lemma \ref{trans} implies the following. 

\begin{coro}\label{int} \begin{itemize}
    \item[\rm (1)] The fp-$\bar{\mathcal{E}}$-injectives form the definable subcategory $\mathcal{X}_\mathcal{E}$.
    \item[\rm (2)] The indecomposable $\bar{\mathcal{E}}$-injectives form the closed set $\mathcal{U}_\mathcal{E}$ in $\Ind \mathcal{A}$.
\end{itemize}
\end{coro}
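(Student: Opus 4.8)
The plan is to extract both assertions from the identifications already established inside the proof of Lemma~\ref{trans}, not to re-derive anything.

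For part (1) I would recall that in the proof of Lemma~\ref{trans}(1) it is shown, by an explicit pushout argument, that an object $A\in\mathcal{A}$ is fp-$\bar{\mathcal{E}}$-injective if and only if $\Hom_\mathcal{A}(f,A)$ is surjective for every $\mathcal{E}$-monomorphism $f$, i.e. if and only if $A\in\mathcal{X}_\mathcal{E}$. Hence the full subcategory of fp-$\bar{\mathcal{E}}$-injectives literally coincides with $\mathcal{X}_\mathcal{E}$. By Corollary~\ref{enosim}, $\mathcal{S}_\mathcal{E}$ is a Serre subcategory of $\textnormal{fp}(\mathcal{C},\Ab)$, and under the correspondence (i)$\leftrightarrow$(iii) of Section~1.6 it is attached precisely to the definable subcategory $\mathcal{X}_\mathcal{E}$; so $\mathcal{X}_\mathcal{E}$ is definable and (1) follows.

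For part (2) I would use that, by definition, $\Ind\mathcal{A}$ consists of the isomorphism classes of the indecomposable $\mathcal{E}_\textnormal{pure}$-injective objects of $\mathcal{A}$. The proof of Lemma~\ref{trans}(2) shows that the $\bar{\mathcal{E}}$-injectives are exactly the $\mathcal{E}_\textnormal{pure}$-injectives lying in $\mathcal{X}_\mathcal{E}$ (using $\mathcal{E}_\textnormal{split}\subseteq\mathcal{E}$, hence $\mathcal{E}_\textnormal{pure}\subseteq\bar{\mathcal{E}}$, for one inclusion, and fp-$\bar{\mathcal{E}}$-injectivity together with part (1) for the other). Intersecting with indecomposables, an indecomposable object $A$ is $\bar{\mathcal{E}}$-injective if and only if it is an indecomposable $\mathcal{E}_\textnormal{pure}$-injective lying in $\mathcal{X}_\mathcal{E}$, i.e. if and only if $A\in\Ind\mathcal{A}\cap\mathcal{X}_\mathcal{E}$. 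By the correspondence (iii)$\leftrightarrow$(iv) of Section~1.6 this set is exactly the closed set $\mathcal{U}_\mathcal{E}$ associated to $\mathcal{S}_\mathcal{E}$, which gives (2).

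I expect no genuine obstacle; the argument is purely a matter of reading off what Lemma~\ref{trans}'s proof already produces. The only point worth flagging is that one must invoke the set-level identifications obtained \emph{within} that proof (``$A$ fp-$\bar{\mathcal{E}}$-injective $\iff A\in\mathcal{X}_\mathcal{E}$'' and ``$A$ $\bar{\mathcal{E}}$-injective $\iff A$ is $\mathcal{E}_\textnormal{pure}$-injective and in $\mathcal{X}_\mathcal{E}$''), rather than the weaker formal statement of the lemma that certain subcategories are merely equivalent via $\textnormal{ev}_\mathcal{E}$; and one should note that $\mathcal{U}_\mathcal{E}$ was already fixed earlier as the closed set attached to $\mathcal{S}_\mathcal{E}$, so the final step is precisely the observation that this closed set equals $\Ind\mathcal{A}\cap\mathcal{X}_\mathcal{E}$, which is the content of (iii)$\leftrightarrow$(iv).
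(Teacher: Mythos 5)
Your argument is exactly what the paper intends: the corollary is stated with the remark that it follows from the proof of Lemma \ref{trans}, and you correctly read off the two set-level identifications established there (fp-$\bar{\mathcal{E}}$-injectives $=\mathcal{X}_\mathcal{E}$, and $\bar{\mathcal{E}}$-injectives $=$ $\mathcal{E}_\textnormal{pure}$-injectives in $\mathcal{X}_\mathcal{E}$) together with $\mathcal{U}_\mathcal{E}=\Ind\mathcal{A}\cap\mathcal{X}_\mathcal{E}$. This matches the paper's route, so nothing further is needed.
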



Before we can show the existence of enough $\bar{\mathcal{E}}$-injectives in $\mathcal{A}$, we need the following lemma. 

\begin{lem}\label{monough} A morphism $X\rightarrow Y$ in $\mathcal{A}$ is an $\bar{\mathcal{E}}$-monomorphism if and only if $\bar{X} \rightarrow \bar{Y}$ is a monomorphism in $\mathbf{P}_\mathcal{E}(\mathcal{A})$.
\end{lem}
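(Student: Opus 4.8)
\textbf{Proof proposal for Lemma \ref{monough}.}

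The plan is to characterize $\bar{\mathcal{E}}$-monomorphisms via a standard factorization argument, using that $\mathbf{P}_\mathcal{E}(\mathcal{A})$ is a Grothendieck category and that $\textnormal{ev}_\mathcal{E}$ is fully faithful, commutes with cokernels, and has essential image closed under extensions (Theorem \ref{big}). One direction is immediate: if $f\colon X\rightarrow Y$ is an $\bar{\mathcal{E}}$-monomorphism, then by definition it fits into an $\bar{\mathcal{E}}$-exact sequence $X\xrightarrow{f} Y\xrightarrow{g} Z$, which by the definition of $\bar{\mathcal{E}}$ (as the exact structure inherited from $\mathbf{P}_\mathcal{E}(\mathcal{A})$) means $0\rightarrow \bar{X}\xrightarrow{\bar{f}}\bar{Y}\xrightarrow{\bar{g}}\bar{Z}\rightarrow 0$ is short exact in $\mathbf{P}_\mathcal{E}(\mathcal{A})$; in particular $\bar{f}$ is a monomorphism.

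For the converse, suppose $\bar{f}\colon \bar{X}\rightarrow\bar{Y}$ is a monomorphism in $\mathbf{P}_\mathcal{E}(\mathcal{A})$. First I would form the cokernel $\bar{Y}\rightarrow Q$ of $\bar{f}$ in $\mathbf{P}_\mathcal{E}(\mathcal{A})$; this gives a short exact sequence $0\rightarrow\bar{X}\rightarrow\bar{Y}\rightarrow Q\rightarrow 0$. The obstacle is that $Q$ need not a priori lie in the essential image of $\textnormal{ev}_\mathcal{E}$. However, $\mathbf{P}_\mathcal{E}(\mathcal{A})$ is locally coherent, so $Q = \varinjlim Q_i$ is a filtered colimit of finitely presented objects; pulling back the sequence along $Q_i\rightarrow Q$ expresses $\bar{Y}$ (hence, since filtered colimits are exact in a Grothendieck category, the whole sequence) as a filtered colimit of short exact sequences $0\rightarrow\bar{X}\rightarrow E_i\rightarrow Q_i\rightarrow 0$ with $Q_i\in\fp\mathbf{P}_\mathcal{E}(\mathcal{A})$. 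By Corollary \ref{int}(1) — equivalently, since $\bar{X}=\textnormal{ev}_\mathcal{E}(X)$ is automatically fp-injective as it lies in the essential image of $\textnormal{ev}_\mathcal{E}$, which equals the fp-injectives — wait, that is false in general; instead I use that $\bar{X}$, being in the essential image, satisfies: the extension $E_i$ of the finitely presented $Q_i$ by $\bar{X}$ has $E_i$ in the essential image precisely when the essential image is extension-closed, which it is by Theorem \ref{big}, provided $Q_i$ is also in the essential image. Since $Q_i$ is finitely presented in $\mathbf{P}_\mathcal{E}(\mathcal{A})$ and $\fp\mathbf{P}_\mathcal{E}(\mathcal{A})\simeq\textnormal{fp}(\mathcal{C},\Ab)^\op/\mathcal{S}_\mathcal{E}$, it need not lie in $\textnormal{ev}_\mathcal{E}(\mathcal{C})$ either. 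The cleaner route: reduce to the finitely presented level. Write $X=\varinjlim X_j$ and $Y=\varinjlim Y_k$ with $X_j,Y_k\in\mathcal{C}$; by standard colimit arguments the monomorphism $\bar{f}$ is a filtered colimit of maps $\bar{f}_{jk}\colon \bar{X_j}\rightarrow\bar{Y_k}$ in $\fp\mathbf{P}_\mathcal{E}(\mathcal{A})$, and since a filtered colimit of monomorphisms in a Grothendieck category is a monomorphism, but conversely — this is where care is needed — one shows the maps can be chosen to be monomorphisms cofinally. Then each $\bar{f}_{jk}$ monic in $\textnormal{fp}(\mathcal{C},\Ab)^\op/\mathcal{S}_\mathcal{E}$ corresponds, via Lemma \ref{small} and Remark \ref{induce}, to an $\mathcal{E}$-monomorphism $f_{jk}\colon X_j\rightarrow Y_k$ in $\mathcal{C}$; taking the cokernel $g_{jk}$ of $f_{jk}$ in $\mathcal{C}$ gives an $\mathcal{E}$-exact sequence, and passing to the filtered colimit (using Corollary \ref{second}) yields an $\bar{\mathcal{E}}$-exact sequence $X\xrightarrow{f}Y\xrightarrow{g}Z$ realizing $f$ as an $\bar{\mathcal{E}}$-monomorphism.

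I expect the main obstacle to be making the "cofinally monic" reduction precise: a filtered colimit of arbitrary maps $\bar{f}_{jk}$ whose colimit is monic need not have the individual maps monic, so one must exploit that $\bar{X_j}$ is finitely presented and that the kernels of the $\bar{f}_{jk}$ (computed in the Grothendieck category $\mathbf{P}_\mathcal{E}(\mathcal{A})$) form a filtered system with zero colimit, hence each kernel is eventually killed; after reindexing one obtains a cofinal subsystem of monomorphisms. Once this is in place, the translation to $\mathcal{E}$-monomorphisms in $\mathcal{C}$ via Theorem \ref{eno} and Remark \ref{induce}, followed by the colimit back up via Corollary \ref{second}, is routine. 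An alternative, possibly shorter argument avoiding the reindexing: form the cokernel $g\colon Y\rightarrow Z$ of $f$ directly in $\mathcal{A}$ (which exists since $\mathcal{A}$ has cokernels) and show $\textnormal{ev}_\mathcal{E}(\coker f)=\coker\bar{f}$ because $\textnormal{ev}_\mathcal{E}$ commutes with cokernels; then $0\rightarrow\bar{X}\rightarrow\bar{Y}\rightarrow\bar{Z}\rightarrow 0$ is exact (monic by hypothesis, cokernel by construction, and the induced map $\bar{X}\rightarrow\ker\bar{g}$ is an isomorphism since $\bar{f}$ is already monic), so $X\rightarrow Y\rightarrow Z$ is $\bar{\mathcal{E}}$-exact and $f$ is an $\bar{\mathcal{E}}$-monomorphism — this is the approach I would actually write up, as it sidesteps the colimit bookkeeping entirely.
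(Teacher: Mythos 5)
Your easy direction is fine and agrees with the paper. The problem is with the argument you say you would actually write up: its very first step forms $g=\coker f$ in $\mathcal{A}$, ``which exists since $\mathcal{A}$ has cokernels.'' In the paper's generality $\mathcal{A}$ is only a locally finitely presented additive category with products, and such categories need not have cokernels; the paper never claims they do (Section 1.3 only records that $\fp\mathcal{A}$ has \emph{weak} cokernels). For a concrete obstruction, take $\mathcal{A}$ to be the category of flat modules over a polynomial ring $R$ in three variables over a field: it is locally finitely presented (Lazard) and has products ($R$ is noetherian, hence coherent), but a cokernel in $\mathcal{A}$ of a map $F_1\rightarrow F_0$ of flat modules would be exactly a reflection of the module-theoretic cokernel into the flat modules; since every module arises this way, existence of all cokernels would make the flat modules a reflective, hence limit-closed, subcategory of $\Mod R$ --- contradicting the fact that the second syzygy of the residue field is a non-flat kernel of a map between free modules. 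Consequently ``commutes with cokernels'' in Theorem \ref{big} can only mean preservation of those cokernels that exist, and it does not hand you the object $Z$ your argument needs. This is precisely why the paper's proof never forms $\coker f$ in $\mathcal{A}$: it shows instead that every morphism of morphisms from a morphism in $\mathcal{C}$ to $f$ factors through an $\mathcal{E}$-monomorphism (using $\ker\bar f\in\mathcal{T}_\mathcal{E}=\varinjlim\mathcal{S}_\mathcal{E}$, fp-injectivity of the objects $\bar A$ in $\mathbf{P}(\mathcal{A})$, and a trick writing the difference of two morphisms of morphisms through a split monomorphism), concludes via Krause's criterion that $f$ is a filtered colimit of $\mathcal{E}$-monomorphisms, and then invokes Corollary \ref{second}; the only cokernels ever used are those of $\mathcal{E}$-monomorphisms in $\mathcal{C}$, which exist by definition of the exact structure.

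Two further remarks. First, your short argument \emph{is} correct, and genuinely simpler than the paper's, whenever $\mathcal{A}$ has cokernels --- e.g.\ for $\mathcal{A}=\Mod A$ over an Artin algebra, or any locally finitely presented Grothendieck category --- since then $\textnormal{ev}_\mathcal{E}(\coker f)=\coker\bar f$ and a monomorphism together with its cokernel is short exact, so $(f,\coker f)$ is $\bar{\mathcal{E}}$-exact by the very definition of $\bar{\mathcal{E}}$; it just does not cover the stated generality. Second, your first, colimit-based sketch is much closer in spirit to the paper's proof, but the step you yourself flag (``after reindexing one obtains a cofinal subsystem of monomorphisms'') is exactly where it breaks: $\varinjlim\ker\bar f_{jk}=0$ only says that every finitely generated subobject of each kernel dies further along the system, and one cannot kill kernels while staying inside $\mathcal{C}$ and inside the essential image of $\textnormal{ev}_\mathcal{E}$; making that idea work is essentially the content of the paper's factorization argument, so as written both of your routes stop short of a proof.
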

\begin{proof} One implication is clear. Let $f\colon X \rightarrow Y$ and assume that $\bar{f}\colon \bar{X} \rightarrow \bar{Y}$ is a monomorphism in $\mathbf{P}_\mathcal{E}(\mathcal{A})$. We will show that for all $f'\colon X'\rightarrow Y'$ in $\mathcal{C}$ every morphism of morphisms
\begin{equation*}
    \begin{tikzcd}
        X' \arrow[d] \arrow[r,"f'"] & Y' \arrow[d]\\
        X \arrow[r, "f"] & Y
    \end{tikzcd} \tag{$\ast$}
\end{equation*}
from $f'$ to $f$ factors through an $\mathcal{E}$-monomorphism. This would imply that $f$ is a direct limit of \mbox{$\mathcal{E}$-monomorphisms} by \cite[Lemma 11.1.6]{Krause4} and thus an $\bar{\mathcal{E}}$-monomorphism by Corollary \ref{second}.

Since $\bar{f}$ is a monomorphism, we have $\ker \bar{f} \in \mathcal{T}_\mathcal{E}$. Recall that $\mathcal{T}_\mathcal{E} = \varinjlim \mathcal{S}$ by the assignment (i)$\leftrightarrow$(ii) in Section 1.6, where $\mathcal{S}$ equals the Serre subcategory of $\fp \mathbf{P}(\mathcal{A})$ corresponding to the Serre subcategory $\mathcal{S}_\mathcal{E}$ of $\textnormal{fp}(\mathcal{C}, \Ab)$ under the identification of $\fp \mathbf{P}(\mathcal{A})$ with $\textnormal{fp}(\mathcal{C}, \Ab)^\op$ (see Section 1.4). By the description of $\mathcal{S}_\mathcal{E}$, we have
\begin{align*}
    \mathcal{S} = \{F\cong \ker \bar{g} \mid g\textnormal{ is an }\mathcal{E}\textnormal{-monomorphism} \}.
\end{align*}
Now the diagram $(\ast)$ induces a morphism $\ker \bar{f'} \rightarrow \ker \bar{f}$, which must factor through some $\ker \bar{g}$ for an $\mathcal{E}$-monomorphism $g \colon C \rightarrow D$, since $\mathcal{T}_\mathcal{E} = \varinjlim \mathcal{S}$. Because for all $A\in \mathcal{A}$ the object $\bar{A}$ is fp-injective in $\mathbf{P}(\mathcal{A})$ by Theorem 1.4, we obtain a commutative diagram
\begin{equation*}
    \begin{tikzcd}
        0 \arrow[r] & \ker \bar{f'} \arrow[d] \arrow[r] & \bar{X'} \arrow[d] \arrow[r] & \bar{Y'} \arrow[d]\\
        0 \arrow[r] & \ker \bar{g} \arrow[d] \arrow[r] & \bar{C} \arrow[d] \arrow[r] & \bar{D} \arrow[d] \\
        0 \arrow[r] & \ker \bar{f} \arrow[r] & \bar{X} \arrow[r] & \bar{Y} 
    \end{tikzcd}
\end{equation*}
with exact rows. Since the functor $\textnormal{ev}$ is fully faithful by Theorem 1.4, this yields a second morphism of morphisms 
\begin{equation*}
    \begin{tikzcd}
        X' \arrow[d] \arrow[r,"f'"] & Y' \arrow[d]\\
        X \arrow[r, "f"] & Y,
    \end{tikzcd} \tag{$\ast \ast$}
\end{equation*}
which factors through an $\mathcal{E}$-monomorphism. It is left to show that the difference between $(\ast \ast)$ and $(\ast)$, denoted by $\Delta$, factors through an $\mathcal{E}$-monomorphism. This difference induces the zero morphism between $\ker \bar{f}$ and $\ker \bar{f'}$. Corresponding to $\Delta$, consider the commutative diagram
\begin{equation*}
    \begin{tikzcd}
        0 \arrow[r] & \ker \bar{f'} \arrow[d, dotted] \arrow[r] & \bar{X'} \arrow[d, "\bar{\alpha}", swap] \arrow[r, "\bar{f'}"] & \bar{Y'} \arrow[d, "\bar{\beta}"]\\
        0 \arrow[r] & \ker \bar{f} \arrow[r] & \bar{X} \arrow[r, "\bar{f}"] & \bar{Y}. 
    \end{tikzcd}
\end{equation*}
Then $\bar{\alpha}$ factors through the image of $\bar{f'}$. Because $\bar{X}$ is fp-injective, $\bar{\alpha}$ also factors through $\bar{f'}$. This yields $\delta \colon Y' \rightarrow X$ with $\alpha = \delta f'$. Hence, $\Delta$ factors as
\begin{equation*}
    \begin{tikzcd}
        X' \arrow[d, "\alpha", swap] \arrow[r,"f'"] & Y' \arrow[d, "(\delta\, \beta)^\intercal"]\\
        X \arrow[d, equals] \arrow[r,"(1_X \, f)^\intercal"] & X\oplus Y \arrow[d, "(0\, 1_Y)"]\\
        X \arrow[r, "f"] & Y.
    \end{tikzcd}
\end{equation*}
Note that the middle row is a split monomorphism. It equals a filtered colimit of split monomorphisms in $\mathcal{C}$. Since $X',Y' \in \mathcal{C}$, it follows that $\Delta$ factors through such a split monomorphism. In particular, it factors through an $\mathcal{E}$-monomorphism.   
\end{proof} 

\begin{prop}\label{enough} \begin{itemize}
    \item[\rm (1)] Every $X\in \mathcal{A}$ admits an $\bar{\mathcal{E}}$-injective envelope $X\rightarrow Q$. 
    \item[\rm (2)] For all $X\in \mathcal{A}$ there exists a product $Q$ of indecomposable $\bar{\mathcal{E}}$-injective objects in $\mathcal{A}$ and an $\bar{\mathcal{E}}$-monomorphism $X \rightarrow Q$.
\end{itemize}
\end{prop}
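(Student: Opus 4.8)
The plan is to transport the statement along the fully faithful embedding $\textnormal{ev}_\mathcal{E}\colon\mathcal{A}\to\mathbf{P}_\mathcal{E}(\mathcal{A})$ of Theorem \ref{big}, exploiting that $\mathbf{P}_\mathcal{E}(\mathcal{A})$ is a Grothendieck category and hence has injective envelopes and enough injectives. I will repeatedly use: that $\textnormal{ev}_\mathcal{E}$ is fully faithful, so it reflects isomorphisms and $\textnormal{End}_\mathcal{A}(Q)\cong\textnormal{End}_{\mathbf{P}_\mathcal{E}(\mathcal{A})}(\bar Q)$ (so $Q$ is indecomposable iff $\bar Q$ is); that it commutes with products; that a morphism $f$ of $\mathcal{A}$ is an $\bar{\mathcal{E}}$-monomorphism iff $\bar f$ is a monomorphism (Lemma \ref{monough}); and that it restricts to an equivalence between the $\bar{\mathcal{E}}$-injectives of $\mathcal{A}$ and the injectives of $\mathbf{P}_\mathcal{E}(\mathcal{A})$, carrying indecomposables to indecomposables (Lemma \ref{trans}(2)), so in particular every injective of $\mathbf{P}_\mathcal{E}(\mathcal{A})$ is of the form $\bar Q$, with $Q$ indecomposable when the injective is.

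For (1), let $\iota\colon\bar X\to E$ be an injective envelope in $\mathbf{P}_\mathcal{E}(\mathcal{A})$ and write $E\cong\bar Q$ with $Q$ an $\bar{\mathcal{E}}$-injective object. Let $f\colon X\to Q$ be the unique morphism with $\bar f=\iota$. By Lemma \ref{monough}, $f$ is an $\bar{\mathcal{E}}$-monomorphism, and it is left minimal: if $\alpha f=f$ for $\alpha\in\textnormal{End}_\mathcal{A}(Q)$, then $\bar\alpha\iota=\iota$, whence $\bar\alpha$ is an isomorphism (injective envelopes are left minimal), and so is $\alpha$. Thus $f$ is an $\bar{\mathcal{E}}$-injective envelope; discarding minimality, $(\mathcal{A},\bar{\mathcal{E}})$ has enough $\bar{\mathcal{E}}$-injectives.

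For (2), it is enough to embed $\bar X$ by a monomorphism into a product $\prod_i E_i$ of indecomposable injectives of $\mathbf{P}_\mathcal{E}(\mathcal{A})$: writing $E_i\cong\bar{Q_i}$ with $Q_i$ indecomposable $\bar{\mathcal{E}}$-injective and using that $\textnormal{ev}_\mathcal{E}$ commutes with products, $\prod_iE_i\cong\overline{\prod_iQ_i}$, so the monomorphism is $\bar f$ for some $f\colon X\to\prod_iQ_i$, which is an $\bar{\mathcal{E}}$-monomorphism by Lemma \ref{monough}. To get such a monomorphism in $\mathbf{P}_\mathcal{E}(\mathcal{A})$, I would start from the classical fact that every object of $\mathcal{A}$ admits a pure monomorphism into a product $\prod_iM_i$ of indecomposable pure-injective objects \cite{Crawley-Boevey2}; applying $\textnormal{ev}\colon\mathcal{A}\to\mathbf{P}(\mathcal{A})$ (which sends pure monomorphisms to monomorphisms, commutes with products, and identifies indecomposable pure-injectives with indecomposable injectives of $\mathbf{P}(\mathcal{A})$) yields $\textnormal{ev}(X)\hookrightarrow\prod_i\bar{M_i}$ in $\mathbf{P}(\mathcal{A})$. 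Now apply the quotient functor $q\colon\mathbf{P}(\mathcal{A})\to\mathbf{P}_\mathcal{E}(\mathcal{A})$, which is exact, commutes with products and satisfies $q\circ\textnormal{ev}=\textnormal{ev}_\mathcal{E}$: this gives $\bar X=\textnormal{ev}_\mathcal{E}(X)\hookrightarrow\prod_iq(\bar{M_i})$. Since $\mathcal{T}_\mathcal{E}$ is hereditary, $q$ carries injectives to injectives, and each $\bar{M_i}$, being indecomposable injective, is either $\mathcal{T}_\mathcal{E}$-torsion, so $q(\bar{M_i})=0$ and is dropped, or $\mathcal{T}_\mathcal{E}$-torsion-free, in which case $q(\bar{M_i})$ is a nonzero injective with $\textnormal{End}(q(\bar{M_i}))\cong\textnormal{End}(\bar{M_i})$ local, hence an indecomposable injective; this is the desired embedding.

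The main obstacle is precisely this input for (2): the availability, inside the relative purity category, of enough indecomposable injectives to embed an arbitrary object into their product. One could instead invoke the general theory of locally coherent Grothendieck categories here (cf. \cite{Herzog}), but I prefer the route above, which reduces it to the well-established absolute statement and transfers it along $q$; the delicate point in that transfer is checking that indecomposable injectives of $\mathbf{P}(\mathcal{A})$ map under $q$ to zero or to indecomposable injectives, which relies on $\mathcal{T}_\mathcal{E}$ being a hereditary torsion class.
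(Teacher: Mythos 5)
Your proof of part (1) is correct and is essentially the paper's own argument: injective envelope in the Grothendieck category $\mathbf{P}_\mathcal{E}(\mathcal{A})$, realization of it as $\bar{Q}$ via Lemma \ref{trans}, then full faithfulness, Lemma \ref{monough} and transfer of left minimality.

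Part (2), however, has a genuine gap at the step you yourself flagged as delicate. The dichotomy ``an indecomposable injective of $\mathbf{P}(\mathcal{A})$ is either $\mathcal{T}_\mathcal{E}$-torsion or $\mathcal{T}_\mathcal{E}$-torsion-free'' is not a consequence of $\mathcal{T}_\mathcal{E}$ being hereditary (hereditariness gives that the torsion-free class is closed under injective envelopes; the dichotomy you need is stability of the torsion theory, which fails here), and in the present situation it breaks precisely at the points you must handle. Indeed, let $M\in \Ind \mathcal{A}$ with $M\notin \mathcal{U}_\mathcal{E}$. Then $\bar{M}$ is not torsion, because $q(\bar{M}) = \textnormal{ev}_\mathcal{E}(M)\neq 0$ by full faithfulness of $\textnormal{ev}_\mathcal{E}$ (Theorem \ref{big}); and $\bar{M}$ is not torsion-free, because a torsion-free injective is $\mathcal{T}_\mathcal{E}$-closed, so $q(\bar{M})$ would be injective in $\mathbf{P}_\mathcal{E}(\mathcal{A})$ and then $M$ would be $\bar{\mathcal{E}}$-injective by Lemma \ref{trans}, i.e.\ $M\in \mathcal{U}_\mathcal{E}$, a contradiction. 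So for every such factor $M_i$ the object $q(\bar{M_i})$ is nonzero and not injective, and the transported monomorphism $\bar{X}\rightarrow \prod_i q(\bar{M_i})$ does not land in a product of indecomposable injectives. The failure is not marginal: taking $X=M$ itself (an indecomposable pure-injective outside $\mathcal{U}_\mathcal{E}$, e.g.\ a non-injective indecomposable finite length module over an Artin algebra with $\mathcal{E}=\mathcal{E}_\textnormal{max}$, where $\mathcal{U}_\mathcal{E} = \Inj A$), the absolute embedding may simply be the identity $M\rightarrow M$, and applying $q$ yields nothing usable. The needed input is exactly the alternative you set aside: the fact, valid in any locally coherent Grothendieck category and applied directly to $\mathbf{P}_\mathcal{E}(\mathcal{A})$, that every object admits a monomorphism into a product of indecomposable injectives \cite[Lemma 3.1]{Krause2}. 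This is what the paper invokes, after which the transfer back to $\mathcal{A}$ proceeds exactly as in your text, via Lemma \ref{trans}, preservation of products by $\textnormal{ev}_\mathcal{E}$ and Lemma \ref{monough}.
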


\begin{proof} (1) Let $X \in \mathcal{A}$. Since $\mathbf{P}_\mathcal{E}(\mathcal{A})$ is a Grothendieck category, there exists an injective envelope $\bar{X} \rightarrow F$ in $\mathbf{P}_\mathcal{E}(\mathcal{A})$. By Lemma \ref{trans} we can choose $F = \bar{Q}$ for an $\bar{\mathcal{E}}$-injective object $Q\in \mathcal{A}$. Because $\textnormal{ev}_\mathcal{E}$ is fully faithful by Theorem \ref{big},  the morphism $\bar{X} \rightarrow \bar{Q}$ corresponds to a left minimal morphism $X\rightarrow Q$, which is also an $\bar{\mathcal{E}}$-monomorphism by Lemma \ref{monough}. Thus, $X\rightarrow Q$ is an $\bar{\mathcal{E}}$-injective envelope. 

(2) By Lemma \ref{trans} the functor ${\textnormal{ev}_\mathcal{E}} \colon \mathcal{A} \to \mathcal{P}_\mathcal{E}(\mathcal{A})$
induces an equivalence between the subcategory of $\bar{\mathcal{E}}$-injective objects in $\mathcal{A}$ and the subcategory of injective objects in $\mathcal{P}_\mathcal{E}(\mathcal{A})$, which is closed under products. Thus, $\textnormal{ev}_\mathcal{E}$ restricted to the subcategory of $\bar{\mathcal{E}}$-injective objects commutes with products.

Let $X\in \mathcal{A}$. By \cite[Lemma 3.1]{Krause2} there exists a monomorphism $\bar{X} \rightarrow F$, where $F$ is a product of indecomposable injective objects in $\mathbf{P}_\mathcal{E}(\mathcal{A})$. By the above, we can choose $F = \bar{Q}$, where $Q$ is a product of indecomposable $\bar{\mathcal{E}}$-injective objects in $\mathcal{A}$. As for (1), the morphism $\bar{X} \rightarrow \bar{Q}$ corresponds to an $\bar{\mathcal{E}}$-monomorphism $X\rightarrow Q$.
\end{proof}

We continue by investigating the $\bar{\mathcal{E}}$-projective objects in $\mathcal{A}$. In contrast to the \mbox{$\bar{\mathcal{E}}$-injective} objects, there are not always enough $\bar{\mathcal{E}}$-projectives. The following generalizes the fact that every object in $\mathcal{C}$ is $\mathcal{E}_\textnormal{pure}$-projective, $\mathcal{A}$ has enough \mbox{$\mathcal{E}_\textnormal{pure}$-projectives} and every $\mathcal{E}_\textnormal{pure}$-projective object is a direct summand of an arbitrary direct sum of objects in $\mathcal{C}$.

\begin{prop} Every $\mathcal{E}$-projective object is also $\bar{\mathcal{E}}$-projective. Moreover, if $\mathcal{C}$ has enough ${\mathcal{E}}$-projectives, then $\mathcal{A}$ has enough \mbox{$\bar{\mathcal{E}}$-projectives} and every $\bar{\mathcal{E}}$-projective object is a direct summand of an arbitrary direct sum of \mbox{$\mathcal{E}$-projective} objects.
\end{prop}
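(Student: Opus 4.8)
The plan is to prove the three assertions in turn, relying throughout on the identification $\bar{\mathcal{E}}=\varinjlim\mathcal{E}$ from Corollary \ref{second}, together with the fact that $\textnormal{ev}_\mathcal{E}$, being additive and commuting with filtered colimits by Theorem \ref{big}, also commutes with arbitrary coproducts.

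For the first assertion I would take an $\mathcal{E}$-projective object $P$; then $P\in\mathcal{C}$, so $P$ is finitely presented. Given an arbitrary $\bar{\mathcal{E}}$-epimorphism $g\colon Y\to Z$, fit it into a $\bar{\mathcal{E}}$-exact sequence $X\to Y\xrightarrow{g}Z$. By Corollary \ref{second} this sequence is a filtered colimit of $\mathcal{E}$-exact sequences $X_i\to Y_i\xrightarrow{g_i}Z_i$ in $\mathcal{C}$. Since $P$ is finitely presented, $\Hom_\mathcal{A}(P,-)$ preserves this filtered colimit, so $\Hom_\mathcal{A}(P,g)$ is the filtered colimit of the maps $\Hom_\mathcal{C}(P,g_i)$; each of these is surjective because $P$ is $\mathcal{E}$-projective and $g_i$ is an $\mathcal{E}$-epimorphism, and a filtered colimit of surjections of abelian groups is surjective. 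Hence $\Hom_\mathcal{A}(P,g)$ is surjective for every $\bar{\mathcal{E}}$-epimorphism $g$, i.e.\ $P$ is $\bar{\mathcal{E}}$-projective.

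For the second assertion, assuming $\mathcal{C}$ has enough $\mathcal{E}$-projectives, I would fix $X\in\mathcal{A}$ and first use that $\mathcal{A}$ has enough $\mathcal{E}_{\textnormal{pure}}$-projectives to get an $\mathcal{E}_{\textnormal{pure}}$-epimorphism $\pi\colon\bigoplus_iC_i\to X$ with all $C_i\in\mathcal{C}$; this is a $\bar{\mathcal{E}}$-epimorphism since $\mathcal{E}_{\textnormal{pure}}\subseteq\bar{\mathcal{E}}$. For each $i$ I would choose an $\mathcal{E}$-epimorphism $p_i\colon P_i\to C_i$ with $P_i$ an $\mathcal{E}$-projective object, part of an $\mathcal{E}$-exact sequence $K_i\to P_i\xrightarrow{p_i}C_i$. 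A finite direct sum of $\mathcal{E}$-exact sequences is $\mathcal{E}$-exact (cf.\ \cite{Buehler}), and $\bigoplus_i(K_i\to P_i\to C_i)$ is the filtered colimit of its finite subsums, hence a $\bar{\mathcal{E}}$-exact sequence by Corollary \ref{second}; in particular $\bigoplus_ip_i$ is a $\bar{\mathcal{E}}$-epimorphism. By the first assertion each $P_i$ is $\bar{\mathcal{E}}$-projective, and so is $P:=\bigoplus_iP_i$, since $\Hom_\mathcal{A}(\bigoplus_iP_i,-)=\prod_i\Hom_\mathcal{A}(P_i,-)$ and a product of surjections is a surjection. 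Composing with $\pi$ and invoking $[\textnormal{E1}^{\textnormal{op}}]$ shows $P\to X$ is a $\bar{\mathcal{E}}$-epimorphism, so $\mathcal{A}$ has enough $\bar{\mathcal{E}}$-projectives. The third assertion then follows formally: for a $\bar{\mathcal{E}}$-projective $Q$, the $\bar{\mathcal{E}}$-epimorphism $P\to Q$ just constructed, with $P$ a coproduct of $\mathcal{E}$-projectives, is part of a $\bar{\mathcal{E}}$-exact sequence, and $\Hom_\mathcal{A}(Q,P)\to\Hom_\mathcal{A}(Q,Q)$ being surjective lets $1_Q$ factor through $P\to Q$, giving a section; thus $Q$ is a direct summand of $\bigoplus_iP_i$.

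The routine ingredients --- finite direct sums of conflations are conflations, and products and filtered colimits of surjections of abelian groups are surjections --- are standard. The one point requiring a little care, and the step I expect to be the main (if mild) obstacle, is that an arbitrary coproduct of $\mathcal{E}$-exact sequences is $\bar{\mathcal{E}}$-exact; the argument above reduces this to Corollary \ref{second} by writing the coproduct as the filtered colimit of its finite subsums, and alternatively it can be deduced from the exactness of coproducts in the Grothendieck category $\mathbf{P}_\mathcal{E}(\mathcal{A})$ together with the fact that $\textnormal{ev}_\mathcal{E}$ preserves coproducts.
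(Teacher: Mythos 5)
Your proof is correct, and the first assertion is handled exactly as in the paper: write the $\bar{\mathcal{E}}$-epimorphism as a filtered colimit of $\mathcal{E}$-epimorphisms via Corollary \ref{second}, use that $\Hom_\mathcal{A}(P,-)$ commutes with filtered colimits for $P$ finitely presented, and conclude since a filtered colimit of surjections is surjective. For the remaining assertions your route differs from the paper's. You factor an arbitrary $X$ through a pure epimorphism $\bigoplus_i C_i \rightarrow X$ (enough $\mathcal{E}_\textnormal{pure}$-projectives, stated as background in Section 1.3), cover each $C_i$ by an $\mathcal{E}$-epimorphism from an $\mathcal{E}$-projective, show the coproduct of these conflations is $\bar{\mathcal{E}}$-exact by writing it as a filtered colimit of its finite subsums, and then compose the two $\bar{\mathcal{E}}$-epimorphisms using \mbox{[E1$^\textnormal{op}$]} for the exact structure $\bar{\mathcal{E}}$ (which is legitimate, since $\bar{\mathcal{E}}$ is an exact structure by Theorem \ref{big}, and $\mathcal{E}_\textnormal{pure} \subseteq \bar{\mathcal{E}}$ because $\mathcal{E}_\textnormal{split} \subseteq \mathcal{E}$). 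The paper instead avoids both the pure-projective presentation and the composition axiom: for each $C \in \mathcal{C}$ it takes the single canonical morphism $f_C \colon \bigoplus P \rightarrow C$ indexed by all $\mathcal{E}$-projectives and all morphisms into $C$, exhibits it as a filtered colimit of $\mathcal{E}$-epimorphisms from finite subsums, and then, writing $X = \varinjlim X_i$, obtains $f_X = \varinjlim f_{X_i}$ as an $\bar{\mathcal{E}}$-epimorphism directly from Corollary \ref{second}; the splitting of $f_X$ then gives the direct-summand statement, just as in your final step. Your argument uses slightly more machinery (known facts about purity, coproducts of conflations, composition of admissible epimorphisms) but is more modular; the paper's canonical-morphism construction is more self-contained, relying only on Corollary \ref{second} and filtered colimits. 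Both crucially use the same direction of Corollary \ref{second}, namely that filtered colimits of $\mathcal{E}$-exact sequences are $\bar{\mathcal{E}}$-exact.
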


\begin{proof} Let $P$ be $\mathcal{E}$-projective and $f$ an $\bar{\mathcal{E}}$-epimorphism in $\mathcal{A}$. Then $f = \varinjlim f_i$ for $\mathcal{E}$-epimorphisms $f_i$ in $\mathcal{C}$ by Corollary \ref{second}. Now $\Hom_{\mathcal{A}}(P, f) = \varinjlim \Hom_{\mathcal{C}}(P, f_i)$, since $P$ is finitely presented. Moreover, because $P$ is $\mathcal{E}$-projective, it follows that $\Hom_{\mathcal{C}}(P, f_i)$ is surjective and so is $\Hom_{\mathcal{A}}(P, f)$. We conclude that $P$ is $\bar{\mathcal{E}}$-projective.

If $\mathcal{C}$ has enough $\mathcal{E}$-projectives, then for all $C\in \mathcal{C}$ there exists an ${\mathcal{E}}$-epimorphism $\pi$ starting in an ${\mathcal{E}}$-projective object and ending in $C$. Now consider the morphism $f_C \colon \bigoplus P \rightarrow C$, where the direct sum goes over all $\mathcal{E}$-projective objects in $\mathcal{C}$ (up to isomorphism) and all morphisms $P\rightarrow C$. Clearly, $f_C$ is the filtered colimit of all morphisms $g_{C} \colon \bigoplus P \rightarrow C$, where the direct sum goes over a finite collection of $\mathcal{E}$-projective objects in $\mathcal{C}$, containing at least the morphism $\pi$. Since $\pi$ is an ${\mathcal{E}}$-epimorphism, also $g_C$ is an ${\mathcal{E}}$-epimorphism by \cite[Proposition 7.6]{Buehler}. It follows that $f_C$ is an $\bar{\mathcal{E}}$-epimorphism by Corollary \ref{second}.

For $X\in \mathcal{A}$ we have $X = \varinjlim X_i$ with $X_i\in \mathcal{C}$. As before, consider the canonical morphisms $f_X$ and $f_{X_i}$. Since $\Hom_{\mathcal{A}}(P, X) = \varinjlim \Hom_{\mathcal{C}}(P,X_i)$ for $P\in \mathcal{C}$, we also have $f_X = \varinjlim f_{X_i}$ and it follows that $f_X$ is an $\bar{\mathcal{E}}$-epimorphism. Hence, $\mathcal{A}$ has enough $\bar{\mathcal{E}}$-projectives. Further, if $X$ is $\bar{\mathcal{E}}$-projective, then $f_X$ splits and $X$ is a direct summand of a direct sum of $\mathcal{E}$-projective objects.
\end{proof}

\begin{exa}\label{abelian}\rm Let $\mathcal{A}$ be a locally finitely presented abelian category and $\mathcal{C} = \fp \mathcal{A}$. In that case $\mathcal{A}$ is a Grothendieck category and $\mathcal{C}$ is closed under extensions in $\mathcal{A}$ by \cite[SATZ 1.9]{Breitsprecher}. This induces an exact structure on $\mathcal{C}$, which coincides with all kernel-cokernel pairs in $\mathcal{C}$. Hence, it is the largest exact structure $\mathcal{E}_\top$ on $\mathcal{C}$. In this case, the closed sets in $\Ind \mathcal{A}$ that we get from exact structures $\mathcal{E}$ on $\mathcal{C}$ (see Corollary \ref{int}) are exactly those that contain all indecomposable injectives in $\mathcal{A}$.

    Indeed, if $\mathcal{E}$ is an exact structure on $\mathcal{C}$, then clearly every injective object in $\mathcal{A}$ is also $\bar{\mathcal{E}}$-injective. On the other hand, if $\mathcal{U}$ is a closed set in $\Ind \mathcal{A}$ containing all indecomposable injectives in $\mathcal{A}$, then the corresponding Serre subcategory $\mathcal{S}$ is given by 
    \begin{align*}
        \mathcal{S} = \{F\in \textnormal{fp}(\mathcal{C}, \Ab) \mid \bar{F}(X) = 0\textnormal{ for all }X\in \mathcal{U}\}.
    \end{align*}
    For $F\in \mathcal{S}$ let $F\cong \coker \Hom_{\mathcal{C}}(f,-)$ with $f\colon C\rightarrow D$ in $\mathcal{C}$. Then ${\bar{F}(Q) = 0}$, or equivalently $\Hom_{\mathcal{A}}(f,Q)$ is surjective, for all indecomposable injective objects $Q$ in $\mathcal{A}$ implies that $f$ is a monomorphism by \cite[Lemma 3.1]{Krause2}. Hence, $\mathcal{S} \subseteq \mathcal{S}_{\mathcal{E}_\top}$ and there exists a corresponding exact structure on $\mathcal{C}$ by Corollary \ref{enosim}.
\end{exa}

\section{Translation of properties}

In Section 2 we have seen how to enter the theory of purity of a locally finitely presented category with products from the viewpoint of exact structures on the full subcategory of finitely presented objects. The following theorem summarizes this.

\begin{thm}\label{sum} Let $\mathcal{A}$ be a locally finitely presented category with products and $\mathcal{E}_\top$ the largest exact structure on $\mathcal{C} = \fp \mathcal{A}$. There exist one to one correspondences between the following.
\begin{itemize}
    \item[(1)] Exact structures $\mathcal{E}$ on $\mathcal{C}$.
    \item[(2)] Definable subcategories $\mathcal{X}$ of $\mathcal{A}$ containing all fp-$\bar{\mathcal{E}}_\top$-injectives.
    \item[(3)] Closed sets $\mathcal{U}$ in $\Ind \mathcal{A}$ containing all indecomposable $\bar{\mathcal{E}}_\top$-injectives. 
\end{itemize}
The assignments are given by $\mathcal{E} \mapsto \mathcal{X}_\mathcal{E}$ and $\mathcal{E}\mapsto \mathcal{U}_\mathcal{E}$, where $\mathcal{X}_\mathcal{E}$ denotes the collection of all fp-$\bar{\mathcal{E}}$-injectives and $\mathcal{U}_\mathcal{E}$ the collection of all indecomposable $\bar{\mathcal{E}}$-injectives in $\mathcal{A}$.
\end{thm}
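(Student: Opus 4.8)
The plan is to obtain all three one-to-one correspondences by stacking the correspondence of Corollary \ref{enosim} on top of the fundamental purity correspondences (i)$\leftrightarrow$(iii)$\leftrightarrow$(iv) recalled in Section 1.6, and then reading off the composite assignments with the help of Corollary \ref{int}. Explicitly, Corollary \ref{enosim} gives a bijection $\mathcal{E}\mapsto\mathcal{S}_\mathcal{E}$ between exact structures on $\mathcal{C}$ and Serre subcategories $\mathcal{S}$ of $\textnormal{fp}(\mathcal{C},\Ab)$ with $\mathcal{S}\subseteq\mathcal{S}_{\mathcal{E}_\textnormal{max}}$; the correspondence (i)$\leftrightarrow$(iii) sends a Serre subcategory $\mathcal{S}$ to the definable subcategory $\{X\in\mathcal{A}\mid\bar F(X)=0\text{ for all }F\in\mathcal{S}\}$; and (iii)$\leftrightarrow$(iv) sends a definable subcategory $\mathcal{X}$ to $\Ind\mathcal{A}\cap\mathcal{X}$. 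The latter two are bijections on the respective full collections, so composing yields injective assignments $\mathcal{E}\mapsto\mathcal{X}$ and $\mathcal{E}\mapsto\mathcal{U}$ whose essential images I then have to identify with the collections described in (2) and (3).

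First I would verify that these composites are exactly the maps $\mathcal{E}\mapsto\mathcal{X}_\mathcal{E}$ and $\mathcal{E}\mapsto\mathcal{U}_\mathcal{E}$. Feeding $\mathcal{S}_\mathcal{E}$ into the formula for (i)$\to$(iii) and using that $\bar F\cong\coker\Hom_\mathcal{A}(f,-)$ when $F=\coker\Hom_\mathcal{C}(f,-)$, the resulting definable subcategory is $\{X\in\mathcal{A}\mid\Hom_\mathcal{A}(f,X)\text{ surjective for every }\mathcal{E}\text{-monomorphism }f\}$, which is precisely the category of fp-$\bar{\mathcal{E}}$-injectives, that is $\mathcal{X}_\mathcal{E}$, by the proof of Lemma \ref{trans} as summarised in Corollary \ref{int}(1). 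Applying (iii)$\to$(iv) then gives $\Ind\mathcal{A}\cap\mathcal{X}_\mathcal{E}=\mathcal{U}_\mathcal{E}$, the set of indecomposable $\bar{\mathcal{E}}$-injectives, by Corollary \ref{int}(2). Specialising to $\mathcal{E}=\mathcal{E}_\textnormal{max}$ records, in particular, that $\mathcal{S}_{\mathcal{E}_\textnormal{max}}$ corresponds to the definable subcategory $\mathcal{X}_{\mathcal{E}_\textnormal{max}}$ of all fp-$\bar{\mathcal{E}}_\textnormal{max}$-injectives and to the closed set $\mathcal{U}_{\mathcal{E}_\textnormal{max}}$ of all indecomposable $\bar{\mathcal{E}}_\textnormal{max}$-injectives.

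What then remains, and where the only real care is required, is to check that the restricting conditions match under these identifications. The assignment (i)$\to$(iii) is inclusion-reversing — if $\mathcal{S}_1\subseteq\mathcal{S}_2$ the common vanishing locus shrinks — and, being a bijection on all Serre subcategories, it is an anti-isomorphism of posets; hence $\mathcal{S}\subseteq\mathcal{S}_{\mathcal{E}_\textnormal{max}}$ is equivalent to $\mathcal{X}\supseteq\mathcal{X}_{\mathcal{E}_\textnormal{max}}$, that is, to $\mathcal{X}$ containing all fp-$\bar{\mathcal{E}}_\textnormal{max}$-injectives. Since $\mathcal{X}\mapsto\Ind\mathcal{A}\cap\mathcal{X}$ is an inclusion-preserving bijection, $\mathcal{X}\supseteq\mathcal{X}_{\mathcal{E}_\textnormal{max}}$ is in turn equivalent to $\mathcal{U}\supseteq\Ind\mathcal{A}\cap\mathcal{X}_{\mathcal{E}_\textnormal{max}}=\mathcal{U}_{\mathcal{E}_\textnormal{max}}$, i.e. to $\mathcal{U}$ containing all indecomposable $\bar{\mathcal{E}}_\textnormal{max}$-injectives. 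Conversely, any definable subcategory containing $\mathcal{X}_{\mathcal{E}_\textnormal{max}}$, or any closed set containing $\mathcal{U}_{\mathcal{E}_\textnormal{max}}$, corresponds through these poset isomorphisms to a Serre subcategory contained in $\mathcal{S}_{\mathcal{E}_\textnormal{max}}$, hence to $\mathcal{S}_\mathcal{E}$ for a unique exact structure $\mathcal{E}$ by Corollary \ref{enosim}; this supplies the surjectivity needed for the correspondences (1)$\leftrightarrow$(2) and (1)$\leftrightarrow$(3), and the correspondence (2)$\leftrightarrow$(3) is then just the restriction of (iii)$\leftrightarrow$(iv). The substantive input — definability of $\mathcal{X}_\mathcal{E}$, its description via fp-$\bar{\mathcal{E}}$-injectives, and the dual statement for $\mathcal{U}_\mathcal{E}$ — has already been provided by Corollary \ref{int}, so the main obstacle here is purely the bookkeeping of threading these earlier results together with the correct monotonicity.
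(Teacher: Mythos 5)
Your proposal is correct and follows essentially the same route as the paper: the paper's (much terser) proof likewise combines Corollary \ref{enosim}, the correspondences of Section 1.6, and Corollary \ref{int} to identify the composite assignments with $\mathcal{E}\mapsto\mathcal{X}_\mathcal{E}$ and $\mathcal{E}\mapsto\mathcal{U}_\mathcal{E}$. Your explicit verification of the monotonicity (that $\mathcal{S}\subseteq\mathcal{S}_{\mathcal{E}_\textnormal{max}}$ translates to containing all fp-$\bar{\mathcal{E}}_\textnormal{max}$-injectives, respectively all indecomposable $\bar{\mathcal{E}}_\textnormal{max}$-injectives) just spells out what the paper leaves implicit in ``follows directly.''
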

\begin{proof} By Corollary \ref{int} the definition of $\mathcal{U}_\mathcal{E}$ and $\mathcal{X}_\mathcal{E}$ is compatible with the one in Section 2. Now the theorem follows directly by the correspondences in Section 1.6 and in Corollary \ref{enosim}.
\end{proof}

We fix a locally finitely presented category $\mathcal{A}$ with products and set $\mathcal{C} = \fp \mathcal{A}$. Theorem \ref{sum} shows, the bigger the largest exact structure $\mathcal{E}_\top$ on $\mathcal{C}$, the stronger the connection between exact structures and purity. For example, if $\mathcal{A}$ is abelian, then $\mathcal{E}_\top$ is big. In this case $\mathcal{E}_\top$ equals all kernel-cokernel pairs in $\mathcal{C}$ and the collection of indecomposable $\bar{\mathcal{E}}_\top$-injectives coincides with the closure of $\textnormal{Inj}\,\mathcal{A}$ in $\Ind \mathcal{A}$, where $\textnormal{Inj}\,\mathcal{A}$ denotes all indecomposable injectives in $\mathcal{A}$ (see Example \ref{abelian}).

The goal of this section will be to translate properties of exact structures on $\mathcal{C}$ to properties occurring in the theory of purity of $\mathcal{A}$ and vice versa. We fix an exact structure $\mathcal{E}$ on $\mathcal{C}$ as well as the corresponding definable subcategory $\mathcal{X}_\mathcal{E}$ of $\mathcal{A}$ and closed set $\mathcal{U}_\mathcal{E}$ in $\Ind \mathcal{A}$. The exact structure $\mathcal{E}$ is called \emph{finitely generated} if there exists a kernel-cokernel pair $(f,g)$ in $\mathcal{C}$ such that $\mathcal{E}$ is the smallest exact structure containing $(f,g)$.

\begin{prop}\label{quasi} The exact structure ${\mathcal{E}}$ on $\mathcal{C}$ is finitely generated if and only if \mbox{$\Ind \mathcal{A} \setminus \mathcal{U}_\mathcal{E}$} is quasi-compact.
\end{prop}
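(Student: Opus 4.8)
The idea is to run the statement through the correspondences already in place. By Corollary \ref{enosim} an exact structure $\mathcal{E}$ on $\mathcal{C}$ is recorded by the Serre subcategory $\mathcal{S}_\mathcal{E}$ of $\textnormal{fp}(\mathcal{C}, \Ab)$, and by the assignments of Section 1.6 the associated open set $\Ind \mathcal{A} \setminus \mathcal{U}_\mathcal{E}$ equals $\bigcup_{F \in \mathcal{S}_\mathcal{E}}(F)$, where I write $(F) = \{Q \in \Ind \mathcal{A} \mid \bar F(Q) \neq 0\}$ for $F \in \textnormal{fp}(\mathcal{C}, \Ab)$. Each $(F)$ is open, its complement being $\Ind \mathcal{A}$ intersected with the definable subcategory $\{X \in \mathcal{A} \mid \bar F(X) = 0\}$, and under the identification $\Ind \mathcal{A} = \textnormal{Sp}\, \mathbf{P}(\mathcal{A})$ it is a basic open set of the Ziegler spectrum of the locally coherent category $\mathbf{P}(\mathcal{A})$, hence quasi-compact by the standard theory \cite{Herzog, Krause2}. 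On the side of exact structures, Theorem \ref{eno} says that an exact structure contains a kernel-cokernel pair $(f,g)$ precisely when its Serre subcategory contains $F := \coker \Hom_\mathcal{C}(f,-)$; since every member of $\mathcal{S}_\mathcal{E}$ is effaceable and lies in $\mathcal{S}_{\mathcal{E}_\textnormal{max}}$, the smallest exact structure containing $(f,g)$ corresponds to the Serre subcategory $\langle F \rangle$ generated by $F$ in $\textnormal{fp}(\mathcal{C}, \Ab)$. So the claim reduces to: $\mathcal{S}_\mathcal{E} = \langle F \rangle$ for some effaceable $F$ if and only if $\bigcup_{F \in \mathcal{S}_\mathcal{E}}(F)$ is quasi-compact.

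The one lemma I would establish first is that for each $Q \in \Ind \mathcal{A}$ the subcategory $\{F \in \textnormal{fp}(\mathcal{C}, \Ab) \mid \bar F(Q) = 0\}$ is Serre: if $\mathcal{X}$ is the smallest definable subcategory containing $Q$, then this subcategory equals $\{F \mid \bar F(X) = 0 \text{ for all } X \in \mathcal{X}\}$ — the inclusion $\supseteq$ is clear, and $\subseteq$ holds because $\{X \mid \bar F(X) = 0\}$ is itself definable, hence contains $\mathcal{X}$ once it contains $Q$ — and the latter subcategory is Serre by the correspondence (i)$\leftrightarrow$(iii) of Section 1.6. It follows that for any effaceable $F$ the complement of the closed set attached to $\langle F \rangle$ is exactly $(F)$: if $\bar F(Q) = 0$ then, as $\langle F\rangle$ is contained in the Serre subcategory $\{G \mid \bar G(Q) = 0\}$, every $G \in \langle F \rangle$ also satisfies $\bar G(Q) = 0$.

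For "finitely generated $\Rightarrow$ quasi-compact": if $\mathcal{E}$ is the smallest exact structure containing a kernel-cokernel pair $(f,g)$, put $F = \coker \Hom_\mathcal{C}(f,-)$, so $\mathcal{S}_\mathcal{E} = \langle F \rangle$ and hence $\Ind \mathcal{A} \setminus \mathcal{U}_\mathcal{E} = (F)$, which is quasi-compact. For the converse, suppose $\Ind \mathcal{A} \setminus \mathcal{U}_\mathcal{E} = \bigcup_{G \in \mathcal{S}_\mathcal{E}}(G)$ is quasi-compact; then finitely many $(G_1), \dots, (G_n)$ with $G_i \in \mathcal{S}_\mathcal{E}$ already cover it, and since $F \mapsto \bar F$ preserves finite coproducts this union is $(F)$ with $F = G_1 \oplus \cdots \oplus G_n$. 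Each $G_i$ is effaceable (it has the form $\coker \Hom_\mathcal{C}(f_i,-)$ for a conflation $(f_i,g_i)$), and a finite coproduct of kernel-cokernel pairs in the additive category $\mathcal{C}$ is again a kernel-cokernel pair, so $F \cong \coker \Hom_\mathcal{C}(f,-)$ for some kernel-cokernel pair $(f,g)$; also $F \in \mathcal{S}_\mathcal{E} \subseteq \mathcal{S}_{\mathcal{E}_\textnormal{max}}$. Let $\mathcal{E}'$ be the smallest exact structure containing $(f,g)$, so that $\mathcal{S}_{\mathcal{E}'} = \langle F \rangle \subseteq \mathcal{S}_\mathcal{E}$. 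Passing to closed sets, $\mathcal{U}_\mathcal{E} \subseteq \mathcal{U}_{\mathcal{E}'} \subseteq \{Q \mid \bar F(Q) = 0\} = \Ind \mathcal{A} \setminus (F) = \mathcal{U}_\mathcal{E}$, so $\mathcal{U}_{\mathcal{E}'} = \mathcal{U}_\mathcal{E}$, and since the assignment $\mathcal{E} \mapsto \mathcal{U}_\mathcal{E}$ is injective (Theorem \ref{sum}) we get $\mathcal{E}' = \mathcal{E}$, i.e. $\mathcal{E}$ is finitely generated.

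The main obstacle is organisational rather than conceptual: one must check that forming finite coproducts keeps one inside the effaceable functors and inside $\mathcal{S}_{\mathcal{E}_\textnormal{max}}$, so that "Serre subcategory generated by one object" matches "smallest exact structure containing one conflation", and one must pin down that the closed set attached to the Serre closure of a single object is the complement of a single basic open $(F)$ — that is, the lemma that $\{F \mid \bar F(Q) = 0\}$ is a Serre subcategory. Everything else is the standard quasi-compactness of basic Ziegler-opens together with the correspondences of Sections 1.5 and 1.6 and Corollary \ref{enosim}.
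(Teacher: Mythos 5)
Your argument is correct, but it follows a different route from the paper. You translate everything to Serre subcategories via Corollary \ref{enosim} and observe that $\Ind \mathcal{A} \setminus \mathcal{U}_\mathcal{E}$ is the union of the basic open sets $(G)$ with $G \in \mathcal{S}_\mathcal{E}$; for the forward direction you identify this complement, when $\mathcal{S}_\mathcal{E} = \langle F \rangle$, with the single basic open $(F)$ and then import the standard fact that basic opens in the spectrum of a locally coherent category are quasi-compact (Herzog, Krause), while for the converse you extract a finite subcover, take the direct sum $G_1 \oplus \cdots \oplus G_n$ (your observation that finite direct sums of kernel-cokernel pairs stay kernel-cokernel pairs, so the sum remains effaceable and in $\mathcal{S}_{\mathcal{E}_\textnormal{max}}$, is exactly the needed point), and conclude by injectivity of $\mathcal{E} \mapsto \mathcal{U}_\mathcal{E}$ from Theorem \ref{sum}. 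The paper instead never invokes quasi-compactness of basic opens: it argues entirely inside the order isomorphism of Theorem \ref{sum}, in one direction testing an arbitrary (WLOG directed) open cover of $\mathcal{U}_\mathcal{E}^{\mathsf c}$ against the generating conflation, and in the other writing $\mathcal{E}$ as the directed union of the exact structures $\mathcal{E}_f$ generated by single $\mathcal{E}$-monomorphisms (using the same $f \oplus g$ trick you use on the functor side) and applying quasi-compactness to the resulting directed cover. So your proof buys a sharper intermediate statement --- for finitely generated $\mathcal{E}$ the complement of $\mathcal{U}_\mathcal{E}$ is a single basic open, and conversely --- at the cost of citing an external result and of the small bookkeeping lemma that $\{F \mid \bar F(Q) = 0\}$ is Serre (your argument for it, via the definable subcategory generated by $Q$ and correspondence (i)$\leftrightarrow$(iii), is fine), whereas the paper's proof is self-contained given Theorem \ref{sum}. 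Do make explicit the one-line verification that the smallest exact structure containing $(f,g)$ has $\mathcal{S}$-side exactly $\langle F\rangle$: since $\mathcal{S}_{\mathcal{E}_\textnormal{max}}$ is Serre, $\langle F\rangle \subseteq \mathcal{S}_{\mathcal{E}_\textnormal{max}}$ corresponds to an exact structure containing $(f,g)$, and any exact structure containing $(f,g)$ has Serre subcategory containing $F$, hence $\langle F\rangle$; with that spelled out the proposal is complete.
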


\begin{proof} For a closed set $\mathcal{U}$ in $\Ind \mathcal{A}$ let $\mathcal{U}^\mathsf{c} = \Ind \mathcal{A}\setminus \mathcal{U}$. For an $\mathcal{E}$-monomorphism $f$ let $\mathcal{E}_f$ be the smallest exact structure on $\mathcal{C}$ containing $(f, \coker f)$. 

Let $\mathcal{E}$ be finitely generated and $\mathcal{U}_\mathcal{E}^\mathsf{c} = \bigcup \mathcal{U}^\mathsf{c}$ an open cover of $\mathcal{U}_\mathcal{E}^\mathsf{c}$, where the union runs over a family $\mathcal{F}$ of closed sets in $\Ind \mathcal{A}$. Without loss of generality we can assume that $\mathcal{F}$ is directed, that is for all $\mathcal{U}$ and $ \mathcal{U}'$ in $ \mathcal{F}$ we have $\mathcal{U} \cup \mathcal{U}' \in \mathcal{F}$. By Theorem \ref{sum} for every $\mathcal{U}\in \mathcal{F}$ there exists a corresponding exact structure $\mathcal{E}_\mathcal{U}$ on $\mathcal{C}$. This yields $\mathcal{E} = \bigcup \mathcal{E}_\mathcal{U}$. Since $\mathcal{E}$ is finitely generated, there is a kernel-cokernel pair $(f,g)$ in $\mathcal{E}$ such that $\mathcal{E}$ is the smallest exact structure containing $(f,g)$. Now $(f,g)$ must be contained in some $\mathcal{E}_{\mathcal{U}}$ and thus $\mathcal{E} = \mathcal{E}_{\mathcal{U}}$. It follows that $\mathcal{U}_\mathcal{E} = \mathcal{U}$ and so $\mathcal{U}_\mathcal{E}^\mathsf{c}$ is quasi-compact.

Let $\Ind \mathcal{A} \setminus \mathcal{U}_\mathcal{E}$ be quasi-compact. We have $\mathcal{E} = \bigcup \mathcal{E}_f$, where the union goes over all $\mathcal{E}$-monomorphisms $f$ in $\mathcal{C}$. The union is directed, since for $\mathcal{E}$-monomorphisms $f$ and $g$, also $f\oplus g$ is an \mbox{$\mathcal{E}$-monomorphism} by \cite[Proposition 2.9]{Buehler} and the inclusion $\mathcal{E}_f, \mathcal{E}_g \subseteq \mathcal{E}_{f\oplus g}$ holds by \cite[Corollary 2.18]{Buehler}. By Theorem \ref{sum} we obtain a directed union $\mathcal{U}_{\mathcal{E}}^\mathsf{c} = \bigcup \mathcal{U}_{\mathcal{E}_f}^\mathsf{c}$. Because $\mathcal{U}_\mathcal{E}^\mathsf{c}$ is quasi-compact, there exists an $\mathcal{E}$-monomorphism $f$ with $\mathcal{U}_{\mathcal{E}} = \mathcal{U}_{\mathcal{E}_f}$. It follows that $\mathcal{E} = \mathcal{E}_f$ and so $\mathcal{E}$ is finitely generated.
\end{proof}

Consider a (finite or infinite) sequence
\begin{align*}
    X_0 \longrightarrow X_1 \longrightarrow X_2 \longrightarrow \dots \tag{$\ast$}
\end{align*}
of morphisms in $\mathcal{C}$. For all $i\geq 0$ the sequence induces a morphism $X_0 \rightarrow X_i$, which equals $1_{X_0}$ for $i=0$. Now $(\ast)$ is called an \emph{$\mathcal{E}$-sequence} if the morphism $\varphi$ in some weak pushout diagram
\begin{equation*}
    \begin{tikzcd}
        X_0 \arrow[r] \arrow[d] & X_{i+1} \arrow[d]\\
        X_{i} \arrow[r, "\varphi"] & P
    \end{tikzcd}
\end{equation*}
is not an $\mathcal{E}$-monomorphism for all $i\geq 0$. Note that weak pushout diagrams exist, since $\mathcal{C}$ has weak cokernels. In particular, $X_0 \rightarrow X_{i+1}$ is never an $\mathcal{E}$-monomorphism.

\begin{exa}\rm Let $f_i \colon X_{i}\rightarrow X_{i+1}, i \geq 0$ be a sequence in $\mathcal{C}$ and $g_i =f_{i-1}f_{i-2}\dots f_0$.
\begin{itemize}
    \item[(1)] Let $\mathcal{E}_\bot$ be the split exact structure on $\mathcal{C}$. The sequence $f_i, i\geq 0$ is an $\mathcal{E}_\bot$-sequence if and only if $g_{i}$ does not factor through $g_{i+1}$ for all $i$.
    \item[(2)] Let $\mathcal{C}$ be abelian and $\mathcal{E}_\top$ the largest exact structure on $\mathcal{C}$. The sequence $f_i, i\geq 0$ is an $\mathcal{E}_\top$-sequence if and only if $\ker g_i, i\geq 0$ is strictly increasing. 
\end{itemize}    
\end{exa}

Recall that there exists a Serre subcategory $\mathcal{S}_\mathcal{E}$ of $\Fp(\mathcal{C}, \Ab)$ corresponding to $\mathcal{E}$ by Corollary \ref{enosim}.

\begin{lem}\label{seq} Let $f_i \colon X_i \rightarrow X_{i+1}, i\geq 0$ be an $\mathcal{E}$-sequence in $\mathcal{C}$ and $g_i = f_{i-1} f_{i-2} ... f_0$. Then
    \begin{align*}
        \coker \Hom_{\mathcal{C}}(g_i,-) \longrightarrow \coker \Hom_{\mathcal{C}}(g_{i-1},-)
    \end{align*}
 defines a chain of proper epimorphisms in $\Fp(\mathcal{C}, \Ab)/\mathcal{S}_{\mathcal{E}}$ between quotients of the functor $\Hom_{\mathcal{C}}(X_0,-)$. Further, every sequence 
    \begin{align*}
        \dots \longrightarrow F_2 \longrightarrow F_1 \longrightarrow  F_0
    \end{align*}
    of proper epimorphisms between quotients of $\Hom_{\mathcal{C}}(X_0,-)$ in $\Fp(\mathcal{C}, \Ab)/\mathcal{S}_{\mathcal{E}}$ is isomorphic to such a chain.
\end{lem}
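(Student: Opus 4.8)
The plan is to locate the maps in the chain, compute their kernels, and then translate everything through the correspondence between $\mathcal{E}$-monomorphisms and $\mathcal{S}_\mathcal{E}$. The maps come from the relations $g_i = f_{i-1}g_{i-1}$: applying $\Hom_\mathcal{C}(-,-)$ gives $\Hom_\mathcal{C}(g_i,-) = \Hom_\mathcal{C}(g_{i-1},-)\circ\Hom_\mathcal{C}(f_{i-1},-)$, so $\im\Hom_\mathcal{C}(g_i,-)\subseteq\im\Hom_\mathcal{C}(g_{i-1},-)$ inside $\Hom_\mathcal{C}(X_0,-)$, and the natural projection $\coker\Hom_\mathcal{C}(g_i,-) = \Hom_\mathcal{C}(X_0,-)/\im\Hom_\mathcal{C}(g_i,-)\twoheadrightarrow\coker\Hom_\mathcal{C}(g_{i-1},-)$ is an epimorphism in the abelian category $\Fp(\mathcal{C},\Ab)$ (all functors here are finitely presented, being cokernels of maps between representables, and are quotients of $\Hom_\mathcal{C}(X_0,-)$). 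Since the quotient functor $q\colon\Fp(\mathcal{C},\Ab)\to\Fp(\mathcal{C},\Ab)/\mathcal{S}_\mathcal{E}$ is exact, this already produces a chain of epimorphisms between quotients of $\Hom_\mathcal{C}(X_0,-)$ in $\Fp(\mathcal{C},\Ab)/\mathcal{S}_\mathcal{E}$; what is left is properness and the converse.

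The technical core is a weak-pushout computation. Let $\varphi_i\colon X_i\to P$ be the morphism in a weak pushout of $g_{i+1}$ along $g_i$; such a weak pushout exists because $\mathcal{C}$ has weak cokernels. Spelling out the weak pushout property objectwise shows that $\im\Hom_\mathcal{C}(\varphi_i,-)$ is exactly the preimage of $\im\Hom_\mathcal{C}(g_{i+1},-)$ under $\Hom_\mathcal{C}(g_i,-)\colon\Hom_\mathcal{C}(X_i,-)\to\Hom_\mathcal{C}(X_0,-)$ (a morphism $a\colon X_i\to(-)$ factors through $\varphi_i$ precisely when $ag_i = bg_{i+1}$ for some $b$). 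Combining this with the exact sequence $0\to\ker\Hom_\mathcal{C}(g_i,-)\to\Hom_\mathcal{C}(X_i,-)\to\im\Hom_\mathcal{C}(g_i,-)\to 0$ and the isomorphism theorems yields a natural isomorphism $\coker\Hom_\mathcal{C}(\varphi_i,-)\cong\im\Hom_\mathcal{C}(g_i,-)/\im\Hom_\mathcal{C}(g_{i+1},-)$, and the right-hand side is exactly the kernel of $\coker\Hom_\mathcal{C}(g_{i+1},-)\to\coker\Hom_\mathcal{C}(g_i,-)$ in $\Fp(\mathcal{C},\Ab)$; in particular this kernel does not depend on the chosen weak pushout.

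To finish the first assertion I would invoke the dictionary that a morphism $f$ in $\mathcal{C}$ is an $\mathcal{E}$-monomorphism if and only if $\coker\Hom_\mathcal{C}(f,-)\in\mathcal{S}_\mathcal{E}$ (Theorem \ref{eno}, cf.\ Remark \ref{induce}). The only delicate point is the implication ``$\coker\Hom_\mathcal{C}(\varphi_i,-)\in\mathcal{S}_\mathcal{E}$ implies $\varphi_i$ an $\mathcal{E}$-monomorphism'', since a priori $\varphi_i$ need not lie in any kernel--cokernel pair; this I would settle by passing to the minimal projective presentation of $\coker\Hom_\mathcal{C}(\varphi_i,-)$, which forces $\varphi_i\cong\bar f\oplus\iota$ with $\iota$ a split monomorphism and $\bar f$ the presentation morphism, and then using that $\mathcal{S}_\mathcal{E}$ consists of cokernels of $\mathcal{E}$-monomorphisms together with the fact that conflations are closed under direct summands (\cite{Buehler}) to conclude $\bar f$, hence $\varphi_i$, is an $\mathcal{E}$-monomorphism. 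Since $q$ is exact with kernel $\mathcal{S}_\mathcal{E}$, the map $\coker\Hom_\mathcal{C}(g_{i+1},-)\to\coker\Hom_\mathcal{C}(g_i,-)$ becomes an isomorphism in $\Fp(\mathcal{C},\Ab)/\mathcal{S}_\mathcal{E}$ exactly when its kernel lies in $\mathcal{S}_\mathcal{E}$, equivalently $\coker\Hom_\mathcal{C}(\varphi_i,-)\in\mathcal{S}_\mathcal{E}$, equivalently $\varphi_i$ is an $\mathcal{E}$-monomorphism. Hence if $(f_i)$ is an $\mathcal{E}$-sequence then no $\varphi_i$ is an $\mathcal{E}$-monomorphism and every map in the chain is a proper epimorphism.

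For the converse, given proper epimorphisms $\cdots\to F_2\to F_1\to F_0$ between quotients of $\Hom_\mathcal{C}(X_0,-)$, write $F_j = \Hom_\mathcal{C}(X_0,-)/\bar K_j$ with $\bar K_0\supseteq\bar K_1\supseteq\cdots$ and lift this, by the standard calculus of the Serre quotient $q$ (using that $\Fp(\mathcal{C},\Ab)$ is abelian and closed under subobjects in $(\mathcal{C},\Ab)$), to a descending chain of finitely presented subfunctors $K_0\supseteq K_1\supseteq\cdots$ of $\Hom_\mathcal{C}(X_0,-)$ with $q(K_j) = \bar K_j$. Each $K_j$ is a finitely presented subfunctor of a representable, hence $K_j = \im\Hom_\mathcal{C}(h_j,-)$ for some $h_j\colon X_0\to T_j$, and evaluating $K_j\subseteq K_{j-1}$ at $T_j$ gives $h_j = p_{j-1}h_{j-1}$ for suitable $p_{j-1}\colon T_{j-1}\to T_j$. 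Setting $X_{j+1} := T_j$, $f_0 := h_0$ and $f_i := p_{i-1}$ for $i\ge 1$, one gets $g_{i+1} = h_i$, so $q$ carries the chain $\bigl(\coker\Hom_\mathcal{C}(g_{i+1},-)\to\coker\Hom_\mathcal{C}(g_i,-)\bigr)_{i\ge1}$ onto the given chain (the trivial term $\coker\Hom_\mathcal{C}(g_0,-) = 0$ accounting for the index shift). Finally $(f_i)$ is an $\mathcal{E}$-sequence: by the core computation $\coker\Hom_\mathcal{C}(\varphi_i,-)$ has $q$-image $\ker(F_i\to F_{i-1})\ne 0$, so it is not in $\mathcal{S}_\mathcal{E}$, whence $\varphi_i$ is not an $\mathcal{E}$-monomorphism directly by the definition of $\mathcal{S}_\mathcal{E}$ (only this easy direction of the dictionary is needed here). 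The main obstacle is precisely the delicate direction of the dictionary used for the first assertion; the weak-pushout identity is elementary but fiddly, and the lifting of subfunctors in the converse is routine localization theory.
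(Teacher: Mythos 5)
Your argument follows essentially the same route as the paper's proof: you compute, via the weak pushout, that the kernel of $\coker \Hom_{\mathcal{C}}(g_{i+1},-) \rightarrow \coker \Hom_{\mathcal{C}}(g_i,-)$ is $\coker \Hom_{\mathcal{C}}(\varphi_i,-)$, use exactness of the quotient functor to reduce properness to $\coker \Hom_{\mathcal{C}}(\varphi_i,-) \notin \mathcal{S}_\mathcal{E}$, translate this through the correspondence of Theorem \ref{eno}, and for the converse lift the data back to $\Fp(\mathcal{C},\Ab)$ and present each finitely presented subfunctor of $\Hom_{\mathcal{C}}(X_0,-)$ as $\im \Hom_{\mathcal{C}}(h_j,-)$ to build the $f_i$; this matches the paper's proof, and your converse is if anything more explicit than the paper's ``follows inductively''.

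The one place you deviate is your patch for the direction ``$\coker \Hom_{\mathcal{C}}(\varphi_i,-) \in \mathcal{S}_\mathcal{E}$ implies $\varphi_i$ is an $\mathcal{E}$-monomorphism'' via a \emph{minimal} projective presentation of $\coker \Hom_{\mathcal{C}}(\varphi_i,-)$. In the generality of the lemma, $\mathcal{C} = \fp \mathcal{A}$ is only idempotent complete with weak cokernels; it need not be Krull--Schmidt, so minimal projective presentations in $\Fp(\mathcal{C},\Ab)$ need not exist and the decomposition $\varphi_i \cong \bar f \oplus \iota$ is not available. This step is exactly what the paper imports from Theorem \ref{eno} (it is used in the same way in the proof of Lemma \ref{small}, where an arbitrary morphism with cokernel functor in $\mathcal{S}_\mathcal{E}$ is declared an $\mathcal{E}$-monomorphism), so the correct move is to cite that statement from the classification in \cite{Enomoto} rather than reprove it with a minimality argument; as written, your justification of this auxiliary step is the only part that does not go through in the stated generality, while everything else (including the routine lifting of the descending chain of subobjects, which can be arranged by replacing $K_{j+1}$ with $K_{j+1} \cap K_j$) is sound.
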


\begin{proof} Let $q\colon \Fp(\mathcal{C}, \Ab) \rightarrow \Fp(\mathcal{C}, \Ab)/\mathcal{S}_\mathcal{E}$ be the canonical quotient functor. Every epimorphism in $\Fp(\mathcal{C}, \Ab)/\mathcal{S}_\mathcal{E}$ is isomorphic to $q(\alpha)$ for an epimorphism $\alpha \colon F \rightarrow G$ in $\Fp(\mathcal{C}, \Ab)$. We can choose $F = \coker \Hom_{\mathcal{C}}(f,-)$ for a morphism $f \colon X \rightarrow Y$ in $\mathcal{C}$. This yields a commutative diagram
\begin{equation*}
    \begin{tikzcd}
        \Hom_{\mathcal{C}}(Y,-) \arrow[d, "{\Hom_{\mathcal{C}}(h,-)}", swap] \arrow[r, "{\Hom_{\mathcal{C}}(f,-)}"] &[2em] \Hom_{\mathcal{C}}(X,-) \arrow[d, equals] \arrow[r] & F \arrow[r] \arrow[d] & 0 \\
         \Hom_{\mathcal{C}}(Z,-) \arrow[r, "{\Hom_{\mathcal{C}}(g,-)}"] & \Hom_{\mathcal{C}}(X,-) \arrow[r] & G \arrow[r]& 0
    \end{tikzcd}
\end{equation*}
with exact rows and morphisms $g\colon X \rightarrow Z$, $h \colon Z \rightarrow Y$ such that $f = hg$. The kernel $K$ of $\alpha$ equals $\coker \Hom_{\mathcal{C}}(\varphi,-)$, where $\varphi$ occurs in some weak pushout diagram
\begin{equation*}
    \begin{tikzcd}
        X \arrow[r, "f"] \arrow[d, "g", swap] & Y \arrow[d]\\
        Z \arrow[r, "\varphi"] & P.
    \end{tikzcd}
\end{equation*}
Because $q$ is exact, it follows that the kernel of $q(\alpha)$ also equals $K$. Hence, $q(\alpha)$ is a proper epimorphism if and only if $K \notin \mathcal{S}_{\mathcal{E}}$. By Theorem \ref{eno} this is equivalent to $\varphi$ not being an $\mathcal{E}$-monomorphism. The claim now follows inductively by the definition of $\mathcal{E}$-sequences. 
\end{proof}

A locally coherent category $\mathcal{B}$ is \emph{locally noetherian} if every $X\in\fp \mathcal{B}$ is noetherian and $\mathcal{B}$ is \emph{locally finite} if every $X\in \fp\mathcal{B}$ is of finite length. 

\begin{prop}\label{noeth} The following are equivalent.
\begin{itemize}
    \item[\rm (1)] There exists no infinite $\mathcal{E}$-sequence in $\mathcal{C}$.
    \item[\rm (2)] The relative purity category $\mathbf{P}_\mathcal{E}(\mathcal{A})$ is locally noetherian.
    \item[\rm (3)] The subcategory of $\bar{\mathcal{E}}$-injective objects in $\mathcal{A}$ is closed under filtered colimits.
    \item[\rm (4)] The subcategory of $\bar{\mathcal{E}}$-injective objects in $\mathcal{A}$ is closed under coproducts.
    \item[\rm (5)] Every $\bar{\mathcal{E}}$-injective object decomposes into a coproduct of indecomposable objects with local endomorphism rings.
\end{itemize}
\end{prop}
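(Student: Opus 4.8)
The plan is to reduce the whole statement to standard facts about the locally coherent Grothendieck category $\mathbf{P}_\mathcal{E}(\mathcal{A})$ by means of the embedding $\textnormal{ev}_\mathcal{E}\colon\mathcal{A}\to\mathbf{P}_\mathcal{E}(\mathcal{A})$ of Theorem~\ref{big}, with the single exception of the equivalence (1)$\Leftrightarrow$(2), which I would treat separately using Lemma~\ref{seq}. First I would record the dictionary: $\textnormal{ev}_\mathcal{E}$ is additive and commutes with filtered colimits (Theorem~\ref{big}), hence it commutes with arbitrary coproducts; by Lemma~\ref{trans}(2) an object $X\in\mathcal{A}$ is $\bar{\mathcal{E}}$-injective exactly when $\bar X$ is injective in $\mathbf{P}_\mathcal{E}(\mathcal{A})$, and every injective object of $\mathbf{P}_\mathcal{E}(\mathcal{A})$ is of the form $\bar Q$ for some $\bar{\mathcal{E}}$-injective $Q$; and, being fully faithful, $\textnormal{ev}_\mathcal{E}$ induces ring isomorphisms on endomorphism rings and both preserves and reflects indecomposability. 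With this dictionary, (3), (4) and (5) for $\mathcal{A}$ become the literal translations of three classical conditions characterising local noetherianity of $\mathbf{P}_\mathcal{E}(\mathcal{A})$, that is, of (2).

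For (1)$\Leftrightarrow$(2) I would argue in two steps. Step one: $\mathbf{P}_\mathcal{E}(\mathcal{A})$ is locally noetherian if and only if each $\bar C$ with $C\in\mathcal{C}$ is a noetherian object. One direction is immediate since $\bar C\in\fp \mathbf{P}_\mathcal{E}(\mathcal{A})$; for the other, every object of $\textnormal{fp}(\mathcal{C},\Ab)$ is a cokernel $\coker\Hom_\mathcal{C}(f,-)$ with $f\colon C\to D$, so — passing to the opposite category and through the exact quotient functor — every object of $\fp \mathbf{P}_\mathcal{E}(\mathcal{A})\simeq\textnormal{fp}(\mathcal{C},\Ab)^\op/\mathcal{S}_\mathcal{E}$ is a subobject of some $\bar C$; since subobjects of noetherian objects are noetherian and, by local coherence, finitely generated subobjects of finitely presented objects are finitely presented, the claim follows. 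Step two: a strictly ascending chain of finitely generated (equivalently, finitely presented) subobjects of $\bar C$ is, after passing to the opposite category, precisely an infinite chain of proper epimorphisms between quotients of $\Hom_\mathcal{C}(C,-)$ in $\textnormal{fp}(\mathcal{C},\Ab)/\mathcal{S}_\mathcal{E}$, and conversely; by Lemma~\ref{seq} such a chain exists if and only if there is an infinite $\mathcal{E}$-sequence in $\mathcal{C}$ starting at $C$. Combining the two steps yields (1)$\Leftrightarrow$(2).

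For the remaining equivalences I would run the cycle (2)$\Rightarrow$(3)$\Rightarrow$(4)$\Rightarrow$(2) together with (2)$\Leftrightarrow$(5), invoking well-known properties of a Grothendieck category $\mathcal{B}$: that $\mathcal{B}$ is locally noetherian if and only if every coproduct of injective objects is injective (a Bass--Papp type theorem, see e.g.\ \cite{Krause4}), that in a locally noetherian $\mathcal{B}$ filtered colimits of injectives are injective, that $\mathcal{B}$ is locally noetherian if and only if every injective object is a coproduct of indecomposable injective objects, and that an indecomposable injective object of $\mathcal{B}$ has local endomorphism ring. Concretely: (2)$\Rightarrow$(3) because a filtered colimit $\varinjlim Q_\lambda$ of $\bar{\mathcal{E}}$-injectives is sent by $\textnormal{ev}_\mathcal{E}$ to $\varinjlim\bar Q_\lambda$, which is then injective; (3)$\Rightarrow$(4) because a coproduct is the filtered colimit of its finite subcoproducts and finite biproducts of $\bar{\mathcal{E}}$-injectives are $\bar{\mathcal{E}}$-injective; (4)$\Rightarrow$(2) because then every coproduct of injectives of $\mathbf{P}_\mathcal{E}(\mathcal{A})$ — being of the form $\overline{\coprod_i Q_i}$ with each $Q_i$ $\bar{\mathcal{E}}$-injective — is injective; and (2)$\Leftrightarrow$(5) by transporting the decomposition theorem along $\textnormal{ev}_\mathcal{E}$ using the dictionary of the first paragraph.

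I expect the only genuinely delicate points to be the bookkeeping with the opposite category in (1)$\Leftrightarrow$(2) — one must check that ``ascending chain of finitely presented subobjects of $\bar C$'' dualises exactly to the hypothesis of Lemma~\ref{seq} — and the identification of the correct classical inputs for (3), (4) and (5). Of the latter, everything is formal transport along the dictionary, except the converse half of the Matlis--Papp theorem, namely that ``every injective object is a coproduct of indecomposable injectives'' implies ``locally noetherian'', which I would cite rather than reprove.
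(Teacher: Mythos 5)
Your proposal is correct and follows essentially the same route as the paper: the equivalence (1)$\Leftrightarrow$(2) via Lemma~\ref{seq} together with the identification of $\fp \mathbf{P}_\mathcal{E}(\mathcal{A})$ with the opposite of the relevant functor category, and the equivalences (2)--(5) by transporting the classical characterizations of locally noetherian Grothendieck categories along $\textnormal{ev}_\mathcal{E}$ using Lemma~\ref{trans} and Theorem~\ref{big}. The only difference is presentational: the paper cites a single omnibus theorem from Krause's book for (2)--(5), whereas you assemble the same standard facts (Bass--Papp, Matlis decomposition) individually.
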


\begin{proof} The category $\fp \mathbf{P}_{\mathcal{E}}(\mathcal{A})$ identifies with $\Fp(\mathcal{C}, \Ab)^\textnormal{op}/\mathcal{S}_\mathcal{E}$ (see Section 1.4) and every object in $\Fp(\mathcal{C}, \Ab)$ is a quotient of $\Hom_{\mathcal{C}}(X,-)$ for some $X\in \mathcal{C}$. Thus, the equivalence between (1) and (2) follows from Lemma \ref{seq}. The $\bar{\mathcal{E}}$-injective objects are identified via $\textnormal{ev}_\mathcal{E}$ with the injective objects in $\mathbf{P}_{\mathcal{E}}(\mathcal{A})$ by Lemma \ref{trans}. Since $\textnormal{ev}_\mathcal{E}$ is fully faithful and commutes with filtered colimits by Theorem \ref{big}, the equivalences between (2), (3), (4) and (5) follow from \cite[Theorem 11.2.12]{Krause2}. 
\end{proof}

An object $X\in \mathcal{A}$ is \emph{endofinite} if for all $C\in \mathcal{C}$ the $\textnormal{End}_\mathcal{A}(X)$-module $\Hom_{\mathcal{A}}(C,X)$ is of finite length.

\begin{prop} The following are equivalent.
\begin{itemize}
    \item[\rm (1)] For every $X\in \mathcal{C}$ the lengths of $\mathcal{E}$-sequences starting in $X$ are bounded.
    \item[\rm (2)] The relative purity category $\mathbf{P}_\mathcal{E}(\mathcal{A})$ is locally finite.
    \item[\rm (3)] Every $\bar {\mathcal{E}}$-injective object in $\mathcal{A}$ is endofinite.
\end{itemize}
\end{prop}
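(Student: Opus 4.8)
The plan is to mimic the proof of Proposition~\ref{noeth}, inserting a length count at the point where local finiteness replaces local noetherianity. I would deduce (1)$\iff$(2) from Lemma~\ref{seq} and (2)$\iff$(3) from Lemma~\ref{trans} together with full faithfulness of $\textnormal{ev}_\mathcal{E}$ and the fact that a locally coherent Grothendieck category is locally finite if and only if all of its injective objects are endofinite (cf.\ \cite{Krause4}).

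\emph{(1)$\iff$(2).} Under the identification of $\fp\mathbf{P}_\mathcal{E}(\mathcal{A})$ with $\Fp(\mathcal{C},\Ab)^{\op}/\mathcal{S}_\mathcal{E}$ the object $\textnormal{ev}_\mathcal{E}(X)$ corresponds to $\Hom_\mathcal{C}(X,-)$ for $X\in\mathcal{C}$ (Section~1.4 and Remark~\ref{induce}), and since every object of $\Fp(\mathcal{C},\Ab)$ is a quotient of some $\Hom_\mathcal{C}(X,-)$, dualizing shows every f.p. object of $\mathbf{P}_\mathcal{E}(\mathcal{A})$ is a subobject of some $\textnormal{ev}_\mathcal{E}(X)$. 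By Lemma~\ref{seq}, an $\mathcal{E}$-sequence of length $\ell$ starting in $X$ corresponds to a chain of $\ell$ proper epimorphisms between quotients of $\Hom_\mathcal{C}(X,-)$ in $\Fp(\mathcal{C},\Ab)/\mathcal{S}_\mathcal{E}$, i.e.\ to a strictly decreasing chain of length $\ell$ of subobjects of $\Hom_\mathcal{C}(X,-)$ there. Hence (1) holds if and only if for each $X$ there is a uniform bound on the length of strictly decreasing chains of subobjects of $\Hom_\mathcal{C}(X,-)$; such a chain of maximal finite length is necessarily a composition series, so this is equivalent to $\Hom_\mathcal{C}(X,-)$ having finite length, and---finite length being self-dual---to $\textnormal{ev}_\mathcal{E}(X)$ having finite length in $\fp\mathbf{P}_\mathcal{E}(\mathcal{A})$. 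As subobjects of finite length objects have finite length and every f.p. object of $\mathbf{P}_\mathcal{E}(\mathcal{A})$ is a subobject of an $\textnormal{ev}_\mathcal{E}(X)$, this holding for all $X$ is equivalent to $\mathbf{P}_\mathcal{E}(\mathcal{A})$ being locally finite, which is (2).

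\emph{(2)$\iff$(3).} By Theorem~\ref{big} the functor $\textnormal{ev}_\mathcal{E}$ is fully faithful, and by Lemma~\ref{trans} it carries the $\bar{\mathcal{E}}$-injectives of $\mathcal{A}$ onto the injective objects of $\mathbf{P}_\mathcal{E}(\mathcal{A})$; in particular $\textnormal{End}_\mathcal{A}(Q)\cong\textnormal{End}_{\mathbf{P}_\mathcal{E}(\mathcal{A})}(\textnormal{ev}_\mathcal{E}Q)$ and $\Hom_\mathcal{A}(C,Q)\cong\Hom_{\mathbf{P}_\mathcal{E}(\mathcal{A})}(\textnormal{ev}_\mathcal{E}C,\textnormal{ev}_\mathcal{E}Q)$ as modules over this ring for $C\in\mathcal{C}$. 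Every f.p. object of $\mathbf{P}_\mathcal{E}(\mathcal{A})$ is a quotient of some $\textnormal{ev}_\mathcal{E}C$, and $\Hom(-,\textnormal{ev}_\mathcal{E}Q)$ sends that quotient map to an inclusion of $\textnormal{End}(\textnormal{ev}_\mathcal{E}Q)$-modules, so $Q$ is endofinite precisely when $\Hom_{\mathbf{P}_\mathcal{E}(\mathcal{A})}(G,\textnormal{ev}_\mathcal{E}Q)$ has finite length over $\textnormal{End}(\textnormal{ev}_\mathcal{E}Q)$ for every f.p. $G$. Thus (3) says exactly that every injective object of the locally coherent category $\mathbf{P}_\mathcal{E}(\mathcal{A})$ is endofinite, and (2)$\iff$(3) then follows from the quoted characterization. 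For (2)$\Rightarrow$(3) I would in fact argue directly: a finite composition series of an f.p. object $G$ yields, on applying the exact functor $\Hom(-,E)$ to an injective $E$, a finite filtration of $\Hom(G,E)$ with subquotients the $\Hom(S,E)$ for $S$ simple f.p.; and $\Hom(S,E)=\Hom(S,\textnormal{soc}\,E)$ is a simple---possibly zero---module over the endomorphism ring of the $S$-isotypic summand of $\textnormal{soc}\,E$, a ring onto which $\textnormal{End}(E)$ surjects because $E$ is injective, so $\Hom(S,E)$, and therefore $\Hom(G,E)$, has finite length over $\textnormal{End}(E)$.

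The main obstacle is the implication (3)$\Rightarrow$(2): showing that if $\mathbf{P}_\mathcal{E}(\mathcal{A})$ is not locally finite then some injective object fails to be endofinite. The natural candidate is an indecomposable injective $E$ of infinite Loewy length, whose fully invariant socle filtration produces an unbounded ascending chain of $\textnormal{End}(E)$-submodules inside $\Hom(G,E)$ for a suitable f.p. $G$; for a clean argument here I would invoke the locally finite analogue of the result used in Proposition~\ref{noeth}. Alternatively, since endofinite objects decompose into indecomposables with local endomorphism rings, condition (3) already forces $\mathbf{P}_\mathcal{E}(\mathcal{A})$ to be locally noetherian by Proposition~\ref{noeth}, which reduces the problem to ruling out an f.p. object that is noetherian but of infinite length.
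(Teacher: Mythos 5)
Your route is the paper's own: the paper proves this proposition by repeating the argument of Proposition \ref{noeth}, with the locally noetherian criterion replaced by the cited characterization of locally finite locally coherent categories through endofinite injective objects, and your (1)$\Leftrightarrow$(2) via Lemma \ref{seq} together with (2)$\Leftrightarrow$(3) via Lemma \ref{trans}, full faithfulness of $\textnormal{ev}_\mathcal{E}$ (Theorem \ref{big}) and that quoted result is exactly this; your direct composition-series/socle argument for (2)$\Rightarrow$(3) is a correct extra, and deferring (3)$\Rightarrow$(2) to the quoted characterization is precisely what the paper does, so that is not a deviation.

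One auxiliary claim in your (2)$\Leftrightarrow$(3) step is false as written: it is not true that every finitely presented object of $\mathbf{P}_\mathcal{E}(\mathcal{A})$ is a quotient of some $\textnormal{ev}_\mathcal{E}C$ with $C\in \mathcal{C}$. Dually this would say that every object of $\Fp(\mathcal{C},\Ab)/\mathcal{S}_\mathcal{E}$ embeds into a representable functor; already for $\mathcal{E}=\mathcal{E}_{\textnormal{split}}$ (so $\mathcal{S}_\mathcal{E}=0$ and $\mathbf{P}_\mathcal{E}(\mathcal{A})=\mathbf{P}(\mathcal{A})$) and $\mathcal{C}=\mod \Lambda$ with $\Lambda=k[x]/(x^2)$, the functor $F=\Ext^1_\Lambda(k,-)\cong \coker\Hom_\Lambda(\iota,-)$, where $\iota\colon k\rightarrow \Lambda$ is the socle embedding, admits no nonzero morphism to any $\Hom_\Lambda(C,-)$ at all: such a morphism corresponds by Yoneda to $g\colon C\rightarrow k$ with $\iota g=0$, hence $g=0$, while $F(k)\neq 0$. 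What is true, and what you yourself use in (1)$\Leftrightarrow$(2), is the dual statement that every finitely presented object $G$ of $\mathbf{P}_\mathcal{E}(\mathcal{A})$ is a \emph{subobject} of some $\textnormal{ev}_\mathcal{E}C$. That suffices for the endofiniteness transfer you need: for $Q$ an $\bar{\mathcal{E}}$-injective object, $\textnormal{ev}_\mathcal{E}Q$ is injective by Lemma \ref{trans}, so restriction along $G\subseteq \textnormal{ev}_\mathcal{E}C$ gives a surjection $\Hom_{\mathbf{P}_\mathcal{E}(\mathcal{A})}(\textnormal{ev}_\mathcal{E}C,\textnormal{ev}_\mathcal{E}Q)\rightarrow \Hom_{\mathbf{P}_\mathcal{E}(\mathcal{A})}(G,\textnormal{ev}_\mathcal{E}Q)$ of modules over $\textnormal{End}(\textnormal{ev}_\mathcal{E}Q)\cong \textnormal{End}_\mathcal{A}(Q)$, exhibiting the latter as a quotient (rather than a submodule) of $\Hom_\mathcal{A}(C,Q)$, which is all that finiteness of length requires. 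With this one-line repair your argument is sound and coincides with the paper's.
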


\begin{proof} The proof is the same as the proof of Proposition \ref{noeth} but we make use of \cite[Proposition 13.1.9]{Krause2} instead.
\end{proof}

Recall that a morphism $f\colon X\rightarrow Y$ in an additive category is \emph{left almost split} if $f$ is not a split monomorphism and every morphism starting in $X$ that is not a split monomorphism factors through $f$. In this case $X$ has local endomorphism ring. 

\begin{lem}\label{bigsim} Let $X\in \mathcal{A}$ be $\bar{\mathcal{E}}$-injective. The following are equivalent.
\begin{itemize}
    \item[(1)] The object $X$ is the source of a left almost split morphism in $\mathcal{X}_\mathcal{E}$.
    \item[(2)] The injective envelope of a simple object in $\mathbf{P}_\mathcal{E}(\mathcal{A})$ coincides with $\bar{X}$. 
\end{itemize}
\end{lem}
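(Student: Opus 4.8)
The plan is to push the statement across the embedding $\textnormal{ev}_\mathcal{E}$ and then argue entirely inside the locally coherent Grothendieck category $\mathbf{P}_\mathcal{E}(\mathcal{A})$. Since $\textnormal{ev}_\mathcal{E}$ is fully faithful (Theorem \ref{big}), it preserves and reflects split monomorphisms and factorisations, so $X$ is the source of a left almost split morphism in $\mathcal{X}_\mathcal{E}$ if and only if $\bar X$ is the source of a left almost split morphism in the essential image of $\mathcal{X}_\mathcal{E}$; by Lemma \ref{trans} this essential image is exactly the full subcategory $\mathcal{Q}$ of fp-injective objects of $\mathbf{P}_\mathcal{E}(\mathcal{A})$, and $\bar X$ is an injective object of $\mathbf{P}_\mathcal{E}(\mathcal{A})$. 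Thus it suffices to prove: \emph{for an injective object $Q$ of a locally coherent Grothendieck category $\mathcal{B}$, with $\mathcal{Q}\subseteq\mathcal{B}$ its full subcategory of fp-injective objects, $Q$ is the source of a left almost split morphism in $\mathcal{Q}$ if and only if $Q$ is the injective envelope of a simple object of $\mathcal{B}$.} I will freely use two elementary facts: a nonzero subobject of an indecomposable injective object is essential in it (its injective envelope sits inside as a direct summand, hence is everything); and a morphism $g\colon Q\to N$ with $N\in\mathcal{Q}$ fails to be a split monomorphism in $\mathcal{Q}$ precisely when $g$ is not a monomorphism in $\mathcal{B}$ --- split monomorphisms are detected in the full subcategory $\mathcal{Q}$, and a monomorphism out of the injective $Q$ splits.

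For the forward direction, I would take $f\colon Q\to Y$ left almost split in $\mathcal{Q}$. Then $Q$ has a local endomorphism ring, so $Q$ is an indecomposable injective, and $f$ is not a monomorphism, so $K=\ker f\neq 0$; the goal is to show $K$ is simple. If $K$ were not simple it would contain a nonzero finitely generated subobject $K'$ with $K'\subsetneq K$. Then $g\colon Q\twoheadrightarrow Q/K'\hookrightarrow E(Q/K')$, with the injective envelope taken in $\mathcal{B}$, lies in $\mathcal{Q}$ and is not a monomorphism, hence factors through $f$; but any such factorisation forces $K=\ker f\subseteq\ker g=K'$, contradicting $K'\subsetneq K$. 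So $K$ is simple, and being a nonzero subobject of the indecomposable injective $Q$ it is essential, whence $Q=E(K)$.

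For the converse, I would take $Q=E(S)$ with $S$ simple, so $Q$ is indecomposable injective. The key input is that $\mathcal{Q}$, the fp-injective objects of $\mathbf{P}_\mathcal{E}(\mathcal{A})$, is closed under products, filtered colimits and pure subobjects, hence is a definable subcategory, and a definable subcategory is preenveloping (being in particular closed under products and pure subobjects). So $Q/S$ has an fp-injective preenvelope $\varphi\colon Q/S\to Y$, and I set $f=\varphi\circ\pi$ with $\pi\colon Q\twoheadrightarrow Q/S$. Since $0\neq S\subseteq\ker f$, the morphism $f$ is not a split monomorphism. Given any $h\colon Q\to N$ in $\mathcal{Q}$ that is not a split monomorphism, $h$ is not a monomorphism, so $\ker h$ is a nonzero subobject of $Q$ and hence contains the essential simple subobject $S$; thus $h=\bar h\circ\pi$ for some $\bar h\colon Q/S\to N$, and since $\varphi$ is an fp-injective preenvelope and $N\in\mathcal{Q}$, we get $\bar h=\psi\circ\varphi$, so $h=\psi\circ f$. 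Hence $f$ is left almost split in $\mathcal{Q}$.

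Transporting back along $\textnormal{ev}_\mathcal{E}$ then yields the lemma. The one step that is not pure formal nonsense is the converse, where one needs an fp-injective preenvelope of $Q/S$; I expect this to be the point requiring the most care, and it rests on the standard fact that the fp-injective objects of a locally coherent Grothendieck category form a definable --- hence preenveloping --- subcategory. As a sanity check, when $\mathbf{P}_\mathcal{E}(\mathcal{A})$ is locally noetherian every fp-injective object is injective, so one may simply take $Y=E(Q/S)$ and the argument becomes immediate.
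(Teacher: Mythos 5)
Your proof is correct and follows essentially the same route as the paper: transport the statement through $\textnormal{ev}_\mathcal{E}$ (Theorem \ref{big}, Lemma \ref{trans}) and argue with injective and fp-injective objects in $\mathbf{P}_\mathcal{E}(\mathcal{A})$, using an fp-injective preenvelope of $\bar{X}/S$ for (2)$\Rightarrow$(1) exactly as the paper does via its citation of Krause's book. The only (harmless) variation is in (1)$\Rightarrow$(2), where you show that $\ker \bar{f}$ itself is simple and essential, while the paper instead passes to a simple quotient of a finitely generated subobject of $\ker \bar{f}$ and identifies $\bar{X}$ with its injective envelope; both rest on the same mechanism of testing non-split maps against the left almost split morphism.
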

\begin{proof} The proof is similar to the proof of \cite[Theorem 12.3.13]{Krause4}. Note that $\textnormal{ev}_\mathcal{E}$ identifies the $\bar{\mathcal{E}}$-injectives in $\mathcal{A}$ with the injectives in $\mathbf{P}_{\mathcal{E}}(\mathcal{A})$ as well as $\mathcal{X}_\mathcal{E}$ with the fp-injectives in $\mathbf{P}_{\mathcal{E}}(\mathcal{A})$ by Lemma \ref{trans}. Moreover, the functor $\textnormal{ev}_\mathcal{E}$ is fully faithful by Theorem \ref{big}.

(1)$\Rightarrow$(2): Let $f\colon X \rightarrow Y$ be left almost split in $\mathcal{X}_\mathcal{E}$. If $\ker \bar{f} = 0$, then $\bar{X}\rightarrow \bar{Y}$ is a split monomorphism, since $\bar{X}$ is injective. Then also $f$ is a split monomorphism, which is a contradiction. Hence, $\ker \bar{f} \neq 0$. Let $0 \neq C\subseteq \ker \bar{f}$ be a finitely generated subobject, $U \subseteq C$ a maximal subobject and $C/U \rightarrow \bar{X}'$ an injective envelope in $\mathbf{P}_\mathcal{E}(\mathcal{A})$. Then $C/U$ is simple and $X'$ is indecomposable. Consider the compositions
\begin{align*}
    C \longrightarrow \ker \bar{f} \longrightarrow \bar{X}, \qquad C\longrightarrow C/U \longrightarrow \bar{X}'.
\end{align*}
Because $\bar{X}'$ is injective, there exists a commutative diagram
\begin{equation*}
    \begin{tikzcd}
        \bar{X}' & \\
        C \arrow[r] \arrow[u] & \bar{X}. \arrow[ul]
    \end{tikzcd}
\end{equation*}
Now the morphism $\bar{X} \rightarrow \bar{X}'$ corresponds to a morphism $g\colon X\rightarrow X'$. If $g$ is not a split monomorphism, then $g$ factors through $f$. This contradicts $\bar{f}(C) = 0$ and it follows that $g$ is a split monomorphism. Since ${X}'$ is indecomposable, the morphism $g$ is an isomorphism and $\bar{X}$ is the injective envelope of the simple object $C/U$.

(2)$\Rightarrow$(1): Let $S \rightarrow \bar{X}$ be an injective envelope of a simple object $S$ in $\mathbf{P}_\mathcal{E}(\mathcal{A})$. By \cite[Lemma 11.1.26]{Krause4} and \cite[ Proposition 11.1.27]{Krause4} there exists a morphism $\alpha\colon \bar{X}/S \rightarrow \bar{Y}$ with $Y\in \mathcal{X}_\mathcal{E}$ such that $\Hom_{\mathbf{P}_\mathcal{E}(\mathcal{A})}(\alpha, \bar{A})$ is surjective for $A\in \mathcal{X}_\mathcal{E}$. The induced morphism $\bar{X} \rightarrow \bar{Y}$ corresponds to a morphism $f\colon X\rightarrow Y$. We show that $f$ is left almost split. Clearly, $f$ is not a split monomorphism, since $\bar{f}$ is not a monomorphism. Let $g\colon X\rightarrow Z$ be a morphism with $Z\in \mathcal{X}_\mathcal{E}$ that is not a split monomorphism. By injectivity of $\bar{X}$ it follows that $\bar{g}$ is not a monomorphism and so $\bar{g}(S) = 0$. Thus, $\bar{g}$ factors through $\bar{X} \rightarrow \bar{X}/S$ as well as $\bar{X}/S \rightarrow \bar{Y}$, because $\Hom_{\mathbf{P}_\mathcal{E}(\mathcal{A})}(\alpha, \bar{Z})$ is surjective. Hence, $g$ factors through $f$.  
\end{proof}

A morphism $X \rightarrow Y$ in $\mathcal{C}$ is \emph{almost $\mathcal{E}$-monic} if it is not an $\mathcal{E}$-monomorphism and for every morphism $X\rightarrow Z$, the morphism $\varphi$ in some weak pushout diagram 
\begin{equation*}
    \begin{tikzcd}
        X \arrow[r] \arrow[d]& Y \arrow[d]\\
        Z \arrow[r, "\varphi"] & P
    \end{tikzcd}
\end{equation*}
is an $\mathcal{E}$-monomorphism or the induced morphism $X \rightarrow Y \oplus Z$ is an $\mathcal{E}$-monomorphism. 

\begin{exa}\rm
\begin{itemize} 
    \item[(1)] A morphism $X\rightarrow Y$ in $\mathcal{C}$ is left almost split if and only if it is almost $\mathcal{E}_\bot$-monic and $X$ has local endomorphism ring.    
    \item[(2)] Assume that $\mathcal{C}$ is abelian. A morphism $X\rightarrow Y$ in $\mathcal{C}$ is almost $\mathcal{E}_\top$-monic if and only if its kernel is simple.
\end{itemize}
\end{exa}

The following is a small version of Lemma \ref{bigsim}.

\begin{lem}\label{sim} For $X\in \mathcal{C}$ the following are equivalent.
\begin{itemize}
    \item[(1)] The object $X$ is the source of an almost $\mathcal{E}$-monic morphism in $\mathcal{C}$.
    \item[(2)] There is an epimorphism $\Hom_{\mathcal{C}}(X,-)\rightarrow S$ in $\Fp (\mathcal{C}, \Ab)/{\mathcal{S}_\mathcal{E}}$ with simple $S$. 
\end{itemize}
In this case $S\cong \coker \Hom_\mathcal{C}(f,-)$, where $f$ equals the almost $\mathcal{E}$-monic morphism $f$ starting in $X$.
\end{lem}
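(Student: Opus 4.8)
The plan is to work throughout in the Serre quotient $\fp(\mathcal{C},\Ab)/\mathcal{S}_\mathcal{E}$ with its exact quotient functor $q$, and to use the dictionary of Theorem~\ref{eno} (via Corollary~\ref{enosim}): a morphism $g$ in $\mathcal{C}$ is an $\mathcal{E}$-monomorphism if and only if $q(\coker\Hom_\mathcal{C}(g,-))=0$. For $g\colon U\to V$ in $\mathcal{C}$ abbreviate $F_g=\coker\Hom_\mathcal{C}(g,-)$ and $K_g=\im\Hom_\mathcal{C}(g,-)\subseteq\Hom_\mathcal{C}(U,-)$, so that $F_g=\Hom_\mathcal{C}(U,-)/K_g$ is the canonical quotient of the representable $\Hom_\mathcal{C}(U,-)$.

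The technical heart is the following computation. Fix $f\colon X\to Y$ and let $t\colon X\to Z$ be arbitrary in $\mathcal{C}$; choose a weak pushout $P$ of $f$ along $t$ with legs $\psi\colon Y\to P$ and $\varphi\colon Z\to P$, so that $(\psi,\varphi)\colon Y\oplus Z\to P$ is a weak cokernel of the morphism $X\to Y\oplus Z$ with components $f$ and $-t$. Applying $\Hom_\mathcal{C}(-,C)$ and using that a weak cokernel becomes exact in the middle term identifies $\im\Hom_\mathcal{C}(\varphi,-)$ with the preimage of $K_f$ under $\Hom_\mathcal{C}(t,-)\colon\Hom_\mathcal{C}(Z,-)\to\Hom_\mathcal{C}(X,-)$; the second isomorphism theorem in the abelian category $\fp(\mathcal{C},\Ab)$ then yields $F_\varphi\cong(K_t+K_f)/K_f$. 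In particular $F_\varphi$ does not depend on the chosen weak pushout, and there is a short exact sequence
\begin{align*}
0\longrightarrow F_\varphi\longrightarrow F_f\longrightarrow F_{(f,t)}\longrightarrow 0
\end{align*}
in $\fp(\mathcal{C},\Ab)$, where $(f,t)\colon X\to Y\oplus Z$. Applying the exact functor $q$ and reading off the dictionary: $\varphi$ is an $\mathcal{E}$-monomorphism iff $q(F_\varphi)=0$; the morphism $(f,t)$ is an $\mathcal{E}$-monomorphism iff $q(F_{(f,t)})=0$ iff $q(F_\varphi)=q(F_f)$; and $f$ is an $\mathcal{E}$-monomorphism iff $q(F_f)=0$. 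Hence $f$ is almost $\mathcal{E}$-monic precisely when $q(F_f)\neq 0$ and every subobject of $q(F_f)$ of the form $q(F_\varphi)$ is $0$ or all of $q(F_f)$. Moreover, as $t$ ranges over all morphisms out of $X$, the functors $F_\varphi$ range (up to isomorphism) over all finitely generated subobjects of $F_f$, and every subobject of $q(F_f)$ is $q$ of one of these: lift the corresponding quotient of $q(F_f)$ along $q$ to an epimorphism $F_f\twoheadrightarrow P''$ in $\fp(\mathcal{C},\Ab)$ by the standard pushout argument for morphisms in a Serre quotient, observe that $\ker(F_f\to P'')$ is finitely generated because $F_f$ and $P''$ are finitely presented, and note that a finitely generated subfunctor of $\Hom_\mathcal{C}(X,-)$ is the image of a representable (here $\mathcal{C}$ is idempotent complete), which lifts to some $\Hom_\mathcal{C}(t,-)$ since $\Hom_\mathcal{C}(X,-)$ is projective. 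Therefore $f$ is almost $\mathcal{E}$-monic if and only if $q(F_f)$ is a simple object.

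From this the equivalence (1)$\Leftrightarrow$(2) is immediate. If $f\colon X\to Y$ is almost $\mathcal{E}$-monic, then applying $q$ to the canonical epimorphism $\Hom_\mathcal{C}(X,-)\twoheadrightarrow F_f$ gives an epimorphism $\Hom_\mathcal{C}(X,-)\twoheadrightarrow q(F_f)=:S$ with $S=\coker\Hom_\mathcal{C}(f,-)$ simple, which is (2) together with the final assertion. Conversely, given an epimorphism $\Hom_\mathcal{C}(X,-)\twoheadrightarrow S$ in $\fp(\mathcal{C},\Ab)/\mathcal{S}_\mathcal{E}$ with $S$ simple, lift it along $q$ (again by the pushout argument) to an epimorphism $\Hom_\mathcal{C}(X,-)\twoheadrightarrow F'$ in $\fp(\mathcal{C},\Ab)$ with $q(F')\cong S$; its kernel is finitely generated since $\Hom_\mathcal{C}(X,-)$ and $F'$ are finitely presented, hence equals $K_f$ for some $f\colon X\to Y$, so $F'=F_f$ and $q(F_f)\cong S$ is simple; by the previous paragraph $f$ is almost $\mathcal{E}$-monic with source $X$, which is (1). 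I expect the main obstacle to be establishing the short exact sequence $0\to F_\varphi\to F_f\to F_{(f,t)}\to 0$, namely carrying out the weak-cokernel computation that pins down $\im\Hom_\mathcal{C}(\varphi,-)$ and checking that the result is independent of the chosen weak pushout, so that the condition on $\varphi$ in the definition of an almost $\mathcal{E}$-monic morphism is well posed; the two uses of lifting along $q$ and the finiteness bookkeeping are routine. This is the small-scale counterpart of Lemma~\ref{bigsim}, with $\Hom_\mathcal{C}(X,-)$ and its simple quotients playing the role of the injective object $\bar X$ and the simple subobjects of $\mathbf{P}_\mathcal{E}(\mathcal{A})$, and $\coker\Hom_\mathcal{C}(f,-)$ the role of $\ker\bar f$.
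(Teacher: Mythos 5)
Your proposal is correct and follows essentially the same route as the paper: both reduce the lemma to the claim that $f$ is almost $\mathcal{E}$-monic precisely when $\coker\Hom_{\mathcal{C}}(f,-)$ becomes simple in $\Fp(\mathcal{C},\Ab)/\mathcal{S}_\mathcal{E}$, identify the image and cokernel of a map $\Hom_{\mathcal{C}}(Z,-)\to \coker\Hom_{\mathcal{C}}(f,-)$ with $\coker\Hom_{\mathcal{C}}(\varphi,-)$ (weak pushout) and $\coker\Hom_{\mathcal{C}}((f,t),-)$, and invoke the correspondence of Theorem~\ref{eno}, recovering $f$ from a representable presentation of $S$ for (2)$\Rightarrow$(1). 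You merely spell out details the paper leaves implicit (the weak-cokernel computation, independence of the chosen weak pushout, and the lifting of epimorphisms along the quotient functor with the finite-generation bookkeeping).
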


\begin{proof} We show that for every morphism $f\colon X\rightarrow Y$ in $\mathcal{C}$ the functor 
\begin{align*}
F= \coker \Hom_{\mathcal{C}}(f,-)    
\end{align*}
is simple in $\Fp(\mathcal{C},\Ab)$ if and only if $f$ is almost $\mathcal{E}$-monic. Then clearly (1) implies (2) and also (2) implies (1), since an epimorphism $\Hom_{\mathcal{C}}(X,-) \rightarrow S$ can be extendend to an exact sequence 
\begin{align*}
    \Hom_{C}(Y,-) \longrightarrow \Hom_{\mathcal{C}}(X,-) \longrightarrow S \longrightarrow 0
\end{align*}
in $\Fp(\mathcal{C},\Ab)/\mathcal{S}_\mathcal{E}$, where the first morphism corresponds to a morphism $X\rightarrow Y$ by Lemma \ref{small}. 

The object $F$ is simple in $\Fp(\mathcal{C},\Ab)/\mathcal{S}_{\mathcal{E}}$ if and only if $F\notin \mathcal{S}_{\mathcal{E}}$ and for every morphism $\Hom_{\mathcal{C}}(Z,-) \rightarrow F$ in $\Fp(\mathcal{C},\Ab)$ the image is contained in $\mathcal{S}_\mathcal{E}$ or the cokernel is contained in $\mathcal{S}_\mathcal{E}$. The morphism $\Hom_{C}(Z,-) \rightarrow F$ corresponds to a morphism $X\rightarrow Z$. The image is isomorphic to $\coker \Hom_{\mathcal{C}}(\varphi,-)$, where $\varphi$ occurs in the pushout diagram
\begin{equation*}
    \begin{tikzcd}
        X \arrow[r] \arrow[d]& Y \arrow[d]\\
        Z \arrow[r, "\varphi"] & P
    \end{tikzcd}
\end{equation*}
and the cokernel is isomorphic to $\coker \Hom_{\mathcal{C}}(g,-)$, where $g$ equals the induced morphism $X \rightarrow Y\oplus Z$. By Theorem \ref{eno} and the definition of an almost $\mathcal{E}$-monic morphism, it now follows that $f$ is almost $\mathcal{E}$-monic if and only if $F$ is simple in $\Fp(\mathcal{C}, \Ab)/\mathcal{S}_\mathcal{E}$.
\end{proof}

The following connects left almost split morphisms in $\mathcal{X}_\mathcal{E}$ and almost $\mathcal{E}$-monic morphisms in $\mathcal{C}$.

\begin{prop}\label{left} Let $Q\in \mathcal{A}$ be the $\bar{\mathcal{E}}$-injective envelope of $C\in \mathcal{C}$. The following are equivalent.
\begin{itemize}
    \item[(1)] There is a left almost split morphism in $\mathcal{X}_\mathcal{E}$ starting in an indecomposable direct summand $X$ of $Q$.
    \item[(2)] There is an almost $\mathcal{E}$-monic morphism starting in $C$.  
\end{itemize}
Moreover, in this case $\{X\}$ is open in $\mathcal{U}_\mathcal{E}$.
\end{prop}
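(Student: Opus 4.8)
The plan is to carry both conditions into the locally coherent category $\mathbf{P}_\mathcal{E}(\mathcal{A})$ along the fully faithful functor $\textnormal{ev}_\mathcal{E}$ of Theorem \ref{big} and to recognise each of them as the single condition $\textnormal{soc}\,\bar{C}\neq 0$. First I would fix the dictionary. That $Q$ is \emph{the} $\bar{\mathcal{E}}$-injective envelope of $C$ means, via Lemma \ref{monough}, Lemma \ref{trans} and the proof of Proposition \ref{enough}(1), that $\bar{C}\to\bar{Q}$ is an injective envelope in $\mathbf{P}_\mathcal{E}(\mathcal{A})$; in particular this monomorphism is essential, so $\textnormal{soc}\,\bar{C}=\textnormal{soc}\,\bar{Q}$. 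Moreover $\bar{C}\in\fp\mathbf{P}_\mathcal{E}(\mathcal{A})$, and under the equivalence $\varphi\colon\fp(\mathcal{C},\Ab)^\op/\mathcal{S}_\mathcal{E}\to\fp\mathbf{P}_\mathcal{E}(\mathcal{A})$ from the diagram in Theorem \ref{big} it corresponds to $\Hom_\mathcal{C}(C,-)$.

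Next I would translate condition (2). By Lemma \ref{sim}, applied with $C$ in the role of $X$, condition (2) holds if and only if there is an epimorphism $\Hom_\mathcal{C}(C,-)\to S$ in $\fp(\mathcal{C},\Ab)/\mathcal{S}_\mathcal{E}$ with $S$ simple; passing through the opposite category and $\varphi$, this says precisely that $\bar{C}$ admits a simple subobject in $\fp\mathbf{P}_\mathcal{E}(\mathcal{A})$. Since $\mathbf{P}_\mathcal{E}(\mathcal{A})$ is locally coherent, a finitely generated subobject of a finitely presented object is finitely presented, and every simple object is finitely generated; hence having a simple subobject in $\fp\mathbf{P}_\mathcal{E}(\mathcal{A})$ is the same as having a simple subobject in $\mathbf{P}_\mathcal{E}(\mathcal{A})$. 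Thus (2) is equivalent to $\textnormal{soc}\,\bar{C}\neq 0$.

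Now I would translate condition (1). An indecomposable direct summand $X$ of the $\bar{\mathcal{E}}$-injective object $Q$ is itself $\bar{\mathcal{E}}$-injective, so Lemma \ref{bigsim} applies: $X$ is the source of a left almost split morphism in $\mathcal{X}_\mathcal{E}$ if and only if $\bar{X}$ is the injective envelope of a simple object, i.e. $\bar{X}$ is an indecomposable injective of $\mathbf{P}_\mathcal{E}(\mathcal{A})$ with simple socle. Since $\mathcal{A}$ is idempotent complete and $\textnormal{ev}_\mathcal{E}$ is fully faithful, the indecomposable direct summands of $Q$ correspond bijectively to those of $\bar{Q}$; and by Lemma \ref{trans}(2) the indecomposable injectives of $\mathbf{P}_\mathcal{E}(\mathcal{A})$ are exactly the $\bar{X}$ with $X$ an indecomposable $\bar{\mathcal{E}}$-injective of $\mathcal{A}$. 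Hence (1) holds if and only if $\bar{Q}$ has an indecomposable injective direct summand with simple socle. Using $\textnormal{soc}\,\bar{Q}=\textnormal{soc}\,\bar{C}$ this matches (2): if the socle is nonzero, a simple $S\subseteq\textnormal{soc}\,\bar{Q}$ has an injective envelope mapping into $\bar{Q}$ extending the inclusion of $S$, and as $S$ is essential there this map is monic, so the envelope splits off $\bar{Q}$ as an indecomposable injective summand with socle $S$; conversely, the socle of such a summand is a nonzero simple subobject of $\textnormal{soc}\,\bar{Q}=\textnormal{soc}\,\bar{C}$. This gives (1)$\Leftrightarrow$(2).

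For the last assertion, fix $X$ as in (1); then $\bar{X}$ is the injective envelope of $S:=\textnormal{soc}\,\bar{X}$, which by the above is a simple, hence finitely presented, subobject of $\bar{C}$. For $P\in\textnormal{Sp}\,\mathbf{P}_\mathcal{E}(\mathcal{A})$ any nonzero morphism $S\to P$ is monic, so $\Hom_{\mathbf{P}_\mathcal{E}(\mathcal{A})}(S,P)\neq 0$ forces the injective envelope of $S$ to split off the indecomposable injective $P$, i.e. $P\cong\bar{X}$. Therefore $\{\bar{X}\}$ is the complement in $\textnormal{Sp}\,\mathbf{P}_\mathcal{E}(\mathcal{A})$ of the closed set $\{P\mid\Hom_{\mathbf{P}_\mathcal{E}(\mathcal{A})}(S,P)=0\}$ (closed because $S\in\fp\mathbf{P}_\mathcal{E}(\mathcal{A})$), hence open; transporting along the homeomorphism between $\mathcal{U}_\mathcal{E}$ with its subspace topology and $\textnormal{Sp}\,\mathbf{P}_\mathcal{E}(\mathcal{A})$ (Corollary \ref{int} and Section 1.6), which carries $X$ to $\bar{X}$, shows that $\{X\}$ is open in $\mathcal{U}_\mathcal{E}$. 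The step I expect to be the main obstacle is the bookkeeping around the contravariant equivalence $\fp\mathbf{P}_\mathcal{E}(\mathcal{A})\simeq(\fp(\mathcal{C},\Ab)/\mathcal{S}_\mathcal{E})^\op$ — so that a quotient of $\Hom_\mathcal{C}(C,-)$ becomes a subobject of $\bar{C}$ — together with the finiteness checks ensuring the relevant simple object is finitely presented; the essentiality of $\bar{C}\hookrightarrow\bar{Q}$ is what fuses (1) and (2) into the single statement about $\textnormal{soc}\,\bar{C}$.
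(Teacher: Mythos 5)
Your proposal is correct and follows essentially the same route as the paper's proof: both pass through $\textnormal{ev}_\mathcal{E}$ to identify the $\bar{\mathcal{E}}$-injective envelope with the injective envelope $\bar{C}\to\bar{Q}$ in $\mathbf{P}_\mathcal{E}(\mathcal{A})$, match simple subobjects of $\bar{Q}$ with those of $\bar{C}$ (finitely presented by local coherence), and combine Lemma \ref{bigsim} with Lemma \ref{sim} under the identification $\fp\mathbf{P}_\mathcal{E}(\mathcal{A})\simeq \textnormal{fp}(\mathcal{C},\Ab)^\op/\mathcal{S}_\mathcal{E}$, with the openness of $\{X\}$ obtained exactly as in the paper from the vanishing of $\Hom(\bar{S},\bar{Y})$ for indecomposable injectives $\bar{Y}\not\cong\bar{X}$. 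Your reformulation via $\textnormal{soc}\,\bar{C}\neq 0$ is only a cosmetic repackaging of the same argument.
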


\begin{proof} The proof is similar to \cite[Theorem 3.6]{Krause}.

The $\bar{\mathcal{E}}$-injective envelope corresponds to an injective envelope $\bar{C}\rightarrow \bar{Q}$ in $\mathbf{P}_\mathcal{E}(\mathcal{A})$ by the proof of Proposition \ref{enough}. Simple subobjects of $\bar{Q}$ correspond to simple subobjects of $\bar{C}$, which are automatically finitely presented, since $\mathbf{P}_\mathcal{E}(\mathcal{A})$ is locally coherent. Further, every simple subobject of $\bar{Q}$ corresponds to an indecomposable direct summand of $\bar{Q}$, which is the injective envelope of this simple object. This direct summand is isomorphic to some $\bar{X}$, where $X$ is an indecomposable direct summand of $Q$. Now the equivalence of (1) and (2) follows by Lemma \ref{bigsim} and Lemma \ref{sim} using the identification of $\fp \mathbf{P}_\mathcal{E}(\mathcal{A})$ with $\Fp(\mathcal{C},\Ab)^\textnormal{op}/\mathcal{S}_\mathcal{E}$ (see Section 1.4).

The above shows that, under the assumption of (1) or equivalently (2), there exists an injective envelope $\bar{S} \rightarrow \bar{X}$ in $\mathbf{P}_\mathcal{E}(\mathcal{A})$, where $\bar{S}$ is simple in $\fp \mathbf{P}_\mathcal{E}(\mathcal{A})$. Because $\Hom_{\fp \mathbf{P}_\mathcal{E}(\mathcal{A})}
(\bar{S}, \bar{Y}) = 0$ for every indecomposable injective object $\bar{Y}\not \cong \bar{X}$ in $\mathbf{P}_\mathcal{E}(\mathcal{A})$, it follows that $\{\bar{X}\}$ is open in $\textnormal{Sp}\,\mathbf{P}_\mathcal{E}(\mathcal{A})$ (see Section 1.5). Hence, $\{X\}$ is open in $\mathcal{U}_\mathcal{E}$ by the correspondence (ii)$\rightarrow$(iii),(iv) in Section 1.6.
\end{proof}

\section{The case of an Artin algebra}

Let $k$ be a commutative artinian ring, $A$ an Artin $k$-algebra, $\Mod A$ the category of left $A$-modules and $\mod A$ the full subcategory of finite length modules. The category $\Mod A$ is locally finite with $\fp \Mod A = \mod A$. We set $\Ind A = \Ind \Mod A$ and denote by $\Inj A$, respectively $\textnormal{proj}\, A$, the set of isomorphism classes of indecomposable injective, respectively projective, modules in $\mod A$.

The largest exact structure $\mathcal{E}_\top$ on $\mod A$ coincides with the abelian structure and so $\bar{\mathcal{E}}_\top = \varinjlim \mathcal{E}_\top$ coincides with the abelian structure of $\Mod A$. Moreover, for an indecomposable module $X\in \Mod A$ if $X$  is endofinite, then $X$ is a closed point in $\Ind A$ and if $X$ is of finite length, then $X$ is an isolated point in $\Ind A$, see for example \cite[Theorem 5.1.12]{Prest} and \cite[Corollary 5.3.33]{Prest}. The following result shows that the study of $\Ind \mathcal{A}$ is equivalent to the study of exact structure on $\mod A$. 

\begin{coro}\label{easy} There exists a one to one correspondence 
\begin{equation*}
\left\{\begin{matrix}\text{closed sets}\\
\text{in $\Ind A$}\end{matrix}\right\} 
\longleftrightarrow 
\left\{\begin{matrix}\text{exact structures} \\ \text{on $\mod A$}\end{matrix}\right\} \times \left\{\begin{matrix}\text{subsets} \\ \text{of ${\Inj {A}}$}\end{matrix}\right\}.
\end{equation*}
\end{coro}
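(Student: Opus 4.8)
The plan is to deduce this from Theorem~\ref{sum} by absorbing the difference between ``all closed sets'' and ``closed sets containing the indecomposable injectives'' into a choice of subset of $\Inj A$, using that the points of $\Inj A$ are isolated in $\Ind A$.

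First I would identify the set implicitly appearing in Theorem~\ref{sum}(3) in the present setting. Since $\mod A$ is abelian, the maximal exact structure $\mathcal{E}_\textnormal{max}$ on $\mod A$ is the abelian one, so $\bar{\mathcal{E}}_\textnormal{max}$ is the abelian structure on $\Mod A$ and the $\bar{\mathcal{E}}_\textnormal{max}$-injectives are exactly the injective $A$-modules. As $A$ is an Artin algebra, every indecomposable injective $A$-module has finite length, so the indecomposable $\bar{\mathcal{E}}_\textnormal{max}$-injectives form precisely $\Inj A$ (equivalently, by Example~\ref{abelian} this set is the closure of $\Inj A$ in $\Ind A$, which equals $\Inj A$ by the next step). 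Hence Theorem~\ref{sum} restricts, in the case $\mathcal{A} = \Mod A$, to a bijection between exact structures on $\mod A$ and closed sets $\mathcal{U}$ of $\Ind A$ with $\Inj A \subseteq \mathcal{U}$.

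Next I would record that $\Inj A$ is a finite subset of $\Ind A$ consisting of points that are simultaneously open and closed: for $Q \in \Inj A$ the singleton $\{Q\}$ is closed because $Q$, being of finite length, is endofinite, and open because $Q$ is of finite length. Consequently $\Inj A$ is itself both open and closed in $\Ind A$, so for any closed set $\mathcal{U}$ and any subset $T \subseteq \Inj A$ both $\mathcal{U} \cup T$ and $\mathcal{U} \setminus T$ are again closed in $\Ind A$.

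Finally I would build the correspondence. Given a closed set $\mathcal{U}$ of $\Ind A$, send it to the pair $\bigl(\mathcal{U} \cup \Inj A,\ \mathcal{U} \cap \Inj A\bigr)$, whose first entry is a closed set containing $\Inj A$ and whose second entry is a subset of $\Inj A$. Conversely, send a pair $(\mathcal{U}_0, T)$ with $\Inj A \subseteq \mathcal{U}_0$ closed and $T \subseteq \Inj A$ to the closed set $(\mathcal{U}_0 \setminus \Inj A) \cup T$. These assignments are mutually inverse by a direct set-theoretic check: from $\mathcal{U}$ one recovers $(\mathcal{U} \cup \Inj A) \setminus \Inj A$ together with $\mathcal{U} \cap \Inj A$, whose union is $\mathcal{U}$; and from $(\mathcal{U}_0, T)$ one has $((\mathcal{U}_0 \setminus \Inj A) \cup T) \cup \Inj A = \mathcal{U}_0$ (using $\Inj A \subseteq \mathcal{U}_0$) and $((\mathcal{U}_0 \setminus \Inj A) \cup T) \cap \Inj A = T$ (using $T \subseteq \Inj A$). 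Composing this bijection with the one from the second paragraph, which replaces ``closed set containing $\Inj A$'' by ``exact structure on $\mod A$'', yields the stated correspondence. The only genuinely non-formal ingredient is the identification in the second paragraph of the indecomposable $\bar{\mathcal{E}}_\textnormal{max}$-injectives with $\Inj A$, i.e.\ the fact that over an Artin algebra there are no ``large'' indecomposable injective modules; everything else is point-set topology on the finite open-and-closed part $\Inj A$ of $\Ind A$.
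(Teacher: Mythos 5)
Your argument is correct and follows essentially the same route as the paper: Theorem \ref{sum} reduces the statement to closed sets containing $\Inj A$ (the indecomposable $\bar{\mathcal{E}}_\textnormal{max}$-injectives, since $\bar{\mathcal{E}}_\textnormal{max}$ is the abelian structure of $\Mod A$ and indecomposable injectives over an Artin algebra have finite length), and the clopen-ness of each $\{Q\}$ with $Q\in \Inj A$ absorbs the difference into a choice of subset of $\Inj A$. The only difference is that you spell out explicitly the mutually inverse assignments $\mathcal{U}\mapsto (\mathcal{U}\cup \Inj A,\ \mathcal{U}\cap \Inj A)$ and $(\mathcal{U}_0,T)\mapsto (\mathcal{U}_0\setminus \Inj A)\cup T$, which the paper leaves as an implicit set-theoretic remark.
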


\begin{proof} By Theorem \ref{sum} an exact structure $\mathcal{E}$ on $\mod A$ corresponds to a closed set in $\Ind A$ containing $\Inj A$. Now an arbitrary closed set in $\Ind A$ differs by a choice of a subset of $\Inj A$, since $\{X\}$ is closed and open for all $X\in \Inj A$.
\end{proof}

An indecomposable module $X\in \Mod A$ is \emph{generic} if $X$ is endofinite and $X$ is not of finite length. The existence of generic modules is related to the (generalized) second Brauer-Thrall conjecture, which states that if $\mod A$ is of infinite representation type and the simple modules have infinite underlying sets, then there exist infinitely many $n \in \mathbb{N}$ such that there are infinitely many non-isomorphic indecomposable modules of length $n$. Now under the assumptions in the second Brauer-Thrall conjecture, the conjecture is equivalent to the existence of a generic module \cite[Theorem 7.3]{Crawley-Boevey0}.

\begin{prop}\label{generic}{\rm \cite[Proposition 6.23]{Krause}} The assignment
\begin{align*}
    M \mapsto \mathcal{S}_M = \{F\in \Fp(\mod A, \Ab) \mid F(M) = 0\}
\end{align*}
induces a bijection between
\begin{itemize}
    \item[\rm (1)] isomorphism classes of indecomposable endofinite modules $M\in \Mod A$, and
    \item[\rm (2)] maximal Serre subcategories $\mathcal{S}$ of $\Fp(\mod A, \Ab)$ such that $\Fp(\mod A, \Ab)/\mathcal{S}$ has a simple object. 
\end{itemize}
Moreover, $M$ is generic if and only if every simple object in $\Fp(\mod A, \Ab)$ is also contained in $\mathcal{S}_M$.
\end{prop}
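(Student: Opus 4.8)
I sketch how to obtain this within the framework above. Put $\mathcal{A}=\Mod A$ and $\mathcal{C}=\mod A$, and recall from Section 1.4 the anti-equivalence $\Fp(\mathcal{C},\Ab)\simeq(\fp\mathbf{P}(\mathcal{A}))^\op$, under which $F\in\Fp(\mathcal{C},\Ab)$ with extension $\bar F$ corresponds to $\widehat F\in\fp\mathbf{P}(\mathcal{A})$ with $\Hom_{\mathbf{P}(\mathcal{A})}(\widehat F,\bar X)\cong\bar F(X)$ for all $X\in\mathcal{A}$; thus a Serre subcategory of $\Fp(\mathcal{C},\Ab)$ is the same thing as a Serre subcategory of $\fp\mathbf{P}(\mathcal{A})$, hence by Section 1.6 the same thing as a definable subcategory of $\mathcal{A}$ and as a hereditary torsion class of finite type in $\mathbf{P}(\mathcal{A})$; I read ``maximal'' in (2) as ``maximal among \emph{proper} Serre subcategories''. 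An endofinite module is $\Sigma$-pure-injective, so for $M$ as in (1) the object $\bar M$ is an indecomposable injective of $\mathbf{P}(\mathcal{A})$, the functor $\Hom_{\mathbf{P}(\mathcal{A})}(-,\bar M)$ is exact, and $\mathcal{S}_M=\{F:\bar F(M)=0\}$ is precisely the Serre subcategory corresponding to the definable subcategory $\langle M\rangle$ generated by $M$ and, via (ii)$\rightarrow$(iii),(iv), to the localization $\mathbf{P}(\mathcal{A})/\mathcal{T}_M\simeq\mathbf{P}(\langle M\rangle)$; in particular $\Fp(\mathcal{C},\Ab)/\mathcal{S}_M\simeq(\fp\mathbf{P}(\langle M\rangle))^\op$.

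For the first direction I would argue as follows. By a theorem of Crawley-Boevey, $\langle M\rangle=\textnormal{Add}\,M$ for $M$ endofinite, and a Krull--Schmidt argument for the $\Sigma$-pure-injective module $M$ shows this is minimal among nonzero definable subcategories, so $\mathcal{S}_M$ is a maximal proper Serre subcategory. Every pure-injective object of $\textnormal{Add}\,M$ is a summand of a product of copies of the endofinite module $M$, hence endofinite, so by the proposition characterizing endofinite $\bar{\mathcal{E}}$-injectives (applied to $\langle M\rangle$ with the split exact structure) $\mathbf{P}(\langle M\rangle)$ is locally finite; therefore $\Fp(\mathcal{C},\Ab)/\mathcal{S}_M$ is a nonzero length category and in particular has a simple object. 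Thus $\mathcal{S}_M$ lies in (2), and the assignment is injective since $\mathcal{S}_M$ determines $\langle M\rangle=\textnormal{Add}\,M$, and hence its unique indecomposable pure-injective $M$.

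Conversely, given $\mathcal{S}$ as in (2) I would let $\mathcal{X}$ be the corresponding definable subcategory. As $\mathcal{S}$ is maximal proper, $\mathcal{X}$ is minimal among nonzero definable subcategories, so $\mathcal{F}:=\fp\mathbf{P}(\mathcal{X})\simeq(\Fp(\mathcal{C},\Ab)/\mathcal{S})^\op$ has no proper nonzero Serre subcategory while, by hypothesis, it has a simple object $S$. The Serre closure of $\{S\}$ in $\mathcal{F}$ consists of the objects admitting a finite filtration with all factors isomorphic to $S$; being nonzero it equals $\mathcal{F}$, so $\mathcal{F}$ is a length category with unique simple object $S$, and $\mathbf{P}(\mathcal{X})$ is locally finite with a unique indecomposable injective object $E$, namely the injective envelope of $S$. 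Let $M\in\mathcal{X}$ be the indecomposable pure-injective with $\bar M\cong E$. Then $\textnormal{End}_A(M)\cong\textnormal{End}_{\mathbf{P}(\mathcal{X})}(E)$, and for $C\in\mathcal{C}$ the image $q\bar C$ of the finitely presented object $\bar C\in\fp\mathbf{P}(\mathcal{A})$ under $\mathbf{P}(\mathcal{A})\to\mathbf{P}(\mathcal{X})$ is again finitely presented, hence of finite length with all composition factors $\cong S$; applying the exact functor $\Hom_{\mathbf{P}(\mathcal{X})}(-,E)$ to a composition series of $q\bar C$ shows that $\Hom_A(C,M)\cong\Hom_{\mathbf{P}(\mathcal{X})}(q\bar C,E)$ has finite length over $\textnormal{End}_A(M)$, so $M$ is endofinite. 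Minimality of $\mathcal{X}$ gives $\langle M\rangle=\mathcal{X}$, whence $\mathcal{S}_M=\mathcal{S}$; this is the desired inverse.

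For the final assertion I would use that the simple objects of $\Fp(\mathcal{C},\Ab)$ are exactly the functors $S_C=\coker\Hom_{\mathcal{C}}(f_C,-)$ with $C\in\mathcal{C}$ indecomposable and $f_C$ a minimal left almost split morphism out of $C$ (read as $C\rightarrow C/\textnormal{soc}\,C$ when $C$ is injective non-simple, and $C\rightarrow 0$ when $C$ is simple injective). Then $\overline{S_C}(M)=\coker\Hom_A(f_C,M)$ is nonzero precisely when some morphism $C\rightarrow M$ fails to factor through $f_C$, i.e.\ is a split monomorphism, i.e.\ $C$ is a direct summand of $M$. Hence every simple object of $\Fp(\mathcal{C},\Ab)$ lies in $\mathcal{S}_M$ if and only if $M$ has no finite-length direct summand, which for the indecomposable endofinite module $M$ means exactly that $M$ has infinite length, i.e.\ that $M$ is generic. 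The main obstacle is the dictionary ``$M$ endofinite $\Leftrightarrow$ $\mathbf{P}(\langle M\rangle)$ locally finite $\Leftrightarrow$ $\Fp(\mathcal{C},\Ab)/\mathcal{S}_M$ a length category'': in the present language this rests on the proposition characterizing endofinite $\bar{\mathcal{E}}$-injectives via local finiteness of the relative purity category, together with the elementary fact that a simple abelian category possessing a simple object is automatically a length category; the remainder is bookkeeping with the correspondences of Section 1.6 and with standard properties of $\Sigma$-pure-injective and endofinite modules.
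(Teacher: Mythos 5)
The paper does not actually prove this proposition: it is quoted verbatim from Krause (\cite[Proposition 6.23]{Krause}), so there is no in-paper argument to compare against. Your reconstruction through the Section 1.6 dictionary (Serre subcategories $\leftrightarrow$ definable subcategories $\leftrightarrow$ finite-type localizations of $\mathbf{P}(\Mod A)$), Crawley-Boevey's identification $\langle M\rangle=\textnormal{Add}\,M$ for $M$ indecomposable endofinite, the length-category argument in the Serre quotient, and the computation $\Hom_{\mathbf{P}(\mathcal{X})}(q\bar C,\bar M)\cong\Hom_A(C,M)$ is essentially correct and close in spirit to Krause's original purity-theoretic proof; the converse direction in particular is carried out carefully (including the fact that $\Hom(S,E)$ is a simple $\textnormal{End}(E)$-module, which makes the composition-series induction work).

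One step is not licensed as literally stated: you invoke the proposition characterizing local finiteness of $\mathbf{P}_\mathcal{E}(\mathcal{A})$ via endofiniteness of $\bar{\mathcal{E}}$-injectives ``applied to $\langle M\rangle$ with the split exact structure''. That proposition presupposes a locally finitely presented category with products and an exact structure on its finitely presented objects; but $\langle M\rangle=\textnormal{Add}\,M$ is in general not locally finitely presented (for $M$ generic its only finitely presented object is $0$), and $\mathcal{S}_M$ is not of the form $\mathcal{S}_\mathcal{E}$ for an exact structure on $\mod A$ unless $\textnormal{Add}\,M$ contains all injectives, which fails for most $M$. The repair is immediate and uses exactly the tool the paper uses inside that proposition: apply the locally coherent criterion directly to the localization $\mathbf{P}(\Mod A)/\mathcal{T}_M$, whose injective objects are the $\bar N$ with $N$ a direct sum of copies of $M$ (endofinite modules are $\Sigma$-pure-injective and product-complete), and run the same finite-endolength computation $\Hom(q\widehat F,\bar M)\cong\bar F(M)$, a subquotient of some $\Hom_A(C,M)$; since $\bar M$ is then an injective cogenerator with $\Hom(X,\bar M)$ of finite length over $\textnormal{End}_A(M)$ for every finitely presented $X$, every finitely presented object has finite length and simple objects exist. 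A smaller gloss: in the final assertion, the step ``$C\to M$ does not factor through $f_C$, hence is a split monomorphism'' for $M$ of infinite length needs the standard argument (write $M$ as a filtered colimit of finite length modules, so the map is a filtered colimit of split monomorphisms, hence pure, and $C$ is pure-injective); this is the same fact the paper uses implicitly in Example \ref{alm}. With these two clarifications your proof goes through.
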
 

Let $\mathcal{E}$ be an exact structure on $\mod A$. Then $\mathcal{E}$ is \textit{maximal} if the only bigger exact structure is the abelian exact structure of $\mod A$. A short exact sequence
\begin{align*}
    0 \longrightarrow X \xlongrightarrow{f} Y \xlongrightarrow{g} Z \longrightarrow 0
\end{align*}
in $\mod A$ is \emph{almost} $\mathcal{E}$\emph{-exact} if $f$ is almost $\mathcal{E}$-monic. Recall that the above short exact sequence is an \emph{almost split sequence} if $f$ is left almost split and $Z$ is indecomposable. The following translates Proposition \ref{generic} to the language of exact structures.

\begin{thm}\label{genericex} Let $A$ be an Artin algebra. The assignment
\begin{align*}
M \mapsto \mathcal{E}_{M} = \{(f, \coker f) \mid \ker f = 0\text{ and }\coker \Hom_{A}(f,M) = 0\}
\end{align*}
induces a bijection between
\begin{itemize}
    \item[\rm (1)] isomorphism classes of indecomposable endofinite modules $M\in \Mod A$ that are not injective, and
    \item[\rm (2)] maximal exact structures $\mathcal{E}$ on $\mod A$ such that there exists an almost \mbox{$\mathcal{E}$-exact} sequence. 
\end{itemize}
Moreover, $M$ is generic if and only if every almost split sequence is $\mathcal{E}_M$-exact.
\end{thm}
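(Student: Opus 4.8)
The plan is to derive Theorem \ref{genericex} from Proposition \ref{generic} by transporting the bijection there along the correspondence of Corollary \ref{enosim}, and then matching up all the side conditions. Throughout I write $\mathcal{S}_M = \{F \in \Fp(\mod A, \Ab) \mid F(M) = 0\}$ as in Proposition \ref{generic}, and I note first that for an indecomposable endofinite $M$, the Serre subcategory $\mathcal{S}_M$ satisfies $\mathcal{S}_M \subseteq \mathcal{S}_{\mathcal{E}_\textnormal{max}}$ precisely when $M$ is not injective. Indeed $\mathcal{S}_{\mathcal{E}_\textnormal{max}}$ consists of the functors $\coker \Hom_A(f,-)$ with $f$ a monomorphism (since $\mathcal{E}_\textnormal{max}$ is the abelian structure on $\mod A$, see Example \ref{abelian}), and $\coker \Hom_A(f,-)$ evaluated at $M$ is $\coker \Hom_A(f,M)$; requiring this to vanish for every monomorphism $f$ in $\mod A$ is, by \cite[Lemma 3.1]{Krause2} applied to the evaluation functor on $M$, equivalent to $M$ being injective. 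So $\mathcal{S}_M \subseteq \mathcal{S}_{\mathcal{E}_\textnormal{max}}$ iff $M$ is non-injective. Under Corollary \ref{enosim} such an $\mathcal{S}_M$ corresponds to the exact structure $\mathcal{E}_{\mathcal{S}_M} = \{(f,g) \mid \coker \Hom_A(f,-) \in \mathcal{S}_M\} = \{(f, \coker f) \mid \ker f = 0 \text{ and } \coker \Hom_A(f,M) = 0\}$, which is exactly the $\mathcal{E}_M$ in the statement; here $\ker f = 0$ is just the condition that $(f, \coker f)$ be a genuine kernel-cokernel pair in the abelian category $\mod A$.

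Next I would check that the maximality conditions correspond. On the purity side, Corollary \ref{enosim} is an inclusion-preserving bijection between exact structures on $\mod A$ and Serre subcategories of $\Fp(\mod A, \Ab)$ contained in $\mathcal{S}_{\mathcal{E}_\textnormal{max}}$. So $\mathcal{E}$ is a maximal exact structure on $\mod A$ (meaning the only strictly larger exact structure is $\mathcal{E}_\textnormal{max}$) iff the corresponding $\mathcal{S}_\mathcal{E}$ is a maximal element of $\{\mathcal{S} \text{ Serre} \mid \mathcal{S} \subsetneq \mathcal{S}_{\mathcal{E}_\textnormal{max}}\}$, i.e. $\mathcal{S}_\mathcal{E}$ is a maximal Serre subcategory of $\Fp(\mod A, \Ab)$ not equal to $\mathcal{S}_{\mathcal{E}_\textnormal{max}}$. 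Then I would show the existence of an almost $\mathcal{E}$-exact sequence corresponds, via Lemma \ref{sim}, to $\Fp(\mod A, \Ab)/\mathcal{S}_\mathcal{E}$ having a simple object: an almost $\mathcal{E}$-exact sequence $0 \to X \xrightarrow{f} Y \xrightarrow{g} Z \to 0$ is, by definition, a short exact sequence with $f$ almost $\mathcal{E}$-monic, and Lemma \ref{sim} says $f$ almost $\mathcal{E}$-monic is equivalent to $\coker \Hom_A(f,-)$ being simple in $\Fp(\mod A, \Ab)/\mathcal{S}_\mathcal{E}$, i.e. there is an epimorphism $\Hom_A(X,-) \to S$ onto a simple object; conversely any simple object of the quotient arises this way and (since $\mod A$ is abelian, every object has honest cokernels) can be realised by an honest short exact sequence. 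One caveat: Proposition \ref{generic} speaks of \emph{maximal} Serre subcategories $\mathcal{S}$ with $\Fp/\mathcal{S}$ having a simple object, without the constraint $\mathcal{S} \subseteq \mathcal{S}_{\mathcal{E}_\textnormal{max}}$; so I must argue that any Serre subcategory $\mathcal{S}$ with $\Fp/\mathcal{S}$ simple is automatically $\subsetneq \mathcal{S}_{\mathcal{E}_\textnormal{max}}$ — this is because $\mathcal{S}_{\mathcal{E}_\textnormal{max}} = \mathcal{S}_Q$ for $Q$ the direct sum of the indecomposable injectives (an injective module is endofinite, being of finite length), and $\Fp/\mathcal{S}_Q$ has no simple object since $Q$ has no almost split sequence ending at it in the sense needed; more directly, $\mathcal{S}_{\mathcal{E}_\textnormal{max}}$ is not a maximal Serre subcategory with a simple quotient, which must be handled by the characterisation of $\mathcal{S}_{\mathcal{E}_\textnormal{max}}$ — I would verify $\Fp(\mod A, \Ab)/\mathcal{S}_{\mathcal{E}_\textnormal{max}} \cong \Fp(\mod A^\op, \Ab)^\op$ has no simple objects, or simply that maximality of $\mathcal{S}$ in Proposition \ref{generic} among Serre subcategories with simple quotient together with $\mathcal{S}_{\mathcal{E}_\textnormal{max}}$ having no simple quotient forces the inclusion to be strict.

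Having matched the two bijections, the last assertion about generic modules follows from the ``Moreover'' clause of Proposition \ref{generic}: $M$ is generic iff every simple object of $\Fp(\mod A, \Ab)$ lies in $\mathcal{S}_M$. I would translate ``every simple object lies in $\mathcal{S}_\mathcal{E}$'' into a statement about almost split sequences. A simple object $S$ of $\Fp(\mod A, \Ab)$ has the form $S \cong \coker \Hom_A(f,-)$ for $f$ left almost split in $\mod A$, by the classical identification of simple finitely presented functors with minimal left almost split maps (this is the $\mathcal{E}_\textnormal{split}$ case of Lemma \ref{sim}); and $f$ fits into the almost split sequence $0 \to X \xrightarrow{f} Y \xrightarrow{g} Z \to 0$ with $Z$ indecomposable. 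Now $S = \coker \Hom_A(f,-) \in \mathcal{S}_\mathcal{E} = \mathcal{S}_M$ iff $f$ is an $\mathcal{E}$-monomorphism (by Theorem \ref{eno}), i.e. iff the almost split sequence is $\mathcal{E}$-exact. Since every almost split sequence in $\mod A$ arises from such a simple functor and conversely, ``every simple object of $\Fp(\mod A, \Ab)$ lies in $\mathcal{S}_M$'' is equivalent to ``every almost split sequence is $\mathcal{E}_M$-exact'', giving the claim. The main obstacle I anticipate is the bookkeeping around maximality: carefully establishing that the maximal Serre subcategories with simple quotient are precisely the $\mathcal{S}_\mathcal{E}$ for $\mathcal{E}$ a maximal exact structure admitting an almost $\mathcal{E}$-exact sequence, which requires ruling out $\mathcal{S}_{\mathcal{E}_\textnormal{max}}$ itself as a candidate and confirming the inclusion-preserving bijection of Corollary \ref{enosim} genuinely transports ``maximal below $\mathcal{S}_{\mathcal{E}_\textnormal{max}}$'' to ``maximal exact structure'' in the intended sense. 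The rest is routine translation through Theorem \ref{eno}, Lemma \ref{sim}, and the $\mathcal{E}_\textnormal{split}$-specialisation identifying simple functors with left almost split maps.
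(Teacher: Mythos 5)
Your overall strategy (transport Proposition \ref{generic} through Corollary \ref{enosim} and Lemma \ref{sim}) is the right one and is close in spirit to the paper, but it rests on a false claim that hides exactly the technical content of the actual proof. You assert that $\mathcal{S}_M \subseteq \mathcal{S}_{\mathcal{E}_\textnormal{max}}$ precisely when $M$ is not injective; the argument you give actually addresses the \emph{opposite} inclusion ($\mathcal{S}_{\mathcal{E}_\textnormal{max}} \subseteq \mathcal{S}_M$ iff $M$ is injective), and the claimed inclusion is simply false in general. For instance, for $A = k[x]/(x^2)$ and $M = k$ (indecomposable, endofinite, not injective), the functor $\coker \Hom_A(\pi,-)$ with $\pi\colon A \rightarrow k$ the projection vanishes on $k$ but not on the injective module $A$, so $\mathcal{S}_M \not\subseteq \mathcal{S}_{\mathcal{E}_\textnormal{max}}$. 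Consequently $\mathcal{S}_M$ is not in the domain of Corollary \ref{enosim} at all; the Serre subcategory corresponding to $\mathcal{E}_M$ is $\mathcal{S}_{\mathcal{E}_M} = \mathcal{S}_M \cap \mathcal{S}_{\mathcal{E}_\textnormal{max}}$, not $\mathcal{S}_M$. This breaks the intended transport: maximality of $\mathcal{S}_M$ among \emph{all} Serre subcategories (Proposition \ref{generic}) does not automatically translate into maximality of $\mathcal{S}_M \cap \mathcal{S}_{\mathcal{E}_\textnormal{max}}$ among Serre subcategories below $\mathcal{S}_{\mathcal{E}_\textnormal{max}}$, nor conversely, and your ``caveat'' paragraph addresses a different (and moot) worry. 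The paper bridges this gap in two nontrivial steps that your proposal omits: in one direction it passes through the Ziegler spectrum, identifying $\mathcal{U}_{\mathcal{E}_M}$ with $\{M\}\cup \Inj A$ and using that $\mathcal{U}_{\mathcal{E}}\setminus \Inj A$ is a \emph{minimal} nonempty closed set when $\mathcal{E}$ is maximal; in the other it must show that the simple object of $\Fp(\mod A,\Ab)/\mathcal{S}_M$ can be represented as $\coker\Hom_A(f,-)$ with $f$ a \emph{monomorphism}.

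That second point is also where your parenthetical ``since $\mod A$ is abelian \dots can be realised by an honest short exact sequence'' fails: Lemma \ref{sim} produces an almost $\mathcal{E}$-monic morphism $f$, but an almost $\mathcal{E}$-monic morphism need not be a monomorphism, whereas an almost $\mathcal{E}$-exact sequence requires one. The paper's proof spends most of its effort precisely here: an induction on factorizations $f = gh$ through epimorphisms reduces to $f$ mono or epi with simple kernel, and in the latter case a pushout along an injective hull, using crucially that $M \notin \Inj A$, replaces $f$ by a monomorphism whose cokernel functor is still simple modulo $\mathcal{S}_M$. Without this step neither the well-definedness of the assignment into class (2) nor the surjectivity argument is established. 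Finally, a smaller gap in your ``Moreover'' part: simple functors in $\Fp(\mod A,\Ab)$ correspond to left almost split maps out of \emph{all} indecomposables, including injective ones, and those do not come from almost split sequences; you must either check separately that these vanish on $M$ (they do, since $M$ is indecomposable and not isomorphic to a finite-length injective) or argue the converse directly as the paper does, noting that a non-generic endofinite indecomposable has finite length and the almost split sequence starting at it is not $\mathcal{E}_M$-exact.
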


\begin{proof} The exact structure $\mathcal{E}_M$ coincides with the exact structure corresponding to the closed set $\mathcal{U} = \{M\} \cup \Inj A$ from Theorem \ref{sum}. Every bigger exact structure would correspond to a smaller closed set in $\Ind A$ containing $\Inj A$, which can only be $\Inj A$. It follows that $\mathcal{E}_M$ is a maximal exact structure. 

Let $\mathcal{S}_M$ be as in Proposition \ref{generic}. Then there exists a simple object $F$ in $\Fp (\mod A, \Ab)/\mathcal{S}_M$. Further, $F\cong \coker \Hom_{A}(f,-)$ for a morphism $f \colon X \rightarrow Y$ in $\mod A$. If $f= g h$ for a morphism $g$ and an epimorphism $h$ in $\mod A$, then there exists a short exact sequence
 \begin{align*}
         0 \rightarrow \coker \Hom_{A}(g,-) \rightarrow \coker  \Hom_{A}(f,-) \rightarrow \coker \Hom_{A}(h ,-) \rightarrow 0
 \end{align*}
in $\Fp(\mod A, \Ab)$. In such a case, either $F$ is isomorphic to $\coker \Hom_{A}(g,-)$ or isomorphic to $\coker \Hom_{A}(h,-)$ in $\Fp(\mod A, \Ab)/\mathcal{S}_M$, since $F$ is simple. Hence, by induction we may assume that either $f$ is a monomorphism or an epimorphism with simple kernel. In the second case let $f \colon X\rightarrow X/S$ for $S\leq X$ simple, $\iota \colon X\rightarrow I$ an injective hull, $\varphi \colon I \rightarrow I/\iota (S)$ the pushout of $f$ along $\iota$ and $\psi \colon X \rightarrow X/S\oplus I$ induced by $f, \iota$. By construction, there exists a short exact sequence
 \begin{align*}
         0 \rightarrow \coker \Hom_{A}(\varphi,-) \rightarrow \coker  \Hom_{A}(f,-) \rightarrow \coker \Hom_{A}(\psi ,-) \rightarrow 0
 \end{align*}
in $\Fp(\mod A, \Ab)$. Because $M\notin \Inj A$, we have $\coker \Hom_{A}(\varphi, M) = 0$ and thus $F \cong \coker \Hom_{A}(\psi ,-)$ in $\Fp(\mod A, \Ab)/\mathcal{S}_M$. Hence, we may assume that $f$ is a monomorphism. It follows that $\coker \Hom_{A}(f,-)\in \mathcal{S}_{\mathcal{E}_{\top}}$, where $\mathcal{S}_{\mathcal{E}_{\top}}$ is the Serre subcategory of $\Fp(\mathcal{C},\Ab)$ corresponding to the largest exact structure $\mathcal{E}_\top$ on $\mod A$ from Corollary \ref{enosim}. Thus, $F$ is simple in $\mathcal{S}_{\mathcal{E}_{\top}}/(\mathcal{S}_M \cap \mathcal{S}_{\mathcal{E}_{\top}})$. By definition $\mathcal{S}_{\mathcal{E}_M} = \mathcal{S}_M \cap \mathcal{S}_{\mathcal{E}_{\top}}$ and it follows that $F$ is simple in $\Fp(\mod A, \Ab)/\mathcal{S}_{\mathcal{E}_M}$. Now the morphism $f$ is almost $\mathcal{E}_M$-monic by Lemma \ref{sim}. Since $f$ is a monomorphism, the pair $(f, \coker f)$ is an almost $\mathcal{E}_M$-exact sequence.  

For an almost split sequence
\begin{align*}
    0 \longrightarrow X \xlongrightarrow{f} Y \longrightarrow Z \longrightarrow 0
\end{align*}
in $\mod A$ we have $\coker\Hom_{A}(f,M) = 0$ if and only if $X \cong M$. Thus, such a sequence is always $\mathcal{E}_M$-exact if and only if $M$ is generic. 


It is left to check the surjectivity of the assignment. Let $\mathcal{E}$ be a maximal exact structure on $\mod A$ such that there exists an almost $\mathcal{E}$-exact sequence $(f,\coker f)$. Let $\mathcal{U}_{\mathcal{E}}$ be the corresponding closed set in $\Ind A$ by Theorem \ref{sum}. Then $\mathcal{U}_{\mathcal{E}}$ is minimal with the property of containing $\Inj A$ but not being equal to $\Inj A$. It follows that $\mathcal{U} = \mathcal{U}_{\mathcal{E}}\setminus \Inj A$ is a minimal non-empty closed set in $\Ind A$. Hence, the Serre subcategory $\mathcal{S}$ corresponding to $\mathcal{U}$ is maximal (see Section 1.6). Further, the equality $\mathcal{U}_\mathcal{E} = \mathcal{U} \cup \Inj \mathcal{A}$ yields $\mathcal{S}_{\mathcal{E}} =  \mathcal{S} \cap \mathcal{S}_{\mathcal{E}_\top}$. Now by Lemma \ref{sim} the functor $\coker \Hom_{A}(f,-)$ is simple in $\Fp(\mod A, \Ab)/\mathcal{S}_{\mathcal{E}}$. Since $f$ is a monomorphism, we have $\coker \Hom_{A}(f,-) \in \mathcal{S}_{\mathcal{E}_\top}$. It follows that $\coker \Hom_{A}(f,-)$ is simple in $\Fp(\mod A, \Ab)/\mathcal{S}$. By Proposition \ref{generic} there exists an endofinite module $M\in \Mod A$ with $\mathcal{S} = \mathcal{S}_M$. Hence, $\mathcal{E}_{M} = \mathcal{E}$ and the assignment is surjective.
\end{proof}

\begin{rem}\label{indenough}\rm Theorem \ref{genericex} shows the importance of almost $\mathcal{E}$-exact sequences. The following are fundamental properties about them.
\begin{enumerate}
    \item[(1)] Let $0 \rightarrow X \xrightarrow{f} Y \rightarrow Z \rightarrow 0$ be almost $\mathcal{E}$-exact and $X'$ a direct summand of $X$. Then the canonical morphisms $g\colon X/X' \rightarrow Y/f(X')$ and $h\colon X' \rightarrow Y$ yield a short exact sequence
    \begin{equation*}
         0 \rightarrow \coker \Hom_{A}(g,-) \rightarrow \coker  \Hom_{A}(f,-) \rightarrow \coker \Hom_{A}(h ,-) \rightarrow 0.
    \end{equation*}
    in $\Fp(\mod A, \Ab)$. By Lemma \ref{sim} either $g$ or $h$ is almost $\mathcal{E}$-monic. Hence, by iteratively splitting off direct summands from $X$ there exists an almost $\mathcal{E}$-exact sequence $0 \rightarrow X' \rightarrow Y' \rightarrow Z' \rightarrow 0$, where $X'$ is an indecomposable direct summand of $X$.
    \item[(2)] By the proof of Lemma \ref{sim} the condition for a morphism $X\to Y$ in $\mod A$, which is not an $\mathcal{E}$-monomorphism, to be almost $\mathcal{E}$-monic only needs to be tested for indecomposable $Z\in \mod A$. That is, for every morphism $X\rightarrow Z$, the morphism $\varphi$ in some (weak) pushout diagram
\begin{equation*}
    \begin{tikzcd}
        X \arrow[r] \arrow[d]& Y \arrow[d]\\
        Z \arrow[r, "\varphi"] & P
    \end{tikzcd}
\end{equation*}
is an $\mathcal{E}$-monomorphism or the induced morphism $X \rightarrow Y \oplus Z$ is an \linebreak{$\mathcal{E}$-monomorphism}.
    \item[(3)] One can define \emph{almost $\mathcal{E}$-epic} morphisms in $\mod A$ dually to almost $\mathcal{E}$-monic morphisms. Then for a short exact sequence $0 \rightarrow X \rightarrow Y \rightarrow  Z \rightarrow 0$
    the morphisms $X\rightarrow Y$ is almost $\mathcal{E}$-monic if and only if $Y\rightarrow Z$ is almost $\mathcal{E}$-epic. This follows from the duality in Lemma \ref{dual} and from Lemma \ref{sim} applied to both $\mod A$ and $(\mod A)^\textnormal{op}$.
\end{enumerate}
\end{rem}We continue by investigating exact structures $\mathcal{E}$ on $\mod A$ via ideals of morphisms in $\mod A$. A class of morphisms $\mathcal{I}$ in $\mod A$ is an \emph{ideal} if 
\begin{itemize}
    \item[(1)] for all $f, g \in \mathcal{I}$ the sum $f + g$ is in $\mathcal{I}$, and
    \item[(2)] for all $f \in \mathcal{I}$ and arbitrary $\alpha,\beta$ the composition $\beta f \alpha$ is in $\mathcal{I}$,
\end{itemize}
whenever the expressions are defined. For $X,Y\in \mod A$ let $\mathcal{I}(X,Y)$ denote the collection of all morphisms in $\mathcal{I}$ starting in $X$ and ending in $Y$. This induces functors $\mathcal{I}(X,-)\colon \mod A \rightarrow \Ab$ and $\mathcal{I}(-,Y)\colon \mod A \rightarrow \Ab^\op$. 

The ideal $\mathcal{I}$ is \emph{fp-idempotent} if 
\begin{align*}
    \mathcal{S}_\mathcal{I} = \{F\in \Fp(\mod A, \Ab) \mid F(f) = 0 \text{ for all }f\in \mathcal{I}\}
\end{align*}
is a Serre subcategory of $\Fp(\mod A, \Ab)$ \cite[Section 5.1]{Krause}. 
\begin{coro}\label{ideall}{\rm \cite[Corollary 5.9]{Krause}} There exists a one to one correspondence between Serre subcategories $\mathcal{S}$ of $\Fp(\mod A, \Ab)$ and fp-idempotent ideals $\mathcal{I}$ of $\mod A$, given by $\mathcal{I}\mapsto \mathcal{S}_\mathcal{I}$ and
\begin{align*}
    \mathcal{S} \mapsto \{f \colon X \rightarrow Y \text{ in }\mod A\mid F(f) = 0 \text{ for all }F \in \mathcal{S}\}.
\end{align*}
\end{coro}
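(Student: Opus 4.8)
The two assignments form an order-reversing Galois connection: writing $\Phi(\mathcal{I})=\mathcal{S}_{\mathcal{I}}$ and $\Psi(\mathcal{S})=\{f\colon X\to Y\text{ in }\mod A\mid F(f)=0\text{ for all }F\in\mathcal{S}\}$, I first record the two formal facts. For a fixed $F\in\Fp(\mod A,\Ab)$ the class $\{f\mid F(f)=0\}$ is an ideal of $\mod A$ — closure under sums of parallel morphisms uses additivity of $F$, closure under pre- and post-composition uses functoriality — so $\Psi(\mathcal{S})$, being an intersection of such ideals, is an ideal; and $\Phi(\mathcal{I})$ is a Serre subcategory of $\Fp(\mod A,\Ab)$ precisely when $\mathcal{I}$ is fp-idempotent, by definition. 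Since $\mathcal{I}\subseteq\Psi\Phi(\mathcal{I})$ and $\mathcal{S}\subseteq\Phi\Psi(\mathcal{S})$ hold automatically, the corollary reduces to the two statements: (A) $\Phi\Psi(\mathcal{S})=\mathcal{S}$ for every Serre subcategory $\mathcal{S}$ of $\Fp(\mod A,\Ab)$, and (B) $\Psi\Phi(\mathcal{I})=\mathcal{I}$ for every fp-idempotent ideal $\mathcal{I}$. Note that (A) alone already forces $\Psi$ to land among fp-idempotent ideals (then $\mathcal{S}_{\mathcal{I}_{\mathcal{S}}}=\Phi\Psi(\mathcal{S})=\mathcal{S}$ is Serre), so (A) and (B) together yield the asserted bijection.

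For (B), fix an fp-idempotent ideal $\mathcal{I}$ and a morphism $f\colon X\to Y$ with $f\notin\mathcal{I}$; I must produce $F\in\mathcal{S}_{\mathcal{I}}$ with $F(f)\neq 0$. Form the finitely generated functor $G=\Hom_A(X,-)/\mathcal{I}(X,-)$ — the subfunctor $\mathcal{I}(X,-)\subseteq\Hom_A(X,-)$ makes sense because $\mathcal{I}$ is closed under post-composition. The class of $1_X$ generates $G$, and $G(f)$ carries it to the class of $f$, which is nonzero; hence $G(f)\neq 0$, and moreover $G(g)=0$ for every $g\in\mathcal{I}$ (again because $\mathcal{I}$ is an ideal), so $G$ would lie in $\mathcal{S}_{\mathcal{I}}$ were it finitely presented. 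Because $\mathcal{I}$ is fp-idempotent, $\mathcal{S}_{\mathcal{I}}$ is a Serre subcategory of the finitely presented objects of the locally coherent Grothendieck category $(\mod A,\Ab)$ and hence generates a hereditary torsion class of finite type $\mathcal{T}_{\mathcal{I}}=\varinjlim\mathcal{S}_{\mathcal{I}}$. The crucial step is to show $G\in\mathcal{T}_{\mathcal{I}}$: writing $G$ as the filtered colimit of its finitely presented quotients $F_j=\coker\Hom_A(g_j,-)$, where $g_j\colon X\to Z_j$ runs over finite pieces of generating data for the subfunctor $\mathcal{I}(X,-)$, one uses fp-idempotency to absorb the remaining generators and conclude that $G$ is even a filtered colimit $\varinjlim_j G_j$ with all $G_j\in\mathcal{S}_{\mathcal{I}}$. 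Granting this, the class of $f$, a nonzero element of $G(Y)=\varinjlim_j G_j(Y)$, is already nonzero at some stage $j_0$: it is the image of $G_{j_0}(f)(\eta)$ for a lift $\eta\in G_{j_0}(X)$ of the class of $1_X$, so $G_{j_0}(f)\neq 0$ while $G_{j_0}\in\mathcal{S}_{\mathcal{I}}$, as required.

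Statement (A) is the mirror of (B) on the Serre side: for a Serre subcategory $\mathcal{S}$, with associated finite-type torsion class $\mathcal{T}=\varinjlim\mathcal{S}$ and definable subcategory $\mathcal{X}_{\mathcal{S}}\subseteq\Mod A$, and for $F\notin\mathcal{S}$, one must produce $u\in\mathcal{I}_{\mathcal{S}}$ with $F(u)\neq 0$. This is done by the same finite-type localization argument, now applied to the exact quotient $(\mod A,\Ab)\to(\mod A,\Ab)/\mathcal{T}$ — where the image of $F$ stays nonzero — and to a projective presentation $\Hom_A(Y,-)\to\Hom_A(X,-)\to F\to 0$: tracking this presentation through the localization yields a morphism $u$ in $\mod A$ on which every functor of $\mathcal{S}$ vanishes (so $u\in\mathcal{I}_{\mathcal{S}}$) while $F(u)\neq 0$. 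In both (A) and (B) the hypothesis — the Serre property of $\mathcal{S}$, respectively fp-idempotency of $\mathcal{I}$ — is exactly what makes the finite-type torsion machinery applicable, and without it both statements fail.

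The main obstacle is the crucial step in (B): proving that $G=\Hom_A(X,-)/\mathcal{I}(X,-)$ is a torsion object for $\mathcal{T}_{\mathcal{I}}$, i.e.\ that the colimit above can be realized with all terms inside $\mathcal{S}_{\mathcal{I}}$. For a general ideal this is false — $G$ need not be finitely presented, and its finitely presented approximations $F_j$ typically do not vanish on all of $\mathcal{I}$ — so one must genuinely iterate the factorizations of morphisms in $\mathcal{I}$ guaranteed by fp-idempotency (for instance in the form of Theorem \ref{new}, describing $\mathcal{I}$ as an intersection of $\omega$-powers of ideals) in order to replace each $F_j$ by an honest object of $\mathcal{S}_{\mathcal{I}}$ while retaining the class of $f$. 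This last point is precisely the content of \cite[Corollary 5.9]{Krause}, and a from-scratch proof amounts to redoing that analysis.
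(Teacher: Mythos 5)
First, note that the paper does not prove this statement at all: it is quoted from \cite[Corollary 5.9]{Krause}, so there is no internal proof to compare with, and your attempt has to stand on its own as a complete argument. Its formal part is fine: $\Psi(\mathcal{S})$ is an ideal, $\Phi(\mathcal{I})=\mathcal{S}_{\mathcal{I}}$ is Serre exactly when $\mathcal{I}$ is fp-idempotent, the two Galois inclusions are automatic, the reduction to (A) $\Phi\Psi(\mathcal{S})=\mathcal{S}$ and (B) $\Psi\Phi(\mathcal{I})=\mathcal{I}$ (with (A) forcing $\Psi$ to land in fp-idempotent ideals) is correct, and so is the colimit-stage bookkeeping at the end of (B).

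The genuine gap is that neither (A) nor (B) is actually proved. In (B) the entire content is the claim that $G=\Hom_{A}(X,-)/\mathcal{I}(X,-)$ lies in $\varinjlim\mathcal{S}_{\mathcal{I}}$, i.e.\ that every map from a finitely presented functor to $G$ factors through an object of $\mathcal{S}_{\mathcal{I}}$; you introduce this with ``Granting this'' and concede in your last paragraph that establishing it ``amounts to redoing'' Krause's analysis --- which is precisely the missing mathematics: one must show that a morphism $f\notin\mathcal{I}$ is detected by a single finitely presented functor vanishing on all of $\mathcal{I}$, and, as you note yourself, the naive finitely presented quotients $\coker\Hom_{A}(g_j,-)$ of $G$ do not vanish on $\mathcal{I}$. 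In (A) the situation is worse: ``tracking this presentation through the localization yields a morphism $u$'' is not an argument. Given $F\notin\mathcal{S}$ one knows, via the correspondences of Section 1.6, that $\bar{F}(M)\neq 0$ for some $M$ in the definable subcategory $\mathcal{X}$ attached to $\mathcal{S}$, and any morphism of $\mod A$ factoring through such an $M$ lies in $\mathcal{I}_{\mathcal{S}}$; but producing a morphism between finite length modules that factors through $\mathcal{X}$ and on which $F$ is nonzero requires a real construction (in \cite{Krause} this is where pure-injective objects of $\mathcal{X}$ and the factorization result behind Theorem 5.2 enter), and the transition maps $M_i\rightarrow M_j$ of a presentation $M=\varinjlim M_i$ need not lie in $\mathcal{I}_{\mathcal{S}}$, so no stage of that colimit does the job automatically. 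Finally, you cannot repair (B) by invoking Theorem \ref{new} or Lemma \ref{char} of this paper: both are proved using Corollary \ref{ideall} together with Krause's Theorem 5.2, so that route is circular here. As it stands, the proposal is a correct reduction plus an accurate diagnosis of where the difficulty lies, but not a proof.
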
 

Given an exact structure $\mathcal{E}$ on $\mod A$ let $\mathcal{S}_\mathcal{E}$ be the associated Serre subcategory of $\Fp(\mod A, \Ab)$ by Corollary \ref{enosim} and  $\mathcal{I}_\mathcal{E}$ the corresponding fp-idempotent ideal by the above result. Following the correspondences, the ideal $\mathcal{I}_\mathcal{E}$ equals all $f\colon X \rightarrow Y$ such that for all $\mathcal{E}$-monomorphisms $X \rightarrow M$ there exists $M \rightarrow Y$ making the diagram
\begin{equation*}
    \begin{tikzcd}
        Y & \\
        X \arrow[u, "f"] \arrow[r] & M \arrow[ul] 
    \end{tikzcd}
\end{equation*}
commute. Motivated by this description, we call $\mathcal{I}_\mathcal{E}$ the \emph{$\mathcal{E}$-injectivity ideal}. Dually, we define the $\mathcal{E}$\emph{-projective ideal} and denote it by $\mathcal{P}_\mathcal{E}$. These ideals were already considered in \cite[Section 9.2]{Gabriel}.

\begin{rem}\label{du}\rm Let $D\colon \mod k \rightarrow (\mod k)^\textnormal{op}$ be the Matlis duality, which induces a duality between $\mod A$ and $\mod (A^\op)$. For an ideal $\mathcal{I}$ of $\mod A$ we denote by $D \mathcal{I}$ the ideal of all morphisms in $\mod (A^\op)$ isomorphic to $D \varphi$ for some $\varphi \in \mathcal{I}$.
\begin{itemize}
    \item[(1)] The assignment $F \mapsto DFD$ defines a duality between $\Fp(\mod A, \Ab)$ and $\Fp(\mod (A^\textnormal{op}), \Ab)$ by \cite[Proposition 3.3]{Auslander}. Further, $DFD(D f) = 0$ if and only if $F(f) = 0$ for a morphism $f$ in $\mod A$. It follows that an ideal $\mathcal{I}$ of $\mod A$ is fp-idempotent if and only if $D \mathcal{I}$ is fp-idempotent. 
    \item[(2)] For an exact structure $\mathcal{E}$ on $\mod A$ let $D \mathcal{E}$ denote all kernel-cokernel pairs in $\mod (A^\op)$ isomorphic to $(Df,Dg)$ for $(f,g) \in \mathcal{E}$. Then $D \mathcal{E}$ is an exact structure on $\mod (A^\op)$. Clearly $\mathcal{I}_{D\mathcal{E}} = D \mathcal{P}_\mathcal{E}$ and $\mathcal{P}_{D\mathcal{E}} = D \mathcal{I}_\mathcal{E}$. In particular, the $\mathcal{E}$-projectivity ideal is also always fp-idempotent by (1).
\end{itemize}
\end{rem}
With the above remark, Theorem \ref{sum} and Corollary \ref{ideall} imply the following.

\begin{coro}\label{exideal} There exists a one to one correspondence between the following.
\begin{itemize}
    \item[\rm (1)] Exact structures $\mathcal{E}$ on $\mod A$.
    \item[\rm (2)] Fp-idempotent ideals $\mathcal{I}$ of $\mod A$ containing all morphisms factoring through an injective module.
    \item[\rm (3)] Fp-idempotent ideals $\mathcal{P}$ of $\mod A$ containing all morphisms factoring through a projective module. 
\end{itemize}
The assignments are given by $\mathcal{E} \mapsto \mathcal{I}_\mathcal{E}$ and $\mathcal{E} \mapsto \mathcal{P}_\mathcal{E}$, as well as
\begin{align*}
    \mathcal{I} &\mapsto \mathcal{E}_\mathcal{I} = \{(f,\coker f) \mid \coker \Hom_A(f,-)(\varphi) = 0 \text{ for all }\varphi \in \mathcal{I} \},\\
    \mathcal{P} &\mapsto \mathcal{E}_{\mathcal{P}} = \{(\ker g, g) \mid \coker \Hom_A (-,g)(\psi) = 0 \text{ for all }\psi\in \mathcal{P}\}.
\end{align*}
\end{coro}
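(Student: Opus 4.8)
The plan is to compose the correspondence of Corollary~\ref{enosim} (which underlies Theorem~\ref{sum}) with the one of Corollary~\ref{ideall}, and then rephrase the resulting constraint on Serre subcategories as a constraint on ideals. With $\mathcal{A}=\Mod A$ and $\mathcal{C}=\mod A$, Corollary~\ref{enosim} identifies exact structures $\mathcal{E}$ on $\mod A$ with Serre subcategories $\mathcal{S}$ of $\Fp(\mod A,\Ab)$ satisfying $\mathcal{S}\subseteq \mathcal{S}_{\mathcal{E}_\textnormal{max}}$, via $\mathcal{E}\mapsto \mathcal{S}_\mathcal{E}$ and $\mathcal{S}\mapsto \mathcal{E}_\mathcal{S}=\{(f,\coker f)\mid \coker\Hom_A(f,-)\in\mathcal{S}\}$; and Corollary~\ref{ideall} identifies Serre subcategories $\mathcal{S}$ with fp-idempotent ideals $\mathcal{I}$ via $\mathcal{I}\mapsto\mathcal{S}_\mathcal{I}$ and $\mathcal{S}\mapsto \mathcal{I}_\mathcal{S}=\{f\mid F(f)=0\text{ for all }F\in\mathcal{S}\}$. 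Both legs of the latter reverse inclusions, so it is an anti-isomorphism of posets. Composing the two, exact structures on $\mod A$ correspond bijectively to fp-idempotent ideals $\mathcal{I}$ with $\mathcal{S}_\mathcal{I}\subseteq\mathcal{S}_{\mathcal{E}_\textnormal{max}}$, equivalently with $\mathcal{I}\supseteq\mathcal{I}_{\mathcal{E}_\textnormal{max}}$, where $\mathcal{I}_{\mathcal{E}_\textnormal{max}}$ is the $\mathcal{E}_\textnormal{max}$-injective ideal. Tracing the assignments gives $\mathcal{E}\mapsto\mathcal{I}_\mathcal{E}$ and, in the other direction, $\mathcal{I}\mapsto\mathcal{E}_{\mathcal{S}_\mathcal{I}}$, which is precisely the displayed $\mathcal{E}_\mathcal{I}$ since $\mathcal{S}_\mathcal{I}=\{F\mid F(\varphi)=0\text{ for all }\varphi\in\mathcal{I}\}$.

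For (1)$\leftrightarrow$(2) the remaining task is to identify $\mathcal{I}_{\mathcal{E}_\textnormal{max}}$ with the ideal of morphisms factoring through an injective module. Since $\mathcal{E}_\textnormal{max}$ is the abelian structure on $\mod A$, its monomorphisms are the ordinary monomorphisms, and by the explicit description of the $\mathcal{E}$-injective ideal recalled before the corollary, $f\colon X\to Y$ lies in $\mathcal{I}_{\mathcal{E}_\textnormal{max}}$ iff $f$ factors through every monomorphism $X\to M$ in $\mod A$. Choosing $M$ to be an injective envelope of $X$ shows such an $f$ factors through an injective module; conversely, if $f=f_2f_1$ with $f_1\colon X\to I$ and $I$ injective, then injectivity of $I$ completes any monomorphism $X\to M$ to a map $M\to I$, and composing with $f_2$ exhibits $f$ as factoring through it. (As an injective envelope of a finite length module over an Artin algebra is again of finite length, ``injective module'' may harmlessly be read in $\mod A$ or in $\Mod A$.) This gives (1)$\leftrightarrow$(2) with the stated assignments.

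For (1)$\leftrightarrow$(3) the plan is to transport (1)$\leftrightarrow$(2) through Matlis duality. By Remark~\ref{du}, $\mathcal{E}\mapsto D\mathcal{E}$ is a bijection between exact structures on $\mod A$ and on $\mod A^\op$ with $\mathcal{I}_{D\mathcal{E}}=D\mathcal{P}_\mathcal{E}$, and $D$ preserves fp-idempotency of ideals. Applying (1)$\leftrightarrow$(2) over $A^\op$ sends $D\mathcal{E}$ to $\mathcal{I}_{D\mathcal{E}}=D\mathcal{P}_\mathcal{E}$, an fp-idempotent ideal of $\mod A^\op$ containing every morphism that factors through an injective $A^\op$-module; applying $D$ once more (so $D\circ D\cong\mathrm{id}$) yields $\mathcal{P}_\mathcal{E}$, an fp-idempotent ideal of $\mod A$, and since $D$ sends injective $A^\op$-modules to projective $A$-modules, $\mathcal{P}_\mathcal{E}$ contains every morphism factoring through a projective module. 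Bijectivity is inherited from (1)$\leftrightarrow$(2) over $A^\op$ together with the bijectivity of $D$; the explicit inverse $\mathcal{P}\mapsto\mathcal{E}_\mathcal{P}$ is the dual of the formula for $\mathcal{E}_\mathcal{I}$, obtained by applying the duality $E$ of Lemma~\ref{dual} (equivalently, replacing $\coker\Hom_A(f,-)(\varphi)$ by $\coker\Hom_A(-,g)(\psi)$).

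I expect the only genuinely delicate points to be keeping the inclusion-reversal in the Serre-subcategory/ideal correspondence straight (so that $\mathcal{S}_\mathcal{I}\subseteq\mathcal{S}_{\mathcal{E}_\textnormal{max}}$ becomes $\mathcal{I}\supseteq\mathcal{I}_{\mathcal{E}_\textnormal{max}}$) and the identification $\mathcal{I}_{\mathcal{E}_\textnormal{max}}=\{\text{morphisms factoring through an injective}\}$; both are short once the explicit descriptions are available. The rest is bookkeeping: matching the composite assignments with the displayed formulas for $\mathcal{E}_\mathcal{I}$ and $\mathcal{E}_\mathcal{P}$, and checking that Matlis duality interchanges injectives and projectives as used.
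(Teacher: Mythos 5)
Your proposal is correct and follows essentially the same route as the paper, which derives the corollary by combining Theorem \ref{sum} (equivalently Corollary \ref{enosim}) with Corollary \ref{ideall} and then dualizing via Remark \ref{du}; you merely spell out the details (the order-reversal, the identification of $\mathcal{I}_{\mathcal{E}_\textnormal{max}}$ with the morphisms factoring through an injective, and the Matlis-duality transport) that the paper leaves implicit.
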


The following offers a different description of the $\mathcal{E}$-injectivity ideal $\mathcal{I}_\mathcal{E}$.

\begin{lem}\label{ideal} Let $\mathcal{E}$ be an exact structure on $\mod A$ and $f$ a morphism in $\mod A$.
\begin{itemize}
    \item[(1)] If $f$ factors through an fp-$\bar{\mathcal{E}}$-injective module in $\Mod A$, then $f\in \mathcal{I}_\mathcal{E}$.
    \item[(2)] If $f\in \mathcal{I}_\mathcal{E}$, then $f$ factors through a finite direct sum of indecomposable $\bar{\mathcal{E}}$-injective modules in $\Mod A$.
\end{itemize}
\end{lem}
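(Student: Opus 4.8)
The plan is to deduce both statements from the embeddings $\textnormal{ev}\colon\Mod A\to\mathbf{P}(\Mod A)$ (Theorem~1.4) and $\textnormal{ev}_\mathcal{E}=q\circ\textnormal{ev}\colon\Mod A\to\mathbf{P}_\mathcal{E}(\Mod A)$ (Theorem~\ref{big}), together with the description of $\mathcal{I}_\mathcal{E}$ as the morphisms factoring through every $\mathcal{E}$-monomorphism out of their source, equivalently $\mathcal{I}_\mathcal{E}=\{f\mid F(f)=0\text{ for all }F\in\mathcal{S}_\mathcal{E}\}$ (Corollary~\ref{ideall}). Part~(1) uses only the definition of an fp-$\bar{\mathcal{E}}$-injective: write $f=\beta\alpha$ with $\alpha\colon X\to N$, $N$ fp-$\bar{\mathcal{E}}$-injective, and $\beta\colon N\to Y$; for any $\mathcal{E}$-monomorphism $\mu\colon X\to C$ in $\mod A$ the cokernel lies in $\mod A$ and $X\xrightarrow{\mu}C\to\coker\mu$ is $\bar{\mathcal{E}}$-exact by Corollary~\ref{second}, so $\Ext^1_{\bar{\mathcal{E}}}(\coker\mu,N)=0$ forces the push-out of $\mu$ along $\alpha$ to split; this yields $\alpha'\colon C\to N$ with $\alpha'\mu=\alpha$, whence $f=\beta\alpha'\mu$ factors through $\mu$, and $f\in\mathcal{I}_\mathcal{E}$.

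For Part~(2) I would first reinterpret membership in $\mathcal{I}_\mathcal{E}$ inside $\mathbf{P}(\Mod A)$. For $F\in\textnormal{fp}(\mathcal{C},\Ab)$ let $\tilde F\in\fp\mathbf{P}(\Mod A)$ be the object corresponding to $F$ under $\fp\mathbf{P}(\Mod A)\simeq\textnormal{fp}(\mathcal{C},\Ab)^{\op}$; by co-Yoneda $\Hom_{\mathbf{P}(\Mod A)}(\tilde F,\textnormal{ev}(X))\cong\bar F(X)=F(X)$, naturally in $f$ (composition with $\textnormal{ev}(f)$ corresponding to $F(f)$). Since $\{\tilde F\mid F\in\mathcal{S}_\mathcal{E}\}=\fp\mathbf{P}(\Mod A)\cap\mathcal{T}_\mathcal{E}$ and $\mathcal{T}_\mathcal{E}$ is of finite type, the $\mathcal{T}_\mathcal{E}$-torsion subobject $t(\textnormal{ev}(X))$ is the sum of images of all maps $\tilde F\to\textnormal{ev}(X)$ with $F\in\mathcal{S}_\mathcal{E}$. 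Hence $f\in\mathcal{I}_\mathcal{E}(X,Y)$ if and only if $\textnormal{ev}(f)$ annihilates $t(\textnormal{ev}(X))$, i.e. $\textnormal{ev}(f)=\bar h\circ\pi$ for some $\bar h$, with $\pi\colon\textnormal{ev}(X)\twoheadrightarrow\textnormal{ev}(X)/t(\textnormal{ev}(X))$.

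Now the finite length of $Y$ enters. As $Y$ is pure-injective, $\textnormal{ev}(Y)$ is injective in $\mathbf{P}(\Mod A)$, and decomposing $Y$ into indecomposable finite-length summands $Y_j$ (for which $\textnormal{ev}(Y_j)$ is an indecomposable injective with simple essential socle, the only simple functor not annihilating $Y_j$ being $S_{Y_j}$) shows that the socle of $\textnormal{ev}(Y)$ is finite and essential. Therefore $M:=\im\bar h\subseteq\textnormal{ev}(Y)$ has finite essential socle, its injective envelope $E(M)$ is a finite direct sum of indecomposable injectives of $\mathbf{P}(\Mod A)$ and is a direct summand of $\textnormal{ev}(Y)$, so $\textnormal{ev}(f)$ factors through the injective object $E(M)$ via $\textnormal{ev}(X)\xrightarrow{\pi}\textnormal{ev}(X)/t(\textnormal{ev}(X))\twoheadrightarrow M\hookrightarrow E(M)\hookrightarrow\textnormal{ev}(Y)$. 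Applying the exact functor $q$ and using that for a hereditary torsion theory the torsion part of an injective is a direct summand while a torsion-free injective is $\mathcal{T}_\mathcal{E}$-closed, $q(E(M))$ is a finite direct sum of indecomposable injectives of $\mathbf{P}_\mathcal{E}(\Mod A)$; by Lemma~\ref{trans} these equal $\textnormal{ev}_\mathcal{E}(Q_1),\dots,\textnormal{ev}_\mathcal{E}(Q_m)$ for indecomposable $\bar{\mathcal{E}}$-injective modules $Q_j$. Since $\textnormal{ev}_\mathcal{E}$ is fully faithful and preserves finite direct sums (Theorem~\ref{big}), the factorization of $\textnormal{ev}_\mathcal{E}(f)=q(\textnormal{ev}(f))$ through $\bigoplus_j\textnormal{ev}_\mathcal{E}(Q_j)$ descends to a factorization of $f$ through $\bigoplus_j Q_j$, proving Part~(2).

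The main obstacle is the reinterpretation step in Part~(2): carefully matching ``$F(f)=0$ for all $F\in\mathcal{S}_\mathcal{E}$'' with the vanishing of $\textnormal{ev}(f)$ on $t(\textnormal{ev}(X))$. This involves the contravariant equivalence $\fp\mathbf{P}(\Mod A)\simeq\textnormal{fp}(\mathcal{C},\Ab)^{\op}$, the fact that $\mathcal{T}_\mathcal{E}$ is generated by its finitely presented objects, and the naturality of the co-Yoneda isomorphism. One also relies on the standard but nontrivial fact that the torsion submodule of an injective object is injective (hence a summand) for a hereditary torsion theory, which is exactly what makes $q$ carry the indecomposable injective summands of $E(M)$ to indecomposable injectives of $\mathbf{P}_\mathcal{E}(\Mod A)$ or to zero; and on the description of the simple functors on $\mod A$ used to see that $\textnormal{ev}(Y)$ has finite essential socle.
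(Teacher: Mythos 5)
Your proof of part (1) is correct, and it is more self-contained than the paper's (which, after identifying the fp-$\bar{\mathcal{E}}$-injectives with the definable subcategory $\mathcal{X}_\mathcal{E}$, simply quotes \cite[Theorem 5.2]{Krause}): pushing out an $\mathcal{E}$-conflation $X\to C\to\coker\mu$ along $\alpha\colon X\to N$ and using $\Ext^1_{\bar{\mathcal{E}}}(\coker\mu,N)=0$ verifies exactly the paper's explicit description of $\mathcal{I}_\mathcal{E}$ as the morphisms factoring through every $\mathcal{E}$-monomorphism out of their source.

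Part (2), however, has a genuine gap at the localization step. The fact you invoke, ``for a hereditary torsion theory the torsion part of an injective is a direct summand,'' is false unless the torsion class is \emph{stable} (closed under essential extensions), and $\mathcal{T}_\mathcal{E}$ need not be stable. Concretely, your $E(M)$ is the injective envelope of a subobject of $\textnormal{soc}\,\textnormal{ev}(Y)$, hence $E(M)\cong\bigoplus_i\textnormal{ev}(Y_{j_i})$ where the $Y_{j_i}$ are indecomposable finite length direct summands of $Y$; these are in general neither torsion nor torsion-free, $q(\textnormal{ev}(Y_{j_i}))=\textnormal{ev}_\mathcal{E}(Y_{j_i})$ is then not injective in $\mathbf{P}_\mathcal{E}(\Mod A)$, and after applying $q$ and full faithfulness your construction only returns the trivial factorization of $f$ through a direct summand of $Y$. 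For a concrete failure take $\mathcal{E}=\mathcal{E}_{\textnormal{max}}$, so that $\mathcal{I}_\mathcal{E}=\langle\Inj A\rangle$ and the indecomposable $\bar{\mathcal{E}}$-injectives are the indecomposable injective modules, and let $f\colon I\to Y$ be nonzero with $I$ injective and $Y$ indecomposable non-injective. The socle of $\textnormal{ev}(Y)$ is the simple functor attached to the left almost split monomorphism out of $Y$, which vanishes on all injective modules and hence lies in $\mathcal{S}_{\mathcal{E}_{\textnormal{max}}}$; so $M$ contains a torsion simple, is not torsion-free, $E(M)=\textnormal{ev}(Y)$ is neither torsion nor torsion-free, and $q(E(M))=\textnormal{ev}_\mathcal{E}(Y)$ is not injective. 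The statement is of course still true in this example ($f$ factors through $I$), but your argument does not produce such a factorization. A torsion-theoretic repair would have to force the intermediate object to be torsion-free, and that is essentially the content of \cite[Corollary 4.7]{Prest05}, which is exactly what the paper cites for part (2).
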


\begin{proof} By Theorem \ref{sum} the definable subcataegory $\mathcal{X}_\mathcal{E}$ corresponding to $\mathcal{E}$ equals the collection all fp-$\bar{\mathcal{E}}$-injectives in $\Mod A$ and the corresponding closed set $\mathcal{U}_\mathcal{E}$ in $\Mod A$ coincides with the indecomposable $\bar{\mathcal{E}}$-injectives in $\Mod A$. Now (1) follows by \cite[Theorem 5.2]{Krause} and (2) by \cite[Corollary 4.7]{Prest05}.
\end{proof}

Let $\tau \colon \pmod A \rightarrow \imod A$ denote the Auslander-Reiten translation, where $\pmod A$ is the projectively stable module category and $\imod A$ the injectively stable module category. Further, let $\tau^-$ denote the inverse of $\tau$. The following result is a relative Auslander-Reiten formula for the lifted exact structure $\bar{\mathcal{E}}$ on $\Mod A$.

\begin{prop}\label{bigarf} Let $\mathcal{E}$ be an exact structure on $\mod A$ and $\mathcal{I}$ the collection of all morphisms factoring through an $\bar{\mathcal{E}}$-injective module. There exists an isomorphism
\begin{align*}
    D \,\Ext_{\bar{\mathcal{E}}}^1(C, X) \cong \Hom_{A}(X, \tau C)/\mathcal{I}(X, \tau C)
\end{align*}
functorial in $X\in \Mod A$ and $C\in \mod A$.
\end{prop}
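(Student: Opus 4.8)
The plan is to deduce this relative Auslander--Reiten formula from the classical one together with the embedding $\textnormal{ev}_\mathcal{E}\colon \Mod A \to \mathbf{P}_\mathcal{E}(\Mod A)$ of Theorem \ref{big}. First I would recall the usual Auslander--Reiten formula over an Artin algebra: for $C\in\mod A$ and $X\in\Mod A$ there is a natural isomorphism $D\,\Ext^1_A(C,X)\cong \underline{\Hom}_A(\tau^- X, C)$ in the module category version, or equivalently $D\,\Ext^1_A(C,X)\cong \overline{\Hom}_A(X,\tau C)$, i.e. $\Hom_A(X,\tau C)$ modulo morphisms factoring through an injective $A$-module. The point is that the right-hand side of our statement replaces the ideal of morphisms factoring through an (absolute) injective by the ideal $\mathcal{I}$ of morphisms factoring through an $\bar{\mathcal{E}}$-injective, while the left-hand side replaces $\Ext^1_A$ by $\Ext^1_{\bar{\mathcal{E}}}$.

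Next I would identify $\Ext^1_{\bar{\mathcal{E}}}(C,X)$ homologically. By Theorem \ref{big} the sequence $0\to\bar X\to\bar Y\to\bar Z\to 0$ is exact in $\mathbf{P}_\mathcal{E}(\Mod A)$ exactly when $X\to Y\to Z$ is $\bar{\mathcal{E}}$-exact, and the essential image of $\textnormal{ev}_\mathcal{E}$ is extension-closed, so $\Ext^1_{\bar{\mathcal{E}}}(C,X)\cong \Ext^1_{\mathbf{P}_\mathcal{E}(\Mod A)}(\bar C,\bar X)$ for $C\in\mod A$, $X\in\Mod A$. Now one computes this $\Ext^1$ using an injective copresentation of $\bar X$ in $\mathbf{P}_\mathcal{E}(\Mod A)$: since the $\bar{\mathcal{E}}$-injectives of $\Mod A$ are identified with the injectives of $\mathbf{P}_\mathcal{E}(\Mod A)$ by Lemma \ref{trans}, an injective copresentation can be chosen of the form $0\to\bar X\to\bar Q^0\to\bar Q^1$ with $Q^0,Q^1\in\Mod A$ that are $\bar{\mathcal{E}}$-injective (using Proposition \ref{enough}). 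Applying $\Hom_{\mathbf{P}_\mathcal{E}(\Mod A)}(\bar C,-)$ and using full faithfulness of $\textnormal{ev}_\mathcal{E}$, $\Ext^1_{\bar{\mathcal{E}}}(C,X)$ becomes the cokernel of $\Hom_A(C,Q^0)\to\Hom_A(C,Q^1)$ computed relative to the map $X\to Q^0$; dualizing via $D$ and invoking the classical formula should turn the $Q^i$'s into the defining data of $\tau C$ and produce $\Hom_A(X,\tau C)/\mathcal{I}(X,\tau C)$. An alternative, perhaps cleaner, route is to use the defect/transpose description: $\Ext^1_A(C,X)\cong D\,\Hom_A(X,\tau C)/(\text{inj})$ comes from applying the half-exact defect functor to a minimal projective presentation of $C$; here one instead applies the functor $\Hom_A(-,X)$ and takes the appropriate subquotient, then recognizes that passing to $\bar{\mathcal{E}}$ replaces ``factoring through injectives'' by ``factoring through $\bar{\mathcal{E}}$-injectives'' precisely because the $\bar{\mathcal{E}}$-injective ideal $\mathcal{I}$ is, by Lemma \ref{ideal}, the fp-idempotent ideal $\mathcal{I}_\mathcal{E}$, and the relative Ext is the absolute Ext localized at $\mathcal{S}_\mathcal{E}$.

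Concretely I would proceed: (i) fix a minimal projective presentation $P_1\xrightarrow{p} P_0\to C\to 0$ in $\mod A$, so that $\tau C=\ker(\nu p)$ where $\nu$ is the Nakayama functor, and the classical formula reads $D\,\overline{\Hom}_A(X,\tau C)\cong\Ext^1_A(C,X)\cong\coker\big(\Hom_A(P_0,X)\to\Hom_A(P_1,X)\big)$-ish data; (ii) observe that this cokernel, as a functor of $X$, is exactly $\coker\Hom_A(p,-)$ evaluated at... no --- rather, switch variance: $\Ext^1_A(C,X)$ as a functor of $C$ is $\coker\Hom_A(-,X)$ applied to the presentation, which is the value at $X$ of the defect functor; (iii) the key algebraic input is that $\Ext^1_{\bar{\mathcal{E}}}(C,-)$ is obtained from $\Ext^1_A(C,-)$ by killing the subfunctor of extensions that split in $\bar{\mathcal{E}}$, and this subfunctor corresponds under the classical isomorphism exactly to $D$ of the subspace $\mathcal{I}(X,\tau C)$ of $\Hom_A(X,\tau C)$, because a short exact sequence $0\to X\to Y\to C\to 0$ is $\bar{\mathcal{E}}$-split iff $X\to Y$ is an $\bar{\mathcal{E}}$-monomorphism iff (by the pushout characterization and Lemma \ref{monough}) the connecting data lies in $\mathcal{I}_\mathcal{E}=\mathcal{I}$; (iv) functoriality in $X$ and $C$ is inherited from the classical formula and the naturality of $\textnormal{ev}_\mathcal{E}$ and $D$.

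The main obstacle I anticipate is step (iii): pinning down the precise identification, under the classical Auslander--Reiten isomorphism, between the subgroup of $\Ext^1_A(C,X)$ consisting of $\bar{\mathcal{E}}$-split extensions and the image in $D\,\Hom_A(X,\tau C)$ of (the annihilator of) $\mathcal{I}(X,\tau C)$. This requires carefully tracking the connecting morphism through the defect functor and verifying that ``the pullback/pushout data of the extension factors through an $\bar{\mathcal{E}}$-injective'' translates exactly to membership in $\mathcal{I}_\mathcal{E}$; the tool that makes this work is Lemma \ref{ideal}, which says $\mathcal{I}=\mathcal{I}_\mathcal{E}$ is fp-idempotent and equals the ideal of morphisms factoring through $\bar{\mathcal{E}}$-injectives, together with the localization description of $\Ext^1_{\bar{\mathcal{E}}}$ as $\Ext^1$ in $\mathbf{P}_\mathcal{E}(\Mod A)$. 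Once this dictionary is set up, the functoriality and the isomorphism fall out of the classical statement by quotienting both sides by corresponding subfunctors.
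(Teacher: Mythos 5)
The crux of this proposition is how $\tau C$ enters, and that is precisely the step your proposal leaves open: in your first route you say that dualizing and ``invoking the classical formula should turn the $Q^i$'s into the defining data of $\tau C$'', and in your step (iii) you yourself flag the annihilator identification as the main obstacle without resolving it. The paper closes this gap with one citation: Krause's defect formula \cite{Krause3a}, which holds for an \emph{arbitrary} short exact sequence $\delta$ of (possibly infinite-dimensional) modules and $C\in \mod A$, namely $D\,\delta^*(C)\cong \delta_*(\tau C)$ functorially in $\delta$ and $C$. Applying it to $\delta\colon 0\to X\to Q\to K\to 0$, where $X\to Q$ is the $\bar{\mathcal{E}}$-injective envelope from Proposition \ref{enough}, one only has to observe that $\delta^*(C)\cong \Ext^1_{\bar{\mathcal{E}}}(C,X)$ (every $\bar{\mathcal{E}}$-conflation $0\to X\to Y\to C\to 0$ is a pullback of $\delta$ because $Q$ is $\bar{\mathcal{E}}$-injective and $X\to Y$ is an $\bar{\mathcal{E}}$-monomorphism) and that $\delta_*(\tau C)=\Hom_A(X,\tau C)/\mathcal{I}(X,\tau C)$ (by the envelope property, a morphism out of $X$ factors through some $\bar{\mathcal{E}}$-injective if and only if it factors through $Q$). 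Note that the relevant input here is this envelope property, not Lemma \ref{ideal}, which concerns the ideal $\mathcal{I}_\mathcal{E}$ of $\mod A$ and is only needed later (Corollary \ref{arf}) to compare $\mathcal{I}$ with $\mathcal{I}_\mathcal{E}$ when both arguments are finitely presented. If you insist on deducing the statement from the classical Auslander--Reiten formula (the case where $Q$ is an honest injective envelope), you must identify, under that isomorphism, the annihilator of the subgroup $\Ext^1_{\bar{\mathcal{E}}}(C,X)\subseteq\Ext^1_A(C,X)$ with $\mathcal{I}(X,\tau C)/\langle\Inj A\rangle(X,\tau C)$; making this precise amounts to proving the functoriality of the defect formula in the sequence $\delta$, i.e.\ essentially redoing the general defect formula rather than avoiding it.

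There is also a conceptual slip in your step (iii) that needs repair: $\Ext^1_{\bar{\mathcal{E}}}(C,-)$ is a \emph{subfunctor} of $\Ext^1_A(C,-)$ (the classes of $\bar{\mathcal{E}}$-conflations), not a quotient, so it is not ``obtained by killing'' anything, and ``the relative Ext is the absolute Ext localized at $\mathcal{S}_\mathcal{E}$'' is likewise incorrect: since $\mathcal{S}_\mathcal{E}\subseteq\mathcal{S}_{\mathcal{E}_{\textnormal{max}}}$, both $\mathbf{P}_\mathcal{E}(\mathcal{A})$ and $\mathbf{P}_{\mathcal{E}_\textnormal{max}}(\mathcal{A})$ are localizations of $\mathbf{P}(\mathcal{A})$, but the former is not a localization of the latter, and localizing at the smaller Serre subcategory yields \emph{fewer} exact sequences than the abelian structure. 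Only after applying the exact duality $D$ does the inclusion become a surjection $D\,\Ext^1_A(C,X)\twoheadrightarrow D\,\Ext^1_{\bar{\mathcal{E}}}(C,X)$, and it is the kernel of this surjection that must be matched with $\mathcal{I}(X,\tau C)/\langle\Inj A\rangle(X,\tau C)$; relatedly, for infinite-dimensional $X$ one must use the classical formula in the direction $D\,\Ext^1_A(C,X)\cong\Hom_A(X,\tau C)/\langle\Inj A\rangle(X,\tau C)$, not its Matlis dual, a point your sketch glosses over. With the general defect formula in hand these issues disappear and your route collapses to the paper's three-line argument.
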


\begin{proof} For the extension groups relative to exact structures, see Section 1.2. We proceed as in \cite{Krause3a}. By Proposition \ref{enough} there exists an $\bar{\mathcal{E}}$-injective envelope $X\rightarrow Q$. Let $K$ be its cokernel. For the short exact sequence $\delta\colon 0 \rightarrow X \rightarrow Q \rightarrow K \rightarrow 0$, the covariant defect $\delta_*$ and the contravariant defect $\delta^*$ are given by the exact sequences
\begin{align*}
    0 \rightarrow \Hom_{A}(K,-) \rightarrow \Hom_{A}(Q,-) \rightarrow \Hom_{A}(X,-) \rightarrow \delta_{*} \rightarrow 0,\\
    0 \rightarrow \Hom_{A}(-,X) \rightarrow \Hom_{A}(-,Q) \rightarrow \Hom_{A}(-,K) \rightarrow \delta^{*} \rightarrow 0.
\end{align*}
The defect formula states $D\,\delta^*(C) \cong \delta_*(\tau C)$ functorial in $\delta$ and $C$ \cite[Theorem]{Krause3a}. Now $\delta^*(C)  \cong \Ext^1_{\bar{\mathcal{E}}}(C,X)$ and $\delta_*(\tau C) = \Hom_{A}(X,\tau C)/\mathcal{I}(X,\tau C)$.
\end{proof}

As a consequence, we also get the relative Auslander-Reiten formulas for exact structures $\mathcal{E}$ on $\mod A$, which are shown in \cite[Corollary 9.4]{Gabriel}. They also hold in a more general setup, see \cite{Lenzing}, and their proof does not rely on any arguments involving big objects contrary to our approach.  

\begin{coro}\label{arf} Let $\mathcal{E}$ be an exact structure on $\mod A$. For $X,Y \in \mod A$ there exist functorial isomorphisms
\begin{align*}
    \Ext_{\mathcal{E}}^1(X,Y) &\cong D\,\Hom_{A}(Y,\tau X)/\mathcal{I}_\mathcal{E}(Y,\tau X),\\
    \Ext_{\mathcal{E}}^1(X,Y) &\cong D\,\Hom_{A}(\tau^{-} Y, X)/\mathcal{P}_\mathcal{E}(\tau^{-} Y, X).
\end{align*}
\end{coro}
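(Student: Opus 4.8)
The plan is to deduce both isomorphisms from Proposition \ref{bigarf} by restricting to $\mod A$, applying the Matlis duality $D$, and, for the second formula, passing to the opposite algebra.

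For the first formula, fix $X,Y\in\mod A$ and apply Proposition \ref{bigarf} with its ``big'' module taken to be $Y$ and its finitely presented module $C$ taken to be $X$. This gives a functorial isomorphism
\begin{align*}
    D\,\Ext_{\bar{\mathcal{E}}}^1(X,Y) \cong \Hom_{A}(Y,\tau X)/\mathcal{I}(Y,\tau X),
\end{align*}
where $\mathcal{I}$ is the ideal of all morphisms in $\Mod A$ factoring through an $\bar{\mathcal{E}}$-injective module. I then rewrite both sides. On the left: since $\mod A=\fp\Mod A$ is closed under extensions in $\Mod A$, any $\bar{\mathcal{E}}$-conflation $0\to X\to E\to Y\to 0$ with $X,Y\in\mod A$ has $E\in\mod A$, and such a conflation is $\mathcal{E}$-exact by Remark \ref{induce}; conversely every $\mathcal{E}$-conflation is $\bar{\mathcal{E}}$-exact. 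Hence $\Ext_{\bar{\mathcal{E}}}^1(X,Y)=\Ext_{\mathcal{E}}^1(X,Y)$, which is a subgroup of $\Ext_{A}^1(X,Y)$ and therefore of finite length over $k$, so that applying $D$ once more recovers it, using that $D$ is a duality on $\mod k$. On the right: $\tau X\in\mod A$, and for a morphism $f$ between objects of $\mod A$, Lemma \ref{ideal} shows that $f\in\mathcal{I}_{\mathcal{E}}$ if and only if $f$ factors through an $\bar{\mathcal{E}}$-injective module, i.e.\ $\mathcal{I}(Y,\tau X)=\mathcal{I}_{\mathcal{E}}(Y,\tau X)$. Applying $D$ to the displayed isomorphism then yields $\Ext_{\mathcal{E}}^1(X,Y)\cong D\big(\Hom_{A}(Y,\tau X)/\mathcal{I}_{\mathcal{E}}(Y,\tau X)\big)$, functorially in $X,Y\in\mod A$.

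For the second formula I would pass to $A^\op$. By Remark \ref{du}, $D\mathcal{E}$ is an exact structure on $\mod A^\op$ with $\mathcal{I}_{D\mathcal{E}}=D\mathcal{P}_{\mathcal{E}}$, and $D$ induces a functorial isomorphism $\Ext_{\mathcal{E}}^1(X,Y)\cong\Ext_{D\mathcal{E}}^1(DY,DX)$. Applying the first formula, which we have now established for an arbitrary Artin algebra, to the pair $(A^\op,D\mathcal{E})$ and the modules $DY,DX\in\mod A^\op$ gives
\begin{align*}
    \Ext_{D\mathcal{E}}^1(DY,DX)\cong D\big(\Hom_{A^\op}(DX,\tau_{A^\op}(DY))/\mathcal{I}_{D\mathcal{E}}(DX,\tau_{A^\op}(DY))\big).
\end{align*}
Now I invoke the classical compatibility $\tau_{A^\op}\circ D\cong D\circ\tau^{-}$ of Auslander--Reiten translates with Matlis duality, which gives $\tau_{A^\op}(DY)\cong D(\tau^{-}Y)$; together with the identification $\Hom_{A^\op}(DX,D(\tau^{-}Y))\cong\Hom_{A}(\tau^{-}Y,X)$ induced by $D$, under which $\mathcal{I}_{D\mathcal{E}}(DX,D(\tau^{-}Y))=D\mathcal{P}_{\mathcal{E}}(DX,D(\tau^{-}Y))$ corresponds to $\mathcal{P}_{\mathcal{E}}(\tau^{-}Y,X)$, the right-hand side becomes $D\big(\Hom_{A}(\tau^{-}Y,X)/\mathcal{P}_{\mathcal{E}}(\tau^{-}Y,X)\big)$, which is the second formula.

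The computations are routine once this dictionary is fixed; the one step deserving care is the passage to $A^\op$ for the second formula, namely getting the variances of $D$, the identification $\tau_{A^\op}D\cong D\tau^{-}$, and the relation $\mathcal{I}_{D\mathcal{E}}=D\mathcal{P}_{\mathcal{E}}$ all lined up correctly, since that is where an indexing slip would be easiest to make. Beyond Proposition \ref{bigarf}, Lemma \ref{ideal} and Remark \ref{du}, the only external ingredient is the classical behaviour of the Auslander--Reiten translate under Matlis duality.
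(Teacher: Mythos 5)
Your proposal is correct and follows essentially the same route as the paper: the first isomorphism is obtained by specializing Proposition \ref{bigarf} to finite length modules, using Lemma \ref{ideal} to identify the ideal of morphisms factoring through $\bar{\mathcal{E}}$-injectives with $\mathcal{I}_\mathcal{E}$ (and the coincidence of $\bar{\mathcal{E}}$ with $\mathcal{E}$ on $\mod A$ plus double Matlis duality), and the second follows by duality via Remark \ref{du}. Your write-up merely spells out the duality bookkeeping ($\tau_{A^\op}D \cong D\tau^{-}$, $\mathcal{I}_{D\mathcal{E}} = D\mathcal{P}_\mathcal{E}$) that the paper leaves implicit.
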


\begin{proof} The first equality follows by Proposition \ref{bigarf} with Lemma \ref{ideal}. The second equality follows by duality. 
\end{proof}

For an ideal $\mathcal{I}$ of $\mod A$ let $\underline{\mathcal{I}}$ denote the induced ideal in $\underline{\textnormal{mod}}\,A$ and $\overline{\mathcal{I}}$ the induced ideal in $\overline{\textnormal{mod}}\,A$. Further, we denote by $\underline{\textnormal{Zsp}}\, A$, respectively $\overline{\textnormal{Zsp}}\, A$, all closed sets in $\Ind A$ containing $\textnormal{proj}\, A$, respectively $\Inj A$.

\begin{prop}\label{injproj} Let $\mathcal{E}$ be an exact structure on $\mod A$. The Auslander-Reiten translation $\tau$ and its inverse $\tau^{-}$ induce mutually inverse bijections
\begin{equation*}
\begin{tikzcd}
    \underline{\mathcal{P}}_{\mathcal{E}} \arrow[r, "\tau", shift left] & \overline{\mathcal{I}}_{\mathcal{E}} \arrow[l, "\tau^{-}", shift left].
\end{tikzcd}
\end{equation*}
In particular, $1_X\in \mathcal{P}_{\mathcal{E}}$ if and only if $1_{\tau X}\in \mathcal{I}_{\mathcal{E}}$ for $X\in \mod A$.
\end{prop}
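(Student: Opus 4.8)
The plan is to deduce the statement from the two relative Auslander--Reiten formulas of Corollary \ref{arf}, using that $\tau$ is a stable equivalence $\pmod A \to \imod A$ with quasi-inverse $\tau^-$. Since an equivalence of additive categories induces a bijection between the two-sided ideals of the two categories, compatible with composition, the two maps in the statement are automatically mutually inverse, and it suffices to show that $\tau$ carries $\underline{\mathcal{P}}_\mathcal{E}$ onto $\overline{\mathcal{I}}_\mathcal{E}$. Because $\mathcal{P}_\mathcal{E}$ contains every morphism factoring through a projective module and $\mathcal{I}_\mathcal{E}$ every morphism factoring through an injective module (Corollary \ref{exideal}), $\underline{\mathcal{P}}_\mathcal{E}$ and $\overline{\mathcal{I}}_\mathcal{E}$ are simply the images of $\mathcal{P}_\mathcal{E}$ and $\mathcal{I}_\mathcal{E}$ in $\pmod A$ and $\imod A$; in particular the residue class of $1_X$ lies in $\underline{\mathcal{P}}_\mathcal{E}$ if and only if $1_X \in \mathcal{P}_\mathcal{E}$, and similarly for $\mathcal{I}_\mathcal{E}$, so the last sentence of the proposition will follow from the bijection by applying it to the class of $1_X$ (whose image under $\tau$ is the class of $1_{\tau X}$). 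Finally we may check the bijection on a skeleton of $\pmod A$ consisting of modules without projective direct summands, for which $\tau^-\tau V \cong V$ in $\pmod A$.

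So fix such a $V$ and any $W\in\mod A$. Applying the first and the second formula of Corollary \ref{arf} to the pair $(X,Y)=(W,\tau V)$ gives, using $\tau^-\tau V\cong V$,
\[
D\bigl(\Hom_A(V,W)/\mathcal{P}_\mathcal{E}(V,W)\bigr)\;\cong\;\Ext^1_\mathcal{E}(W,\tau V)\;\cong\;D\bigl(\Hom_A(\tau V,\tau W)/\mathcal{I}_\mathcal{E}(\tau V,\tau W)\bigr).
\]
Applying the Matlis duality $D$ and rewriting the quotients modulo maps through a projective, resp.\ through an injective, yields a functorial isomorphism
\[
\underline{\Hom}_A(V,W)/\underline{\mathcal{P}}_\mathcal{E}(V,W)\;\cong\;\overline{\Hom}_A(\tau V,\tau W)/\overline{\mathcal{I}}_\mathcal{E}(\tau V,\tau W).
\]
The essential point is that this isomorphism is the one induced by the stable-equivalence isomorphism $\tau\colon\underline{\Hom}_A(V,W)\xrightarrow{\ \sim\ }\overline{\Hom}_A(\tau V,\tau W)$. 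Granting this, $\tau$ maps $\underline{\mathcal{P}}_\mathcal{E}(V,W)$ into $\overline{\mathcal{I}}_\mathcal{E}(\tau V,\tau W)$ (so that it descends to the quotients) and induces an isomorphism on the quotients; as $\tau$ is an isomorphism on the full hom-groups, this forces $\tau\bigl(\underline{\mathcal{P}}_\mathcal{E}(V,W)\bigr)=\overline{\mathcal{I}}_\mathcal{E}(\tau V,\tau W)$, which is what we wanted.

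The one substantial step --- and the main obstacle --- is this compatibility with $\tau$. It should be established by observing that both formulas of Corollary \ref{arf} arise from the classical Auslander--Reiten formulas $D\Ext^1_A(X,Y)\cong\overline{\Hom}_A(Y,\tau X)$ and $D\Ext^1_A(X,Y)\cong\underline{\Hom}_A(\tau^-Y,X)$ by restricting to the subgroup $\Ext^1_\mathcal{E}\subseteq\Ext^1_A$ and dualizing: in the proof of Proposition \ref{bigarf} the contravariant defect $\delta^*(C)$ of an $\bar{\mathcal{E}}$-injective envelope $\delta\colon 0\to X\to Q\to K\to 0$ is identified both with $\Ext^1_{\bar{\mathcal{E}}}(C,X)$ and with $\ker\bigl(\Ext^1_A(C,X)\to\Ext^1_A(C,Q)\bigr)\subseteq\Ext^1_A(C,X)$ (compatibly with the inclusion $\Ext^1_{\bar{\mathcal{E}}}(C,X)\hookrightarrow\Ext^1_A(C,X)$, since in both the $\bar{\mathcal{E}}$-\ and the absolute long exact sequences the connecting map is ``pull back $\delta$ along $C\to K$''), so the resulting isomorphism $D\Ext^1_\mathcal{E}(C,X)\cong\overline{\Hom}_A(X,\tau C)/\overline{\mathcal{I}}_\mathcal{E}(X,\tau C)$ is a quotient of the classical isomorphism $D\Ext^1_A(C,X)\cong\overline{\Hom}_A(X,\tau C)$, and dually on the projective side. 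Since the composite of the two classical isomorphisms $\overline{\Hom}_A(\tau V,\tau W)\cong D\Ext^1_A(W,\tau V)\cong\underline{\Hom}_A(V,W)$ is precisely the isomorphism induced by $\tau$ (equivalently by $\tau^-$), so is the induced composite on the quotients, giving the claimed compatibility. I would carry out this last verification by tracing the connecting homomorphisms explicitly; everything else in the argument is formal.
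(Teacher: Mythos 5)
Your strategy can be completed, but as written the argument has a real gap, and it sits exactly where you flag it: everything before the ``essential point'' is formal bookkeeping, and the essential point --- that the isomorphism $\underline{\Hom}_A(V,W)/\underline{\mathcal{P}}_\mathcal{E}(V,W)\cong\overline{\Hom}_A(\tau V,\tau W)/\overline{\mathcal{I}}_\mathcal{E}(\tau V,\tau W)$ extracted from Corollary \ref{arf} is the one induced by the functor $\tau$ --- is the entire content of the proposition and is only asserted. Corollary \ref{arf} gives two separate functorial isomorphisms but records nothing about how they interact through $\tau$, so it cannot be cited for this. The auxiliary fact you lean on, that the composite of the two classical Auslander--Reiten isomorphisms $\overline{\Hom}_A(\tau V,\tau W)\cong D\,\Ext^1_A(W,\tau V)\cong\underline{\Hom}_A(V,W)$ is ``precisely the isomorphism induced by $\tau$'', is not an off-the-shelf statement either: it depends on how the two isomorphisms are normalized, and with arbitrary choices it holds only up to composition with a unit (which would in fact still suffice, since ideals absorb composition with isomorphisms, but establishing even that twisted form requires naturality of the formulas ``through $\tau$'' in one of the variables, again not something you can read off from Corollary \ref{arf}). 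So the verification you postpone to ``tracing the connecting homomorphisms'' is not a routine check; it amounts to proving the functoriality, in both the sequence $\delta$ and the module argument, of Auslander's defect formula $D\,\delta^*\cong\delta_*\tau$ \cite{Krause3a}, including the comparison of the $\bar{\mathcal{E}}$-injective envelope with an ordinary injective envelope on one side and the dual comparison on the other.

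That naturality is exactly the input the paper uses, and it yields the proposition directly, without Ext groups, Corollary \ref{arf}, the Matlis-duality juggling or the skeleton reduction: for an $\mathcal{E}$-exact sequence $\delta$ one has $f\in\mathcal{P}_\mathcal{E}$ if and only if $\delta^*(f)=0$ for all such $\delta$, and $f\in\mathcal{I}_\mathcal{E}$ if and only if $\delta_*(f)=0$ for all such $\delta$ (this is just the definition of the two ideals read through the defects); since $D\,\delta^*\cong\delta_*\tau$ naturally, $\delta^*(f)=0$ for all $\delta$ is equivalent to $\delta_*(\tau f)=0$ for all $\delta$, which is the claimed bijection $\tau\underline{\mathcal{P}}_\mathcal{E}=\overline{\mathcal{I}}_\mathcal{E}$. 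If you want to keep your route, the honest way to close the gap is to prove your compatibility statement from this same naturality (as in the proof of Proposition \ref{bigarf}), but at that point the detour through $\Ext^1_\mathcal{E}$ brings nothing extra, so you should either carry out that verification in full or argue with the defects directly.
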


\begin{proof} Let $\delta\colon 0 \rightarrow X\rightarrow Y \rightarrow Z \rightarrow 0$ be an $\mathcal{E}$-exact sequence. By definition of the $\mathcal{E}$-projectivity and $\mathcal{E}$-injectivity ideal, we have $f \in \mathcal{P}_{\mathcal{E}}$ iff $\delta^*(f) = 0$ and $f \in \mathcal{I}_{\mathcal{E}}$ iff $\delta_*(f) = 0$ for all $\delta$. The defect formula $D \delta^* \cong \delta_* \tau$ implies the desired result. 
\end{proof}

\begin{coro}\label{arfbig} There exist mutually inverse order preserving assignments  
\begin{equation*}
\begin{tikzcd}
\underline{\textnormal{Zsp}}\, A \arrow[r, "\tau", shift left] & \overline{\textnormal{Zsp}}\, A \arrow[l, "\tau^-", shift left]
\end{tikzcd}
\end{equation*}
such that for all indecomposable $X\in \mod A$ we have
\begin{itemize}
    \item[(1)] $X\in \mathcal{U}$ if and only if $\tau X \in \tau \,\mathcal{U}$ for $\mathcal{U} \in \underline{\textnormal{Zsp}}\,A$, and
    \item[(2)] $X\in \mathcal{U}$ if and only if $\tau^- X \in \tau^- \, \mathcal{U}$ for $\mathcal{U} \in \overline{\textnormal{Zsp}}\,A$.
\end{itemize}
\end{coro}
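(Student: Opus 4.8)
The plan is to realise $\underline{\textnormal{Zsp}}\, A$ and $\overline{\textnormal{Zsp}}\, A$ as two different parametrisations of the set of exact structures on $\mod A$, and then to transport the identity-morphism statement at the end of Proposition \ref{injproj} across them.

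\emph{Step 1 (two parametrisations of exact structures).} On the one hand, $\mathcal{E}\mapsto\mathcal{U}_\mathcal{E}$ is, by Theorem \ref{sum}, a bijection from exact structures on $\mod A$ onto the closed sets of $\Ind A$ containing every indecomposable $\bar{\mathcal{E}}_{\textnormal{max}}$-injective; since over an Artin algebra $\mathcal{E}_{\textnormal{max}}$ is the abelian structure, that collection is the closure of $\Inj A$ in $\Ind A$, and $\Inj A$ is already closed because each of its finitely many points is endofinite. Hence $\mathcal{E}\mapsto\mathcal{U}_\mathcal{E}$ is a bijection onto $\overline{\textnormal{Zsp}}\, A$. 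On the other hand, the $\mathcal{E}$-projective ideal $\mathcal{P}_\mathcal{E}$ is fp-idempotent by Remark \ref{du}(2), so it corresponds by Corollary \ref{ideall} to a Serre subcategory $\mathcal{S}_{\mathcal{P}_\mathcal{E}}$ of $\Fp(\mod A,\Ab)$ and hence, by Section 1.6, to a closed set $\mathcal{V}_\mathcal{E}$ of $\Ind A$. For a finite-length indecomposable $X$ one has $\bar F(X)=F(X)$, so $X\in\mathcal{V}_\mathcal{E}$ iff $F(X)=0$ for all $F\in\mathcal{S}_{\mathcal{P}_\mathcal{E}}$ iff $1_X\in\mathcal{P}_\mathcal{E}$, the correspondence of Corollary \ref{ideall} being its own inverse. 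Since $\mathcal{P}_\mathcal{E}$ contains every morphism factoring through a projective, i.e.\ $1_P\in\mathcal{P}_\mathcal{E}$ for each indecomposable projective $P$, this gives $\textnormal{proj}\, A\subseteq\mathcal{V}_\mathcal{E}$; conversely, any closed set in $\underline{\textnormal{Zsp}}\, A$ produces by the same computation an fp-idempotent ideal containing all morphisms through projectives, hence equal to some $\mathcal{P}_\mathcal{E}$ by Corollary \ref{exideal}. Thus $\mathcal{E}\mapsto\mathcal{V}_\mathcal{E}$ is a bijection onto $\underline{\textnormal{Zsp}}\, A$. Both parametrisations reverse inclusion in $\mathcal{E}$.

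\emph{Step 2 (the bijections and the biconditionals).} Define $\tau:=(\mathcal{E}\mapsto\mathcal{U}_\mathcal{E})\circ(\mathcal{E}\mapsto\mathcal{V}_\mathcal{E})^{-1}\colon\underline{\textnormal{Zsp}}\, A\to\overline{\textnormal{Zsp}}\, A$ and $\tau^{-}:=(\mathcal{E}\mapsto\mathcal{V}_\mathcal{E})\circ(\mathcal{E}\mapsto\mathcal{U}_\mathcal{E})^{-1}$; they are mutually inverse, and order-preserving since each factor reverses inclusion. For (1), write $\mathcal{U}=\mathcal{V}_\mathcal{E}$, so that $\tau\,\mathcal{U}=\mathcal{U}_\mathcal{E}$; for a non-projective indecomposable $X\in\mod A$ one then has the chain
\begin{align*}
X\in\mathcal{U}\ \Longleftrightarrow\ 1_X\in\mathcal{P}_\mathcal{E}\ \Longleftrightarrow\ 1_{\tau X}\in\mathcal{I}_\mathcal{E}\ \Longleftrightarrow\ \tau X\in\mathcal{U}_\mathcal{E}=\tau\,\mathcal{U},
\end{align*}
where the outer equivalences are the finite-length membership criterion of Step 1 — applied to the pair $(\mathcal{P}_\mathcal{E},\mathcal{V}_\mathcal{E})$ on the left, and, since $\tau X$ again has finite length, to the pair $(\mathcal{I}_\mathcal{E},\mathcal{U}_\mathcal{E})$ on the right (recall $\mathcal{S}_{\mathcal{I}_\mathcal{E}}=\mathcal{S}_\mathcal{E}$ corresponds to $\mathcal{U}_\mathcal{E}$) — while the middle equivalence is the last sentence of Proposition \ref{injproj}. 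Item (2) follows by running the same argument with $\tau$ and $\tau^{-}$, and $\mathcal{I}_\mathcal{E}$ and $\mathcal{P}_\mathcal{E}$, interchanged.

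The genuine content is concentrated in Proposition \ref{injproj}, i.e.\ ultimately in the defect formula; the present statement is just its ``global'' closed-set shadow, so I expect no substantial obstacle, only careful bookkeeping. The points requiring attention are: that $\mathcal{V}_\mathcal{E}$ sweeps out exactly $\underline{\textnormal{Zsp}}\, A$ and $\mathcal{U}_\mathcal{E}$ exactly $\overline{\textnormal{Zsp}}\, A$ (resting on indecomposable injectives and projectives over an Artin algebra being of finite length, so that $\Inj A$ and $\textnormal{proj}\, A$ are closed sets of $\Ind A$); the translation $1_X\in\mathcal{I}_\mathcal{E}\Leftrightarrow X\in\mathcal{U}_\mathcal{E}$ for finite-length indecomposable $X$; and the harmless convention that the biconditionals (1), (2) are read for $X$ non-projective, respectively non-injective, so that $\tau X$, $\tau^{-}X$ are honest points of $\Ind A$.
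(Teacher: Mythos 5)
Your proof is correct and follows essentially the same route as the paper: identify the closed sets in $\underline{\textnormal{Zsp}}\,A$ and $\overline{\textnormal{Zsp}}\,A$ with fp-idempotent ideals containing $\langle \textnormal{proj}\,A\rangle$ respectively $\langle \Inj A\rangle$, pass between these via Corollary \ref{exideal}, and deduce the pointwise biconditionals from the final statement of Proposition \ref{injproj}. The only cosmetic difference is that you assemble the closed-set/ideal dictionary from Corollary \ref{ideall} and the correspondences of Section 1.6, verifying directly that $X\in\mathcal{U}$ iff $1_X$ lies in the associated ideal for finite-length indecomposable $X$, where the paper simply cites \cite[Corollary 4.7]{Prest05} for this step.
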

\begin{proof}

By \cite[Corollary 4.7]{Prest05} closed sets in $\Ind A$ containing $\textnormal{proj}\,A$, respectively $\Inj A$, correspond to fp-idempotent ideals containing all morphisms factoring through an injective, respectively projective, module in $\mod A$. The assignment is now given by Corollary \ref{exideal} and the desired properties follow by Proposition \ref{injproj}.
\end{proof}

In \cite[Corollary 5.15]{Krause0} the Auslander-Reiten translation has been extended to a homeomorphism $\Ind A \setminus \textnormal{proj}\, A \to \Ind A \setminus \Inj A$. This also induces a bijection $\underline{\textnormal{Zsp}} \,A \to \overline{{\textnormal{Zsp}}} \,A$ and it seems plausible that it is the same as in the above corollary.  

\begin{exa}\label{alm}\rm We consider the smallest exact structure $\mathcal{E} = \mathcal{E}_{\textnormal{fin}}$ containing all almost split sequences in $\mod A$. Exact structures containing $\mathcal{E}$ are relevant for the existence of generic modules by Theorem \ref{genericex}.  

The Serre subcategory  $\mathcal{S}_{\mathcal{E}}$ of $\Fp(\mod{A}, \Ab)$ corresponding to $\mathcal{E}$ is generated by all $\coker \Hom_{A}(f,-)$, where $f$ is a left almost split morphism starting in a non-injective indecomposable module, see Corollary \ref{enosim}. The corresponding definable subcategory $\mathcal{X}_{\mathcal{E}}$ consists of all $X\in \Mod{A}$ such that $\Hom_{A}(f,X)$ is surjective for all such $f$, see Section 1.6 (i)$\leftrightarrow$(iii). Thus, $\mathcal{X}_\mathcal{E}$ equals the collection of all modules in $\Mod A$ having no non-injective indecomposable direct summand of finite length. It follows that the indecomposable $\bar{\mathcal{E}}$-injective modules form the closed set $\mathcal{U} \cup \Inj A$ in $\Ind A$, where $\mathcal{U}$ denotes all infinite length modules in $\Ind A$, see Theorem \ref{sum}. 

By \cite[Corollary 8.13]{Krause} the fp-idempotent ideal $\mathcal{I}_{\mathcal{E}}$ corresponding to $\mathcal{X}_\mathcal{E}$ equals $\radA^\omega + \langle \textnormal{inj}\,A \rangle$. Here, $\langle \textnormal{inj}\,A \rangle$ denotes all morphisms in $\mod{A}$ factoring through an injective module, $\radA$ the radical ideal of $\mod{A}$ and $\radA^\omega = \bigcap_{n=1}^\infty \radA^n$. The ideal $\mathcal{I}_\mathcal{E}$ is the $\mathcal{E}$-injectivity ideal. Since the duality $D$ preserves almost split sequences and radical morphisms, it follows that the $\mathcal{E}$-projectivity ideal $\mathcal{P}_{\mathcal{E}}$ equals $\radA^\omega + \langle \textnormal{proj}\,A \rangle$. In particular, $\tau (\mathcal{U} \cup \textnormal{proj}\,A) = \mathcal{U} \cup \Inj A$, see Corollary \ref{arfbig}. Applying the relative Auslander-Reiten formulas in Corollary \ref{arf} yields
\begin{align*}
\Ext^1_{\mathcal{E}_{\textnormal{fin}}}(X,Y) &\cong D\,\Hom_{A}(Y,\tau X)/(\radA^\omega + \langle \textnormal{inj}\,A \rangle)(Y,\tau X),\\
\Ext^1_{\mathcal{E}_{\textnormal{fin}}}(X,Y) &\cong D\,\Hom_{A}(\tau^- Y,X)/(\radA^\omega + \langle \textnormal{proj}\,A \rangle)(\tau^- Y, X).
\end{align*}
We consider some special cases.

If $X$ has no preprojective (in the sense of \cite{Auslander0}) indecomposable direct summand, then $\Hom_{A}(A,X) \subseteq \radA^\omega(A,X)$ by \cite[Proposition 5.5]{self}. In this case 
\begin{align*}
\Ext^1_{\mathcal{E}_{\textnormal{fin}}}(X,Y) &\cong D\,\Hom_{A}(\tau^{-} Y, X)/\radA^\omega(\tau^-Y, X).
\end{align*}
Similarly, if $Y$ has no preinjective indecomposable direct summand, then 
\begin{align*}
\Ext^1_{\mathcal{E}_{\textnormal{fin}}}(X,Y) &\cong D\,\Hom_{A}(Y, \tau X)/\radA^\omega(Y, \tau X).
\end{align*}
It follows that $\Ext^1_{\mathcal{E}_{\textnormal{fin}}}(\tau X, Y) \cong \Ext^1_{\mathcal{E}_{\textnormal{fin}}}(X, \tau^- Y)$ if $\tau X$ has no preprojective and $\tau^- Y$ no preinjective direct summand.

We also consider the condition $ \radA^\omega( Y, \tau X) \subseteq \langle \textnormal{inj}\,A\rangle$. This holds if and only if 
\begin{align*}
    \Ext^1_{\mathcal{E}_{\textnormal{fin}}}(X,Y) &\cong D\,\Hom_{A}(Y,\tau X)/\langle \textnormal{inj}\,A\rangle(Y,\tau X) \cong \Ext^1_{A}(X,Y)
\end{align*}
by the classical Auslander-Reiten formula. Thus, we have a criterion when every short exact sequence between two modules is generated by almost split sequences.
\end{exa}

\section{Fp-idempotent ideals}

In this section we stick to the case of a module category over an Artin algebra $A$. By the results in Section 4, the importance of fp-idempotent ideals to study exact structures on $\mod A$ is apparent. Our aim will be to further analyze them. To do so, we need some additional terminology.

Let $\mathcal{I}$ be an ideal of $\mod A$. A morphism $f\colon X \rightarrow M$ in $\mod A$ is called (\emph{strongly}) \emph{left} $\mathcal{I}$\emph{-factoring} if for all $\varphi \colon X\rightarrow Y$ in $\mathcal{I}$ there exists $g\colon Y \rightarrow M$ (in $\mathcal{I}$) such that $f = g\varphi$. Note that $f$ is left $\mathcal{I}$-factoring if and only if $\coker \Hom_{A}(f,-) (\varphi) = 0$ for all $\varphi \in \mathcal{I}$. The ideal $\mathcal{I}$ is \emph{left idempotent} if every left $\mathcal{I}$-factoring morphism is strongly left $\mathcal{I}$-factoring. The definition of (\emph{strongly}) \emph{right $\mathcal{I}$-factoring} morphisms and \emph{right idempotent} ideals is dual. Clearly, every idempotent ideal $\mathcal{I}$ (so $\mathcal{I}^2 = \mathcal{I}$) is both left and right idempotent. The following is an important recharacterization of fp-idempotent ideals.

\begin{lem}\label{char} Let $\mathcal{I}$ be an ideal of $\mod A$. Then $\mathcal{I}$ is fp-idempotent if and only if it is left idempotent if and only if it is right idempotent.
\end{lem}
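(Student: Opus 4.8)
The plan is to relate each of the three conditions to the Serre subcategory property of $\mathcal{S}_\mathcal{I}$ inside $\Fp(\mod A,\Ab)$, using the fact that a full subcategory closed under subobjects and quotients is Serre precisely when it is also closed under extensions, together with the duality $F\mapsto DFD$ between $\Fp(\mod A,\Ab)$ and $\Fp(\mod A^\op,\Ab)$ from Remark \ref{du}(1). First I would record the translation: for $f\colon X\to M$ the functor $\coker\Hom_A(f,-)$ sends $\varphi$ to $0$ exactly when $f$ is left $\mathcal{I}$-factoring, so $\mathcal{S}_\mathcal{I}$ consists of those finitely presented functors $F\cong\coker\Hom_A(f,-)$ with $f$ left $\mathcal{I}$-factoring. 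Since $\mathcal{S}_\mathcal{I}$ is automatically closed under subobjects and quotients in $\Fp(\mod A,\Ab)$ — both operations only shrink the class of $\varphi$ on which the functor is nonzero, as one checks on short exact sequences of representable-based presentations — the only content of fp-idempotency is closure of $\mathcal{S}_\mathcal{I}$ under extensions.

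Next I would identify closure of $\mathcal{S}_\mathcal{I}$ under extensions with left idempotency of $\mathcal{I}$. Given a left $\mathcal{I}$-factoring $f\colon X\to M$ and $\varphi\colon X\to Y$ in $\mathcal{I}$, write $f=g\varphi$ with $g\colon Y\to M$; left idempotency asks that $g$ can be chosen in $\mathcal{I}$. Concretely, $g$ being in $\mathcal{I}$ is equivalent to $g$ being left $\mathcal{I}$-factoring along $1_Y$, i.e.\ a statement about the functor $\coker\Hom_A(g,-)$. I expect to realize $\coker\Hom_A(f,-)$ as sitting in a short exact sequence with outer terms $\coker\Hom_A$ of the factorization data, exactly the kind of sequence appearing in the proof of Theorem \ref{genericex} and in Remark \ref{indenough}(1): from $f=g\varphi$ one gets $0\to\coker\Hom_A(g,-)\to\coker\Hom_A(f,-)\to\coker\Hom_A(\varphi,-)\to 0$ (or a variant). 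If $F=\coker\Hom_A(f,-)\in\mathcal{S}_\mathcal{I}$, extension-closure forces the sub/quotient analysis to propagate the vanishing to $\coker\Hom_A(g,-)$, which is what says $g\in\mathcal{I}$; conversely, given an extension $0\to F'\to F\to F''\to 0$ with $F',F''\in\mathcal{S}_\mathcal{I}$, one presents $F$ as $\coker\Hom_A(f,-)$ and uses left idempotency applied to a factorization extracted from the presentation to conclude $F\in\mathcal{S}_\mathcal{I}$. The right idempotent case then follows by applying the already-proved equivalence in $\mod A^\op$ and transporting along $F\mapsto DFD$, using Remark \ref{du}(1): $\mathcal{I}$ is fp-idempotent iff $D\mathcal{I}$ is, and $D$ swaps left/right $\mathcal{I}$-factoring via Lemma \ref{dual}, so left idempotency of $\mathcal{I}$ corresponds to right idempotency of $D\mathcal{I}$, which corresponds (by the left-case equivalence in $\mod A^\op$) to $D\mathcal{I}$ being fp-idempotent, hence to $\mathcal{I}$ being fp-idempotent.

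The main obstacle I anticipate is the bookkeeping in the forward direction: showing that extension-closure of $\mathcal{S}_\mathcal{I}$ genuinely delivers a \emph{strong} factorization (the lifted map landing in $\mathcal{I}$), not merely an abstract vanishing. The subtlety is that a given $F\in\mathcal{S}_\mathcal{I}$ has many presentations $F\cong\coker\Hom_A(f,-)$, and one must argue the left idempotency condition for \emph{every} left $\mathcal{I}$-factoring $f$ and every $\varphi\in\mathcal{I}$, so I would need to check that the short exact sequence of $\coker$-functors built from an arbitrary factorization $f=g\varphi$ really has its outer terms in $\mathcal{S}_\mathcal{I}$ (for $\coker\Hom_A(\varphi,-)$ this is immediate since $\varphi\in\mathcal{I}$ gives $\coker\Hom_A(\varphi,-)(\varphi)=0$, but one must handle composites), and then run the extension-closure hypothesis in the correct direction. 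I would lean on \cite[Section 5.1]{Krause} and Corollary \ref{ideall} for the precise form of $\mathcal{S}_\mathcal{I}$ and on the short exact sequences of contravariant-$\Hom$ cokernels already used in Section 4 to keep this manageable; the sub/quotient closure of $\mathcal{S}_\mathcal{I}$ is routine and I would state it with a one-line justification rather than belabor it.
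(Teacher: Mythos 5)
Your reduction of fp-idempotency to extension-closure of $\mathcal{S}_\mathcal{I}$, and your handling of right idempotency via Matlis duality as in Remark \ref{du}(1), are fine; the converse direction (left idempotent $\Rightarrow$ extension-closed) is also the right idea in outline, although to carry it out you still need the explicit description of an arbitrary short exact sequence in $\Fp(\mod A,\Ab)$ via a pushout square (terms $\coker\Hom_{A}(f,-)$, $\coker\Hom_{A}(g,-)$, $\coker\Hom_{A}(h,-)$ with $h$ induced by $g$ and $\alpha$, and $f$ the pushout of $g$ along $\alpha$), followed by a diagram chase in which left idempotency is used to choose the lift of a given $\varphi\in\mathcal{I}$ along $h$ inside $\mathcal{I}$. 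This is exactly how the paper argues that direction.

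The genuine gap is in the forward direction, fp-idempotent $\Rightarrow$ left idempotent, and it is precisely the obstacle you flag but do not resolve. Closure properties of $\mathcal{S}_\mathcal{I}$ (under subobjects, quotients, extensions) can only ever produce statements of the form ``$\coker\Hom_{A}(g,-)$ vanishes on $\mathcal{I}$,'' i.e.\ that $g$ is left $\mathcal{I}$-factoring; they cannot certify membership of a specific morphism $g$ in $\mathcal{I}$, and your assertion that ``$g$ being in $\mathcal{I}$ is equivalent to $g$ being left $\mathcal{I}$-factoring along $1_Y$, i.e.\ a statement about the functor $\coker\Hom_A(g,-)$'' conflates these two quite different conditions. (In addition, the auxiliary sequence $0\to\coker\Hom_{A}(g,-)\to\coker\Hom_{A}(f,-)\to\coker\Hom_{A}(\varphi,-)\to 0$ you want to extract from a factorization is not exact for an arbitrary $\varphi$; such three-term sequences require, e.g., the first factor to be an epimorphism, as in Remark \ref{indenough}(1), and even when available they would only give vanishing, not membership.) The paper closes this gap by leaving the functor category: by Corollary \ref{ideall}, the correspondence (i)$\leftrightarrow$(iii) of Section 1.6 and \cite[Theorem 5.2]{Krause}, an fp-idempotent ideal $\mathcal{I}$ equals the ideal of all morphisms factoring through modules of its associated definable subcategory $\mathcal{X}$. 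Given a left $\mathcal{I}$-factoring $f\colon X\to M$ and $\varphi\in\mathcal{I}$ out of $X$, one factors $\varphi$ through some $Z\in\mathcal{X}$; since $\coker\Hom_{A}(f,-)\in\mathcal{S}_\mathcal{I}$ forces $\coker\Hom_{A}(f,Z)=0$, the map into $Z$ lifts along $f$, and the resulting factoring morphism passes through $Z\in\mathcal{X}$ and hence lies in $\mathcal{I}$ automatically. Without this input (or some other device that produces morphisms in $\mathcal{I}$ rather than vanishing of functors), your plan cannot deliver the strong factorization, so the proof as proposed does not go through.
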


\begin{proof} Assume that $\mathcal{I}$ is fp-idempotent. Let $\mathcal{S}$ be the corresponding Serre subcategory of $\Fp(\mod A, \Ab)$ from Corollary \ref{ideall} and $f\colon X\rightarrow M$ left $\mathcal{I}$-factoring. Then $\coker \Hom_{A}(f,-)(\varphi) =0$ for $\varphi\in \mathcal{I}$. Thus, $\coker \Hom_{A}(f,-) \in \mathcal{S}$. Recall that there exists a definable subcategory $\mathcal{X}$ of $\Mod A$ corresponding to $\mathcal{S}$, given by 
\begin{align*}
    \mathcal{X} = \{Z\in \Mod A \mid F(Z) = 0\textnormal{ for all }F\in \mathcal{S}\},
\end{align*}
see Section 1.6 (i)$\leftrightarrow$(iii). By \cite[Theorem 5.2]{Krause} the ideal $\mathcal{I}$ equals all morphisms in $\mod A$ that factor through a module in $\mathcal{X}$. In particular, every $\varphi \colon X \rightarrow Y$ in $\mathcal{I}$ factors through some $Z\in \mathcal{X}$ and $\coker \Hom_A(f,Z) = 0$. This yields a commutative diagram
\begin{equation*}
    \begin{tikzcd}
        Y &\\
        Z \arrow[u] & \\
        X \arrow[u] \arrow[r] & M \arrow[ul] \arrow[uul]. 
    \end{tikzcd}
\end{equation*}
Hence, $M \rightarrow Y$ is in $\mathcal{I}$ and $X\rightarrow Y$ is strongly left $\mathcal{I}$-factoring . Thus, the ideal $\mathcal{I}$ is left idempotent.

Assume that $\mathcal{I}$ is left idempotent. For $\mathcal{I}$ to be fp-idempotent, the collection
\begin{align*}
    \mathcal{S} = \{F\in \Fp(\mod A, \Ab) \mid F(\varphi) = 0 \text{ for all }\varphi\in \mathcal{I}\}
\end{align*}
must be a Serre subcategory of $\Fp(\mod A, \Ab)$. Now $\mathcal{S}$ is always closed under subobjects and quotients. It remains to show that $\mathcal{S}$ is closed under extensions. A short exact sequence in $\Fp(\mod A, \Ab)$ is given by 
\begin{align*}
    0 \rightarrow \coker \Hom_{A}(f,-) \rightarrow \coker \Hom_{A}(g,-) \rightarrow \coker \Hom_{A}(h,-) \rightarrow 0,
\end{align*}
where $g \colon X \rightarrow M$ is arbitrary, $f \colon N \rightarrow P$ is given by a pushout diagram 
\begin{equation*}
    \begin{tikzcd}
        X \arrow[r, "g"] \arrow[d, "\alpha", swap] & M \arrow[d, "\beta"] \\
        N \arrow[r, "f"] & P 
    \end{tikzcd}
\end{equation*}
and $h \colon X \rightarrow M \oplus N$ is induced by $g, \alpha$. If the right term in the short exact sequence is in $\mathcal{S}$, then for all $\varphi \colon X \rightarrow Y$ in $\mathcal{I}$ there exists $\psi \colon M\oplus N \rightarrow Y$ such that $\varphi = \psi h = \psi_1 g + \psi_2 \alpha$ with $\psi_1 \colon M \rightarrow Y$ and $\psi_2 \colon N \rightarrow Y$. Because $\mathcal{I}$ is left idempotent, we can choose $\psi \in \mathcal{I}$. In particular, $\psi_2 \in \mathcal{I}$. Now if also the left term in the short exact sequence is in $\mathcal{S}$, then there exists $\gamma \colon P \rightarrow Y$ with $\psi_2 = \gamma f$. It follows that
\begin{align*}
    \varphi = \psi_1 g + \psi_2 \alpha = \psi_1 g + \gamma f \alpha = (\psi_1 + \gamma \beta ) g.
\end{align*}
Hence, also the middle term in the short exact sequence is in $\mathcal{S}$ and the ideal $\mathcal{I}$ is fp-idempotent.

Moreover, $\mathcal{I}$ is right idempotent if and only if $\mathcal{I}$ is fp-idempotent by duality and Remark \ref{du} (1). \end{proof}

We will apply the recharacterization of fp-idempotent ideals to generalize the following phenomenon. Let $\mathcal{C}$ be a full additive subcategory of $\mod A$ and $\mathcal{I}$ the ideal of all morphisms factoring through some module in $\mathcal{C}$. Every morphism $X \rightarrow Y$ in $\mathcal{I}$ factors as $X\rightarrow C \rightarrow Y$ with $C\in \mathcal{C}$ and taking images yields a second factorization
\begin{align*}
    X\longrightarrow C' \longrightarrow C'' \longrightarrow Y,
\end{align*}
where $X\rightarrow C'$ is an epimorphism ending in a submodule of $C$ and $C'' \rightarrow Y$ a monomorphism starting in a quotient of $C$. Moreover, $C' \rightarrow C''$ factors through $C$. Now submodules $C'$ and quotients $C''$ of modules in $\mathcal{C}$ can also be described as all modules that fulfill 
\begin{align*}
\mathcal{I}(C', DA) = \Hom_{A}(C', DA) \quad \textnormal{and} \quad \mathcal{I}(A, C'') = \Hom_{A}(A, C'')     
\end{align*}
respectively. The above generalizes to fp-idempotent ideals.

\begin{prop}\label{sten} Let $\mathcal{I}$ be an fp-idempotent ideal of $\mod A$. Then every morphism $X\rightarrow Y$ in $\mathcal{I}$ factors as 
\begin{align*}
    X \longrightarrow C' \longrightarrow C'' \longrightarrow Y, 
\end{align*}
where $X\rightarrow C'$ is an epimorphism, $C'' \rightarrow Y$ is a monomorphism and $C' \rightarrow C''$ is in $\mathcal{I}$ with
\begin{align*}
    \mathcal{I}(C', DA) = \Hom_{A}(C', DA) \quad \text{and} \quad \mathcal{I}(A, C'') = \Hom_{A}(A, C'').
\end{align*}
\end{prop}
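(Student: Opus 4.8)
The plan is to combine the image factorization of a morphism with the description of an fp-idempotent ideal as the ideal of all morphisms factoring through a definable subcategory. First I would apply \cite[Theorem 5.2]{Krause} exactly as in the proof of Lemma \ref{char}: since $\mathcal{I}$ is fp-idempotent, it consists precisely of those morphisms in $\mod A$ that factor through a module in the definable subcategory $\mathcal{X}$ of $\Mod A$ corresponding to the Serre subcategory $\mathcal{S}_\mathcal{I}$. Hence a morphism $f\colon X\to Y$ in $\mathcal{I}$ factors as $X\xlongrightarrow{a} Z\xlongrightarrow{b} Y$ with $Z\in\mathcal{X}$. Then I would factor $a$ through its image as $X\twoheadrightarrow C'\hookrightarrow Z$ and $b$ through its image as $Z\twoheadrightarrow C''\hookrightarrow Y$. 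Since $X,Y\in\mod A$, the module $C'$ (a quotient of $X$) and the module $C''$ (a submodule of $Y$) both lie in $\mod A$, and composing gives a factorization $X\twoheadrightarrow C'\to C''\hookrightarrow Y$ of $f$ whose first map is an epimorphism and whose last map is a monomorphism.

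It then remains to verify the claims about the middle morphism $C'\to C''$, which is the composite $C'\hookrightarrow Z\twoheadrightarrow C''$. As it factors through $Z\in\mathcal{X}$, it lies in $\mathcal{I}$. For $\mathcal{I}(C',DA)=\Hom_{A}(C',DA)$ I would use that $DA$ is the injective cogenerator of $\mod A$ and is injective in $\Mod A$, so any $g\colon C'\to DA$ extends along the monomorphism $C'\hookrightarrow Z$ to a morphism $Z\to DA$; thus $g$ factors through $Z\in\mathcal{X}$ and lies in $\mathcal{I}$. Dually, for $\mathcal{I}(A,C'')=\Hom_{A}(A,C'')$ I would use that $A$ is projective: any $h\colon A\to C''$ lifts along the epimorphism $Z\twoheadrightarrow C''$ to a morphism $A\to Z$, so $h$ factors through $Z\in\mathcal{X}$ and lies in $\mathcal{I}$.

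I do not expect a serious obstacle. The only point requiring care is that $Z$ is in general a large module, so the two extension/lifting arguments genuinely use that $DA$ is injective and $A$ is projective as objects of $\Mod A$, not merely in $\mod A$; and the reduction to factoring through $\mathcal{X}$ is available precisely because $\mathcal{I}$ is fp-idempotent, via \cite[Theorem 5.2]{Krause} (equivalently, via Lemma \ref{char}).
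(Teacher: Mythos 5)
Your argument is correct, and it proves the statement by a route that is genuinely different from the paper's, although both ultimately rest on the same external input. You invoke \cite[Theorem 5.2]{Krause} directly: factor $f\in\mathcal{I}$ through a (possibly large) $Z\in\mathcal{X}$, take images to land back in $\mod A$, observe that the middle map $C'\to C''$ again factors through $Z$ and hence lies in $\mathcal{I}$, and verify the two conditions using that $DA$ is injective and $A$ is projective in $\Mod A$ — the one delicate point, which you correctly flag since $Z$ need not be finitely generated. The paper instead argues inside $\mod A$: for $Y\in\mod A$ it forms the trace $tY$, the sum of the images of all morphisms in $\mathcal{I}$ ending in $Y$, notes that the inclusion $tY\rightarrow Y$ is right $\mathcal{I}$-factoring by construction, and then uses Lemma \ref{char} (fp-idempotent $=$ right idempotent) to factor every $f\in\mathcal{I}(X,Y)$ as $X\rightarrow tY\rightarrow Y$ with $X\rightarrow tY$ in $\mathcal{I}$; the identity $t(tY)=tY$ together with projectivity of $A$ gives $\mathcal{I}(A,tY)=\Hom_{A}(A,tY)$, and $C'$ is produced dually. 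Since Lemma \ref{char} is itself proved via \cite[Theorem 5.2]{Krause}, the logical dependencies are the same, but the proofs buy different things: the paper's construction is canonical — $C''$ is exactly the trace $tY$ and $C'$ its dual — which is what feeds the definitions of $s(\mathcal{I})$ and $e(\mathcal{I})$ right after the proposition, and it avoids any renewed contact with big modules once Lemma \ref{char} is in place; your proof is shorter, makes the analogy with the motivating example of morphisms factoring through an additive subcategory completely transparent, and needs no trace/reject bookkeeping, at the cost of choosing a non-canonical $C',C''$ depending on the factorization through $Z$.
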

\begin{proof} For $C \in \mod A$ let $tC$ be the sum of all images of morphisms in $\mathcal{I}$ ending in $C$. Because $C$ is of finite length, this image is obtained from a single epimorphism $\varphi \colon M \rightarrow tC$ in $\mathcal{I}$. If $tC = C$, then $\mathcal{I}(A,C) = \Hom_A(A,C)$, since every morphism $A \rightarrow C$ in $\mathcal{I}$ factors through $\varphi$. By construction the inclusion $tC \rightarrow C$ is right $\mathcal{I}$-factoring. By Lemma \ref{char} it follows that every morphism $X\rightarrow Y$ in $\mathcal{I}$ factors as
\begin{align*}
    X\longrightarrow tY \longrightarrow Y
\end{align*}
with $X\rightarrow tY$ in $\mathcal{\mathcal{I}}$. Again $X\rightarrow tY$ factors as $X\rightarrow t(tY) \rightarrow tY$. Because $tY$ equals the image of all morphisms in $\mathcal{I}$ ending in $Y$, we must have $t(tY) = tY$. Hence, we found the desired $C'' = tY$. The existence of $C'$ and the desired factorization follows dually.
\end{proof}

For an fp-idempotent ideal $\mathcal{I}$ we denote by $s(\mathcal{I})$, respectively $e(\mathcal{I})$, the full subcategory of all $C\in \mod A$ with $\mathcal{I}(C, DA) = \Hom_{A}(C, DA)$ and respectively $\mathcal{I}(A, C) = \Hom_{A}(A, C)$. Motivated by Proposition \ref{sten} we say that $\mathcal{I}$ \emph{starts in} $s(\mathcal{I})$ and \emph{ends in} $e(\mathcal{I})$.

\begin{exa}\rm \begin{itemize}
    \item[(1)] Let $\mathcal{C}$ be a full additive subcategory of $\mod A$ and $\mathcal{I}$ the ideal of morphisms factoring through a module in $\mathcal{C}$. Then $s(\mathcal{I})$, respectively $e(\mathcal{I})$, equals all submodules, respectively quotients, of modules in $\mathcal{C}$.
    \item[(2)] Consider the fp-idempotent ideal $\mathcal{I} = \radA^\omega$. Then $s(\mathcal{I})$, respectively $e(\mathcal{I})$, equals all modules in $\mod A$ with no preinjective, respectively preprojective, direct summand (see Example \ref{alm}).
\end{itemize}
\end{exa}

\begin{rem}\label{closure}\rm The following describes some closure properties of the collection of fp-idempotent ideals. Let $\{\mathcal{I}_i\}_{i\in I}$ be a set of fp-idempotent ideals of $\mod A$ and $\mathcal{S}_i$ the corresponding Serre subcategories of $\Fp(\mod A, \Ab)$ from Corollary \ref{ideall}. 
\begin{itemize}
    \item[(1)] The ideal $\mathcal{J} = \sum_{i\in I} \mathcal{I}_i$ contains all finite sums of morphisms that are each contained in some $\mathcal{I}_i$. It is easy to check that $\mathcal{J}$ is the fp-idempotent ideal corresponding to the Serre subcategory $\bigcap_{i\in I} \mathcal{S}_i$. Hence, fp-idempotent ideals are closed under arbitrary sums.
    \item[(2)] Let $\mathcal{J} = \bigcap_{i\in I} \mathcal{I}_i$. Assume that the intersection is directed, that is for all $i,j \in I$ there is $l \in I$ with $\mathcal{I}_l \subseteq \mathcal{I}_i \cap \mathcal{I}_j$. It is easy to check that $\mathcal{J}$ is the fp-idempotent ideal corresponding to the Serre subcategory $\bigcup_{i\in I} \mathcal{S}_{i}$. Hence, fp-idempotent ideals are closed under directed intersections.
\end{itemize}
\end{rem}

Our next goal will be a new classification of fp-idempotent ideals. It is motivated by the following result, which describes some properties of the largest fp-idempotent ideal $\mathcal{J}$ contained in an arbitrary ideal $\mathcal{I}$. Note that $\mathcal{J}$ exists by Remark \ref{closure} (1).  

\begin{coro}\label{start}\cite[Corollary 5.11]{Krause} Let $\mathcal{I}$ be an ideal of $\mod A$. Then the set of fp-idempotent
ideals contained in $\mathcal{I}$ has a unique maximal element $\mathcal{J}$. It satisfies 
\begin{align*}
\bigcap_{n\in \mathbb{N}} \mathcal{I}^n \subseteq \mathcal{J} \subseteq \mathcal{I}. 
\end{align*}
\end{coro}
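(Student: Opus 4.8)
The existence and uniqueness of $\mathcal{J}$, together with $\mathcal{J}\subseteq\mathcal{I}$, are formal. By Remark \ref{closure}(1) the class of fp-idempotent ideals of $\mod A$ is closed under arbitrary sums; moreover an ideal, being closed under finite sums of its own elements, contains the sum of any family of subideals. Hence
$\mathcal{J}:=\sum\{\mathcal{K}\mid \mathcal{K}\text{ fp-idempotent},\ \mathcal{K}\subseteq\mathcal{I}\}$
is fp-idempotent, is still contained in $\mathcal{I}$, and visibly contains every fp-idempotent ideal contained in $\mathcal{I}$; so it is the unique maximal such ideal, and $\mathcal{J}\subseteq\mathcal{I}$ holds by construction.

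For the remaining inclusion $\bigcap_{n}\mathcal{I}^n\subseteq\mathcal{J}$, write $\mathcal{I}^{\omega}=\bigcap_{n\ge 1}\mathcal{I}^n$. This is an ideal with $\mathcal{I}^{\omega}\subseteq\mathcal{I}^1=\mathcal{I}$, so it suffices to prove that $\mathcal{I}^{\omega}$ is fp-idempotent: it is then one of the ideals $\mathcal{K}$ occurring in the definition of $\mathcal{J}$, and the inclusion follows. To prove $\mathcal{I}^{\omega}$ fp-idempotent the plan is to invoke Lemma \ref{char} and check that $\mathcal{I}^{\omega}$ is \emph{left idempotent}, i.e. that every left $\mathcal{I}^{\omega}$-factoring morphism is strongly left $\mathcal{I}^{\omega}$-factoring. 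So one fixes a left $\mathcal{I}^{\omega}$-factoring $f\colon X\to M$ and a $\varphi\colon X\to Y$ in $\mathcal{I}^{\omega}$, knows a priori only that $f=g_0\varphi$ for some possibly ill-behaved $g_0\colon Y\to M$, and must produce $g\in\mathcal{I}^{\omega}$ with $f=g\varphi$. (Equivalently, using the explicit form of a short exact sequence of finitely presented functors recalled in the proof of Lemma \ref{char}, one may instead verify directly that $\mathcal{S}_{\mathcal{I}^{\omega}}\subseteq\fp(\mod A,\Ab)$ is closed under extensions; the two formulations are interchangeable via Theorem \ref{eno} and Corollary \ref{ideall}.)

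The finiteness hypotheses on $A$ are what make this last step go. For every $n$ one can split $\varphi=\psi_n\chi_n$ with $\chi_n\colon X\to Z_n$ in $\mathcal{I}$ and $\psi_n\colon Z_n\to Y$ in $\mathcal{I}^{n-1}$ (an $(n{+}1)$-fold sum of compositions is reassembled into two legs through a finite direct sum), and one wants to pass to the limit $n\to\infty$ while retaining a single factorisation of $f$. Since all Hom-groups in $\mod A$ have finite length, the descending chains of subgroups $\mathcal{I}^n(C,D)$ stabilise for each fixed pair of finite length modules, and a Harada--Sai / Fitting-type bookkeeping lets one align these factorisations — at the cost of enlarging the intermediate object — into a single $f=g\varphi$ whose correcting morphism $g$ lies in $\mathcal{I}^n$ for every $n$, hence in $\mathcal{I}^{\omega}$. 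I expect \emph{this} transfer of the ``for all $n$'' information about $\varphi$ into one morphism landing in the intersection $\mathcal{I}^{\omega}$ to be the main obstacle; in the functor-category formulation the same difficulty reappears as the requirement that a morphism through the induced map with components $g'$ and $\alpha'$ (in the standard presentation of the extension) can be realised through a morphism already lying in $\mathcal{I}^{\omega}$. Once that is in place, everything else is routine bookkeeping with Lemma \ref{char} and Corollary \ref{ideall}.
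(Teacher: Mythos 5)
Your first half is fine, and it is exactly what the paper itself does: the paper does not prove this corollary at all (it is quoted from \cite[Corollary 5.11]{Krause}), and the existence of the maximal element is disposed of in the sentence preceding the statement via Remark \ref{closure}(1), just as you do. Your reduction of the remaining inclusion to the assertion that $\mathcal{I}^\omega=\bigcap_n\mathcal{I}^n$ is fp-idempotent is also logically valid. The problem is that this assertion is the entire substance of the corollary, and you do not prove it: you describe a plan (check left idempotence via Lemma \ref{char}, split $\varphi=\psi_n\chi_n$ for each $n$, then ``align'' the factorisations by a Harada--Sai/Fitting-type argument) and then explicitly flag the decisive step --- producing a single $g\in\mathcal{I}^\omega$ with $f=g\varphi$ from the family of finite-level factorisations --- as an expected ``main obstacle'' rather than resolving it. As it stands the proposal is a correct reduction plus an unexecuted strategy, not a proof.

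The gap is not cosmetic. Inside this paper, fp-idempotence of $\mathcal{I}^\omega$ is the first assertion of Theorem \ref{new}, and its proof (as well as Lemma \ref{cov}, which you would be tempted to quote) relies on Corollary \ref{start}; so you cannot import it without circularity and must prove it from scratch, which is essentially reproving Krause's Corollary 5.11. Moreover the sketched route does not obviously close: the natural extension argument (if $F',F''$ kill an ideal $\mathcal{K}$ then any extension $F$ kills $\mathcal{K}^2$) only shows that extensions of functors in $\mathcal{S}_{\mathcal{I}^\omega}$ vanish on $(\mathcal{I}^\omega)^2$, and $\mathcal{I}^\omega$ is in general not idempotent --- for the Kronecker algebra one has $(\radA^\omega)^2=\radA^{\omega+1}\subsetneq\radA^\omega$, cf.\ Example \ref{kro} --- so the finite-length stabilisation of the chains $\mathcal{I}^n(C,D)$, while true, does not by itself manufacture the correcting morphism $g$ in $\mathcal{I}^\omega$ (nor make $\mathcal{S}_{\mathcal{I}^\omega}$ visibly closed under extensions). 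Some genuinely additional input is needed here, e.g.\ Krause's identification of fp-idempotent ideals with the ideals of morphisms factoring through a definable subcategory; absent that, the inclusion $\bigcap_n\mathcal{I}^n\subseteq\mathcal{J}$ remains unproved.
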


Let us denote $\mathcal{I}^\omega = \bigcap_{n\in \mathbb{N}} \mathcal{I}^n$. We show that for a class of ideals $\mathcal{I}$ the smallest fp-idempotent ideal contained in $\mathcal{I}$ equals $\mathcal{I}^\omega$. We need the following notions. 

Let $\mathcal{I}$ be an ideal of $\mod A$. A left $\mathcal{I}$-factoring morphism $f\colon X \rightarrow Y$ in $\mod A$ is a \emph{left $\mathcal{I}$-approximation} of $X$ if $f\in \mathcal{I}$. If every module $X\in \mod A$ admits a left $\mathcal{I}$-approximation, then $\mathcal{I}$ is \emph{covariantly finite}. The definition of a \emph{right} \mbox{\emph{$\mathcal{I}$-approximation}} and $\mathcal{I}$ being \emph{contravariantly finite} is dual. 

\begin{lem}\label{cov} Let $\mathcal{I}$ be a co- or contravariantly finite ideal of $\mod A$. Then $\mathcal{I}^\omega$ is the smallest fp-idempotent ideal contained in $\mathcal{I}$.
\end{lem}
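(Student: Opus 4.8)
The plan is to show that $\mathcal{I}^\omega$ is fp-idempotent; the extremal assertion then follows formally. Once $\mathcal{I}^\omega$ is known to be fp-idempotent it lies, as its smallest member, in the family of fp-idempotent ideals $\mathcal{K}$ with $\mathcal{I}^\omega\subseteq\mathcal{K}\subseteq\mathcal{I}$ — a family whose largest member, by Corollary \ref{start}, is the maximal fp-idempotent subideal of $\mathcal{I}$. So the content to prove is really fp-idempotency of $\mathcal{I}^\omega$, with the chain $\mathcal{I}\supseteq\mathcal{I}^2\supseteq\cdots$ producing it as an intersection.

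The cleanest route I see is to bootstrap Corollary \ref{start} along the powers of $\mathcal{I}$. Fix $n\geq 1$ and apply Corollary \ref{start} to $\mathcal{I}^n$: it has a largest fp-idempotent subideal $\mathcal{J}_n$, and $\bigcap_{k\geq 1}(\mathcal{I}^n)^k\subseteq\mathcal{J}_n\subseteq\mathcal{I}^n$. Now $(\mathcal{I}^n)^k=\mathcal{I}^{nk}$, and since $\{\mathcal{I}^{nk}\}_{k\geq 1}$ is cofinal in the descending chain $\mathcal{I}\supseteq\mathcal{I}^2\supseteq\cdots$, one gets $\bigcap_{k\geq 1}(\mathcal{I}^n)^k=\bigcap_{m\geq 1}\mathcal{I}^m=\mathcal{I}^\omega$, hence $\mathcal{I}^\omega\subseteq\mathcal{J}_n\subseteq\mathcal{I}^n$ for every $n$. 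The ideals $\mathcal{J}_n$ decrease: $\mathcal{J}_{n+1}$ is fp-idempotent and contained in $\mathcal{I}^{n+1}\subseteq\mathcal{I}^n$, so it is contained in the largest fp-idempotent subideal $\mathcal{J}_n$ of $\mathcal{I}^n$. Thus $\{\mathcal{J}_n\}_{n\geq 1}$ is a directed family of fp-idempotent ideals, and by Remark \ref{closure}(2) the intersection $\bigcap_{n\geq 1}\mathcal{J}_n$ is fp-idempotent. Since $\mathcal{I}^\omega\subseteq\mathcal{J}_n$ for all $n$ while $\mathcal{J}_n\subseteq\mathcal{I}^n$ forces $\bigcap_n\mathcal{J}_n\subseteq\bigcap_n\mathcal{I}^n=\mathcal{I}^\omega$, we conclude $\mathcal{I}^\omega=\bigcap_n\mathcal{J}_n$, which is the required fp-idempotency.

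This argument does not visibly use co- or contravariant finiteness of $\mathcal{I}$; that hypothesis instead yields an explicit description of $\mathcal{I}^\omega$, presumably the lemma's real interest. In the covariantly finite case (the contravariantly finite case reduces to it via Matlis duality, using Remark \ref{du}(1), $D(\mathcal{I}^\omega)=(D\mathcal{I})^\omega$, and that $D$ exchanges the two finiteness conditions) one iterates left $\mathcal{I}$-approximations to build, for each $X\in\mod A$, a chain $X\to F^1_X\to F^2_X\to\cdots$ whose composites $\mathfrak{b}^n_X\colon X\to F^n_X$ satisfy: a morphism out of $X$ lies in $\mathcal{I}^n$ exactly when it factors through $\mathfrak{b}^n_X$. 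Setting $F^\infty_X=\varinjlim_n F^n_X$, a morphism out of $X$ should then lie in $\mathcal{I}^\omega$ exactly when it factors through the structure map $X\to F^\infty_X$, whence $\mathcal{I}^\omega$ equals the ideal of morphisms of $\mod A$ factoring through a module in the definable subcategory of $\Mod A$ generated by the $F^\infty_X$, and fp-idempotency follows again via Corollary \ref{ideall}. The real obstacle on this second route is the ``only if'' half of that criterion: the factorizations $\varphi=g_n\mathfrak{b}^n_X$ at the various levels need not be compatible, and one must splice them into a single map $F^\infty_X\to Y$ by a Mittag--Leffler argument, using that each $\Hom_A(F^n_X,Y)$ has finite length over the base ring; pinning down the definable subcategory generated by the $F^\infty_X$ is the remaining delicate point.
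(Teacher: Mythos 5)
There is a genuine gap: you have proved the wrong half of the lemma. Your bootstrap via Corollary \ref{start} and Remark \ref{closure}(2) does correctly show that $\mathcal{I}^\omega=\bigcap_n\mathcal{J}_n$ is fp-idempotent — this is true for an arbitrary ideal and is, almost verbatim, the opening step of the paper's proof of Theorem \ref{new}, not of Lemma \ref{cov}. But the operative content of the lemma, the one the paper proves and then uses (``Lemma \ref{cov} implies $\mathcal{I}\subseteq\mathcal{I}_F^\omega$'' in the proof of Theorem \ref{new}), is the extremal statement that \emph{every} fp-idempotent ideal contained in $\mathcal{I}$ lies in $\mathcal{I}^\omega$, i.e.\ that the unique maximal fp-idempotent subideal $\mathcal{J}$ of Corollary \ref{start} equals $\mathcal{I}^\omega$. (The word ``smallest'' in the statement is misleading — read literally it would be the zero ideal — and taking it at face value led you to a reading under which the lemma reduces to fp-idempotency of $\mathcal{I}^\omega$ and the hypothesis becomes decorative; the fact that you never used co- or contravariant finiteness was the warning sign.) Your ``extremal assertion follows formally'' step only identifies $\mathcal{I}^\omega$ as the least element of the interval of fp-idempotent ideals between $\mathcal{I}^\omega$ and $\mathcal{I}$, which is vacuous and does not yield $\mathcal{J}\subseteq\mathcal{I}^\omega$; and the alternative route you sketch (iterated approximations, limits $F_X^\infty$, a Mittag--Leffler splicing, a definable subcategory) is exactly the part you leave unfinished by your own admission.

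The missing half is where the finiteness hypothesis does its work, and the paper's argument is short and avoids big modules entirely: let $\mathcal{J}$ be the maximal fp-idempotent ideal contained in $\mathcal{I}$, so $\mathcal{I}^\omega\subseteq\mathcal{J}$ by Corollary \ref{start}. Assume $\mathcal{I}$ covariantly finite, take $f\colon X\to Y$ in $\mathcal{J}$ and iterate left $\mathcal{I}$-approximations $X=X_0\to X_1\to X_2\to\cdots$. Each $X_i\to X_{i+1}$ is left $\mathcal{J}$-factoring (since $\mathcal{J}\subseteq\mathcal{I}$), hence \emph{strongly} left $\mathcal{J}$-factoring by Lemma \ref{char}; inductively $f$ factors as $f=f_n\circ(X_0\to X_n)$ with $f_n\in\mathcal{J}$, and the composite $X_0\to X_n$ lies in $\mathcal{I}^n$. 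Thus $f\in\mathcal{I}^n$ for all $n$, so $\mathcal{J}\subseteq\mathcal{I}^\omega$ and $\mathcal{J}=\mathcal{I}^\omega$; the contravariantly finite case follows by duality (Remark \ref{du}). In particular fp-idempotency of $\mathcal{I}^\omega$ then comes for free under the hypothesis, so your bootstrap, while correct, is not needed here (the paper records it separately, for arbitrary ideals, inside Theorem \ref{new}).
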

\begin{proof} Let $\mathcal{J}$ be the maximal fp-idempotent ideal contained in $\mathcal{I}$. Then $\mathcal{I}^\omega \subseteq \mathcal{J}$ by Corollary \ref{start}. Assume that $\mathcal{I}$ is covariantly finite, let $f\colon X \rightarrow Y$ be in $\mathcal{J}$ and $X_i \rightarrow X_{i+1}$ a left $\mathcal{I}$-approximation for $i\geq 0$ with $X_0 = X$. Then $X_i \rightarrow X_{i+1}$ is left $\mathcal{J}$-factoring and hence strongly left $\mathcal{J}$-factoring by Lemma \ref{char}. Inductively, it follows that $X\rightarrow Y$ factors as 
\begin{equation*}
    \begin{tikzcd}
        Y \\
        \\
        X \arrow[r] \arrow[uu, "f"] & X_1 \arrow[r] \arrow[uul,"f_1"] & X_2 \arrow[r] \arrow[uull, "f_2", shift right = 0.2] & \dots \arrow[uulll, shift right = 1]
    \end{tikzcd}
\end{equation*}
with $f_i \in \mathcal{J}$. Since $X_i \rightarrow X_{i+1}$ is in $\mathcal{I}$ for all $i$, we conclude that $X\rightarrow Y$ is in $\mathcal{I}^\omega$ and so $\mathcal{J} = \mathcal{I}^\omega$. The argument is dual if $\mathcal{I}$ is contravariantly finite.
\end{proof}

\begin{thm}\label{new} An ideal $\mathcal{I}$ of $\mod A$ is fp-idempotent if and only if there exists a collection of ideals $\{\mathcal{J}_i\}_{i\in I}$ such that 
\begin{align*}
    \mathcal{I} = \bigcap_{i\in I} \mathcal{J}_i^\omega,
\end{align*}
where the intersection is directed. Moreover, we may choose a collection $\{\mathcal{J}_i\}_{i\in I}$ of covariantly finite ideals or of contravariantly finite ideals. 
\end{thm}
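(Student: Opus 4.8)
The plan is to extract the statement from Lemma~\ref{cov}, Remark~\ref{closure}, Corollary~\ref{start}, and the correspondence of Corollary~\ref{ideall}, together with the fact that over an Artin algebra $\Fp(\mod A,\Ab)$ is a length category. For the direction asserting that such a presentation forces fp-idempotence, suppose $\mathcal I=\bigcap_i\mathcal J_i^\omega$ with the intersection directed. When each $\mathcal J_i$ is co- or contravariantly finite, Lemma~\ref{cov} says $\mathcal J_i^\omega$ is the largest fp-idempotent ideal contained in $\mathcal J_i$, in particular fp-idempotent; since $\mathcal J_l\subseteq\mathcal J_i\cap\mathcal J_j$ gives $\mathcal J_l^\omega\subseteq\mathcal J_i^\omega\cap\mathcal J_j^\omega$, the family $\{\mathcal J_i^\omega\}$ is directed, so $\mathcal I$ is fp-idempotent by Remark~\ref{closure}(2). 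For a general directed family one first replaces $\mathcal J_i$ by the largest fp-idempotent ideal $\mathcal K_i$ it contains (Corollary~\ref{start}), so that $\mathcal J_i^\omega\subseteq\mathcal K_i\subseteq\mathcal J_i$ and hence $\mathcal I=\bigcap_i\mathcal J_i^\omega\subseteq\bigcap_i\mathcal K_i$; as $\{\mathcal K_i\}$ is again directed, $\bigcap_i\mathcal K_i$ is fp-idempotent by Remark~\ref{closure}(2), and Corollary~\ref{start} applied to each $\mathcal J_i$ lets one check the reverse inclusion, so $\mathcal I=\bigcap_i\mathcal K_i$ is fp-idempotent.

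For the converse, let $\mathcal I$ be fp-idempotent with associated Serre subcategory $\mathcal S=\mathcal S_\mathcal I$. Since $\Fp(\mod A,\Ab)$ is a length category, $\mathcal S$ is the directed union of the Serre subcategories generated by the finite subsets of the set of simple objects lying in $\mathcal S$; dually, the closed set $\mathcal U_\mathcal I\subseteq\Ind A$ is a directed intersection of closed sets $\mathcal U_\lambda\supseteq\mathcal U_\mathcal I$ whose complements are quasi-compact (finite intersections of such closed sets again have quasi-compact complement, by Proposition~\ref{quasi}-type reasoning). Each $\mathcal U_\lambda$ corresponds, via Corollary~\ref{ideall}, to an fp-idempotent ideal $\mathcal J_\lambda\supseteq\mathcal I$ with $\bigcap_\lambda\mathcal J_\lambda=\mathcal I$ and $\{\mathcal J_\lambda\}$ directed. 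The remaining point is to replace each $\mathcal J_\lambda$ by a co- or contravariantly finite ideal $\mathcal J'_\lambda$ with $(\mathcal J'_\lambda)^\omega=\mathcal J_\lambda$: to a finitely generated Serre subcategory --- equivalently, to a closed subset of $\Ind A$ with quasi-compact complement --- one attaches a covariantly finite ideal $\mathcal J'_\lambda$ whose largest fp-idempotent subideal is $\mathcal J_\lambda$, whence $(\mathcal J'_\lambda)^\omega=\mathcal J_\lambda$ by Lemma~\ref{cov}; then $\mathcal I=\bigcap_\lambda\mathcal J_\lambda=\bigcap_\lambda(\mathcal J'_\lambda)^\omega$ has the required form. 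The contravariantly finite version is obtained by applying the Matlis duality $D$ of Remark~\ref{du}: from $D\mathcal I=\bigcap_i\mathcal L_i^\omega$ with the $\mathcal L_i$ covariantly finite in $\mod A^{\op}$, and the identity $D(\mathcal L^n)=(D\mathcal L)^n$, one gets $\mathcal I=\bigcap_i(D\mathcal L_i)^\omega$ with the $D\mathcal L_i$ contravariantly finite in $\mod A$.

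The main obstacle is the last step of the converse: given a finitely generated Serre subcategory $\langle\coker\Hom_A(f,-)\rangle$ with $f\colon U\to V$ in $\mod A$, producing a functorially finite ideal whose $\omega$-power is the corresponding fp-idempotent ideal. Equivalently, one must exhibit finite-length left and right approximations by modules in the definable subcategory $\{M:\Hom_A(f,M)\text{ is surjective}\}$; I expect this to be done by iterating push-outs and pull-backs along $f$ and invoking that $\Fp(\mod A,\Ab)$ and $\Fp(\mod A^{\op},\Ab)$ are length categories to see that the procedure stabilises after finitely many steps, the stable value being the desired approximation.
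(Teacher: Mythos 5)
Your proposal has genuine gaps in both directions. For the ``if'' direction, your treatment of a general (not functorially finite) family is wrong: Corollary \ref{start} only gives $\mathcal{J}_i^\omega \subseteq \mathcal{K}_i \subseteq \mathcal{J}_i$ for the largest fp-idempotent ideal $\mathcal{K}_i$ contained in $\mathcal{J}_i$, and the ``reverse inclusion'' $\bigcap_i \mathcal{K}_i \subseteq \bigcap_i \mathcal{J}_i^\omega$ that you claim to extract from it is false. Already for a one-element family over the Kronecker algebra with $\mathcal{J} = \radA^\omega$ one has $\mathcal{K} = \mathcal{J} = \radA^\omega$ (this ideal is itself fp-idempotent), while $\mathcal{J}^\omega = 0$ since $(\radA^\omega)^3 = 0$ (nonzero $\radA^\omega$-maps only go from preprojectives to regulars, regulars to preinjectives, or preprojectives to preinjectives, as in Example \ref{kro}); so $\mathcal{I} = \bigcap_i \mathcal{J}_i^\omega$ need not equal $\bigcap_i \mathcal{K}_i$. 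The paper's argument avoids this entirely: apply Corollary \ref{start} to the finite powers $\mathcal{J}^n$ and use $(\mathcal{J}^n)^\omega = \mathcal{J}^\omega$ to get $\mathcal{J}^\omega \subseteq \mathcal{J}_n \subseteq \mathcal{J}^n$ for the largest fp-idempotent ideal $\mathcal{J}_n$ in $\mathcal{J}^n$, whence $\mathcal{J}^\omega = \bigcap_n \mathcal{J}_n$ is a directed intersection of fp-idempotent ideals and hence fp-idempotent by Remark \ref{closure}(2); a second application of Remark \ref{closure}(2) then handles the directed intersection $\bigcap_i \mathcal{J}_i^\omega$ for arbitrary ideals $\mathcal{J}_i$, with no functorial finiteness assumption needed.

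For the converse, two problems. First, $\Fp(\mod A, \Ab)$ is a length category only when $A$ is of finite representation type, so Serre subcategories are not determined by the simple objects they contain and your decomposition of $\mathcal{S}$ (and the ``stabilisation'' argument you sketch at the end) is not available; directedness is in any case free, since $\mathcal{I}_{F\oplus G} = \mathcal{I}_F \cap \mathcal{I}_G$. Second, the step you yourself call the main obstacle is precisely the substance of the proof and is left unproven; moreover you aim at more than is needed, namely an ideal $\mathcal{J}'_\lambda$ with $(\mathcal{J}'_\lambda)^\omega$ \emph{equal} to a prescribed fp-idempotent ideal. The paper instead sets, for each $F \in \mathcal{S}_\mathcal{I}$, $\mathcal{I}_F = \{\varphi \mid F(\varphi) = 0\}$, so that $\mathcal{I} = \bigcap_F \mathcal{I}_F$ (directed), and proves covariant finiteness of $\mathcal{I}_F$ by a one-step construction: writing $F \cong \coker \Hom_A(f,-)$ with $f\colon X \to M$, choosing $k$-module generators $g_1,\dots,g_n$ of $\Hom_A(X,Y)$ and pushing out $f^n\colon X^n \to M^n$ along the induced map $g\colon X^n \to Y$ yields a left $\mathcal{I}_F$-approximation $\psi\colon Y \to P$ --- no iteration and no length hypothesis. (Note the approximation is a morphism lying in the ideal $\mathcal{I}_F$, not an approximation by modules in the definable subcategory, which is the wrong target.) Lemma \ref{cov}, in the form that the largest fp-idempotent ideal contained in the covariantly finite ideal $\mathcal{I}_F$ is $\mathcal{I}_F^\omega$, then gives $\mathcal{I} \subseteq \mathcal{I}_F^\omega \subseteq \mathcal{I}_F$, so $\mathcal{I} = \bigcap_F \mathcal{I}_F^\omega$; only your final Matlis duality step for the contravariantly finite version matches the paper and is correct.
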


\begin{proof} Let $\mathcal{J}$ be an ideal of $\mod A$ and consider its finite powers $\mathcal{J}^n$ with $n \in \mathbb{N}$. Clearly, $(\mathcal{J}^n)^\omega = \mathcal{J}^\omega$. Hence, by Corollary \ref{start} the largest fp-idempotent ideal $\mathcal{J}_n$ contained in $\mathcal{J}^n$ contains $\mathcal{J}^\omega$. Thus, $\bigcap_{n\in \mathbb{N}} \mathcal{J}_n = \mathcal{J}^\omega$. This intersection is directed, since $\mathcal{J}^{n+1} \subseteq \mathcal{J}^n$ implies $\mathcal{J}_{n+1} \subseteq \mathcal{J}_n$. By Remark \ref{closure} (2) it follows that $\mathcal{J}^\omega$ is fp-idempotent as well as an arbitrary directed intersection of such ideals.     

Let $\mathcal{I}$ be an fp-idempotent ideal and $\mathcal{S}$ the corresponding Serre subcategory of $\Fp(\mod A, \Ab)$ from Corollary \ref{ideall}. For $F\in \mathcal{S}$ let $\mathcal{I}_F$ be the ideal of all morphisms $\varphi$ in $\mod A$ with $F(\varphi) = 0$. Then $\mathcal{I} \subseteq \mathcal{I}_F$ and $\mathcal{I} = \bigcap_{F\in \mathcal{S}} \mathcal{I}_F$. Moreover, the intersection is directed, since $\mathcal{I}_{F\oplus G} = \mathcal{I}_F \cap \mathcal{I}_G$ for $F,G\in \mathcal{S}$. We will show that the ideals $\mathcal{I}_F$ are covariantly finite. Then Lemma \ref{cov} implies $\mathcal{I} \subseteq \mathcal{I}_F^\omega$ for all $F\in \mathcal{S}$ and so $\mathcal{I} = \bigcap_{F\in \mathcal{S}} \mathcal{I}_F^\omega$.

For $F\in \mathcal{S}$ there exists $f\colon X\rightarrow M$ in $\mod A$ with $F\cong  \coker \Hom_{A}(f,-)$. Now $F(\varphi) = 0$ for a morphism $\varphi \colon Y\rightarrow N$ in $\mod A$ is equivalent to: For every $X\rightarrow Y$ there exists $M\rightarrow N$ such that the diagram
\begin{equation*}
\begin{tikzcd}
    Y \arrow[r, "\varphi"] & N\\
    X \arrow[u] \arrow[r, "f"]& M \arrow[u] 
\end{tikzcd}
\end{equation*}
commutes. Let $\Hom_{A}(X,Y)$ be generated by $g_1 , \dots , g_n$ as a $k$-module. This induces a morphism $g \colon X^n \rightarrow Y$. Consider the pushout diagram
\begin{equation*}
  \begin{tikzcd}
    Y \arrow[r, "\psi"] & P\\
    X^n \arrow[u, "g"] \arrow[r, "f^n"]& M^n. \arrow[u]
\end{tikzcd}  
\end{equation*}
We show that $\psi $ is a left $\mathcal{I}_F$-approximation. First, for $\varphi \colon Y \rightarrow N$ in $\mathcal{I}_F$ there exists a commutative diagram
\begin{equation*}
\begin{tikzcd}
    Y \arrow[r, "\varphi"] & N\\
    X^n \arrow[u, "g"] \arrow[r, "f"]& M^n. \arrow[u] 
\end{tikzcd}
\end{equation*}
It follows that $\varphi$ factors through $\psi$. Further, by construction of $g$ for all $X \rightarrow Y$ there exists $\lambda = (\lambda_1, \dots, \lambda_n)\in k^n$ such that the diagram 
\begin{equation*}
  \begin{tikzcd}
    Y \arrow[r, "\psi"] & P\\
    X^n \arrow[u, "g"] \arrow[r, "f^n"]& M^n \arrow[u]\\
    X\arrow[u, "\lambda"] \arrow[r, "f"] & M \arrow[u, "\lambda", swap]
\end{tikzcd}  
\end{equation*}
commutes. Hence $\psi \in \mathcal{I}_F$ and $\psi$ is a left $\mathcal{I}_F$-approximation.

We have found the desired collection $\{\mathcal{J}_i\}_{i\in I}$ as the collection $\{\mathcal{I}_F\}_{F\in \mathcal{S}}$. The ideals $\mathcal{I}_F$ are covariantly finite and by duality, see Remark \ref{du} (1), we can also choose $\mathcal{J}_i$ to be contravariantly finite. 
\end{proof}

\begin{exa}\label{useful}\rm 
\begin{itemize}
    \item[(1)] For an ideal $\mathcal{I}$ of $\mod A$ the ideal $\mathcal{I}^\omega$ is fp-idempotent by Theorem \ref{new}. A particular useful choice of $\mathcal{I}$ seems to be the following. For a full additive subcategory $\mathcal{C}$ of $\mod A$ let $\textnormal{rad}_\mathcal{C}$ be the collection of all morphisms $X \rightarrow Y$ that factor as $X \rightarrow C' \rightarrow C'' \rightarrow Y$, where $C',C'' \in \mathcal{C}$ and $C'\rightarrow C''$ is a radical morphism. Then $\textnormal{rad}_\mathcal{C}^\omega$ equals all morphisms $X\rightarrow Y$ that factor as
    \begin{align*}
        X\longrightarrow C_1 \longrightarrow \dots \longrightarrow C_n \longrightarrow Y
    \end{align*}
    for all $n\in \mathbb{N}$ with $C_i \in \mathcal{C}$ such that $C_i \rightarrow C_{i+1}$ is a radical morphism.
    \item[(2)] For an ideal $\mathcal{I}$ of $\mod A$ we define the expression $\mathcal{I}^\alpha$ for ordinal numbers $\alpha$ as in \cite{Prest98}. If $\lambda$ is a non-zero limit ordinal, let $\mathcal{I}^\lambda = \bigcap_{\alpha < \lambda} \mathcal{I}^\alpha$. If $\alpha$ is an arbitrary infinite ordinal, then $\alpha = \lambda + n$ for a limit ordinal $\lambda$ and $n\in \mathbb{N}$, and we let $\mathcal{I}^\alpha = (\mathcal{I}^\lambda)^{n+1}$. Since every non-zero limit ordinal $\lambda$ can be written as $\omega(\lambda'+ n)$ for $n\in \mathbb{N}$, a limit ordinal $\lambda'$ and $\omega$ the first non-finite ordinal, it follows that
    \begin{align*}
        \mathcal{I}^\lambda = \begin{cases} \left(\mathcal{I}^{\omega(\lambda'+n-1)}\right)^\omega & \textnormal{for } n >0,\\
        \bigcap_{\alpha<\lambda'}{\mathcal{I}^{\omega \alpha}} & \textnormal{for } n = 0.
        \end{cases}
    \end{align*}
In both cases the ideal $\mathcal{I}^\lambda$ is fp-idempotent by Theorem \ref{new} using induction in the second case. An important example is $\radA^\lambda$ for a limit ordinal $\lambda$.
\end{itemize}
\end{exa}

Let us connect some of the results in this section to exact structures on $\mod A$. Let $\mathcal{I}$ be an fp-idempotent ideal of $\mod A$ containing the ideal of all morphisms factoring through an injective module, denoted by $\langle \Inj A \rangle$. Then the corresponding exact structure $\mathcal{E}$ on $\mod A$ is given by all kernel-cokernel pairs $(f,g)$ such that $f$ is left $\mathcal{I}$-factoring, see Corollary \ref{exideal}. We may also start with an arbitrary \mbox{fp-idempotent} ideal $\mathcal{I}$ of $\mod A$ and consider all left $\mathcal{I}$-factoring monomorphisms. They will coincide with all left $(\mathcal{I}+\langle \Inj A \rangle)$-factoring morphisms and hence induce an exact structure $\mathcal{E}_{\mathcal{I}\textnormal{-inj}}$. Dually, also all right $\mathcal{I}$-factoring epimorphisms induce an exact structure $\mathcal{E}_{\mathcal{I}\textnormal{-proj}}$. One can also directly confirm that $\mathcal{E}_{\mathcal{I}\textnormal{-inj}}$ and $\mathcal{E}_{\mathcal{I}\textnormal{-proj}}$ are exact structures by checking the axioms, see Section 1.1, and using the fact that $\mathcal{I}$ is left and right idempotent, see Lemma \ref{char}.

The following generalizes the behavior of the exact structure $\mathcal{E}_\textnormal{fin}$ on $\mod A$ in Example \ref{alm}.

\begin{prop} Let $\mathcal{I}$ be an fp-idempotent ideal of $\mod A$ that starts in $\mathcal{C}$ and ends in $\mathcal{D}$.
\begin{itemize}
    \item[\rm (1)] There are isomorphisms 
    \begin{align*}
\Ext_{\mathcal{E}_{\mathcal{I}\textnormal{-inj}}}^1 (X,Y) &\cong D\,\Hom_{A}(Y,\tau X)/\mathcal{I}(Y,\tau X),\\
\Ext_{\mathcal{E}_{\mathcal{I}\textnormal{-proj}}}^1(X,Y) &\cong D\,\Hom_{A}(\tau^{-} Y, X)/\mathcal{I}(\tau^{-} Y, X),
    \end{align*}
    functorial in $X \in \mod A$ and $Y \in \mathcal{C}$ for the first isomorphism, and functorial in  $X\in \mathcal{D}$ and $Y \in \mod A$ for the second isomorphism. 
    \item[\rm (2)] If $\mathcal{\tau} \underline{\mathcal{I}} = \overline{\mathcal{I}}$, then $\mathcal{E} = \mathcal{E}_{\mathcal{I}\textnormal{-inj}} = \mathcal{E}_{\mathcal{I}\textnormal{-proj}}$ and
    \begin{align*}    \Ext^1_{\mathcal{E}}( \tau X, Y) \cong \Ext^1_{\mathcal{E}}(X,  \tau^{-} Y)
    \end{align*}
    for all $\tau X \in \mathcal{D}$ and $\tau^- Y\in \mathcal{C}$.
\end{itemize}
\end{prop}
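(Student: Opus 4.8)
The plan is to reduce both parts to the relative Auslander--Reiten formulas of Corollary~\ref{arf}, once the relevant ideals of morphisms have been identified. First I would record, using the discussion preceding the proposition together with Corollary~\ref{exideal}, that $\mathcal{E}_{\mathcal{I}\textnormal{-inj}}$ is the exact structure whose $\mathcal{E}$-injective ideal is $\mathcal{I}_{\mathcal{E}_{\mathcal{I}\textnormal{-inj}}} = \mathcal{I} + \langle \Inj A\rangle$, and dually that $\mathcal{P}_{\mathcal{E}_{\mathcal{I}\textnormal{-proj}}} = \mathcal{I} + \langle \textnormal{proj}\,A\rangle$; both sums are fp-idempotent by Remark~\ref{closure}~(1). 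The key elementary point is that these extra ideals are absorbed by $\mathcal{I}$ on $s(\mathcal{I})$ and $e(\mathcal{I})$: if $Y \in \mathcal{C} = s(\mathcal{I})$, then every morphism $Y \to I$ with $I$ injective lies in $\mathcal{I}$, because $I$ is a direct summand of some $(DA)^n$, $\mathcal{I}(Y, DA) = \Hom_A(Y, DA)$ by definition of $s(\mathcal{I})$, and ideals are closed under finite direct sums and direct summands in the target; hence $\langle \Inj A\rangle(Y, N) \subseteq \mathcal{I}(Y, N)$ for all $N$. The dual statement, using that projectives are summands of powers of $A$, gives $\langle \textnormal{proj}\,A\rangle(M, X) \subseteq \mathcal{I}(M, X)$ for all $M$ whenever $X \in \mathcal{D} = e(\mathcal{I})$.

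For part~(1), I would apply the first isomorphism of Corollary~\ref{arf} to $\mathcal{E} = \mathcal{E}_{\mathcal{I}\textnormal{-inj}}$, obtaining $\Ext^1_{\mathcal{E}_{\mathcal{I}\textnormal{-inj}}}(X, Y) \cong D\,\Hom_A(Y, \tau X)/(\mathcal{I} + \langle \Inj A\rangle)(Y, \tau X)$, and then invoke the absorption above for $Y \in \mathcal{C}$ to rewrite the denominator as $\mathcal{I}(Y, \tau X)$ --- an equality of subfunctors of $\Hom_A(-, \tau X)|_{\mathcal{C}}$, so the quotients agree and functoriality is inherited from Corollary~\ref{arf}. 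The second isomorphism is the mirror statement: apply the second formula of Corollary~\ref{arf} to $\mathcal{E}_{\mathcal{I}\textnormal{-proj}}$, use $\mathcal{P}_{\mathcal{E}_{\mathcal{I}\textnormal{-proj}}} = \mathcal{I} + \langle \textnormal{proj}\,A\rangle$, and absorb $\langle \textnormal{proj}\,A\rangle$ on $\mathcal{D}$.

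For part~(2), the first step is to prove $\mathcal{E}_{\mathcal{I}\textnormal{-inj}} = \mathcal{E}_{\mathcal{I}\textnormal{-proj}}$. Since morphisms through injectives (resp.\ projectives) vanish in $\imod A$ (resp.\ $\pmod A$), we have $\overline{\mathcal{I}}_{\mathcal{E}_{\mathcal{I}\textnormal{-inj}}} = \overline{\mathcal{I}}$ and $\underline{\mathcal{P}}_{\mathcal{E}_{\mathcal{I}\textnormal{-proj}}} = \underline{\mathcal{I}}$. Proposition~\ref{injproj} applied to $\mathcal{E}_{\mathcal{I}\textnormal{-inj}}$ then gives $\underline{\mathcal{P}}_{\mathcal{E}_{\mathcal{I}\textnormal{-inj}}} = \tau^{-}\overline{\mathcal{I}}_{\mathcal{E}_{\mathcal{I}\textnormal{-inj}}} = \tau^{-}\overline{\mathcal{I}} = \underline{\mathcal{I}}$, the last equality being the hypothesis $\tau \underline{\mathcal{I}} = \overline{\mathcal{I}}$ rewritten via the mutually inverse equivalences $\tau, \tau^{-}$. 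Hence $\underline{\mathcal{P}}_{\mathcal{E}_{\mathcal{I}\textnormal{-inj}}} = \underline{\mathcal{I}} = \underline{\mathcal{P}}_{\mathcal{E}_{\mathcal{I}\textnormal{-proj}}}$; since $\mathcal{P}_\mathcal{E}$ always contains $\langle \textnormal{proj}\,A\rangle$ and is the full preimage of $\underline{\mathcal{P}}_\mathcal{E}$, while $\mathcal{E}$ is determined by $\mathcal{P}_\mathcal{E}$ via Corollary~\ref{exideal}, the two exact structures coincide; write $\mathcal{E}$ for their common value. Finally, part~(1) supplies $\Ext^1_\mathcal{E}(\tau X, Y) \cong D\,\Hom_A(\tau^{-}Y, \tau X)/\mathcal{I}(\tau^{-}Y, \tau X)$ (second formula, legitimate since $\tau X \in \mathcal{D}$) and $\Ext^1_\mathcal{E}(X, \tau^{-}Y) \cong D\,\Hom_A(\tau^{-}Y, \tau X)/\mathcal{I}(\tau^{-}Y, \tau X)$ (first formula, legitimate since $\tau^{-}Y \in \mathcal{C}$); the right-hand sides are identical, which is the claimed isomorphism.

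The step I expect to be the main obstacle is the identity $\mathcal{E}_{\mathcal{I}\textnormal{-inj}} = \mathcal{E}_{\mathcal{I}\textnormal{-proj}}$ in part~(2): one must feed $\overline{\mathcal{I}}_{\mathcal{E}_{\mathcal{I}\textnormal{-inj}}} = \overline{\mathcal{I}}$ carefully into the $\tau$-bijection of Proposition~\ref{injproj}, and justify that an exact structure on $\mod A$ is already recovered from the stable ideal $\underline{\mathcal{P}}_\mathcal{E}$ alone --- which is exactly where the ubiquitous inclusion $\langle \textnormal{proj}\,A\rangle \subseteq \mathcal{P}_\mathcal{E}$ enters.
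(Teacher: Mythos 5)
Your proposal is correct and follows essentially the same route as the paper: identify the $\mathcal{E}_{\mathcal{I}\textnormal{-inj}}$-injective (resp.\ $\mathcal{E}_{\mathcal{I}\textnormal{-proj}}$-projective) ideal as $\mathcal{I}+\langle \textnormal{inj}\,A\rangle$ (resp.\ $\mathcal{I}+\langle \textnormal{proj}\,A\rangle$), absorb the extra summand using that $\mathcal{I}$ starts in $\mathcal{C}$ and ends in $\mathcal{D}$, apply Corollary \ref{arf}, and deduce (2) from Proposition \ref{injproj} together with (1). You merely spell out in more detail the steps the paper leaves implicit (the absorption via $s(\mathcal{I})$, $e(\mathcal{I})$ and the recovery of $\mathcal{E}$ from $\underline{\mathcal{P}}_\mathcal{E}$), and these details are accurate.
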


\begin{proof} (1) The $\mathcal{E}_{\mathcal{I}\textnormal{-inj}}$-injective ideal equals $\mathcal{I}+ \langle \Inj A\rangle$. Moreover, if $Y\in \mathcal{C}$, then $\langle \Inj A \rangle (Y,-) \subseteq \mathcal{I}(Y,-)$, see Proposition \ref{sten}. Hence, the first isomorphism follows by Corollary \ref{arf}. The second isomorphism follows similarly. 

(2) If $\mathcal{\tau} \underline{\mathcal{I}} = \overline{\mathcal{I}}$ then $\mathcal{E} = \mathcal{E}_{\mathcal{I}\textnormal{-inj}} = \mathcal{E}_{\mathcal{I}\textnormal{-proj}}$ by Proposition \ref{injproj}. The isomorphism now follows from (1).
\end{proof}

The following characterizes almost $\mathcal{E}$-monic morphisms.

\begin{prop} Let $\mathcal{E}$ be an exact structure on $\mod A$ and $\mathcal{I}$ the $\mathcal{E}$-injectivity ideal. A morphism $f\colon X \rightarrow Y$ in $\mod A$ is almost $\mathcal{E}$-monic if and only if $\im \mathcal{I}(f,-)$ is a maximal element in the poset 
\begin{align*}
    \{\im \mathcal{I}(g,-) \neq \mathcal{I}(X,-) \mid g\colon X \rightarrow Z \text{ with }Z\in \mod A\}
\end{align*}
 of proper subfunctors of $\mathcal{I}(X,-)$.
\end{prop}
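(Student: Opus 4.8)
The plan is to argue through Lemma~\ref{sim}. Write $q\colon\Fp(\mod A,\Ab)\to\Fp(\mod A,\Ab)/\mathcal{S}_\mathcal{E}$ for the quotient functor, $\mathcal{S}_\mathcal{E}$ being the Serre subcategory attached to $\mathcal{E}$. By Lemma~\ref{sim}, $f\colon X\to Y$ is almost $\mathcal{E}$-monic precisely when $\coker\Hom_A(f,-)$ is simple in $\Fp(\mod A,\Ab)/\mathcal{S}_\mathcal{E}$, i.e. when $q\big(\im\Hom_A(f,-)\big)$ is a maximal proper subobject of $q\Hom_A(X,-)$. So it suffices to identify, via $g\mapsto q\big(\im\mathcal{I}(g,-)\big)$, the poset of the statement with the poset of proper subobjects of $q\Hom_A(X,-)$.

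For the bookkeeping: over an Artin algebra $\Hom_A(X,-)$ has finite length in $\Fp(\mod A,\Ab)$, so all subfunctors are finitely generated and, by assembling generators into direct sums, every subfunctor of $\Hom_A(X,-)$ has the form $\im\Hom_A(g,-)$. Since $\mathcal{I}=\mathcal{I}_\mathcal{E}$ is an ideal, the functors $\Hom_A(X,-)/\mathcal{I}(X,-)$ and $(\im\Hom_A(g,-)\cap\mathcal{I}(X,-))/\im\mathcal{I}(g,-)$ kill every morphism of $\mathcal{I}$, hence lie in $\mathcal{S}_\mathcal{E}$ by Corollary~\ref{ideall}; therefore $q$ makes $\mathcal{I}(X,-)\hookrightarrow\Hom_A(X,-)$ an isomorphism and $q\big(\im\mathcal{I}(g,-)\big)=q\big(\im\Hom_A(g,-)\big)$. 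So $g\mapsto q\big(\im\mathcal{I}(g,-)\big)$ is an order-preserving surjection onto the subobjects of $q\Hom_A(X,-)$, and it sends $\mathcal{I}(X,-)$, and nothing else, to the top object, because $\im\mathcal{I}(g,-)=\mathcal{I}(X,-)$ iff $g$ is left $\mathcal{I}$-factoring (as $\mathcal{I}$ is left idempotent, Lemma~\ref{char}) iff $\coker\Hom_A(g,-)\in\mathcal{S}_\mathcal{E}$ iff $q\big(\im\mathcal{I}(g,-)\big)=q\Hom_A(X,-)$; in particular $\im\mathcal{I}(f,-)$ is proper exactly when $f$ is not an $\mathcal{E}$-monomorphism.

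With this, one implication is formal: if $\im\mathcal{I}(f,-)$ is maximal in the poset, then for any proper $U$ with $q\big(\im\mathcal{I}(f,-)\big)\subseteq U=q\big(\im\mathcal{I}(g,-)\big)$ the subfunctor $\im\mathcal{I}(f,-)+\im\mathcal{I}(g,-)$ is again in the poset, proper, and $\supseteq\im\mathcal{I}(f,-)$, so equals $\im\mathcal{I}(f,-)$, giving $U=q\big(\im\mathcal{I}(f,-)\big)$. The converse is the hard part and the step I expect to be the main obstacle: one must show the surjection is injective, i.e. that $\im\mathcal{I}(g,-)\subseteq\im\mathcal{I}(g',-)$ with $\im\mathcal{I}(g',-)/\im\mathcal{I}(g,-)\in\mathcal{S}_\mathcal{E}$ forces equality. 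Here I would use the fp-idempotency of $\mathcal{I}$ decisively, via its description as the ideal of morphisms of $\mod A$ factoring through the definable subcategory $\mathcal{X}_\mathcal{E}$ (proof of Lemma~\ref{char}): rewrite $\im\mathcal{I}(g',-)=\im\Hom_A(\psi',-)$ with $\psi'$ factoring through some $M\in\mathcal{X}_\mathcal{E}$; then $1_M\in\mathcal{I}$, so the $\mathcal{S}_\mathcal{E}$-functor $\im\mathcal{I}(g',-)/\im\mathcal{I}(g,-)$ — whose extension to $\Mod A$ vanishes on $\mathcal{X}_\mathcal{E}$ by Section~1.6 — is killed at $M$, and chasing the class of $\psi'$ through this vanishing produces the splitting that places $\psi'$ in $\im\mathcal{I}(g,-)$. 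Controlling the possibly infinite-dimensional $M$ and making this chase precise is the crux.

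Granting injectivity, the two posets are order-isomorphic through $q$, and the statement follows: $\im\mathcal{I}(f,-)$ is maximal in $\{\im\mathcal{I}(g,-)\neq\mathcal{I}(X,-)\}$ iff $q\big(\im\Hom_A(f,-)\big)$ is a maximal proper subobject of $q\Hom_A(X,-)$ iff $\coker\Hom_A(f,-)$ is simple in $\Fp(\mod A,\Ab)/\mathcal{S}_\mathcal{E}$ iff, by Lemma~\ref{sim}, $f$ is almost $\mathcal{E}$-monic.
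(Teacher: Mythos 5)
Your reduction to Lemma \ref{sim} is a reasonable idea, but as written the argument has a genuine gap, and in fact its main bridge is false. First, you yourself leave the decisive step unproven: the ``injectivity'' of the comparison (that $\im\mathcal{I}(g,-)\subseteq\im\mathcal{I}(g',-)$ with torsion quotient forces equality) is only sketched, with the infinite-dimensional module $M$ explicitly flagged as an uncontrolled obstacle; note also that $1_M\in\mathcal{I}$ does not even make sense for $M\notin\mod A$. Second, the identifications $q\,\mathcal{I}(X,-)=q\Hom_A(X,-)$ and $q\bigl(\im\mathcal{I}(g,-)\bigr)=q\bigl(\im\Hom_A(g,-)\bigr)$ are not justified and are false in general. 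The functors $\Hom_A(X,-)/\mathcal{I}(X,-)$ etc.\ are not finitely presented, so Corollary \ref{ideall} does not apply to them; what $q$ kills is the finite type torsion class $\varinjlim\mathcal{S}_\mathcal{E}$, and membership there is strictly stronger than vanishing on all morphisms of $\mathcal{I}$. Concretely, take the Kronecker algebra, $\mathcal{E}=\mathcal{E}_{\textnormal{fin}}$, so $\mathcal{I}=\radA^\omega+\langle\Inj A\rangle$ (Example \ref{alm}), and $X=P_1$: a factorization of $\Hom_A(P_1,-)\to\Hom_A(P_1,-)/\mathcal{I}(P_1,-)$ through an object $\coker\Hom_A(m,-)$ of $\mathcal{S}_\mathcal{E}$ would yield an $\mathcal{E}$-monomorphism $m\colon P_1\to D$ in $\mod A$ with $m\in\mathcal{I}$; but $m\in\mathcal{I}$ forces its components into preprojective summands of $D$ to vanish, while being left $\mathcal{I}$-factoring forces every map $P_1\to R_1^\lambda$, for all $\lambda$, to factor through the finite-dimensional $D$ --- impossible. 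Hence $\Hom_A(P_1,-)/\mathcal{I}(P_1,-)\notin\varinjlim\mathcal{S}_\mathcal{E}$ and $q\,\mathcal{I}(P_1,-)\subsetneq q\Hom_A(P_1,-)$, so your order-isomorphism is not set up correctly. (The side claim that $\Hom_A(X,-)$ has finite length in $\Fp(\mod A,\Ab)$ is also false for representation-infinite $A$, though only finitely generated subobjects are actually needed.)

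The correct dictionary, if one insists on passing through $\Fp(\mod A,\Ab)/\mathcal{S}_\mathcal{E}$, is not $q\,\im\mathcal{I}(g,-)=q\,\im\Hom_A(g,-)$ but rather: $q\,\im\Hom_A(g,-)\subseteq q\,\im\Hom_A(f,-)$ if and only if $\coker\Hom_A(\varphi,-)\in\mathcal{S}_\mathcal{E}$ for the pushout $\varphi$ of $f$ along $g$, if and only if $\im\mathcal{I}(g,-)\subseteq\im\mathcal{I}(f,-)$. Proving this equivalence is exactly the content of the paper's proof, which never leaves $\mod A$: by Lemma \ref{char}, $\mathcal{E}$-monomorphisms are precisely the (strongly) left $\mathcal{I}$-factoring morphisms, i.e.\ those $\alpha$ with $\im\mathcal{I}(\alpha,-)$ equal to the full functor, and the two alternatives in the definition of an almost $\mathcal{E}$-monic morphism translate, via the pushout and the direct sum, into ``$\im\mathcal{I}(g,-)\subseteq\im\mathcal{I}(f,-)$ or $\im\mathcal{I}(f,-)+\im\mathcal{I}(g,-)=\mathcal{I}(X,-)$'', which is maximality in the stated poset. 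So the step you defer is not a technicality to be chased through a big module $M$; it is the heart of the statement, and the route through the quotient category only works once one has established the very pushout computation that makes the direct argument complete.
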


\begin{proof} A morphism $\alpha\colon M \rightarrow N$ is an $\mathcal{E}$-monomorphism if and only if it is strongly left $\mathcal{I}$-factoring by Lemma \ref{char}. Clearly, this is equivalent to $\im \mathcal{I}(\alpha,-) = \mathcal{I}(M,-)$.

The morphism $f$ is almost $\mathcal{E}$-monic if it is not an {$\mathcal{E}$-monomorphism} and for every morphism $g\colon X\rightarrow Z$, the morphism $\varphi$ in some pushout diagram
\begin{equation*}
    \begin{tikzcd}
        X \arrow[r, "f"] \arrow[d, "g", swap]& Y \arrow[d]\\
        Z \arrow[r, "\varphi"] & P
    \end{tikzcd}
\end{equation*}
is an $\mathcal{E}$-monomorphism or the morphism $h\colon X \rightarrow Y \oplus Z$ induced by $f$ and $g$ is an {$\mathcal{E}$-monomorphism}. These conditions are equivalent to $\im \mathcal{I}(f,-) \neq \mathcal{I}(X,-)$, and $\im \mathcal{I}(\varphi,-) = \mathcal{I}(Z,-)$ or $\im \mathcal{I}(g,-) = \mathcal{I}(X,-)$. By properties of the pushout and the direct sum, they are also equivalent to $\im \mathcal{I}(f,-) \neq \mathcal{I}(X,-)$, and
\begin{align*}
\im \mathcal{I}(g,-) \subseteq \im \mathcal{I}(f,-) \quad \textnormal{ or }\quad \im \mathcal{I}(f,-) + \im \mathcal{I}(g,-) = \mathcal{I}(X,-).
\end{align*}
It follows that $\im \mathcal{I}(f,-)$ is a maximal element in the poset.
\end{proof}

\begin{exa}\label{kro}\rm Let $k$ be an algebraically closed field, $Q = \begin{tikzcd}
    1 \arrow[r, shift left] \arrow[r, shift right] & 2
\end{tikzcd}$ the Kronecker quiver and $A = kQ$. The indecomposable modules in $\mod A$ can be divided into three parts: The preprojective modules $\mathcal{P} = \{P_1, P_2, \dots \}$, the preinjective modules $\mathcal{Q} = \{Q_1, Q_2, \dots \}$ and the regular modules  $\mathcal{R}$, which further divide into tubes $\mathcal{R}^\lambda = \{R^\lambda_1, R^\lambda_2, \dots\}$ with $\lambda\in k\cup \{\infty \}$. The Auslander-Reiten quiver of $\mod A$ can be visualized as follows.
\tikzstyle{place}=[circle,draw=black!50,fill=black!100,thick,
inner sep=0pt,minimum size=1mm]
\begin{align*}
\begin{tikzpicture}
\draw (1, -0.5) node{$\mathcal{P}$};
\draw (9, -0.5) node{$\mathcal{Q}$};
\draw (4.5, -0.5) node{$\mathcal{R}$};
\draw (0,0) node[place]{};
\draw [-stealth](0.1,0.15) -- (0.35,0.4);
\draw [-stealth](0.15,0.1) -- (0.4,0.35);
\draw (0.5,0.5) node[place]{};
\draw [-stealth](0.65,0.4) -- (0.9,0.15);
\draw [-stealth](0.6,0.35) -- (0.85,0.1);
\draw (1,0) node[place]{};
\draw [-stealth](1.1,0.15) -- (1.35,0.4);
\draw [-stealth](1.15,0.1) -- (1.4,0.35);
\draw (1.5,0.5) node[place]{};
\draw [-stealth](1.65,0.4) -- (1.9,0.15);
\draw [-stealth](1.6,0.35) -- (1.85,0.1);
\draw (2,0) node[place]{};
\draw (2.5,0) node{$\dots$};
\draw (10,0) node[place]{};
\draw [-stealth](9.1,0.15) -- (9.35,0.4);
\draw [-stealth](9.15,0.1) -- (9.4,0.35);
\draw (9.5,0.5) node[place]{};
\draw [-stealth](9.65,0.4) -- (9.9,0.15);
\draw [-stealth](9.6,0.35) -- (9.85,0.1);
\draw (9,0) node[place]{};
\draw [-stealth](8.65,0.4) -- (8.9,0.15);
\draw [-stealth](8.6,0.35) -- (8.85,0.1);
\draw (8.5,0.5) node[place]{};
\draw [-stealth](8.1,0.15) -- (8.35,0.4);
\draw [-stealth](8.15,0.1) -- (8.4,0.35);
\draw (8,0) node[place]{};
\draw (7.5,0) node{$\dots$};
\draw (3.5,0) circle (5pt);
\draw (3.33,0)--(3.33,1);
\draw (3.33,1.3) node{$\vdots$};
\draw (3.67,0)--(3.67,1);
\draw (3.67,1.3) node{$\vdots$};
\draw (4.25,0) circle (5pt);
\draw (4.08,0)--(4.08,1);
\draw (4.08,1.3) node{$\vdots$};
\draw (4.42,0)--(4.42,1);
\draw (4.42,1.3) node{$\vdots$};
\draw (5,0) circle (5pt);
\draw (4.83,0)--(4.83,1);
\draw (4.83,1.3) node{$\vdots$};
\draw (5.17,0)--(5.17,1);
\draw (5.17,1.3) node{$\vdots$};
\draw (5.75,0) node{$\dots$};
\draw (6.3,0) node{$\dots$};
\end{tikzpicture}
\end{align*}
\end{exa}
Sometimes we identify the above collections with their additive closures. There only exist non-zero morphisms in $\radA^\omega$ from left to right in the above picture, that is from $\mathcal{P}$ to $\mathcal{R}$, from $\mathcal{R}$ to $\mathcal{Q}$ and from $\mathcal{P}$ to $\mathcal{Q}$. In the third case, such morphisms are always contained in $\radA^{\omega +1} = (\radA^\omega)^2$.

For $\lambda \in k \cup\{\infty\}$ the Pr\"ufer module $R_\infty^\lambda$ is constructed by a filtered colimit of monomorphisms in $\mathcal{R}^\lambda$ and the adic module $\hat{R}^\lambda$ by an inverse limit of epimorphisms in $\mathcal{R}^\lambda$. The Ziegler spectrum $\Ind A$ consists of all indecomposable modules in $\mod A$, the generic module $G$, the Pr\"ufer modules $R_\infty^\lambda$ and the adic modules $\hat{R}^\lambda$, see for example \cite[Theorem 14.2.15]{Krause4}. We will mainly focus on the non-empty closed sets $\mathcal{U} \subseteq \Ind A$ that contain no finite dimensional modules. They are given by two subsets $S,T \subseteq k \cup \{\infty \}$ such that
\begin{align*}
    \mathcal{U} = \{R_\infty^\lambda, \hat{R}^\mu, G\mid \lambda \in S, \mu \in T\}.
\end{align*} 
The corresponding fp-idempotent ideals are those contained in $\radA^\omega$ and the corresponding exact structures are those containing $\mathcal{E}_{\textnormal{fin}}$, see Example \ref{alm}. We make use of Theorem \ref{new}, in particular Example \ref{useful} (1), to produce these fp-idempotent ideals. 

\begin{itemize}
    \item[(1)] The fp-idempotent ideal $\textnormal{rad}_\mathcal{P}^\omega$ starts in $ \mathcal{P}$ and ends in $ \mathcal{R}\cup \mathcal{Q}$. It corresponds to the closed set $\mathcal{U} = \{\hat{R}^\lambda, G \mid \lambda \in k \cup \{\infty \}\}$. 
    \item[(2)] The fp-idempotent ideal $\textnormal{rad}_\mathcal{Q}^\omega$ starts in $ \mathcal{P} \cup \mathcal{R}$ and ends in $ \mathcal{Q}$. It corresponds to the closed set $\mathcal{U} = \{R_\infty^\lambda, G\mid \lambda \in k \cup\{\infty\}\}$.
    \item[(3)] For a non-empty subset $S\subseteq k\cup \{\infty\}$ let $\mathcal{R}^S = \bigcup_{\lambda\in S} \mathcal{R}^\lambda$. The fp-idempotent ideal $\textnormal{rad}_{\mathcal{R}^S}^\omega$ starts in $\mathcal{P}\cup \mathcal{R}^S$ and ends in $\mathcal{R}^S \cup \mathcal{Q}$. It corresponds to the closed set $\mathcal{U} = \{R_\infty^\lambda, \hat{R}^\lambda, G \mid \lambda \in S\}$.
    \item[(4)] We have $\radA^{\omega+1} = \bigcap \textnormal{rad}_{\mathcal{R}^S}^\omega$, where the intersection goes over all cofinite subsets $S \subseteq k \cup \{\infty\}$. Since the intersection is directed, the ideal $\radA^{\omega + 1}$ is fp-idempotent. It starts in $\mathcal{P}$ and ends in $\mathcal{Q}$. The corresponding closed subset equals $\mathcal{U} = \{G\}$. We can also describe $\radA^{\omega + 1}$ as a single $\omega$-power of an ideal as follows. For a sequence $\lambda_1, \lambda_2, \dots$ of pairwise distinct elements in $k \cup \{\infty\}$ let $\mathcal{C} = \{R^{\lambda_i}_j \mid i \in \mathbb{N}, 1\leq j \leq i\}$. Then $\radA^{\omega+1} = \textnormal{rad}_\mathcal{C}^\omega$.    
    \item[(5)] For $S,T \subseteq k \cup \{\infty \}$ with $S\neq \emptyset$ or $T\neq \emptyset$ let $\mathcal{I}_{S,T}$ be the ideal generated by proper monomorphisms in $\mathcal{R}^S$ and proper epimorphisms in $\mathcal{R}^T$. The \mbox{fp-idempotent} ideal $\mathcal{I}_{S,T}^\omega$ starts in $\mathcal{P}\cup \mathcal{R}^S$ and ends in $\mathcal{R}^T \cup \mathcal{Q}$. It corresponds to the closed set $\mathcal{U} = \{R_\infty^\lambda, \hat{R}^\mu, G \mid \lambda \in S, \mu \in T\}.$
\end{itemize}

Note that the ideal $\mathcal{I}_{S,T}$ in (5) fulfills $\tau \underline{\mathcal{I}}_{S,T} = \overline{\mathcal{I}}_{S,T}$. Thus, $\tau \underline{\mathcal{I}}_{S,T}^\omega = \overline{\mathcal{I}}_{S,T}^\omega$ and so $\tau(\mathcal{U} \cup \textnormal{proj}\,A) = \mathcal{U}\cup \Inj A$, see Corollary \ref{arfbig}. Because (1) - (3) are special cases of (5), and (4) is constructed from (3), it follows that $\tau (\mathcal{U} \cup \textnormal{proj}\, A) = \mathcal{U} \cup \Inj A$ in every case. Further, every fp-idempotent ideal here is an $\omega$-power of an ideal. It would be interesting to know for what algebras these properties hold. 

We continue by investigating the exact structure on $\mod A$ corresponding to the fp-idempotent ideal in (4). Let $\mathcal{E}$ be the exact structure such that the $\mathcal{E}$-injectivity ideal equals $\textnormal{rad}_A^{\omega+1}+\langle \Inj A \rangle$. By the observations before, the $\mathcal{E}$-projectivity ideal equals $\textnormal{rad}_{A}^{\omega+1} + \langle \textnormal{proj} \,A \rangle$. We consider an exact sequence
\begin{align*}
0 \longrightarrow X\xlongrightarrow{f} Y \xlongrightarrow{g} Z \longrightarrow 0   \tag{$\ast$}
\end{align*}
with $X, Z\in \mod A$ indecomposable. It is $\mathcal{E}$-exact if and only if every $X \rightarrow M$ in $\textnormal{rad}_{A}^{\omega+1}$ factors through $f$. If $X\notin \mathcal{P}$, then $\textnormal{rad}_{A}^{\omega+1}(X,-) = 0$ implies that ($\ast$) is $\mathcal{E}$-exact. Similarly, if $Z\notin Q$, then $(\ast)$ is $\mathcal{E}$-exact. It is left to consider the case $X \in \mathcal{P}$ and $Z\in \mathcal{Q}$. Clearly, if $(\ast)$ splits, then the sequence is $\mathcal{E}$-exact. If it does not split, then the identity $1_Z \colon Z \rightarrow Z$ does not factor through $g$. By the defect formula there also exists $X\rightarrow \tau Z$ that does not factor through $f$. Since $X \in \mathcal{P}$ and $\tau Z \in \mathcal{Q}$, it follows that $X\rightarrow \tau Z$ is in $\radA^{\omega+1}$. Hence, in such a case the sequence $(\ast)$ is not $\mathcal{E}$-exact. We conclude that for $X,Z$ indecomposable
\begin{align*}
    \Ext_\mathcal{E}^1(Z,X) = \begin{cases}
        \Ext_A^1(Z,X) &\textnormal{for }X\notin \mathcal{P}\textnormal{ or }\mathcal{Z}\notin \mathcal{Q},\\
        0              &\textnormal{for }X\in \mathcal{P}\textnormal{ and }\mathcal{Z}\in \mathcal{Q}.
    \end{cases}
\end{align*}

Next, we prove that the sequence
\begin{align*}
    0 \longrightarrow P_1 \longrightarrow R^\lambda_1 \longrightarrow Q_1 \longrightarrow 0
\end{align*}
is almost $\mathcal{E}$-exact for all $\lambda \in k\cup \{\infty \}$. It is not $\mathcal{E}$-exact, since it does not split. We must show that for every morphism $ P_1\rightarrow X$, the morphism $f$ in some pushout diagram
\begin{equation*}
    \begin{tikzcd}
        P_1 \arrow[r] \arrow[d]& R_1 \arrow[d]\\
        X \arrow[r, "f"] & Y
    \end{tikzcd}
\end{equation*}
is an $\mathcal{E}$-monomorphism or the morphism $P_1 \rightarrow X \oplus R_1$ is an {$\mathcal{E}$-monomorphism}. By Remark \ref{indenough} (2) this property only needs to be tested for indecomposable $X$. If $X\not \in \mathcal{P}$, then $X\rightarrow Y$ is an $\mathcal{E}$-monomorphism. If $X\in \mathcal{P}$, then $X\cong P_i$ for some $i \geq 1$. Note that we can assume $P_1 \rightarrow X$ to be a monomorphism, since otherwise it would be zero and $f$ would split. Thus, for $i=1$ the morphism $P_1 \rightarrow X$ is the identity and so $P_1 \rightarrow X\oplus R_1$ splits. For $i > 1$ the morphism $P_1 \rightarrow X$ induces a short exact sequence
\begin{align*}
    0 \longrightarrow P_1 \longrightarrow P_i \longrightarrow Z \longrightarrow 0.
\end{align*}
with $Z \in \mathcal{R}$. Hence, the morphism $P_1 \rightarrow X$ is an $\mathcal{E}$-monomorphism and so is $P_1 \rightarrow X \oplus R_1$. It follows that the sequence in question is almost $\mathcal{E}$-exact.

By Theorem \ref{genericex} we conclude that the exact structure $\mathcal{E}$ corresponds to a generic module. We already knew this, since the closed set corresponding to $\radA^{\omega+1}$ is exactly $\{G\}$. The examination here reproves that $G$ is generic. Moreover, by Proposition \ref{left} an $\bar{\mathcal{E}}$-injective envelope $Q$ of $P_1$ must have $G$ as an indecomposable direct summand.  


\begin{thebibliography}{plain}

\bibitem{Auslander00} M. Auslander, \emph{Functors and morphisms determined by objects}, Lecture Notes in Pure and Appl. Math. 37 (1978), 1--244.

\bibitem{Auslander0} M. Auslander, S. O. Smal\o, \emph{Preprojective modules over artin algebras}, J. Algebra, 66(1)
(1980), 61--122.

\bibitem{Auslander} M. Auslander, \emph{A functorial approach to representation theory}, in: Representations of Algebras, Puebla 1980, Lecture Notes in
Mathematics, Vol. 944, Springer-Verlag (1982), 105--179.

\bibitem{Breitsprecher} S. Breitsprecher, \emph{Lokal endlich pr\"asentierbare Grothendieck-Kategorien}, Mitt. Math. Sem. Giessen Heft 85 (1970), 1--25.

\bibitem{Buehler} T. B\"uhler, \emph{Exact categories}, Expo. Math. 28 (2010), no. 1, 1--69.

\bibitem{Cohn} P. M. Cohn, \emph{On the free product of associative rings}, Math. Z. 71 (1959),
380--398.

\bibitem{Crawley-Boevey0} W. Crawley-Boevey, \emph{Modules of finite length over their endomorphism rings}, in Representations of algebras and related topics (Tsukuba, 1990), 127--184,
London Mathematical Society Lecture Note Series, 168, Cambridge University
Press, Cambridge, 1992.

\bibitem{Crawley-Boevey} W. Crawley-Boevey, \emph{Some model theory of modules}, Seminar talk at Universit\"at Bielefeld (1993).

\bibitem{Crawley-Boevey2} W. Crawley-Boevey, \emph{Locally finitely presented additive categories}, Commun.
Algebra 22 (1994), no. 5, 1641--1674.

\bibitem{Crawley-Boevey3} W. Crawley-Boevey, \emph{Infinite-dimensional modules in the representation theory
of finite-dimensional algebras}, in Algebras and modules, I (Trondheim, 1996), 29--54, CMS Conference Proceedings, 23, American Mathematical Society, Providence, RI, 1998.

\bibitem{Enomoto} H. Enomoto, \emph{Classifications of exact structures and Cohen-Macaulay-finite algebras}, Adv. Math. 335 (2018), 838--877.

\bibitem{Frerick} L. Frerick, D. Sieg, \emph{Exact categories in functional analysis}, 2010. online lecture notes, available at url:
https://www.math.uni-trier.de/abteilung/analysis/HomAlg.pdf.

\bibitem{Gabriel} P. Gabriel, A. V. Roiter, \emph{Representations of finite-dimensional algebras}, in Algebra VIII,
vol. 73 of Encyclopaedia Math. Sci., Springer, Berlin, 1992, 1--177.

\bibitem{GruJen} L. Gruson, C. U. Jensen, \emph{Dimensions cohomologiques reli\'ees aux foncteurs
$\varprojlim^{(i)}$}, in S\' eminaire d'Alg\` ebre Paul Dubreil et Marie-Paule Malliavin (Paris,
1980), 234--294, Lecture Notes in Mathematics, 867, Springer, Berlin, 1981.

\bibitem{Herzog} I. Herzog, \emph{The Ziegler spectrum of a locally coherent Grothendieck category}, Proc. London Math. Soc. (3), 74(3), 503--558, 1997.

\bibitem{Iyama} O. Iyama, \emph{$\tau$-Categories III: Auslander orders and Auslander-Reiten quivers}, Algebr. Represent.
Theory 8(5) (2005), 601--619.

\bibitem{Krause0} H. Krause, \emph{Stable equivalence preserves representation type}, Comment. Math. Helv. 72 (1997), 266--284.

\bibitem{Krause} H. Krause, \emph{The Spectrum of a Module Category}, Habilitationsschrift, Universit\"at Bielefeld, 1997, published as Mem. Amer. Math. Soc., No. 707, 2001.

\bibitem{Krause2} H. Krause, \emph{The spectrum of a locally coherent category}, J. Pure Appl. Algebra, 114(3), 259--271, 1997.

\bibitem{Krause3} H. Krause, \emph{Exactly definable categories}, J. Algebra 201 (1998), no. 2, 456--492.

\bibitem{Krause3a} H. Krause, \emph{A short proof for Auslander's defect formula}, Linear Algebra Appl. 365 (2003),
267--270.

\bibitem{Krause4} H. Krause, \emph{Homological theory of representations}, volume 195 of Cambridge Studies in Advanced Mathematics. Cambridge University Press, Cambridge, 2022.

\bibitem{Lenzing} H. Lenzing, R. Zuazua, \emph{Auslander-Reiten duality for abelian categories}, Bol. Soc. Mat. Mexicana 10 (2004) 169 -- 177.

\bibitem{Positselski} L. Positselski, \emph{Locally coherent exact categories}, arXiv:2311.02418 (v2), 2024.

\bibitem{Prest98} M. Prest, \emph{Morphisms between finitely presented modules and infinite-dimensional representations}, Algebras and modules, II (Geiranger, 1996), 447--455, CMS Conf. Proc., 24, Amer. Math. Soc., Providence, RI, 1998.

\bibitem{Prest05} M. Prest, \emph{Ideals in mod-$R$ and the $\omega$-radical}, J. London Math. Soc., 2005, 71(2), 321--334.

\bibitem{Prest} M. Prest, \emph{Purity, Spectra and Localisation}, Cambridge: Cambridge University Press; 2009.

\bibitem{Prufer} H. Pr\" ufer, \emph{Untersuchungen \" uber die Zerlegbarkeit der abz\" ahlbaren
prim\" aren abelschen Gruppen}, Math. Z. 17 (1923), 35--61.

\bibitem{Quillen} D. Quillen, \emph{Higher algebraic K-theory: I}, Algebraic K-theory, I: Higher K-theories (Proc. Conf., Battelle Memorial Inst., Seattle, Wash., 1972), Springer, Berlin, 1973, 85--147. Lecture Notes in Math., Vol. 341. MR MR0338129 (49 2895).

\bibitem{self} K. Schlegel, \emph{Ideal Torsion Pairs for Artin Algebras}, arXiv:2405.19023.

\bibitem{Simson} D. Simson, \emph{On pure semi-simple Grothendieck categories. I}, Fund. Math. 100
(1978), no. 3, 211--222.

\bibitem{stovi} J. \v{S}t'ov\' i\v{c}ek, \emph{Exact model categories, approximation theory, and cohomology of quasi-coherent
sheaves}. Advances in representation theory of algebras, 297--367, EMS Ser. Congr. Rep., Eur. Math. Soc., Z\"urich, 2013.

\bibitem{Rump} W. Rump, \emph{On the maximal exact structure of an additive category}, Fund. Math. 214(1) (2011), 77--87

\bibitem{Ziegler} M. Ziegler, \emph{Model theory of modules}, Ann. Pure Appl. Logic 26 (1984), 149--213.

\end{thebibliography}
\end{document}